\definecolor{refkey}{gray}{.75}
\definecolor{labelkey}{gray}{.5}
\numberwithin{equation}{section}
\numberwithin{figure}{section}
\newtheorem{theorem}{Theorem}[section]
\newtheorem*{theorem*}{Theorem}
\newtheorem{lemma}[theorem]{Lemma}
\newtheorem{claim}[theorem]{Claim}
\newtheorem{proposition}[theorem]{Proposition}
\newtheorem{observation}[theorem]{Observation}
\newtheorem{fact}[theorem]{Fact}
\newtheorem{corollary}[theorem]{Corollary}
\newtheorem{cor}[theorem]{Corollary}
\theoremstyle{definition}{
\newtheorem{example}[theorem]{Example}
\newtheorem{definition}[theorem]{Definition}
\newtheorem{process}[theorem]{Process}

\newtheorem*{definition*}{Definition}

\newtheorem{remark}[theorem]{Remark}

}
\newcommand{\R}{\mathbb R}
\newcommand{\cA}{\ensuremath{\mathcal A}}
\newcommand{\cC}{\ensuremath{\mathcal C}}
\newcommand{\cD}{\ensuremath{\mathcal D}}
\newcommand{\cE}{\ensuremath{\mathcal E}}
\newcommand{\cF}{\ensuremath{\mathcal F}}
\newcommand{\cG}{\ensuremath{\mathcal G}}
\newcommand{\cH}{\ensuremath{\mathcal H}}
\newcommand{\cT}{\ensuremath{\mathcal T}}
\newcommand{\cV}{\ensuremath{\mathcal V}}
\newcommand{\fm}{\mathfrak{m}}
\newcommand{\fM}{\mathfrak{M}}
\newcommand{\fPM}{\overline{\fM}}
\newcommand{\sT}{{\ensuremath{\mathscr T}}}
\newcommand{\bd}{{\ensuremath{\mathbf d}}}
\newcommand{\ps}{{\hat{p}}}
\newcommand{\bbP}{{\mathbb{P}}}
\newcommand{\T}{{\mathcal{T}}}
\newcommand{\E}{{\mathbb{E}}}
\newcommand{\treelike}{{\mathsf{Treelike}}}
\newcommand{\sparse}{{\mathsf{Sparse}}}
\newcommand{\w}{{\mathrm{wgt}}}
\newcommand{\vdn}{{\ensuremath{\mathbf{d}_n}}}
\newcommand{\Pvd}{{\ensuremath{\mathbb P_{\vdn}}}}
\newcommand{\Evd}{{\ensuremath{\mathbb E_{\vdn}}}}
\newcommand{\Prg}{{\mathbb{P}}_{\textsc{rg}(\vdn)}}
\newcommand{\Pcm}{{\mathbb{P}}_{\textsc{cm}(\vdn)}}
\newcommand{\Ecm}{{\mathbb{E}}_{\textsc{cm}(\vdn)}}
\newcommand{\Tburn}{T_{\textsc{burn}}}
\renewcommand{\Pr}{{\mathbb{P}}}
\renewcommand{\epsilon}{{\varepsilon}}
\DeclareMathOperator{\ber}{\mathrm{Ber}}
\DeclareMathOperator{\bin}{\mathrm{Bin}}
\newcommand{\tv}{{\textsc{tv}}}
\newcommand{\tmix}{{t_{\textsc{mix}}}}
\newcommand{\ew}{{\mathbf{m}}}
 \title[Sampling from Potts on random graphs of unbounded degree]
 {Sampling from Potts on random graphs of unbounded degree \\ via random-cluster dynamics}
\author{Antonio Blanca}
\address{A.\ Blanca\hfill\break
Department of CSE, Pennsylvania State University }
\email{ablanca@cse.psu.edu}
\author{Reza Gheissari}
\address{R.\ Gheissari\hfill\break
Department of Statistics and EECS \\ UC Berkeley }
\email{gheissari@berkeley.edu}
\begin{document}

\maketitle

\thispagestyle{empty}

\vspace{-.75cm}
\begin{abstract}
    We consider the problem of sampling from the ferromagnetic Potts and random-cluster models on a general family of random graphs via the Glauber dynamics for the random-cluster model. 
     The random-cluster model is parametrized by an edge probability $p \in (0,1)$ and a cluster weight $q > 0$.
    We establish that for every $q\ge 1$, the random-cluster Glauber dynamics mixes in optimal $\Theta(n\log n)$ steps on $n$-vertex random graphs having a prescribed degree sequence with bounded average branching $\gamma$ throughout the {full high-temperature} uniqueness regime $p<p_u(q,\gamma)$. 

    The family of random graph models we consider includes the Erd\H{o}s--R\'enyi random graph $G(n,\gamma/n)$, and so we provide the first polynomial-time sampling algorithm for the ferromagnetic Potts model {on Erd\H{o}s--R\'enyi random graphs
    for the full tree uniqueness regime}.
    We accompany our results with mixing time lower bounds (exponential in the largest degree) for the Potts Glauber dynamics, in the same settings where our $\Theta(n \log n)$ bounds for the random-cluster Glauber dynamics apply. This reveals a novel and significant computational advantage of random-cluster based algorithms for sampling from the  Potts model at high temperatures.
\end{abstract}

\thispagestyle{empty}
\section{Introduction}

The ferromagnetic Potts model is a classical spin system model in statistical physics and computer science. It is defined on a finite graph $G = (V,E)$, by a set of spins (or colors) $[q] = \{1,...,q\}$ and an edge weight or inverse temperature parameter $\beta > 0$. 
A configuration $\sigma \in \{1,\dots,q\}^V$ of the model is an assignment of spins to the vertices of $V$. The probability of $\sigma$ is given by the Gibbs distribution:
\begin{equation}
\label{eq:gibbs:potts}
\mu_{G,\beta,q}(\sigma) = \frac{1}{Z_{G,\beta,q}} \exp(-\beta D(\sigma))\,, 
\end{equation}
where $D(\sigma) = |\{\{v, w\} \in E : \sigma(v) \neq \sigma(w)\}|$ is the number of edges whose endpoints have different spins in $\sigma$, and $Z_{G,\beta,q}$ is a normalizing factor known as the partition function.
The Ising model of ferromagnetism corresponds to the case where $q = 2$.

Sampling from the Potts Gibbs distribution~\eqref{eq:gibbs:potts} is one of the most frequently encountered problems when running simulations in statistical physics or when solving a variety of inference tasks in computer science; see e.g.~\cite{Georgii,GG,Roth,OsinderoHinton,Felsenstein,Ellison,MontSab} and the references therein for a sample of these applications. 
There is a family of powerful sampling algorithms for the Potts model that are based on its \emph{random-cluster representation}, defined subsequently. 
Such algorithms, which include the Glauber dynamics of the random-cluster model and the widely-used Swendsen--Wang dynamics, 
are an attractive option computationally
since they are often efficient 
at ``low-temperatures'' (large $\beta$), a
parameter regime
where standard Markov chains for the Potts model (including the canonical Glauber dynamics)
often converge exponentially slowly; {see, e.g.,~\cite{BCFKTVV,BCT,CDLLPS,BGP}.} 

To be more precise, the \emph{random-cluster model} on a finite graph~$G=(V,E)$, is defined by an edge probability parameter
$p\in(0,1)$ and a cluster weight $q>0$.
The set of {configurations} of the model is the set of all subsets of edges $\omega \subseteq E$. The probability of each configuration $\omega$ is given by the Gibbs distribution: 
\begin{equation}\label{eq:rcmeasure}
\pi_{G,p,q}(\omega) = \frac{1}{Z_{G,p,q}} p^{|\omega|}(1-p)^{|E|-|\omega|} q^{c(\omega)},
\end{equation}
where $c(\omega)$ is the number of connected components (also called clusters) in the subgraph~$(V,\omega)$, and $Z_{G,p,q}$ is the corresponding {partition function}. The random-cluster model was introduced by Fortuin and Kasteleyn \cite{FK} as a unifying framework for studying random graphs, spin systems, and electrical networks, and it is also known as the \emph{FK-representation} of the Ising and Potts model.

For integer $q \ge 2$, a sample $\omega \subseteq E$ from the random-cluster Gibbs distribution $\pi_{G,p,q}$ can be easily transformed into one for the ferromagnetic $q$-state Potts model with inverse temperature $\beta(p) = -\ln ({1-p})$, by independently assigning a random spin from $\{1,\dots,q\}$ to (all vertices in) each connected component of $(V,\omega)$; {see, e.g,~\cite{FK,ES,Grimmett}}. As such, any sampling algorithm for the random-cluster model yields 
one for the ferromagnetic Potts model with essentially no computational overhead. This has led to significantly improved sampling algorithms for the Potts model in various low-temperature settings~\cite{SW,GSV,LNNP14,PerkinsPottsAllTemp,Ullrich-random-cluster,HeJePe20} 
and more generally, to a broad interest in dynamics for the random-cluster model~\cite{CM,GuoJer,BS,BS-MF,BlGh21,BSX-CM-MF,BGVfull}.

In this paper, we focus on the Glauber dynamics of the random-cluster model, which for easy distinction we will henceforth call the \emph{FK-dynamics}.
From a configuration $\omega_t \subseteq E$, one step of this Markov chain transitions to
a new configuration $\omega_{t+1}\subseteq E$ as follows: 
\begin{enumerate}
	\item Choose an edge $e_t\in E$ uniformly at random;
	\item Set $\omega_{t+1} = \omega_t \cup \{e_t\}$ with probability
	$\left\{\begin{array}{ll}
	\ps := \frac{p}{q(1-p)+p} & \mbox{if $e_t$ is a ``cut-edge'' in $(V,\omega_t)$;} \\
	p & \mbox{otherwise;}
	\end{array}\right.$
	\item Otherwise set $\omega_{t+1} = \omega_t \setminus \{e_t\}$.
\end{enumerate}
Here, we say $e$ is a {\it cut-edge} in $(V,\omega_t)$ 
if changing the state of $e_t$ changes the number of connected components $c(\omega_t)$ in $(V,\omega_t)$. The probabilities in step (2) are exactly the conditional probabilities of $e_t$ being in the configuration $\omega_t$ given the remainder of $\omega_t$. As such, this Markov chain is reversible with respect to $\pi_{G,p,q}$ and converges to it. 
We are interested in its \emph{mixing time} $\tmix$; i.e., the number of steps until the dynamics is within variation distance $1/4$ of $\pi_{G,p,q}$, starting from the worst possible initial configuration.

As mentioned, the FK-dynamics is by now well-studied in its own right, though sharp analyses of its mixing time are only available on certain structured graphs like the complete graph~\cite{BS-MF,GLP,BSX-CM-MF}, boxes in the infinite integer lattice graph $\mathbb Z^d$~\cite{BS,BGVfull,GL1,GL2,HarelSpinka,GS,BCT}, and trees~\cite{BZSV-SW-trees}. Recently, in~\cite{BlGh21}, the authors studied the FK-dynamics on random regular graphs and established optimal $\Theta(n\log n)$ mixing time for the FK-dynamics throughout the entire {high-temperature} tree uniqueness regime.

 Our aim in this paper is to study the FK-dynamics in settings in which the maximum degree of the underlying graph is much larger than its \emph{average} degree. In such settings, high-degree vertices are an obstruction to the fast convergence of the Ising/Potts Glauber dynamics. 
For instance, we later prove (see Section~\ref{subsec:slow}) that on a general class of random graphs on $n$ vertices with maximum degree $d_{\textsc{max}}$, the Ising/Potts Glauber dynamics
requires $n\cdot \exp(\Omega(d_{\textsc{max}}))$ steps to converge at high temperatures. 

We reveal here that, for the same general family of random graphs, random-cluster based algorithms are not affected by the presence of high-degree vertices; both their mixing times and fast mixing parameter regimes are determined instead by the \emph{average degree} of the graph. This reveals a novel and significant computational
advantage of random-cluster based algorithms for sampling from the ferromagnetic Potts model \emph{at high temperatures}.
Indeed, prior to this work, random-cluster based sampling algorithms were only found to be more efficient than Ising/Potts Glauber dynamics at  low temperatures.

More precisely, we study the mixing time of the FK-dynamics on random graphs of average branching $\gamma > 0$ in the full uniqueness (high-temperature) regime $p < p_u(q,\gamma)$.
At integer $\gamma$, the threshold $p_u(q,\gamma)$, formally defined in~\eqref{eq:p_u}, was identified in~\cite{Haggstrom} as a uniqueness/non-uniqueness phase transition point of the random-cluster model on the \emph{wired} $\gamma$-ary tree, i.e., where the leaves are externally wired to be in the same connected component. {For us, $p_u(q,\gamma)$ is the natural extension of that function to non-integer $\gamma$,
which we show corresponds to the high-temperature uniqueness threshold of the random-cluster model on general trees of average branching $\gamma$ for all $q\ge 1$ (see Corollary \ref{cor:rc:unique} in Section~\ref{sec:uniqueness})}.

Before we describe our general results for random graph models with fixed degree sequence (which we define in the next subsection) {we present a special case of our main result of particular interest concerning the FK-dynamics on sparse Erd\H{o}s--R\'enyi random graphs.}

\begin{theorem}
	\label{thm:intro:fk-er}
	Fix $q\ge 1$, $\gamma>0$ and $p<p_u(q,\gamma)$. If $\cG$ is an Erd\H{o}s--R\'enyi random graph $\cG\sim G(n,\gamma/n)$, then with probability $1-o(1)$, $\cG$ is such that the FK-dynamics on $\cG$ satisfies $\tmix = \Theta(n\log n)$. 
\end{theorem}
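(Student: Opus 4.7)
The plan is to derive Theorem~\ref{thm:intro:fk-er} as a corollary of the paper's general $\Theta(n\log n)$ mixing result for the FK-dynamics on random graphs with prescribed degree sequence. First, I would condition on the realized degree sequence $\vdn = (d_1,\dots,d_n)$ of $\cG \sim G(n,\gamma/n)$. A standard observation for Erd\H{o}s--R\'enyi graphs is that conditionally on $\vdn$, $\cG$ is uniformly distributed among simple graphs with that degree sequence, so any pointwise-in-$\vdn$ upper bound on the FK-dynamics mixing time in the uniform random graph model lifts directly to a bound on $\cG$, up to the probability that $\vdn$ lands outside the ``good'' set of degree sequences covered by the main theorem.

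The next step is to show that with probability $1-o(1)$, the random degree sequence of $G(n,\gamma/n)$ lies in that good set. The degrees of an Erd\H{o}s--R\'enyi random graph with edge probability $\gamma/n$ are exchangeable Binomial$(n{-}1,\gamma/n)$ variables, jointly close to i.i.d.\ Poisson$(\gamma)$; consequently the empirical average degree concentrates at $\gamma$, the empirical average branching---which for a Poisson$(\gamma)$ offspring distribution is again $\gamma$---concentrates at $\gamma$, the maximum degree is $O(\log n/\log\log n)$, and moments such as $\tfrac{1}{n}\sum_i d_i(d_i-1)$ concentrate at their expectations. These are precisely the inputs required by the general theorem. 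Moreover, since $p_u(q,\gamma)$ is by construction the uniqueness threshold on the limiting Galton--Watson tree of average branching $\gamma$ (see Corollary~\ref{cor:rc:unique}), the hypothesis $p < p_u(q,\gamma)$ translates directly into the strict high-temperature uniqueness condition used in the general statement. Combining these two ingredients yields the $O(n\log n)$ upper bound with probability $1-o(1)$.

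The matching $\Omega(n\log n)$ lower bound is standard for any single-edge-update Glauber dynamics on a graph with $\Theta(n)$ edges: a coupon-collector argument shows that with probability bounded away from zero some specific edge fails to be selected within the first $cn\log n$ steps for a small constant $c>0$, and one can exhibit a local statistic (for example, the indicator of a pendant edge being open, against the stationary marginal) that distinguishes the starting configuration from stationarity, keeping the total variation distance above $1/4$. The main obstacle is therefore not in extracting the mixing bound from the general theorem, but rather in verifying cleanly that every regularity hypothesis of that theorem---in particular any conditions involving truncation of high-degree vertices or concentration of the average-branching functional---is met by the random degree sequence $\vdn$ of $G(n,\gamma/n)$ with high probability. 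Since the abstract advertises robustness to unbounded maximum degree, I expect this verification to reduce to routine tail estimates for sums of nearly-i.i.d.\ Poisson variables and a mild separate treatment of the $O(\log n/\log\log n)$ highest-degree vertices.
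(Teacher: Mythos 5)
Your approach is the right one and tracks the paper's: both proofs reduce Theorem~\ref{thm:intro:fk-er} to Theorem~\ref{thm:intro:general} by (i) transferring from $G(n,\gamma/n)$ to a configuration-model-type setting and (ii) verifying that the induced degree sequence lies in a set $\cD_{\gamma',\kappa}$. Your contiguity route is slightly different in flavor---you condition on the realized degree sequence and use that $G(n,\gamma/n)$ given its degree sequence is uniform, whereas the paper invokes Kim's Poisson cloning contiguity (Lemma~\ref{lem:poisson-cloning-contiguous}) together with Lemma~\ref{lem:Poisson-D-gamma-kappa} to reduce to i.i.d.\ Poisson degrees. Both routes work, and yours is perhaps more elementary.

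There is, however, a concrete gap. Definition~\ref{def:D-gamma-kappa} requires the \emph{strict} inequality $\limsup_n \Evd[D] < \gamma$, and as you correctly observe, the effective offspring mean of a $G(n,\gamma/n)$ degree sequence concentrates at exactly $\gamma$. So the random degree sequence is \emph{not} (w.h.p.) in $\cD_{\gamma,\kappa}$; it is only in $\cD_{\gamma',\kappa}$ for $\gamma' > \gamma$. Applying Theorem~\ref{thm:intro:general} with parameter $\gamma'$ then requires the hypothesis $p < p_u(q,\gamma')$, which is \emph{not} the same as $p < p_u(q,\gamma)$; the latter is strictly weaker since $p_u$ is decreasing in $\gamma$. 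The paper closes this with an explicit continuity argument: since $p_u(q,\cdot)$ is continuous, $p < p_u(q,\gamma)$ implies $p < p_u(q,\gamma')$ for $\gamma'$ sufficiently close to $\gamma$. Your statement that the hypothesis ``translates directly'' elides this step, and it genuinely does not translate directly; without the continuity of $p_u$ in $\gamma$, the reduction does not go through.

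Two smaller remarks. First, Theorem~\ref{thm:intro:general} already asserts $\tmix = \Theta(n\log n)$, so once the reduction is made you inherit the $\Omega(n\log n)$ lower bound as well; your separate coupon-collector sketch is redundant for this corollary. Second, the sketch as worded is not quite right: for a single fixed edge in a graph with $\Theta(n)$ edges, the probability it is never updated in $cn\log n$ steps is $n^{-\Theta(c)} \to 0$, so ``some specific edge fails to be selected'' does not occur with probability bounded away from zero. The paper's lower-bound argument (Section~\ref{sec:lower-bound-rc}) uses $n^{\eta}$ well-separated treelike test edges, couples their joint law to a product of $\ber(\ps)$ variables both dynamically and at stationarity, and then runs a coupon-collector argument over this polynomial collection. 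If you were to give a self-contained lower bound, you would need that full structure.
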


This yields a sampler for the Potts distribution on Erd\H{o}s--R\'enyi random graphs with near-optimal running time. Let $\beta_u(q,\gamma) = -\ln(1-p_u(q,\gamma))$ be the corresponding uniqueness point for the Potts model.

\begin{corollary}
	\label{cor:intro:potts}
	Fix $q\ge 2$, $\gamma>0$ and $\beta<\beta_u(q,\gamma)$. There is an MCMC sampling algorithm that, with probability $1-o(1)$ over the choice of an Erd\H{o}s--R\'enyi random graph $\cG\sim G(n,\gamma/n)$, outputs a configuration whose distribution is within total-variation distance $\delta>0$ of $\mu_{\cG,\beta,q}$ in time $O(n(\log n)^3 \log(1/\delta))$. 
\end{corollary}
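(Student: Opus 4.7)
The plan is to generate an approximate draw $\omega$ from $\pi_{\cG,p,q}$ by running the FK-dynamics with $p=1-e^{-\beta}$, and then convert it into a Potts sample via the standard Edwards--Sokal procedure. Since $\beta<\beta_u(q,\gamma)$ translates directly into $p<p_u(q,\gamma)$, Theorem~\ref{thm:intro:fk-er} applies: with probability $1-o(1)$ over $\cG\sim G(n,\gamma/n)$, the FK-dynamics on $\cG$ satisfies $\tmix(1/4)=O(n\log n)$. A standard submultiplicativity argument (running the chain in $\lceil\log_2(1/\delta)\rceil$ consecutive blocks, or equivalently invoking the exponential decay of the distance-from-stationarity past $\tmix$) then upgrades this to $\tmix(\delta)=O(n\log n\cdot\log(1/\delta))$ for any $\delta\in(0,1)$.

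First I would initialize the chain at, say, the empty edge configuration, and run it for $T=\Theta(n\log n\cdot\log(1/\delta))$ steps, producing $\omega$ whose law is within total-variation distance $\delta$ of $\pi_{\cG,p,q}$. Next I would build a Potts configuration $\sigma$ by independently drawing a uniform spin from $[q]$ for each connected component of $(V,\omega)$ and assigning it to every vertex of that component. Because a sample from $\pi_{\cG,p,q}$ maps exactly to $\mu_{\cG,\beta,q}$ under this randomized transformation, and total-variation distance does not increase under such post-processing, $\sigma$ lies within TV distance $\delta$ of $\mu_{\cG,\beta,q}$.

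To hit the claimed running time I would implement each FK-step using a dynamic-connectivity data structure: choosing a uniform edge, querying whether it is a cut-edge in $(V,\omega_t)$, and then updating the edge set reduces to one connectivity query followed by at most one insertion or deletion, each of amortized cost $O((\log n)^2)$ via Holm--Lichtenberg--Thorup. This gives a per-step cost of $O((\log n)^2)$ and a total dynamics cost of $O(n(\log n)^3\log(1/\delta))$. A concluding BFS over $(V,\omega)$ identifies the components and assigns the spins in time $O(|V|+|E(\cG)|)$, which is $O(n)$ on the $1-o(1)$-probability event that $\cG\sim G(n,\gamma/n)$ has $O(n)$ edges. Intersecting this event with the FK-mixing event of Theorem~\ref{thm:intro:fk-er} gives the conclusion. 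The real content is Theorem~\ref{thm:intro:fk-er} itself; the only mild obstacle is the bookkeeping of the two good events and the verification that the Edwards--Sokal coupling does indeed push $\pi_{\cG,p,q}$ forward to $\mu_{\cG,\beta,q}$, both of which are standard.
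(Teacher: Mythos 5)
Your proposal is correct and follows exactly the route the paper intends: invoke Theorem~\ref{thm:intro:fk-er} for the $O(n\log n)$ mixing bound, amplify to TV distance $\delta$ by submultiplicativity, convert to a Potts sample via Edwards--Sokal, and account for the extra $O((\log n)^2)$ factor by the amortized cost of the Holm--Lichtenberg--Thorup dynamic connectivity structure used to test the cut-edge condition. The paper states this in two sentences and leaves the details to the reader; you have simply made the bookkeeping explicit.
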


Corollary~\ref{cor:intro:potts} is a direct consequence of Theorem~\ref{thm:intro:fk-er} and the aforementioned {connection between} the random-cluster model and the Potts model.
The extra $O((\log n)^2)$ factor in the running time of the algorithm comes 
from the (amortized) cost of checking
whether the chosen edge is a cut-edge in each step of the FK-dynamics (see, e.g.,~\cite{HLT,Thorup}).

To the best of our knowledge, this is the first polynomial-time sampling algorithm for the Potts model on Erd\H{o}s--R\'enyi random graphs for $q \ge 3$ and $\beta = \Omega(1)$.
{
Even for the better understood $q=2$ case (i.e., the Ising model), Corollary~\ref{cor:intro:potts}  provides the fastest known sampling algorithm, improving upon the running time 
of samplers based on the Glauber dynamics which, for the Ising model, is known to converge in $n^{1+\Theta(\frac{1 }{\log \log n})}$ steps for all $\beta<\beta_u(2,\gamma)$~\cite{MS}.}

We mention that the thresholds $p_u(q,\gamma)$ and $\beta_u(q,\gamma)$
should be sharp, in the sense that the FK-dynamics is conjectured to undergo polynomial or exponential slowdowns (depending on $q$) at the point $p_u(q,\gamma)$ (and when $q>2$ in a whole critical window $(p_u,p_u')$). This is 
by analogy with the FK-dynamics on the complete graph~\cite{GLP} and on random regular graphs~\cite{Coja-Oghlan}; see also~\cite{GSVY,HeJePe20,DMSS}.

\subsection{Results on random graphs with general degree sequences}
We next provide our main results on random graph models 
with a fixed degree sequence. 
Let $\vdn = (d_1,...,d_n)$ be the degree sequence giving the degree of each vertex $v\in \{1,...,n\}$. 
Our results will hold for uniform random graphs with degree sequence $\vdn$ under certain mild conditions on this degree sequence. 
The first condition we make on $\vdn$ is that the sequence is \emph{graphical}: i.e., that there exists at least one simple graph having degree sequence $\vdn$. 

Given a graphical sequence $\vdn$, we define $\Prg$ as the uniform distribution over all simple graphs on $n$ vertices having degree sequence $\vdn$. The governing quantity in this degree sequence, in terms of the uniqueness thresholds for the Potts and random-cluster models on $\cG \sim \Prg$, will be what we call the \emph{effective offspring distribution}
$\Pvd$, which is defined as the distribution over the set $\mathcal M(\vdn) = \{d_v-1: v \in \{1,...,n\}\}$ where $x \in \mathcal M(\vdn)$ is assigned probability: 
\begin{align}\label{eq:Pvd}
\mathbb P_{\vdn}(x) = \frac{\sum_{v} (x+1) \mathbf 1_{\{d_v= x+1\}}}{\sum_{v} d_v}\,.
\end{align}
In words, the distribution $\Pvd$ corresponds to choosing $d_v-1$ with probability proportional to
the total degree of vertices having degree
$d_v$. This distribution governs the offspring distribution corresponding to the random trees one obtains when looking at balls of small radius around a vertex of a random graph $\cG \sim \Prg$. Specifically, a vertex of degree $d$ is selected to be the next vertex added to the random tree with probability proportional to the total degree of all such vertices, and
once it is selected and connected to its parent, it has $d-1$ available edges to connect to other randomly chosen vertices.

Our results will apply to graphical degree sequences whose effective offspring distribution has a certain mean, and has bounded finite moments, as we detail next.

\begin{definition}\label{def:D-gamma-kappa}
	Let $\cD_{\gamma,\kappa}$ be the set of graphical degree sequences $(\vdn)_n$ such that $D\sim \Pvd$ has mean that is uniformly bounded away from $\gamma$ and uniformly bounded $\kappa$-th moment. Formally,
	\[
	\limsup_n \Evd[D]<\gamma \qquad \mbox{and}\qquad \limsup_n \Evd[D^\kappa]<\infty\,.
	\]
	Let us finally assume that $\sum_{1\le v\le n} d_v = \Omega(n)$; this is not strictly necessary, but will simplify presentation.
\end{definition}

This framework is fairly standard in the random graphs literature~\cite{BollobasBook} and is similar to e.g., the setting of~\cite{GGS21} for studying sampling from Potts on random graphs with fixed degree sequences at sufficiently low temperatures.
While Definition~\ref{def:D-gamma-kappa} yields a fairly general family of random graphs, we draw attention to some well-studied examples which fall under its umbrella. 

\begin{example}\label{eq:Delta-regular}
	\emph{$\Delta$-regular random graph.} 
	In this case, $\vdn = (\Delta,\ldots,\Delta)$ and the effective offspring distribution simply assigns probability $1$ to $\Delta-1$; thus $(\vdn)_n \in \cD_{\gamma,\kappa}$ for every $\gamma>\Delta-1$ and every $\kappa$. 
 \end{example}

\begin{example}\label{ex:Erdos-Renyi}
	\emph{Erd\H{o}s--R\'enyi random graph $G(n,\lambda/n)$}. It was shown in~\cite{KimPoissonCloning} that if $\vdn$ is drawn as an i.i.d.\ sequence of Poisson random variables of mean $\lambda$, then $\Prg$ is contiguous with respect to $G(n,\lambda/n)$.
	(Two random graph models are \emph{contiguous} when any sequence of events that has a probability of $1-o(1)$ in one has a probability of $1 - o(1)$ in the other model as well.)
	Hence, it suffices to prove the desired results with high probability over such $\vdn$ (see Lemma~\ref{lem:poisson-cloning-contiguous}).  
	Standard concentration estimates for Poisson random variables (see Lemma~\ref{lem:Poisson-D-gamma-kappa}) then give that for every $\gamma>\lambda$ and every $\kappa$, with high probability, $(\vdn)_n\in \cD_{\gamma,\kappa}$. 
\end{example}

Our main result is an optimal mixing time bound for the FK-dynamics on $\cG \sim \Prg$, which applies to all the examples above and more generally to random graphs with degree sequences in $\cD_{\gamma,\kappa}$.

\begin{theorem}
	\label{thm:intro:general}
	Fix $q\ge 1$, $\gamma>0$, and $p<p_u(q,\gamma)$. There exists $\kappa$ such that if $(\vdn)_n\in \cD_{\gamma,\kappa}$, then with probability $1-o(1)$, the FK-dynamics on $\cG\sim \Prg$ satisfies $\tmix = \Theta(n\log n)$. 
\end{theorem}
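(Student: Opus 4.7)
The plan is to adapt the framework developed in \cite{BlGh21} for random regular graphs to the current setting of degree sequences in $\cD_{\gamma,\kappa}$. At a high level, the argument proceeds by (i) identifying the local geometry of $\cG \sim \Prg$, (ii) proving that the FK-dynamics mixes rapidly on local neighborhoods via spatial mixing coming from the uniqueness hypothesis $p < p_u(q,\gamma)$, and (iii) bootstrapping local to global mixing via a block-dynamics or entropy-factorization argument, yielding the optimal rate $O(n\log n)$. The matching $\Omega(n \log n)$ lower bound is standard: isolated low-degree features (e.g.\ leaf edges) must each be refreshed $\Omega(\log n)$ times by coupon-collector.

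First I would establish the relevant structural properties of $\cG \sim \Prg$. The bounded-moment condition $\limsup_n \Evd[D^\kappa] < \infty$ in Definition~\ref{def:D-gamma-kappa} is designed so that the ball $B_R(v)$ of radius $R = c\log n$ around a typical vertex $v$ is, with probability $1-o(1)$, tree-like and distributed as a truncated multi-type Galton--Watson tree whose subtree offspring distribution is $\Pvd$ (with root offspring $d_v$). A ``bad'' vertex set $\mathcal B$, collecting vertices lying in short cycles or whose $R$-ball contains a second high-degree vertex, has size $|\mathcal B| = o(n)$, so that $\cG \setminus \mathcal B$ decomposes into tree-like components where the local analysis can be carried out cleanly.

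Next I would use the uniqueness hypothesis through Corollary~\ref{cor:rc:unique}, which for $p < p_u(q,\gamma)$ provides exponential decay of correlations on the $\Pvd$-Galton--Watson tree, uniformly over boundary conditions. Transporting this to tree-like balls $B_R(v)$ in $\cG$ would then let me prove a local mixing estimate: the FK-dynamics restricted to $B_R(v)$ with arbitrary boundary conditions mixes in $O(|B_R(v)|\log |B_R(v)|)$ steps, extending the tree analysis of~\cite{BlGh21}. The main obstacle is handling balls that contain a high-degree vertex $v$ of degree $\Delta_v \gg 1$. Here I would exploit the crucial fact that the FK-dynamics penalizes cut-edges by the factor $\hat p = p/(q(1-p)+p) < p$, so the random-cluster marginal on the edges incident to $v$ is close to an independent Bernoulli product (conditioned on cluster structure) whose mean depends only on $p$ and $q$, not on $\Delta_v$. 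This is precisely the mechanism absent from Potts Glauber dynamics and is what allows the local mixing time near $v$ to remain polynomial in $\Delta_v$ rather than exponential.

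Finally I would combine the local mixing and spatial mixing ingredients via a block-dynamics scheme whose blocks are the $R$-balls $\{B_R(v)\}$ (a censoring/projection argument routes around the bad set $\mathcal B$). Exponential correlation decay plays the role of a strong spatial mixing condition, enabling an approximate tensorization of variance/entropy that converts $O(|B|\log|B|)$ local mixing into $O(n\log n)$ block-dynamics mixing. A standard comparison argument then transfers the bound to the single-site FK-dynamics. I expect the most delicate technical step to be the local-mixing analysis for balls containing high-degree vertices, since these are exactly the configurations where degree-based arguments break down; the rest of the program is a refinement of the route established in~\cite{BlGh21}, with the Galton--Watson offspring distribution $\Pvd$ replacing the deterministic offspring $\Delta-1$ of the regular case.
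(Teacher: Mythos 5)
Your outline captures several of the paper's ingredients (local tree-likeness from the moment conditions, exponential decay from uniqueness, and a reduction from global to local mixing), but it has a genuine gap at the step that is the paper's main technical contribution: you have no mechanism to guarantee that the boundary conditions a local ball $B_R(v)$ sees from the rest of the graph are manageable.

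Your step (ii) claims that the FK-dynamics on $B_R(v)$ mixes in $O(|B_R(v)|\log|B_R(v)|)$ steps \emph{with arbitrary boundary conditions}. This is not true for the random-cluster model: because the weight depends on the \emph{number of connected components}, a random-cluster boundary condition can wire distant boundary vertices together, and in the worst case such wirings can make the local chain exponentially slow. The paper's local mixing result (Lemma~\ref{lemma:local-mixing}, via a log-Sobolev bound in Lemma~\ref{lemma:main-tree-mixing}) only holds for $K$-$\sparse$ boundary conditions, i.e.\ ones with $O(1)$ vertices in non-singleton wirings. So a block-dynamics/entropy-factorization scheme that treats the block boundary conditions as arbitrary cannot close; you need to control them. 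This is exactly where the paper does something you don't have: Theorem~\ref{thm:k-R-sparse-whp} shows that after an $O(1)$ continuous-time burn-in, the FK-dynamics from the worst-case (all-wired) start has \emph{shattered}, so that the boundary condition it induces on every ball $B_R(v)$ is $K$-$\sparse$ with very high probability. Establishing this requires a delicate simultaneous revealing of the random graph together with a family of dominating localized FK-dynamics (Process~\ref{proc:revealing-FK-on-graph}), a branching-process comparison (Definition~\ref{def:branching-process} and Lemmas~\ref{lem:branching-process-domination}--\ref{lem:branching-process-tail-bounds}), and the sharp decay rate $\ps$ on general wired trees (Lemma~\ref{lemma:exp:decay:wired:treelike}); none of this appears in your plan.

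Relatedly, the paper does not use block dynamics or entropy tensorization at all. It proves global mixing through a monotone coupling argument: run from time $T = O(1)$ (the burn-in) to time $\hat T = T + O(\log n)$ with a censoring of updates outside $B_R(v)$, condition on the (sparse, by shattering) boundary configuration at time $T$, then apply local log-Sobolev mixing (Lemma~\ref{lemma:local-mixing}) plus influence decay between two sparse boundary conditions (Theorem~\ref{theorem:influence-probability-new}) to show $\mathbb P(X^1_{\hat T}(e) \neq X^0_{\hat T}(e)) = o(1/|E|)$. Your high-degree-vertex discussion is also slightly off target: the relevant point is not that the FK marginal near a high-degree vertex is an i.i.d.\ product, but rather that on a treelike graph with a $K$-$\sparse$ boundary condition the FK measure is within a $q^{O(K+L)}$ multiplicative factor of a product measure (Lemma~\ref{lemma:simple-rc-bound}), yielding a bounded log-Sobolev constant independent of the degree. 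If you want to complete the proposal, the missing piece to add is the shattering/sparse-boundary-condition argument; without it, both the block-dynamics route and the censoring route are unjustified.

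Finally, note that the argument is carried out on the configuration model $\Pcm$ (with a separate contiguity step, Lemma~\ref{lem:cm-rg-contiguity}, to get back to $\Prg$), which you should make explicit; and the paper's lower bound (Section~\ref{sec:lower-bound-rc}) is a product-chain coupling on a collection of $n^{\Theta(1)}$ disjoint treelike neighborhoods rather than a bare leaf-edge coupon-collector, though the coupon-collector intuition is the same.
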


This parameter regime in Theorem~\ref{thm:intro:general} is tight as FK-dynamics have been very recently shown~\cite{Coja-Oghlan} to exponentially slow down as soon as $p>p_u(q,\gamma)$ for random regular graphs (Example~\ref{eq:Delta-regular}) at integer $q > 2$.

The proof of the upper bound in Theorem~\ref{thm:intro:general} is the main content of this paper. As mentioned, the special case of the $\Delta$-regular random graph (i.e., $\vdn = (\Delta,...,\Delta)$) was the content of an earlier paper~\cite{BlGh21}. However, as soon as the degree sequence is not homogeneous, substantial further obstacles arise.

First, even the uniqueness threshold for the random-cluster model 
on wired heterogeneous trees (specifically, with offspring distribution $\Pvd$) had not been established. 
In our proof of Theorem~\ref{thm:intro:general} we require something much stronger; namely,
an exponential decay of connectivities with the correct rate (see Lemma~\ref{lemma:exp:decay:wired:treelike}).
In the regular case, the fact that $p_u(q,\gamma)$ is the uniqueness threshold goes back to the work of H{\"a}ggstr{\"o}m~\cite{Haggstrom} (see also~\cite{Jonasson,BGGSVY}), and the exponential decay rate was established in~\cite{BlGh21}. 
To establish analogous results for the heterogeneous case, we combine the approach of~\cite{Lyons} (which considered the special case of the Ising model $q=2$) with ideas from~\cite{BGGSVY}, so as to recurse, not on the marginal of an edge of the tree, but rather on a nice functional of its probability of downwards connection to infinity.

The second technical obstacle concerns establishing that the FK-dynamics on $\cG\sim \Prg$ \emph{shatters}, i.e., that its components have size at most $O(n^\varepsilon)$ after $O(n)$ steps of the dynamics.
This is proved using a delicate revealing procedure for the random graph with the FK-dynamics configuration on top of it, a technique introduced in~\cite{BlGh21} for the case of random regular graphs. The heterogeneity of the degrees
in the current setting, however, 
introduces extra correlations between the underlying graph and the FK-dynamics configuration, necessitating substantial modifications to the revealing procedure from~\cite{BlGh21}.

The changes we make to deal with the above-described dependencies include: (i) modifications to the revealing process so that it is based on half-edges rather than vertices and the dynamics is run in continuous time, and (ii) a new criteria to truncate potentially unbounded increments in the revealing procedure.
The more robust procedure yields a notable further improvement: we show that the shattering time is $O(n)$ (as opposed to $O(n \log n)$ in~\cite{BlGh21}). Though this 
improvement has no impact on the eventual mixing time bound, the more precise understanding of the shattering phase may be useful in other settings.

A more detailed proof sketch of this theorem and the new complications that arise is provided in Section~\ref{sec:proof-sketch} and Remark~\ref{rem:revealing-process-modifications}.
    
\subsection{Slowdown for the corresponding Potts Glauber dynamics}\label{subsec:slow}
Returning to the advantage of FK-dynamics in the presence of high-degree vertices, the following theorem establishes that in the same setting as Theorem~\ref{thm:intro:general} the Ising/Potts Glauber dynamics slows down exponentially in the maximum degree.

\begin{theorem}\label{thm:Ising-Potts-lower-bound}
	Fix $q\ge 1$, $\gamma >0$ and $\beta<\beta_u(q,\gamma)$. Then there exists $\kappa$ such that if $(\vdn)_n \in \cD_{\gamma,\kappa}$, then with probability $1-o(1)$, $\cG \sim \Prg$ is such that the Glauber dynamics for the Potts model on $\cG$ has $\tmix = n \cdot \exp( \Omega({\|\vdn\|}_\infty))$.  
\end{theorem}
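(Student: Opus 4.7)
The plan is a Cheeger-type bottleneck argument localized to a single high-degree vertex. Since $(\vdn)_n \in \cD_{\gamma,\kappa}$, with probability $1-o(1)$ the graph $\cG\sim\Prg$ contains a vertex $v^*$ with $\deg(v^*) = \Delta := \|\vdn\|_\infty$; moreover, via the standard configuration-model coupling and the $\kappa$-th moment bound in Definition~\ref{def:D-gamma-kappa}, for any slowly-growing $r = r(n)$ the depth-$r$ ball $B_r(v^*)$ is, with high probability, treelike, with $B_r(v^*)\setminus\{v^*\}$ decomposing into disjoint subtrees $T_1,\ldots,T_\Delta$ rooted at the neighbors $u_1,\ldots,u_\Delta$ of $v^*$, each with offspring law $\Pvd$.

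Set $A := \{\sigma : \sigma(v^*) = 1\}$, so that color-permutation symmetry gives $\mu(A) = 1/q$, and let $M_j(\sigma) := |\{u \in N(v^*) : \sigma(u) = j\}|$. Since the Potts Glauber dynamics can leave $A$ in one step only by picking $v^*$ (probability $1/n$) and assigning it a color $j \neq 1$,
\[
P_{\mathrm{GD}}(\sigma, A^c) \;=\; \frac{1}{n}\cdot\frac{\sum_{j\neq 1} e^{\beta M_j(\sigma)}}{\sum_{k\in[q]} e^{\beta M_k(\sigma)}} \;\le\; \frac{q-1}{n}\,\exp\!\big(\!-\beta \, [M_1(\sigma) - \max_{j\neq 1} M_j(\sigma)]\big).
\]
The heart of the argument is showing that there exist $\epsilon, c > 0$ depending only on $\beta, q, \gamma$ such that
\[
\mu\!\big(M_1(\sigma) - \max_{j\neq 1} M_j(\sigma) \ge \epsilon \Delta \,\big|\, A\big) \;\ge\; 1 - e^{-c\Delta}.
\]

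To prove this, condition further on $\sigma_{V\setminus B_r(v^*)}$ and on the treelikeness event. The resulting law on $B_r(v^*)$ is Potts on the star $\{v^*\}\cup N(v^*)$ with an external field at each $u_i$ obtained by summing out $T_i$ against its boundary condition at depth $r$. Because $p = 1 - e^{-\beta} < p_u(q,\gamma)$ is in the uniqueness regime for trees with offspring distribution $\Pvd$, Corollary~\ref{cor:rc:unique} gives that for $r$ sufficiently large the Potts marginal at each root $u_i$ of $T_i$ (under any boundary condition) is within $o(1)$ of the uniform distribution on $[q]$. Hence, after additionally conditioning on $\sigma(v^*) = 1$, the ferromagnetic coupling at the star edge $\{v^*, u_i\}$ yields $\Pr(\sigma(u_i) = 1 \mid A, \text{outer}) = e^\beta/(e^\beta + q-1) + o(1)$ and $\Pr(\sigma(u_i) = j \mid A, \text{outer}) = 1/(e^\beta + q-1) + o(1)$ for $j \neq 1$. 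Crucially, these marginals are \emph{independent} across $i$ since the $T_i$ are vertex-disjoint given the outer configuration, so Hoeffding's inequality applied to the $\Delta$ independent indicators $(\mathbf{1}\{\sigma(u_i) = j\})_{i=1}^\Delta$ --- together with a union bound over $j \in [q]$ --- establishes the claim for any $\epsilon < (e^\beta - 1)/(2(e^\beta + q - 1))$.

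Combining the two displays, $\sum_{\sigma\in A}\mu(\sigma) P_{\mathrm{GD}}(\sigma, A^c) \le C n^{-1} e^{-c'\Delta}\mu(A)$, so the conductance satisfies $\Phi(A) \le C e^{-c'\Delta}/n$. The Cheeger inequality $\tmix \ge (1-\mu(A))/(2\Phi(A))$ then delivers $\tmix \ge n\cdot\exp(\Omega(\Delta))$, as claimed. The chief technical hurdle is the treelikeness-plus-uniqueness step underlying the core claim --- i.e., establishing that the boundary-induced external fields on the $u_i$'s are uniformly nearly color-symmetric. This requires the sharp decay-of-correlation control developed for the matching upper bound (cf.\ Lemma~\ref{lemma:exp:decay:wired:treelike}), adapted here to the local geometry of subtrees hanging off a single high-degree vertex.
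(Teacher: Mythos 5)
Your proposal follows the same conductance-at-the-max-degree-vertex strategy as the paper, and the computation of $P_{\mathrm{GD}}(\sigma, A^c)$ is correct. The overall shape of the argument is sound, and your choice of the larger bottleneck set $A = \{\sigma(v^*)=1\}$ (with $\mu(A)=1/q$ exactly) is a small simplification compared to the paper, which works with the restricted set $\cA_\epsilon = \{\sigma : \sigma_{v_\star}=1,\ m_1 - \max_{j\ne 1}m_j \ge \lfloor \epsilon d_{v_\star}\rfloor\}$ and then has to bound an inner-boundary term $\mu(\widehat\cA_\epsilon)/\mu(\cA_\epsilon)$ separately. In exchange, you must bound the conditional expectation $\mathbb E_\mu[e^{-\beta(M_1 - \max_j M_j)}\mid A]$ rather than just a ratio of probabilities, which is a comparable amount of work.

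The substantive difference, and where the proposal has a genuine gap, is in how the near-color-symmetry of the neighbor marginals is established. You condition on $\sigma$ outside $B_r(v^*)$ and on treelikeness, then invoke Corollary~\ref{cor:rc:unique} (uniqueness of the infinite-volume wired RC measure) to assert that the Potts marginal at each root $u_i$ of $T_i$ is $o(1)$-close to uniform under \emph{any} boundary condition. Two issues: (i) uniqueness of infinite-volume measures does not by itself give a quantitative finite-volume statement uniform over all depth-$r$ boundary conditions; what you actually need is the finite-volume decay of Lemma~\ref{lemma:tree:decay} (root-to-boundary connectivity under $\pi^1$ is $\le C\theta^h$) combined with monotonicity, which is a different and more quantitative statement. (ii) More importantly, Lemma~\ref{lemma:tree:decay} requires the branching/growth hypothesis $|\partial\cT_v|\le\gamma^{h(v)}$. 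The subtrees $T_i$ are random, and a non-negligible fraction of them may fail this condition; for those ``bad'' subtrees the root marginal need not be close to uniform, breaking the uniformity of the per-neighbor bias needed for Hoeffding. You flag this as the ``chief technical hurdle'' but do not resolve it. The paper avoids this cleanly by working in the Edwards--Sokal/RC representation: the event $\cA_{4\epsilon}^{\textsc{rc}}$ asks only that (a) at least $4\epsilon d_{v_\star}$ star edges are open (by $\ps$-domination and Chernoff) and (b) at most $2\epsilon d_{v_\star}$ neighbors of $v_\star$ lie in non-trivial components of $\omega(E_{v_\star}^c)$, with (b) proved via the revealing-process machinery in Lemma~\ref{lem:sparsity-for-Potts-lower-bound}. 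Conditioned on $\cA_{4\epsilon}^{\textsc{rc}}$, the uncorrelated neighbors receive colors exactly i.i.d.\ uniform, so no tree-decay estimate or uniformity-over-subtrees argument is ever needed. Your route can almost certainly be made to work by adding a bound on the number of bad subtrees (e.g., via Corollary~\ref{cor:random-tree-has-tree-growth} and a concentration bound across the $\Delta$ branches), but as written the proposal does not contain that step, and it is precisely the step the paper's RC approach is designed to bypass.

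Two smaller points: the treelikeness of $B_r(v^*)$ for $v^*$ of degree $\Delta = {\|\vdn\|}_\infty = n^{\Theta(1/\kappa)}$ requires $r$ bounded (constant, not merely slowly growing) and $\kappa$ large so that $|B_r(v^*)|^2/n = o(1)$; ``for any slowly-growing $r$'' is not quite right. And since the margin $M_1-\max_j M_j$ can be negative, you should note that $P(\sigma,A^c)\le 1/n$ always, so the contribution of the exceptional event $\{M_1 - \max_j M_j < \epsilon\Delta\}$ to $Q(A,A^c)$ is at most $n^{-1}\mu(A)\Pr(\text{margin} < \epsilon\Delta\mid A)$, rather than trying to multiply the exceptional probability by a possibly huge $e^{-\beta(M_1-\max_j M_j)}$.
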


Intuitively, the slowdown comes from the fact that the neighborhood of a vertex of degree ${\|\vdn\|}_\infty$ is a star graph, in which the Ising/Potts Glauber dynamics mixes slowly when $\beta \gg \frac{1}{{\|\vdn\|}_\infty}$. In a random graph at high temperatures (i.e., when $\beta<\beta_u(q,\gamma)$) there is essentially no interference with this effect from the remainder of the graph. Note that the FK-dynamics in the star graph is fast mixing at all temperatures, so this obstruction is not present.

\begin{remark}
We remark that under various decay of correlation conditions (see, e.g.,~\cite{DGJ-dob,Hayes-cond,DGJ-dob1,Yitong}) the mixing time of this chain is known to be $\mathrm{poly}(n)$ when (roughly) $\beta \le 1/{{\|\vdn\|}_\infty}$. This does not contradict Theorem~\ref{thm:Ising-Potts-lower-bound}, which holds when $\beta = \Omega(1)$.
In fact, if one tracks the dependence on $\beta$ in our proof, it gives $\tmix = n \cdot \exp(\Omega(\beta^2 {\|\vdn\|}_\infty))$.
\end{remark}

The known $n^{1+\Omega(\frac{1}{\log \log n})}$ slowdown of the Ising/Potts Glauber dynamics on the Erd\H{o}s--R\'enyi random graph~\cite{MS09,MS}
is a special case of Theorem~\ref{thm:Ising-Potts-lower-bound} where ${\|\vdn\|}_\infty = \Theta(\frac{\log n}{\log \log n})$. 
Below are a few examples where this slowdown can be even more dramatic, indeed stretched exponential in the total number of vertices. 

\begin{example}
     \emph{Power-law degree distributions.} Consider graphical sequences $(\vdn)_n$ satisfying item~(1) in Definition~\ref{def:D-gamma-kappa}, and for which the fraction of degrees of size $\ell$ is $\Theta(\ell^{-\zeta})$. For every $\kappa$, if $\zeta>\kappa+2$, one would have $(\vdn)_n \in \cD_{\gamma,\kappa}$. In such situations, ${\|\vdn\|}_\infty = \Theta(n^{1/\zeta})$ and $\tmix = \exp( \Omega(n^{1/\zeta}))$.  
\end{example}

\begin{example}
	 \emph{Planted high-degree vertices.} Consider a random $\Delta$-regular random graph and change the degree of one vertex to $\Theta(n^{\epsilon})$. If $\epsilon<1/(\kappa+1)$ and $\gamma>\Delta-1$, then $(\vdn)\in \cD_{\gamma,\kappa}$ and $\tmix = \exp( \Omega(n^{\epsilon})$.    
\end{example}

In the above instances where the maximum degree is polynomial in $n$, there is an exponential vs.\ polynomial difference in the high-temperature mixing times of the Ising/Potts Glauber dynamics and of the FK-dynamics. 
At this level, the computational benefits of random-cluster based sampling methods also extend to the often implemented Swendsen--Wang dynamics~\cite{SW}. In particular, using 
the comparison inequalities from~\cite{Ullrich-random-cluster}
the upper bounds of Theorems~\ref{thm:intro:fk-er} and~\ref{thm:intro:general} translate into $O(n^2 \log n)$ upper bounds on the mixing time of the Swendsen--Wang dynamics in those settings. 

\subsection*{Acknowledgements}
The authors thank the anonymous referee for their helpful comments. The research of A.B.\ was supported in part by NSF grants CCF-1850443 and CCF-2143762. R.G.\ thanks the Miller Institute for Basic Research in Science for its support.

\section{Proof outline}\label{sec:proof-sketch}
In this section, we present the main technical contributions in our paper, and describe how they combine to yield the mixing time upper bound of Theorem~\ref{thm:intro:general}.

\medskip
\noindent \textbf{Notational disclaimers.}
Throughout the paper, a subset $\omega \subset E$ is naturally identified with an assignment of $\{0,1\}$, or closed and open, to $E$, via $\omega(e) = 1$ if and only if $e\in \omega$.
The parameters $p,q,\gamma$ will always be fixed quantities, and all constants in little-o, big-O, etc.\ notations may depend on these. As such, we also drop $p,q$ from subscripts when understood from context, e.g., $\pi_G = \pi_{G,p,q}$. All our results should be understood to hold uniformly over all sufficiently large $n$. We use $C$ to generally denote the existence of a constant (possibly depending on fixed parameters such as $p,q,\gamma$) such that the relevant statement holds for all large $n$; for ease of notation, this constant $C$ may change from line to line. 

\subsection{Random graphs}\label{subsec:rg}
We start by describing the locally treelike structure and exponential rate of volume growth of random graphs with fixed degree sequence $(\vdn)_n\in \cD_{\gamma,\kappa}$. 
It will be convenient to work with
the \emph{configuration model}, 
a useful and standard tool for studying random graphs with fixed degree sequence.
The configuration model $\Pcm$ is a distribution over multigraphs on $n$ vertices with degree sequence $\vdn$.
It is defined by giving $d_v$ half-edges to every vertex $v$ and drawing a uniform at random perfect matching on the $\sum_v d_v$ many half-edges to form the $\frac 1 2\sum_v d_v$  edges of the graph~\cite{BollobasConfig}. 
It is a standard fact that for any $(\vdn)_n \in \cD_{\gamma,\kappa}$, and any
sequence of sets $A_n$ of simple graphs on $n$ vertices, we have 
$$\Prg(\cG \in A_n) = o(1) \qquad\mbox{if and only if} \qquad \Pcm(\cG \in A_n) = o(1):$$ see~\cite{BollobasConfig,FriezeBook}. It thus suffices to prove Theorems~\ref{thm:intro:general}-\ref{thm:Ising-Potts-lower-bound} for $\cG \sim \Pcm$.

For a graph $G = (V,E)$ and a vertex $v \in V$, 
we define the ball of radius $R$ around $v$ as: 
$$B_R(v):= \{w \in V: d(w,v) \le R\}\,,$$
where $d(\cdot,\cdot)$ is the graph distance. For a set $B\subset V$ define $E(B) = \{\{v,w\}\in E: v,w\in B\}$. 
\begin{definition}\label{def:k-treelike}\label{def:(L,R)-treelike}
	We say that a graph $G = (V,E)$ is $L$-$\treelike$ if there is a set $H\subset E$ with $|H|\le L$ such that the graph $(V,E\setminus H)$ is a tree. 
	We say that $G$ is $(L,R)$-$\treelike$ 
	if for every $v \in V$ the subgraph $(B_R(v), E(B_R(v))$ is $L$-$\treelike$.
\end{definition}

The following lemma says that small balls of the random graph $\cG\sim \Pcm$ are close to trees. Indeed, for $R/\log_\gamma n$ uniformly less than $1/2$, the ball $B_R(v)$ in $\cG\sim \Pcm$ is typically a random tree with offspring distribution approximately $\Pvd$, defined in~\eqref{eq:Pvd}.

\begin{lemma}\label{lem:random-graph-treelike}
	There exists $\kappa$ such that if $(\vdn)_n \in \cD_{\gamma,\kappa}$ the following holds. For every $\delta>0$, there exists $L = L(\delta)$ such that if $1\le R \le (\frac 12 - \delta)\log_\gamma n$, we have 
	$$
	\Pcm\big(\cG \mbox{ is } (L,R)\mbox{-}\treelike\big) = 1-o(n^{-10})\,.
	$$
\end{lemma}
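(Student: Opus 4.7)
The plan is to work in the configuration model $\Pcm$ (for which the desired event transfers to $\Prg$ by the comparison noted in the text above), and to carry out a breadth-first exploration of $B_R(v)$ while revealing the matching of half-edges only as they are queried. At each step the half-edge being processed either pairs to a fresh half-edge at a previously-unseen vertex (a ``tree edge'' of the BFS) or pairs to a half-edge incident to a vertex already in the current exploration (a ``surplus edge''); the excess of $|E(B_R(v))|$ over a spanning tree of $B_R(v)$ is exactly the number of surplus edges, so bounding this count by $L$ establishes $L$-treelikeness of $B_R(v)$. The proof then has two ingredients: a uniform tail bound on $|B_R(v)|$, and a bound on the number of surplus edges given that $|B_R(v)|$ is small. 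Both probabilities must be $o(n^{-11})$ to accommodate a union bound over the $n$ choices of $v$.

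For the first ingredient, fix $m \in (\limsup_n \Evd[D], \gamma)$, which exists by Definition~\ref{def:D-gamma-kappa}. As long as the BFS has touched at most $o(n)$ vertices, the conditional law of the next attached vertex's degree is close in total variation to the size-biased distribution $\Pvd$, so the BFS is stochastically dominated by a Galton--Watson tree with offspring distribution of mean at most $m$ and bounded $\kappa$-th moment. A standard recursive moment computation then gives $\E[Z_r^k] \le C_k m^{kr}$ for every generation $r$ and every $k \le \kappa$, whence by Markov,
$$
\Pcm\bigl(|B_R(v)| \ge C_\kappa \gamma^R\bigr) \le C_\kappa \,(m/\gamma)^{\kappa R} \le C_\kappa \, n^{-\kappa(1/2 - \delta)\log_\gamma(\gamma/m)}\,,
$$
which is $o(n^{-11})$ once $\kappa$ is chosen large in terms of $\delta$ and $\gamma/m$.

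For the second ingredient, the bounded $\kappa$-th moment of $\Pvd$ implies deterministically (via $\sum_v d_v^{\kappa+1} = O(n)$) that $\|\vdn\|_\infty \le n^{1/\kappa}$, so on the event $\{|B_R(v)| \le C_\kappa \gamma^R\}$ the total number of half-edges incident to $B_R(v)$ satisfies $|\mathcal{S}_v| \le \|\vdn\|_\infty \cdot |B_R(v)| \le C n^{1/2 - \delta + 1/\kappa}$. In the uniform matching on the $M = \sum_v d_v$ half-edges, with $M = \Theta(n)$ by the last clause of Definition~\ref{def:D-gamma-kappa}, the number $N$ of matched pairs contained in $\mathcal{S}_v$ has $k$-th moment $\E[N^k] \le C_k (|\mathcal{S}_v|^2/M)^k$ by the standard matching-pair factorial-moment calculation. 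Since each surplus edge inside $B_R(v)$ (including multi-edges and self-loops) is exactly one such matched pair, Markov's inequality yields
$$
\Pcm\bigl(\#\{\text{surplus edges in } B_R(v)\} > L \,\big|\, |B_R(v)| \le C_\kappa \gamma^R\bigr) \le C_{L,k}\, n^{-k(2\delta - 2/\kappa)}\,,
$$
which is $o(n^{-11})$ for some fixed $L = L(\delta)$ once $\kappa > 1/\delta$ and $k$ is taken large in terms of $(\delta, L)$.

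Combining the two displays and union bounding over $v \in V(\cG)$ completes the proof. The main obstacle is extracting strong polynomial tail bounds --- of order $o(n^{-11})$, in order to survive the union bound --- from only a bounded $\kappa$-th moment on the offspring distribution; this is what forces the value of $\kappa$ in the hypothesis $(\vdn)_n \in \cD_{\gamma, \kappa}$ to be chosen sufficiently large in terms of $\delta$.
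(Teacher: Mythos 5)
Your overall strategy---explore $B_R(v)$ by BFS in the configuration model, show that the ball has at most $n^{1/2-\delta+o(1)}$ incident half-edges, and then argue that the number of surplus edges among those half-edges has a small polynomial tail---matches the structure of the paper's proof, which combines Lemma~\ref{lem:random-graph-volume-growth} with a step-by-step BFS revealing. However there are two concrete gaps in your write-up.

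First, the chain of inequalities in your first display has the inequality reversed: since $m/\gamma<1$, the quantity $(m/\gamma)^{\kappa R}$ is \emph{decreasing} in $R$, so
\[
(m/\gamma)^{\kappa R}\ \ge\ (m/\gamma)^{\kappa(\frac12-\delta)\log_\gamma n}\ =\ n^{-\kappa(1/2-\delta)\log_\gamma(\gamma/m)}
\]
for all $R\le(\frac12-\delta)\log_\gamma n$, and the bound you wrote down is in fact only the \emph{smallest} probability over the range of $R$. At $R=O(1)$ the right-hand side $C_\kappa(m/\gamma)^{\kappa R}$ is a constant, not $o(n^{-11})$; indeed $\Pcm(|B_1(v)|\ge C_\kappa\gamma)$ need not even be small if some degrees are unbounded. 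The correct event to control is $\{|B_R(v)|\ge C n^{1/2-\delta}\}$, which by monotonicity of balls is contained in $\{|B_{R_{\max}}(v)|\ge Cn^{1/2-\delta}\}$ with $R_{\max}=(\frac12-\delta)\log_\gamma n$, and it is the tail at $R_{\max}$ that one should compute. This is precisely the content the paper invokes via Lemma~\ref{lem:random-graph-volume-growth} (noting that for $R<\epsilon\log_\gamma n$ one bounds $|B_R(v)|$ by the volume of the larger ball).

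Second, and more seriously, the ``matching-pair factorial-moment calculation'' you invoke is valid only for a \emph{fixed, deterministic} set $\mathcal S$ of half-edges sitting inside a uniform perfect matching. Here $\mathcal S_v$ is the set of half-edges incident to $B_R(v)$, which is a function of the matching itself; conditioning on $\{|\mathcal S_v|\le s\}$ (or on $\mathcal S_v=S$) does \emph{not} leave the matching on $\mathcal S_v$ uniform, so the factorial-moment formula does not transfer. The paper handles this cleanly by revealing the BFS one half-edge at a time: uniformly over the history up to step $m\le n^{1/2-\delta}$, the probability that step $m$ creates a surplus edge is at most $m\|\vdn\|_\infty/(\|\vdn\|_1-m)$, and so the number of surplus edges is stochastically dominated by a binomial random variable, to which Chernoff applies. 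Your mean computation $s^2/M$ agrees with the paper's dominating binomial mean, so the quantitative conclusion is the same, but the moment argument as stated is circular and should be replaced by the sequential step-by-step domination (or a suitably phrased stopping-time argument).

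Once these two points are repaired, the argument goes through and is essentially the same as the paper's: a tail bound on the ball volume at the top radius plus a binomial Chernoff bound on surplus edges, with $\kappa$ chosen large enough that $\|\vdn\|_\infty\le n^{\epsilon_*}$ with $\epsilon_*<\delta$, and $L$ chosen as a large multiple of $1/\delta$.
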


Using standard concentration estimates for the volume of Galton--Watson trees (see Lemma~\ref{lemma:gw:leaves}), we establish that if $(\vdn)_n \in \cD_{\gamma,\kappa}$, then $\cG\sim \Pcm$ has average exponential rate  $\gamma$ of volume growth.  

\begin{definition}\label{def:volume-growth}
	A graph $G=(V,E)$ on $n$ vertices is said to have $(\gamma,\epsilon)$-volume growth if for every $v\in V$ and every integer $r \in [\epsilon \log_\gamma n, \frac 12 \log_\gamma n]$ the graph has 
	$
	|B_r(v)|\le \gamma^{r}\,.
	$
\end{definition}

\begin{lemma}\label{lem:random-graph-volume-growth}
	Fix $\epsilon\in (0,\frac 12)$. There exists $\kappa(\epsilon)$ such that if $(\vdn)_n\in \cD_{\gamma,\kappa}$, then 
	\begin{align*}
	\Pcm\big(\cG \mbox{ has }(\gamma,\epsilon)\mbox{-growth}\big) \ge 1-o(n^{-10})\,.
	\end{align*}
\end{lemma}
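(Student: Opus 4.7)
The approach is a first-moment/Markov argument leveraging the local branching structure of the configuration model: the ball $B_r(v)$ in $\cG \sim \Pcm$ is stochastically dominated by a Galton--Watson tree with offspring distribution $\Pvd$, and Lemma~\ref{lemma:gw:leaves} combined with Markov's inequality and a union bound over vertices and radii then yields the claim.

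Concretely, I would fix a vertex $v$ and perform a breadth-first exposure of $\cG$ from $v$ in the configuration model, pairing one half-edge at a time. While the number of exposed vertices remains $o(n^{1/2})$---a scale comfortably exceeding $\gamma^r$ for every admissible $r \le \tfrac{1}{2}\log_\gamma n$---the conditional law of the offspring of each newly discovered vertex is within $o(1)$ total-variation distance of $\Pvd$, so one can couple the exposure so as to stochastically dominate it by a branching process $\cT$ whose root has $d_v$ children and whose subsequent generations reproduce according to $\Pvd$. Writing $\bar\gamma := \limsup_n \Evd[D]$, which is strictly less than $\gamma$ by assumption, Lemma~\ref{lemma:gw:leaves} yields the moment bound $\mathbb{E}[|\cT_r|^\kappa] \le C\, d_v^\kappa\, \bar\gamma^{\kappa r}$, whence Markov's inequality gives
\[
\Pcm\bigl(|B_r(v)| > \gamma^r\bigr) \;\le\; C\, d_v^\kappa\, (\bar\gamma/\gamma)^{\kappa r}.
\]

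To promote this per-pair bound to a uniform statement, I would combine it with the deterministic degree bound ${\|\vdn\|}_\infty \le C n^{1/(\kappa+1)}$, which follows from $\Evd[D^\kappa] = O(1)$ together with $\sum_v d_v = \Theta(n)$. For $r \ge \epsilon \log_\gamma n$ one has $(\bar\gamma/\gamma)^{\kappa r} \le n^{-\epsilon \kappa c}$ with $c := \log_\gamma(\gamma/\bar\gamma) > 0$, so the per-pair bound becomes $C\, n^{\kappa/(\kappa+1) - \epsilon \kappa c}$. Choosing $\kappa = \kappa(\epsilon)$ large enough drives this exponent below $-11$, and a union bound over the $n$ choices of $v$ and the $O(\log n)$ admissible values of $r$ then delivers the desired $o(n^{-10})$ failure probability.

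The main technical point is the presence of high-degree vertices: the factor $d_v^\kappa$ in the Markov bound may be as large as $n^{\kappa/(\kappa+1)}$, and this polynomial-in-$n$ prefactor must be absorbed by the exponential-in-$r$ decay $(\bar\gamma/\gamma)^{\kappa r}$ at the threshold radius $r = \epsilon \log_\gamma n$. This balance dictates that $\kappa(\epsilon)$ grows as $\epsilon \searrow 0$, and it is also why the strict inequality $\limsup_n \Evd[D] < \gamma$ in Definition~\ref{def:D-gamma-kappa} is essential to the argument.
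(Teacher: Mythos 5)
Your proposal follows essentially the same route as the paper: a breadth-first exposure of $B_r(v)$ in the configuration model, stochastic domination by a Galton--Watson tree, the tail/moment estimate of Lemma~\ref{lemma:gw:leaves}, the $\ell^\infty$ degree bound coming from $\Evd[D^\kappa]=O(1)$, Markov's inequality, and a union bound over $v$ and $r$. The one place where your wording is looser than the paper's actual mechanism is the domination step: ``the conditional law of the offspring of each newly discovered vertex is within $o(1)$ total-variation distance of $\Pvd$'' does not in itself deliver stochastic domination. The paper handles this by dominating instead by the \emph{truncated} offspring law $\mathbb P_{\underline\vdn}$ (the effective degree distribution after deleting the $2\sqrt n$ smallest degrees), which is a genuine upper coupling uniformly over any $O(\sqrt n)$-sized history, and then showing via Lemma~\ref{lem:moments-sequence-to-truncated-sequence} that $(\underline\vdn)_n$ inherits membership in $\cD_{\gamma,\kappa}$. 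If you replace the TV-heuristic with this truncation, your argument matches the paper's. (Two harmless discrepancies: you dominate by a tree rooted at $v$ with $d_v$ children, while the paper union-bounds over the $d_v$ half-edges first and dominates each $B_{r-1}(\hat e)$ by a single-source tree — equivalent; and your moment bound $\E[|\cT_r|^\kappa] \le C d_v^\kappa \bar\gamma^{\kappa r}$ hides a $\mathrm{poly}(r)$ factor present in Lemma~\ref{lemma:gw:leaves}, but since $r = O(\log n)$ this is negligible against the $n^{-\Omega(\kappa\epsilon)}$ decay.)
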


\subsection{Exponential decay and uniqueness on general trees and treelike graphs}\label{subsec:exp-decay}
Given the local tree structure of the random graphs from $\Pcm$, 
to control the decay rate of connectivities of the random-cluster model on $\cG\sim \Pcm$, we need to first understand how these connectivities decay on heterogeneous (i.e., non-regular) trees. The relevant random-cluster measure on the tree requires the addition of \emph{boundary conditions} mimicking the possible presence of open edges in the random graph outside of the treelike ball. Towards this, let us formally define boundary conditions. 
\begin{definition}
     A random-cluster \emph{boundary condition} $\xi$ on $G=(V,E)$ is a partition of $V$, such that the vertices in each element of the partition are identified with one another. The random-cluster measure with boundary conditions $\xi$, denoted $\pi^{\xi}_{G,p,q}$, is the same as in~\eqref{eq:rcmeasure} except the number of connected components $c(\omega)= c(\omega;\xi)$ would be counted with this vertex identification, i.e., if $v,w$ are in the same element of $\xi$, they are always counted as being in the same connected component of $\omega$ in~\eqref{eq:rcmeasure}. 
    The boundary condition can alternatively be seen as external ``wirings" of the vertices in the same element of $\xi$.
\end{definition} 

\begin{remark}The \emph{free} boundary condition, $\xi = 0$, corresponds to the case of no external wirings; i.e., its  partition 
is the one consisting of only of singletons.
For a subset $\partial V \subset V$, the \emph{wired} boundary condition on $\partial V$, denoted $\xi = 1$, is the one whose partition has all vertices of $\partial V$ in the same element (and all vertices of $V\setminus \partial V$ as singletons); i.e., $\xi = \{\partial V\} \cup \bigcup \{v: v\in V\setminus \partial V\}$. For boundary conditions $\xi,\xi'$ we say $\xi \le \xi'$ if $\xi$ is a finer partition than $\xi'$. When $q\ge 1$, the random-cluster model has the following monotonicity property: for any two boundary conditions $\xi\ge \xi'$, $\pi_{G,p,q}^{\xi} \succcurlyeq \pi_{G,p,q}^{\xi'}$ where $\succcurlyeq$ denotes stochastic domination~\cite{Grimmett}.
\end{remark}

Now define the threshold
	\begin{align}\label{eq:p_u}
	p_u(q,\gamma): = 1- \frac{1}{1+\inf_{y>1} h(y)}\,, \qquad \mbox{where} \qquad h(y):= \frac{(y-1)(y^{\gamma}+q-1)}{y^\gamma -y}\,.
	\end{align}
The work~\cite{Haggstrom} studied the random-cluster measure on homogeneous, $d$-ary trees, with wired boundary conditions and identified $p_u(q,d)$ as the \emph{uniqueness threshold} such that whenever $p<p_u(q,d)$, the probability that the root is connected to a distance $h$ in the wired $d$-ary tree goes to zero as $h\to \infty$; a different proof was given in~\cite{BGGSVY}. In~\cite{BlGh21}, it was shown that this decay is in fact exponential with rate $\ps = p/(p+q(1-p))$. However, the methods of those papers do not easily extend to the non-regular setting, where there may be vertices of unbounded degree, but one would expect the threshold for connectivity decay to only depend on the \emph{average} branching rate. In~\cite{Lyons}, it was shown that the analogue $\beta_u(2,\gamma)$ of~\eqref{eq:p_u} gives the correct uniqueness threshold in the case of the Ising model $q=2$, for general (non-homogenous) trees of average branching $\gamma$. {However, the argument there recursed over the single-site spin marginals, and relied on the fact that it was an Ising model whose interactions are nearest-neighbor. In the case of the random-cluster model, interactions between edge-marginals are non-local, and we therefore have to work with a more complicated functional encoding the probability of an edge being downward connected to the wired boundary.}
Combining ideas from~\cite{Lyons} and~\cite{BGGSVY}, 
we are then able to establish uniqueness, and that connectivities decay exponentially with rate $\ps$ on general heterogenous trees of average branching factor $\gamma$ for all $q\ge 1$ and all $p<p_u(q,\gamma)$. When $p<p_u(q,\gamma)$, we have $\ps<1/\gamma$ (see e.g.,~\cite[Theorem 1.5]{Haggstrom}); this indicates by a union bound why there will typically be no connections to the boundary in a tree of average branching $\gamma$.

More formally, let $\T_h = (V(\T_h),E(\T_h))$ be an arbitrary finite tree, rooted at $\rho$, and of height $h$.
Let $\partial \T_h \subset V(\T_h)$ be the set of vertices of $\T_h$ at distance exactly $h$ from $\rho$. For $v \in V(\T_h)$, let $\T_v$ be the subtree of $\T_h$ rooted at $v$, let $h(v)$ denote the height of $\T_v$, and let $\partial \cT_v = \partial \T_h \cap \T_v$.
	For a random-cluster configuration $\omega$ on $\T_h$, let $\cC_\rho(\omega)$ denote the connected component of $\omega$ that contains the root $\rho$ of $\T_h$.
	Finally, let $(1,\circlearrowleft)$ denote the boundary condition that wires all vertices of $\partial \T_h$ together, and also wires them up to the root, and let $\pi_{\cT_h}^{(1,\circlearrowleft)}$ be the random-cluster measure with this boundary condition.
	\begin{lemma}
		\label{lemma:exp:decay:wired:treelike}
		Fix $q \ge 1$, $\gamma > 1$, $p < p_u(q,\gamma)$, and $\varepsilon \in [0,1)$. Suppose that
		$|\partial \cT_v| \le \gamma^{h(v)} $ for every $v \in V(\T_h)$ with $h(v) > \varepsilon h$.
		Then, there exists a constant $C = C(p,q,\gamma)$ such that for any  $u\in \partial\T_h$ 
		\begin{align*}
		\pi_{\T_h}^{(1,\circlearrowleft)} (\omega: u\in \cC_\rho(\omega) ) \le  C \ps^{(1-\varepsilon)h}\,.
		\end{align*}
\end{lemma}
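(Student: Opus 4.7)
The plan is to extend the tree-recursion approach of \cite{Lyons} (for the Ising model on general trees) and of \cite{Haggstrom,BGGSVY,BlGh21} (for the random-cluster model on regular trees) to the random-cluster model with general $q\ge 1$ on heterogeneous trees, recovering the correct decay rate $\ps$. First, for each $v\in V(\T_h)$ I would define the downward connection probability $\phi_v := \pi^{1}_{\T_v}(v \leftrightarrow \partial\T_v)$, i.e., the probability under the random-cluster measure on $\T_v$ with wired boundary on $\partial\T_v$ (and $v$ free) that $v$ lies in the wired cluster, with $\phi_v=1$ if $v$ is a leaf. By the spatial Markov property, the child subtrees at $v$ effectively decouple under the wired boundary, and a standard calculation yields a recursion of the form
\[
R_v \;=\; \sum_{u \in \mathrm{child}(v)} G_{p,q}(R_u)
\]
for a suitable ratio functional $R_v = R_v(\phi_v)$ designed to absorb the non-locality coming from the cluster weight $q$, where $G_{p,q}$ is increasing and concave on $[0,\infty)$, with $G_{p,q}(0)=0$ and $G_{p,q}'(0)=\ps$.

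I would then analyze this recursion under $p<p_u(q,\gamma)$. The definition $p_u(q,\gamma) = 1 - (1 + \inf_{y>1} h(y))^{-1}$ is equivalent, after the change of variables matching the recursion, to the existence of a parameter $y^\star>1$ realizing a strict super-solution at linearized growth rate $\gamma\ps<1$. The concavity of $G_{p,q}$ lets a Jensen-type step absorb the heterogeneity of the child subtrees, and combined with the hypothesis $|\partial\T_v|\le \gamma^{h(v)}$ for $h(v)>\varepsilon h$, an induction from the leaves yields a bound of the form
\[
\phi_v \;\le\; C\, \ps^{h(v)}\, |\partial\T_v| \qquad \text{for all } v \text{ with } h(v)>\varepsilon h,
\]
and in particular $\phi_v \le C(\gamma\ps)^{h(v)} = o(1)$ for some constant $C=C(p,q,\gamma)$.

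To translate this into the desired root-to-leaf estimate, observe that in a tree $\{\rho \leftrightarrow u \text{ in }\omega\}$ is the event that all $h$ edges on the unique path $\rho=v_0,\ldots,v_h=u$ are open. Under the cyclic wiring $(1,\circlearrowleft)$, an edge $(v_{i-1},v_i)$ on this path is a cut-edge precisely when $v_i$ fails to reach a leaf through open edges in $\T_{v_i}$; hence its conditional probability of being open, after integrating out the configuration in $\T_{v_i}$, is at most $\ps + (p-\ps)\phi_{v_i}$. Telescoping along the path gives
\[
\pi^{(1,\circlearrowleft)}_{\T_h}(\rho \leftrightarrow u) \;\le\; \prod_{i=1}^{h}\bigl(\ps + (p-\ps)\,\phi_{v_i}\bigr).
\]
For the $(1-\varepsilon)h$ upper edges (those with $h(v_i)>\varepsilon h$), the previous step gives $\phi_{v_i}=o(1)$, so each such factor is at most $\ps(1+o(1))$; for the remaining $\varepsilon h$ edges the trivial bound $\phi_{v_i}\le 1$ gives factors at most $p$. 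The product is therefore bounded by $C\,\ps^{(1-\varepsilon)h}\,p^{\varepsilon h}\le C\,\ps^{(1-\varepsilon)h}$, as desired.

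The main obstacle will be the exponential-decay step: on a heterogeneous tree one cannot iterate a single-variable recursion, and for $q\neq 2$ the recursion is non-local because of the cluster weight. Synthesizing the $q=2$ argument of \cite{Lyons} with the regular-tree arguments of \cite{BGGSVY,BlGh21} for general $q$, one must identify a ratio functional that linearizes at rate $\ps$ and is concave in its argument, and then exploit its concavity together with the average-branching bound $|\partial \T_v|\le \gamma^{h(v)}$ to close the induction uniformly in the degree sequence of the tree.
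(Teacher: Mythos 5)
Your high-level strategy mirrors the paper's: establish exponential decay of the downward connection probability $\varphi(v)=\pi^1_{\T_v}(v\leftrightarrow\partial\T_v)$ via a recursion on a ratio functional, then upgrade this to rate $\ps$ by tracking the conditional open-probabilities along the unique path from $\rho$ to $u$. The path/telescoping step (your second half) is essentially the paper's Lemma~\ref{lemma:exp:decay:wired:treelike} proof, up to the bookkeeping of the $q^2$ comparison factor between wired and $(1,\circlearrowleft)$ boundary conditions, and it only needs $\varphi(v_i)$ to decay exponentially in $h(v_i)$ at \emph{some} rate, so that $\sum_i \varphi(v_i)=O(1)$.

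The gap is in your first step, specifically the concavity claim. The correct ratio functional is $R_v = \log f(v)$ with $f(v)=q\frac{Z_1(v)}{Z_0(v)}+1$, which obeys $R_v=\sum_{w\in N_v}G(R_w)$ with $G(y)=\log g(e^y)$ and $g(x)=\frac{x+(q-1)(1-p)}{(1-p)x+p+(q-1)(1-p)}$. One does have $G(0)=0$ and $G'(0)=\ps$, but $G$ is \emph{not} concave on $[0,\infty)$: a direct computation gives
\begin{align*}
G''(0) \;=\; \frac{p(1-p)(q-2)}{(q(1-p)+p)^2}\,,
\end{align*}
which is strictly positive for all $q>2$. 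So your Jensen-type step does not go through, and indeed the bound $G(y)\le \ps y$ (equivalently $g(x)\le x^{\ps}$) is false near $x=1$ — one can check that $|g''(1)|<|\frac{d^2}{dx^2}x^{\ps}|_{x=1}|$ already for $q=2$, so $g$ sits \emph{above} $x^\ps$ in a neighborhood of $1$. Consequently the claimed intermediate estimate $\varphi(v)\le C\ps^{h(v)}|\partial\T_v|$ is too strong to be derived this way. The paper sidesteps this entirely: instead of trying to linearize at rate $\ps$, Fact~\ref{fact:g:bound} proves the strictly weaker inequality $g(x)\le x^{1/\gamma-\xi}$ for some small $\xi>0$ (so $G(y)\le(1/\gamma-\xi)y$), using three separate regimes (Taylor near $1$, boundedness $g\le(1-p)^{-1}$ for large $x$, and the monotonicity of $g$ in $p$ together with the variational form of $p_u(q,\gamma)$ for intermediate $x$). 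This yields decay of $\varphi(\rho)$ at some rate $\theta=1-\gamma\xi<1$, which is all the path argument requires. If you replace your concavity step with a proof of the power-law bound $g(x)\le x^{1/\gamma-\xi}$ and correspondingly weaken your intermediate estimate to $\varphi(v)\le C\theta^{h(v)}$, your proof outline closes.
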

We note that the condition that $|\partial \cT_v| \le \gamma^{h(v)} $ for every $v \in V(\T_h)$ with $h(v) > \varepsilon h$ in the lemma
holds with high probability for random trees with averaging branching $\gamma$: see Corollary~\ref{cor:random-tree-has-tree-growth}.
In addition, the exponential decay rate in Lemma~\ref{lemma:exp:decay:wired:treelike} is essentially optimal, and together with Lemmas~\ref{lem:random-graph-treelike}--\ref{lem:random-graph-volume-growth}, allows us to derive precise estimates on the exponential decay of connectivities on the treelike balls around each
vertex of the random graph $\cG\sim \Pcm$.
We will actually need a sharp bound on the rate of influence decay between the boundary and the center of the ball $B_R(v)$; we find that this is the square of the rate of connectivity decay on a corresponding tree of depth $R$. 
(Intuitively, this is because \emph{two} disjoint paths are required to reach the center of the ball in order for the boundary to have any effect on it.)
To be more precise, let $G=(V,E)$ be a graph and for $v \in V$, let $E_v \subseteq E$ denote the set of edges incident to $v$.

\begin{definition}\label{def:k-sparse-bc}
	A random-cluster boundary condition $\xi$ on a graph $H$ is said to be $K$-$\sparse$ if the number of vertices in non-trivial (non-singleton) boundary components of $\xi$ is at most $K$. 
\end{definition}

\begin{theorem}\label{theorem:influence-probability-new}
	Fix $\gamma > 0$, $q\ge 1$, and $p<p_u(q,\gamma)$. Suppose $G$ is $(L,R)$-$\treelike$ for some $L$ and some $R\le \frac{1}{2}\log_\gamma n$. Also suppose $G$ has $(\gamma,\epsilon)$-volume growth for some $\epsilon>0$ sufficiently small. 
	There exists a constant $C > 0$ such that for every $v \in G$, and any two $K$-$\sparse$ boundary conditions $\xi$ and~$\tau$ on $B_R(v)$: 
	\begin{align*}
	\|\pi_{B_R(v)}^{\xi}(\omega(E_v)\in \cdot)- \pi_{B_R(v)}^{\tau}(\omega(E_v)\in \cdot ) \|_\tv \le C 
	\ps^{(2-CL\sqrt{\epsilon})R}\,.
	\end{align*}
\end{theorem}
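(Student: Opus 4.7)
I will prove Theorem~\ref{theorem:influence-probability-new} by reducing to a ``two-arms'' estimate on the treelike ball and then applying Lemma~\ref{lemma:exp:decay:wired:treelike} once per arm; the resulting squared exponent $2(1-\epsilon)R$ is what produces the bound $\ps^{(2-CL\sqrt\epsilon)R}$ after absorbing combinatorial prefactors. First, since $q\ge 1$, stochastic monotonicity sandwiches any $K$-$\sparse$ boundary condition $\xi$ between the free boundary condition $0$ and the maximally wired one $1$ that collapses all non-trivial classes of $\xi\cup\tau$ into a single class of size $|W|\le 2K$; via a monotone coupling it suffices to compare $\pi^{0}_{B_R(v)}$ and $\pi^{1}_{B_R(v)}$ on $E_v$. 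Using that $B_R(v)$ is $L$-$\treelike$, pick $H\subset E(B_R(v))$ with $|H|\le L$ such that $\T:=(V(B_R(v)),E(B_R(v))\setminus H)$ is a tree rooted at $v$, and condition on $\omega(H)\in\{0,1\}^H$: the conditional measure is a random-cluster model on $\T$ with an $O(K+L)$-$\sparse$ boundary condition inherited from $W$ together with the wirings induced by the open back-edges. Summing over the at most $2^L$ back-edge configurations costs a factor $2^L$ in the total variation, to be absorbed into the exponent.

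\textbf{Two-arms characterization.} Let $A_v$ denote the event that at least two distinct subtrees rooted at children of $v$ in $\T$ each contain an open path from their root to a vertex of $W$. On $A_v^c$, the spatial Markov property of the random-cluster model applied with $F:=E(\T)\setminus E_v$ shows that the conditional law of $\omega(E_v)$ given $\omega|_F$ is a product of Bernoullis with parameter $\ps$, \emph{independent} of whether the $W$-boundary condition is free or wired: in that case no two children of $v$ get identified through $\omega|_F$ together with the boundary wirings, so every $e_i\in E_v$ is a cut-edge. Coupling $\omega|_F$ under the two BCs monotonically and then coupling $\omega|_{E_v}$ to agree on $A_v^c$ via the identical conditional laws yields
\[
\bigl\|\pi^{0}_\T(\omega(E_v)\in\cdot)-\pi^{1}_\T(\omega(E_v)\in\cdot)\bigr\|_{\tv}\le 2\,\pi^{1}_\T(A_v)\,,
\]
where we used $\pi^{0}_\T(A_v)\le \pi^{1}_\T(A_v)$ by FKG since $A_v$ is an increasing event.

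\textbf{Two-arm bound via Lemma~\ref{lemma:exp:decay:wired:treelike}.} Union-bound $A_v$ over pairs of children $u_i\ne u_j$ of $v$ in $\T$ and pairs $(w,w')\in W\times W$; it remains to estimate $\pi^{1}_\T(u_i\leftrightarrow w,\, u_j\leftrightarrow w')$. For each such pair, FKG-condition on one of the two arm events and apply Lemma~\ref{lemma:exp:decay:wired:treelike} to each of the subtrees $\T_{u_i},\T_{u_j}$ (of height at most $R-1$) with the $(1,\circlearrowleft)$ wired-root boundary condition, yielding a single-arm estimate of $C\ps^{(1-\epsilon)R}$ and thus a two-arm bound of $C^2\ps^{2(1-\epsilon)R}$. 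The hypothesis $|\partial\T_u|\le \gamma^{h(u)}$ for $h(u)>\epsilon R$ needed to apply Lemma~\ref{lemma:exp:decay:wired:treelike} follows from the $(\gamma,\epsilon)$-volume growth of $\cG$, after accounting for the at most $L$ excised back-edges. The combinatorial prefactors $\deg_\T(v)^2\,|W|^2\,2^L$ are all bounded by $\gamma^{O(L)}\cdot n^{o(1)}$, which together with the slack $\epsilon\to\sqrt\epsilon$ and $R\le\tfrac12\log_\gamma n$ absorbs into $\ps^{-O(L\sqrt\epsilon R)}$ to yield the claimed exponent.

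\textbf{Anticipated main obstacle.} The most delicate step will be transferring the $(\gamma,\epsilon)$-volume growth hypothesis from $\cG$ to each subtree $\T_{u_i}$ in a form compatible with Lemma~\ref{lemma:exp:decay:wired:treelike} after the $L$-surgery, and carefully tracking how the losses at intermediate scales $[\epsilon R,R]$ combine with the $2^L$ factor and the $\deg_\T(v)^2$ prefactor to produce exactly the exponent $(2-CL\sqrt\epsilon)R$ claimed in the theorem, rather than the naive $(2-2\epsilon)R$.
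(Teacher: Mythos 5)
There is a genuine gap in your handling of the back-edges $H$, in two places. First, the reduction step ``condition on $\omega(H)$ and sum over the $2^L$ configurations'' does not cleanly decompose the total-variation distance: for any decomposition of the form you intend, one incurs an \emph{additive} error of $\|\pi^{0}_{B_R(v)}(\omega(H)\in\cdot)-\pi^{1}_{B_R(v)}(\omega(H)\in\cdot)\|_\tv$, and nothing forces this to be small --- an edge of $H$ whose endpoints sit at depth close to $R$ has marginal sensitive to the boundary condition at order $\Theta(1)$, which would destroy the $\ps^{2R}$-type bound. Second, even granting the conditioning, the two-arm event $A_v$ you define --- requiring two distinct child-subtrees $\T_{u_i},\T_{u_j}$ of $\T$ to each contain a root-to-$W$ open path --- is too small. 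After conditioning, the open back-edges act as wirings that let a single cluster weave between subtrees: e.g.\ $u_1$ could reach $W$ via an open path $u_1\to x_1\in\T_{u_1}$, an open back-edge $\{x_1,x_2\}$ with $x_2\in\T_{u_2}$, and then $x_2\to w$ inside $\T_{u_2}$, without $u_1$ or $u_2$ having a root-to-$W$ path \emph{within its own subtree}. If additionally $u_3\leftrightarrow w'$ for $w'\ne w$, the partitions on $\{v,u_1,\dots,u_d\}$ genuinely differ between $\pi^0$ and $\pi^1$ while $A_v$ fails, so your coupling ``agree on $A_v^c$'' is not valid and the bound $\|\cdot\|_\tv\le\pi^1_\T(A_v)$ is false. (Relatedly, your claim that on $A_v^c$ every $e_i\in E_v$ is a cut-edge so the conditional law is exactly $\mathrm{Ber}(\ps)^{\otimes E_v}$ is also incorrect once back-edges can identify children; what one actually needs, and can prove with a corrected $A_v$, is merely that the two conditional laws agree, not that they are product.)

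These are exactly the issues the paper's proof is engineered to avoid. Instead of conditioning on $H$, the paper cites Lemma~\ref{lem:influence-event} from~\cite{BlGh21} to bound the total-variation distance by the probability of the two-arm event $\Upsilon_{B,\xi}$ stated on the \emph{full} ball $B$ (back-edges included) and demanding two arms at \emph{every} depth $1\le\ell\le R$, not just at $v$; this captures the zigzags through $H$ automatically. Then it slices $B$ at the depths where $H$ lives into forests $\cF_i$, observes that each pair of vertex-disjoint $v$-to-$\mathfrak V_{B,\xi}$ paths is determined by $V(H)$ and its endpoint in $W$ (giving the $[4^{L^2}K(2L+1)!]^2$ combinatorial factor), artificially wires the transition depths to decouple the $\cF_i$'s into a product measure, and only then applies Lemma~\ref{lemma:exp:decay:wired:treelike} once per subtree piece. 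The small slices (heights $\le\sqrt\varepsilon R$) are dropped, which is where the $L\sqrt\varepsilon$-loss in the exponent comes from --- another point your sketch does not account for. Your overall two-arms intuition and the use of Lemma~\ref{lemma:exp:decay:wired:treelike} twice are the right spirit, but as written the reduction and the definition of the arm event both fail in the presence of $H$.
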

A similar influence decay bound was proven for the regular case in~\cite[Section 5.2]{BlGh21}.

\subsection{Shattering of the FK-dynamics}\label{subsec:shattering}
With Theorem~\ref{theorem:influence-probability-new} in hand, the core of our argument becomes establishing that the boundary conditions induced by the FK-dynamics chains from all possible initializations, on balls of radius $R\le \frac{1}{2}\log_\gamma n$ are $K$-$\sparse$. This will follow from \emph{shattering} of the FK-dynamics, by which we mean the time at which the connected components of the FK-dynamics configuration are all small, say of size $n^{o(1)}$. 

\begin{remark}It will be technically convenient to prove our results in continuous time instead of discrete time. In the continuous-time FK-dynamics, each edge of the graph has a rate-1 Poisson clock and every time a clock rings, the corresponding edge is updated 
as in the discrete-time version of the FK-dynamics; that is,
according to the conditional distribution given the configuration off of this edge. It is a standard fact (see e.g.,~\cite[Theorem 20.3]{LP}) that the
discrete-time mixing time is comparable to $|E(\cG)|$ times the continuous-time mixing time. 
It therefore suffices for us to establish the mixing time bounds of Theorems~\ref{thm:intro:fk-er} and~\ref{thm:intro:general} as $\Theta(\log n)$ bounds for the continuous-time version of the FK-dynamics. From this point on, we let $X_{t}^{x_0}$ denote the continuous-time FK-dynamics on $\cG$ initialized from the configuration $x_0$, and use the superscripts $1$ and $0$ to denote the full (all-open) and empty (all-closed) configurations, respectively. 
\end{remark}

We now formalize what we mean by a shattered random-cluster configuration, and establish that the FK-dynamics shatters after an $O(1)$ continuous-time burn in period.

\begin{definition}\label{def:(k,r)-sparse}
	A random-cluster configuration $\omega$ on $\cG = (V(\cG),E(\cG))$ is \emph{$(K,R)$-$\sparse$} 
	if, for every $v \in V(\cG)$,
	the boundary conditions induced on $B_R(v)$ by $\omega(E(\cG)\setminus E(B_r(v)))$ are $K$-$\sparse$. 
\end{definition}

\begin{theorem}\label{thm:k-R-sparse-whp}
	Fix $q\ge 1$, $\gamma>0$ and $p<p_u(q,\gamma)$. For every $\delta>0$, there exists $\kappa$ such that if $(\vdn)_n \in \cD_{\gamma,\kappa}$, there exists $T= T(p,q,\gamma)$ and $K = K(p,q,\gamma,\delta)$ such that for any $t\ge T$, and every $1\le R \le (\frac 12 -\delta)\log_\gamma n$, with probability $1-o(1)$, $\cG\sim \Pcm$ is such that 
	\begin{align}
	\bbP\big(X_{t}^1 \mbox{ is $(K,R)$-$\sparse$}\big)\ge 1-o(n^{-5})\,.
	\end{align}
\end{theorem}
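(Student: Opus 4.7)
I would adapt the joint revealing strategy of~\cite{BlGh21} — which simultaneously exposes the half-edge pairings of the configuration model and the FK-dynamics edge states on top of them — to the heterogeneous degree setting $(\vdn)_n\in \cD_{\gamma,\kappa}$. First fix $v\in V(\cG)$; by a union bound over at most $n$ choices, it suffices to show that, with probability $1-o(n^{-6})$ over $\cG\sim\Pcm$ and the dynamics, the number of vertices of $\partial B_R(v)$ lying in non-singleton components of $X_t^1$ restricted to $E(\cG)\setminus E(B_R(v))$ is at most $K$. To expose this quantity, I would first reveal the interior $B_R(v)$, which is $L$-$\treelike$ by Lemma~\ref{lem:random-graph-treelike}, and then run a breadth-first exploration of the outside starting from each $u\in\partial B_R(v)$: at each step, pick an unmatched half-edge at the frontier, pair it uniformly among the remaining unmatched half-edges, and immediately reveal $X_t^1$ on the resulting edge; if the edge is open and exposes a new vertex, add that vertex to the queue.

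The goal is to show that this exploration is subcritical. Using the grand monotone coupling from $X_0=1$, after a burn-in time $t\ge T=T(p,q,\gamma)$ a standard argument shows every edge has been updated at least once, and that whenever the exposed local neighborhood is treelike with sparse induced boundary, the conditional probability that a tree-edge $e$ is open at time $t$ is at most $\ps+o(1)$. This marginal bound comes from Lemma~\ref{lemma:exp:decay:wired:treelike} (exponential decay of connectivities at rate $\ps$ on wired general trees) combined with the local treelike structure of $\cG\sim\Pcm$. Hence at a frontier vertex $u$ of revealed degree $d_u$, the number of new children is stochastically dominated by $\bin(d_u-1,\ps+o(1))$; since the configuration model reaches a vertex of degree $d$ with probability proportional to $d$, the identity~\eqref{eq:Pvd} gives mean per-step offspring $(\ps+o(1))\Evd[D]<\gamma\ps<1$. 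The exploration is therefore dominated by a subcritical branching process with heavy-tailed offspring whose $\kappa$-th moment is controlled by $\cD_{\gamma,\kappa}$.

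Standard tail estimates for subcritical Galton--Watson trees with bounded $\kappa$-th moment (for $\kappa$ sufficiently large) yield that the component explored from any single $u\in\partial B_R(v)$ has at most $(\log n)^{C}$ vertices, except on an event of probability $o(n^{-12})$. Consequently, the probability that two distinct $u_1,u_2\in\partial B_R(v)$ land in the same outside-component is at most $|\partial B_R(v)|(\log n)^{C}/n = o(1)$, thanks to $|\partial B_R(v)|\le \gamma^R \le n^{1/2-\delta}$; a second-moment argument over such pairs bounds the number of $u$ in non-singleton boundary components by $O(1)$ except with probability $o(n^{-6})$. Taking $K$ sufficiently large and union bounding over $v\in V(\cG)$ yields the claimed $1-o(n^{-5})$ bound.

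The main obstacle is the marginal upper bound $\Pr[X_t^1(e)=1\mid \text{revealed history}]\le \ps+o(1)$ in the presence of unbounded degrees. In the regular setting of~\cite{BlGh21} every local neighborhood is a $\Delta$-ary tree and one appeals directly to tree-level estimates. In the inhomogeneous case, a single high-degree vertex exposed during exploration has many incident edges whose open-marginals can be inflated toward $p$ (rather than $\ps$), because the cut-edge event is less likely when alternative paths abound. Controlling this requires a careful truncation of the revealed degrees, exploiting the $\kappa$-th moment bound in $\cD_{\gamma,\kappa}$ to ensure that such vertices are encountered rarely enough along any exploration path not to disrupt the subcriticality. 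These degree-dependent modifications of the revealing process of~\cite{BlGh21} are precisely the ones anticipated in Remark~\ref{rem:revealing-process-modifications}.
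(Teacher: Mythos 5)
The high-level frame (joint revealing of the configuration model together with a dominating FK configuration, targeting $|\mathfrak V_{(\cV_0,\cA_0)}(X_t^1)|\le K$ for $\cV_0=\partial B_R(v)$, union bounding over $v$, and exploiting $|\partial B_R(v)|\le n^{1/2-\delta}$) is the right template and matches the paper. But the core ingredient you propose — a per-edge marginal bound $\Pr[X_t^1(e)=1\mid\text{revealed history}]\le \ps+o(1)$, giving a branching process with per-vertex mean offspring $(\ps+o(1))\Evd[D]<1$ — is not attainable and is not what the paper (nor \cite{BlGh21} for the regular case) uses. Two specific failures. First, $T=T(p,q,\gamma)=O(1)$ in continuous time, so a $\Theta(e^{-T})$ fraction of edges have not received a single clock ring and, under the all-$1$ initialization, are still open; the claim that ``every edge has been updated at least once'' is false. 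Second, even once an edge has been updated, its conditional open probability given the (partially wired) history is only bounded above by $p$, not by $\ps+o(1)$: in the regime $p<p_u(q,\gamma)$ one has $\ps<1/\gamma$ but $p$ may well satisfy $p\gamma>1$, so a per-edge exploration with marginal $\approx p$ is potentially \emph{super}critical. This is exactly why both the present paper and~\cite{BlGh21} explore in $r$-balls for large constant $r$, reveal a \emph{localized} FK-dynamics $X_{A_m,t}^1$ on each ball with wired boundary (which dominates $X_{\cG,t}^1$ by monotonicity and is generated from an independent slice of the grand-coupling randomness), and then bound the progeny by the wired root-to-boundary connection probability on a tree of depth $r$. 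That probability is $\lesssim\gamma^r\ps^{(1-\epsilon)r}$ by Lemma~\ref{lemma:exp:decay:wired:treelike} once $t\ge T_{\textsc{burn}}=C_0\gamma^r\bar\tau_{\textsc{mix}}$, and can be made $<1/\gamma$ by taking $r$ large precisely because $\ps\gamma<1$. Subcriticality is an $r$-step, coarse-grained phenomenon; the single-edge version simply does not hold.

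There are also gaps in the tail and concentration steps. With offspring having only bounded $\kappa$-th moment, a subcritical Galton--Watson total population has polynomial, not logarithmic, tails: one gets bounds of the form $n^{O(1/\kappa)}$ with error $o(n^{-12})$, not $(\log n)^C$. This is why the paper proves only that clusters are $\le n^{\epsilon}$ (Theorem~\ref{thm:pi-exponential-decay}) and encodes the rare large steps via the $\mathsf{bad}$-step device of Definition~\ref{def:branching-process}, whose probability is tied to $n^{-1/2}$ matching attempts, not to offspring tails. Finally, the concluding ``second-moment argument over pairs $u_1,u_2$'' is too weak to produce $o(n^{-6})$: the paper instead bounds the number of non-trivial boundary reconnections by a $\bin(n^{1/2-\delta/2}{\|\vdn\|}_\infty, 2n^{-1/2-\delta/2}{\|\vdn\|}_\infty)$ random variable (every half-edge matching during the revealing is one Bernoulli trial, with success meaning re-hitting an already-discovered vertex) and applies a Chernoff bound (Lemma~\ref{lem:nontrivial-bdy-component-bound}), which yields the needed polynomial error uniformly in $v$. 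Your remark that the main obstacle is degree truncation is not wrong, but it is secondary: the $r$-ball coarse-graining is what makes the argument work at all, even in the regular case.
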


    \begin{figure}
        \centering
        \begin{tikzpicture}
        \node at (0,0) {
        \includegraphics[width = .926\textwidth]{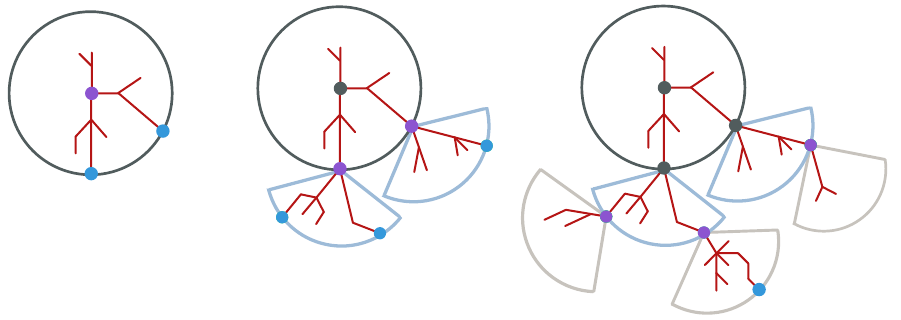}};
        \draw[<->, thick] (-6.45,1.05)--(-5.37,1.05);
        \node[font = \small] at (-5.925, 1.25) {$r$};
        \node[font = \small] at (-5.05, 1.25) {$v$};

        \end{tikzpicture}
        \vspace{-.35cm}
        \caption{Three ``generations" of the revealing procedure. In each figure, the purple vertices are the current generation of exposed vertices; the revealing procedure reveals the ball of radius $r$ around such a vertex $v$, and a dominating localized FK-dynamics configuration $\tilde \omega(B_r(v))$ on that ball. The next generation of exposed vertices (blue) consists of those on the boundary $B_r(v)$ that are in the connected component of $v$ in the configuration $\tilde \omega(B_r(v))$. Exposed vertices from previous generations are then colored black.}
        \label{fig:overview}
    \end{figure}

Our starting point for the proof of Theorem~\ref{thm:k-R-sparse-whp} is a proof of shattering for the FK-dynamics on $\Delta$-regular random graphs from~\cite{BlGh21}. Hence, as in~\cite{BlGh21}, our proof relies on a delicate simultaneous revealing procedure for the random graph, along with the connected component of a vertex $v$ in $X_t^1$, showing that after a burn-in period, the configuration $X_t^1$ is shattered. The revealing scheme for the component of a vertex $v$ in the FK-dynamics chain $X_t^{1}$ roughly proceeds as follows (see the accompanying Figure~\ref{fig:overview}). First ``expose" the starting vertex $v$, and iteratively, for each exposed vertex $u$ do the following:
\begin{enumerate}
    \item Reveal the ball $B_r(u)$ in the random graph for a large $r= O(1)$;
    \item Reveal a configuration $\tilde \omega(B_r(u))$ that dominates the configuration of the FK-dynamics at time $t$ on $B_r(u)$. This configuration will come from simulating FK-dynamics that ignores all updates outside of $B_r(u)$ (effectively inducing the wired boundary condition on $B_r(u)$) and thus can be obtained independently of the dynamics on the rest of the graph;
    \item Add to the set of exposed vertices all vertices of $\partial B_r(u)$ that get connected to $u$ in $\tilde \omega(B_r(u))$. 
\end{enumerate}
The key point of the argument is then to stochastically dominate the exposed vertices by a branching process, which can be shown to be sub-critical (see Lemma~\ref{lemma:exp:decay:wired:treelike}). 
In our setting, the heterogeneity of the degrees causes substantial complications to the argument from~\cite{BlGh21}, because in balls where the branching rate is locally larger than $\gamma$, the overlayed FK-dynamics configuration will actually be highly connected. The presence of high degrees also destroys the $O(1)$ bounds on the maximum number of new vertices that could possibly get exposed in step (3) above; this complicates relevant concentration arguments, as our branching process martingale will no longer have bounded increments.

\subsection{Organization of the remainder of the paper}
In Section~\ref{sec:uniqueness}, we prove that whenever $p<p_u(q,\gamma)$, the random-cluster model on trees of average branching $\gamma$ is in its uniqueness regime, and deduce Lemma~\ref{lemma:exp:decay:wired:treelike}. In Section~\ref{sec:random-graph-estimates}, we prove key properties of the random-graph model $\Pcm$, including Lemmas~\ref{lem:random-graph-treelike}--\ref{lem:random-graph-volume-growth}. Section~\ref{sec:shattering} contains the proof of shattering of the FK-dynamics, and in particular Theorem~\ref{thm:k-R-sparse-whp}. In Section~\ref{sec:correlation-decay-treelike}, we bound the rate of influence decay (Theorem~\ref{theorem:influence-probability-new}) and mixing time (Lemma~\ref{lemma:local-mixing}) in treelike graphs with sparse boundary conditions. Section~\ref{sec:proof-of-main-theorem} combines these ingredients to conclude the $\Theta(n\log n)$ bound on the FK-dynamics for Theorem~\ref{thm:intro:general}. Finally, Section~\ref{sec:potts-slow-down} proves the exponential (in ${\|\vdn\|}_\infty$) lower bound on the Potts Glauber dynamics of Theorem~\ref{thm:Ising-Potts-lower-bound}.

\section{Uniqueness and exponential decay on general trees}\label{sec:uniqueness}
Our main result in this section is to prove Lemma~\ref{lemma:exp:decay:wired:treelike}. We also use this section to deduce some corollaries about uniqueness of infinite-volume random-cluster and Potts measures on general trees of average branching $\gamma$, and apply these results to super-critical Galton--Watson trees. 

\subsection{Exponential decay of connectivities on general trees}
We begin by considering the probability 
$\varphi(\rho)$ that the root $\rho$ is connected to $\partial\T_h$ in $\omega \sim \pi_{\T_h}^1$, and show that $\varphi(\rho) = \varphi_{p,q,\cT_h}(\rho)$ decays exponentially with $h$ for all trees of \emph{average} branching $\gamma$, whenever $p<p_u(q,\gamma)$. 

\begin{lemma}
	\label{lemma:tree:decay}
	Fix $\gamma \ge 1$ and $q \ge 1$ and let $p < p_u(q,\gamma)$. There exists $\theta = \theta(p,q,\gamma) \in (0,1)$ and $C = C(p,q,\gamma)$ such that if $|\partial\T_h| \le \gamma^{h}$, then 
	$
	\varphi(\rho) \le C  \theta^h.
	$
\end{lemma}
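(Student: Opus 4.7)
The plan is to iterate an exact tree recursion for $\varphi_v := \pi^1_{\T_v}(v\leftrightarrow \partial \T_v)$ derived from the wired random-cluster partition function. Expanding this partition function at $v$ by conditioning, for each child $u_i$ of $v$, on the state $\omega(\{v,u_i\})\in\{0,1\}$ and on the event that $u_i\leftrightarrow\partial\T_{u_i}$ inside $\T_{u_i}$, then integrating out each subtree and carefully tracking how the connected components merge at the wired super-vertex $\partial\T_v$, yields the exact recursion
\[
\varphi_v \;=\; \frac{1-\prod_{i=1}^k\rho(\varphi_{u_i})}{1+(q-1)\prod_{i=1}^k\rho(\varphi_{u_i})}\,,\qquad \rho(x):=\frac{1-\ps x}{1+(q-1)\ps x}\,,
\]
where $u_1,\dots,u_k$ are the children of $v$. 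Setting $G(x):=-\log\rho(x)$, this recursion is equivalent to the log-additive form $G(\varphi_v/\ps)=\sum_{i=1}^kG(\varphi_{u_i})$.

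A short calculation shows that $\inf_{y>1}h(y)=q/(\gamma-1)$ (attained as $y\to 1^+$), so $p_u(q,\gamma)=q/(q+\gamma-1)$, and thus the hypothesis $p<p_u(q,\gamma)$ is equivalent to the quantitative bound $\ps\gamma<1$. Setting $\theta:=\ps\gamma\in(0,1)$ gives the decay rate to aim for: if I can establish a contraction of the form $F(\varphi_v)\leq\ps\sum_{i=1}^k F(\varphi_{u_i})$ for some functional $F$ on $[0,1]$ with $F(0)=0$ and $F(x)\gtrsim x$ near zero, then iterating it $h$ times from the root down to the wired boundary yields
\[
F(\varphi(\rho)) \;\leq\;\ps^h\sum_{u\in\partial\T_h}F(\varphi_u) \;=\; F(1)\cdot\ps^h\,|\partial\T_h|\;\leq\;F(1)\,\theta^h\,,
\]
using $\varphi_u=1$ on the wired boundary together with the hypothesis $|\partial\T_h|\leq\gamma^h$; inverting $F$ then gives $\varphi(\rho)\leq C\theta^h$ as required.

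The natural choice, following the Lyons--Ising analysis, is $F=G$. Combined with the exact recursion $G(\varphi_v/\ps)=\sum G(\varphi_{u_i})$, the target contraction reduces to the single-variable inequality $G(\ps y)\leq\ps G(y)$ for $y\in[0,1]$, which in turn follows from convexity of $G$ (together with $G(0)=0$, via $G(\ps y)=G(\ps y+(1-\ps)\cdot 0)\leq \ps G(y)$). A direct differentiation of $G$ verifies convexity on $[0,1]$ when $q\leq 2$, which recovers Lyons' argument essentially as in the Ising case.

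The main obstacle is that when $q>2$ the function $G$ is concave in a neighborhood of the origin (one checks $G''(0)=\ps^2(1-(q-1)^2)<0$), so the convexity trick collapses and a different contracting functional is needed. This is the reason the paper combines the Lyons approach with ideas from BGGSVY. My plan for this regime is to replace the vertex functional $G(\varphi_v)$ by a finer edge-based functional encoding the probability of downward connection through a specific child edge: for each edge $e=\{v,u\}$ with $u$ a child of $v$ in $\T_v$, define
\[
\phi_e\;:=\;\pi^{1}_{\T_v}\bigl(\omega(e)=1\text{ and }u\leftrightarrow\partial\T_u\text{ in }\T_u\bigr)\,.
\]
Deriving the analogous recursion for $\phi_e$ in terms of $\{\phi_{e'}\}$ at the edges below $u$, and exploiting that the conditional marginal of $\omega(e)$ given $u\nleftrightarrow\partial\T_u$ off of $e$ is precisely $\ps$ (since $e$ becomes a cut edge in that event, cf.\ the FK update rule), one should recover the linear contraction $\phi_e\leq \ps\sum_{e'}\phi_{e'}$ uniformly in $q\ge 1$. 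Iterating this recursion from the root down to the boundary (where $\phi_e\leq 1$) and using the union bound $\varphi_v\leq\sum_{e:v\to\cdot}\phi_e$ then closes the argument with the same decay rate $\theta=\ps\gamma$, completing the proof.
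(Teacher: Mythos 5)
Your derivation of the recursion is correct: writing $P=\prod_i\rho(\varphi_{u_i})$, a direct calculation (or the change of variables $f(v)=q\,Z_1(v)/Z_0(v)+1$ from the paper) shows $f(v)=1/P$ and hence $\varphi_v=(1-P)/(1+(q-1)P)$, and the log-additive form $G(\varphi_v/\ps)=\sum_i G(\varphi_{u_i})$ with $G=-\log\rho$ matches the paper's $f(v)=\prod_w g(f(w))$ after taking logarithms. You have also correctly diagnosed where a naive convexity argument breaks: $G''(0)=\ps^2\bigl(1-(q-1)^2\bigr)<0$ for $q>2$.

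However, there are two genuine gaps that make the plan fail for $q>2$, which is exactly the regime the lemma is designed to cover.

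First, the threshold identification is wrong. You claim $\inf_{y>1}h(y)=q/(\gamma-1)$ so that $p_u(q,\gamma)=q/(q+\gamma-1)$ and $p<p_u\iff\ps\gamma<1$. This holds for $q\le 2$ but is \emph{false} for $q>2$. For instance at $\gamma=2$ one has $h(y)=y+(q-1)/y$, minimized at $y=\sqrt{q-1}>1$ with value $2\sqrt{q-1}$, which is strictly less than $q=q/(\gamma-1)$ whenever $q>2$. Thus $p_u(q,\gamma)<q/(q+\gamma-1)$ for $q>2$, and $p<p_u$ only \emph{implies} $\ps\gamma<1$, it is not equivalent to it (cf.\ the paper's remark that $p<p_u$ gives $\ps<1/\gamma$, citing H\"aggstr\"om, with no converse). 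The hypothesis $p<p_u(q,\gamma)$ carries strictly more information than $\ps\gamma<1$ once $q>2$.

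Second, and as a direct consequence, the fallback plan cannot work as stated. Your target is a linear contraction $\phi_e\le\ps\sum_{e'}\phi_{e'}$ that, iterated down a tree with $|\partial\T_h|\le\gamma^h$, would give $\varphi(\rho)\lesssim(\ps\gamma)^h$ whenever $\ps\gamma<1$. But for $q>2$ on the homogeneous $\gamma$-ary tree, there exist $p$ with $\ps\gamma<1$ and $p>p_u(q,\gamma)$, and for those $p$ the wired connectivity $\varphi(\rho)$ does \emph{not} vanish as $h\to\infty$. So no such uniform linear contraction can exist: any valid argument must use the full strength of $p<p_u(q,\gamma)$ and not merely its linearized consequence $\ps\gamma<1$. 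This is precisely the role of the paper's Fact~\ref{fact:g:bound}, a \emph{sub-power} bound $g(x)\le x^{1/\gamma-\xi}$ valid for all $x\ge 1$ with $\xi=\xi(p,q,\gamma)>0$, whose proof goes through the alternate characterization~\eqref{eq:p-u-alternate-form} of $p_u$ and uses convexity only locally near $x=1$ (where $g'(1)=\ps<1/\gamma$), with a separate compactness argument for intermediate $x$. Iterating the multiplicative recursion with that bound gives $\theta=1-\gamma\xi$, which tends to $1$ as $p\uparrow p_u$ and is in general different from $\ps\gamma$. In short: the exponent $1/\gamma-\xi$, not the prefactor $\ps$, is where $p<p_u$ enters, and your plan has no analogue of that step.
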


\begin{proof}
    Fix $h$ and fix  $\cT_h$ having $|\partial \cT_h|\le \gamma^h$. 
	Recall that for $v \in V(\T_h)$, $\T_v$ denotes the subtree of $\T_h$ rooted at $v$.
	Let $Z(v) = Z_{\T_v,p,q}$ denote the partition function corresponding to $\pi_{\T_v}^1$ (the random-cluster measure on $\cT_v$ with all its vertices in $\partial \cT_h$ wired together). Let  $Z_1(v)$ be the contribution to $Z(v)$ from the 
	configurations on $\T_v$ that contain an open path between $v$ and $\partial\T_h$. Similarly, let $Z_0(v)$
	denote the contribution from the configurations that do not have such a path. Note that $Z(v) = Z_0(v)+Z_1(v)$ and $\varphi(\rho) = \frac{Z_1(\rho)}{Z_0(\rho)+Z_1(\rho)}$.
	
	For $v \in V(\T_h)$, let $N_v$ denote the set of children of $v$.
	Using tree recurrences, and the definition of~\eqref{eq:rcmeasure}, the following identities can be checked; the  proof is similar to that in~\cite[Lemma 33]{BGGSVY} and is provided later.	
	\begin{fact} \label{fact:recurrence} 
	Let $t = p/q + 1 - p$. For any $v \in V(\T_h)$,
		\begin{align*}
			Z_1(v) &= q \prod_{w \in N_v} \left(\frac{Z_1(w)}{q} + \frac{t Z_0(w)}{q}\right) -  q \prod_{w \in N_v} \left(\frac{(1-p)Z_1(w)}{q} + \frac{t Z_0(w)}{q}\right), \\
			Z_0(v) &= q^2 \prod_{w \in N_v} \left(\frac{(1-p)Z_1(w)}{q} + \frac{t Z_0(w)}{q}\right).			 
		\end{align*}
	\end{fact}
 Now consider the function $f: V(\T_h) \to \R$ defined as $$f(v) := q \frac{Z_1(v)}{Z_0(v)}+1\,.$$ 
Using the identities in Fact~\ref{fact:recurrence}, one easily sees that 
\begin{align*}
    f(v) = \prod_{w\in N_v} g(f(w)) \qquad \mbox{for} \qquad g(x):= \frac{x+(q-1)(1-p)}{(1-p)x+p+(q-1)(1-p)}\,.
\end{align*}
    The following calculus bound, which is proved later, holds for the function $g$.
	\begin{fact}
	\label{fact:g:bound}
	Fix $q,\gamma \ge 1$ and $p < p_u(q,\gamma)$. There exists $\xi\in (0,1/\gamma)$ such that $g(x) \le x^{1/\gamma-\xi}$ for all $x \ge 1$.
    \end{fact}
    Now, let $D_k \subset V(\T_h)$ denote the set of vertices at distance $k$ from the root $\rho$ and let $L_k \subseteq D_k$ be the set of leaves at distance $k$ from $\rho$. Setting $\zeta = 1/\gamma-\xi$, 
    and using the facts that $g(1) = 1$, and that if $w$ is a leaf that does not belong to $\partial\T_h$ then $Z_1(w) = 0$ and $g(f(w))=1$,
    we obtain
	\begin{align*}
	f(\rho) &= \prod\limits_{w \in D_1} g(f(w)) = \prod\limits_{w \in D_1\setminus L_1} g(f(w)) \le \prod\limits_{w \in D_1\setminus L_1} f(w)^\zeta\,.
	\end{align*}
		Iterating, and using the fact that $g(x) \le (1-p)^{-1}$ for all $x \ge 1$, we have
		\begin{align*}
	f(\rho)& \le \prod\limits_{w \in D_{h-1}\setminus L_{h-1}} f(w)^{\zeta^{h-1}} \le 
	(1-p)^{- \zeta^{h-1} |\partial\T_h|}\,.
	\end{align*}
	
	Then, recalling $\varphi(\rho) = Z_1(\rho)/(Z_0(\rho)+ Z_1(\rho))$, we get
	\begin{align*}
	\varphi(\rho) \le \frac{Z_1(\rho)}{Z_0(\rho)} = \frac{f(\rho)-1}{q} \le \frac{1}{q} \left(\frac{1}{1-p}\right)^{\zeta^{h-1} \cdot |\partial\T_h|} -  \frac{1}{q} \le \frac{\zeta^{h} \cdot |\partial\T_h|}{q(1-p)^{1/\zeta}}\,,
	\end{align*}	
	where the last inequality follows from the fact that $a^x \le 1+ ax$ when $a \ge 1$ and $x \in [0,1]$ since $\zeta^{h} \cdot |\partial\T_h| \le 1$ when $|\partial\T_h| \le \gamma^h$.
	The proof is complete by setting $\theta = 1 - \gamma \xi$.
\end{proof}

With Lemma~\ref{lemma:tree:decay} on hand, we can now provide the proof of Lemma~\ref{lemma:exp:decay:wired:treelike}, which
gives a precise bound on the rate of decay under stronger assumptions for the growth of $\T_h$. 
\begin{proof}[\textbf{\emph{Proof of Lemma~\ref{lemma:exp:decay:wired:treelike}}}]
    Let $u$ be a vertex in $\partial\T_h$ and for $v \in V(\T_h)$
	let $\vartheta(v,u)$ be the probability that $v$ is connected to $u$ in $\T_v$ under $\pi_{\T_v}^1$.
	Let $\vartheta^\circlearrowleft(v,u)$ be the probability of the same event under $\pi_{\cT_v}^{(1,\circlearrowleft)}$.
	
	By monotonicity we have $\vartheta(\rho,u) \le \vartheta^{\circlearrowleft}(\rho,u)$ and by a standard comparison between boundary conditions (see e.g., Lemma~\ref{lemma:simple-rc-bound}), we have $\vartheta^\circlearrowleft(\rho,u) \le q \vartheta(\rho,u)$. Hence, it suffices to bound $\vartheta(\rho,u)$. 
	Consider the unique path $P=(\rho=v_0,v_1,\dots ,u = v_h)$ between $\rho$ and $u$.
	Let $N_v$ denote the set of children of $v$.
	For $w \in N_{v_0}$, 
	let $I_w$ be the indicator function of the event that there is a 
	path from $v_0$ to $\partial \T_h$ going through $w$; set $I = \sum_{w \in N_\rho: w \neq v_1} I_w$.
	Then, we can write
	\begin{align*}
	\vartheta(\rho,u) \leq p  \cdot \pi_{\cT_{h}}^1(I \geq 1) \cdot \vartheta^\circlearrowleft(v_1,u)+\ps \vartheta(v_1,u) \le \vartheta(v_1,u) \left[pq^2 \cdot \pi_{\cT_{h}}^1(I \geq1)+ \ps\right]\,.
	\end{align*}
	In the first inequality, we used the fact that in order for the root to be connected to the vertex $u$,
	it is required that the root 
	is connected to $v_1$,
	and that  $v_1$ is connected to $u$ in its sub-tree. 
	The former event occurs with probability $p$ or $\ps$,
	depending on whether or not the root is connected to $\partial \cT_h$ through any child besides $v_1$.

	Let $\varphi(w)$ denote the probability that $w$ is connected to $\partial \cT_w$ under $\pi_{\T_w}^1$. Then, $\pi_{\cT_{h}}^1(I \geq1) \le \varphi(v_0)$ and since
	$|\partial \cT_{v_0}| \le \gamma^{h(v_0)}$ by assumption, Lemma~\ref{lemma:tree:decay} implies that for suitable constants $\theta  = \theta(p,q,\gamma) \in (0,1)$ and $C = C(p,q,\gamma) > 0$, we have 
	$\pi_{\cT_{h}}^1(I \geq1) \le C \theta^{h(v_0)}$
	Thus, setting $a = \frac{C pq^2}{\ps}$, and continuing the recursion
	we
	obtain 
	\begin{align*}
	\vartheta(\rho,u) \leq \ps \cdot \vartheta(v_1,u) \left[1 + a \cdot \theta^{h(v_0)}  \right] &\le \ps^{(1-\varepsilon)h} \prod_{i=0}^{(1-\varepsilon)h} \left[1 + a \cdot \theta^{h(v_i)}  \right] \\
	&\le \ps^{(1-\varepsilon)h}  \exp\Big[ a  \sum\nolimits_{i=0}^{(1-\varepsilon)h} \theta^{h(v_i)} \Big] \le  A \ps^{(1-\varepsilon)h},
	\end{align*}
	for a suitable constant $A = A(p,q,\gamma) > 0$.
	Hence, $\vartheta^{\circlearrowleft}(\rho,u) \le A q^2 \ps^{(1-\varepsilon)h}$ and the result follows.
\end{proof}

\subsection{Proofs of auxiliary facts}

We now provide the deferred proofs of Facts~\ref{fact:recurrence} and~\ref{fact:g:bound}. 

\begin{proof}[\textbf{\emph{Proof of Fact~\ref{fact:recurrence}}}]
For $v \in V(\T_h)$, let $N_v$ denote the set of children of $v$
and let $\partial \cT_v \subseteq \partial \cT_h$ be the set of vertices of $\T_v\cap \partial \cT_h$.
We compute $Z_1(v)$ and $Z_0(v)$ by partitioning the space of configurations according to which subtrees of $v$ among $\{\cT_u: u\in N_v\}$ are connected to the $\partial \cT_v$. For each configuration $\omega$, the connectivity of the children of $v$ to their respective boundaries is encoded by a vector $a_\omega \in \{0,1\}^{N_v}$,
where for $u \in N_v$ we have $a_\omega(u) = 1$ when $u$ is connected to $\partial \cT_u$ by a path in $\cT_u$.

We start by proving the identity for $Z_1(v)$.
In this case, we only consider configurations such that ${\|a_\omega\|}_1 \ge 1$.
For a fixed vector $a_\omega$ such that ${\|a_\omega\|}_1 = k$, let
$u_1,\dots,u_{k} \in N_v$ be the neighbors of $v$ for which $a_\omega(u_i) = 1$,
and
let $\hat{u}_1,\dots,\hat{u}_{l} \in N_v$ be the neighbors of $v$ for which $a_\omega(\hat{u}_i) = 0$; hence $l = |N_v| - k$.
Any random-cluster configuration $\omega$ of $\T_v$, can be partitioned into the configuration
on $E(\{v\} \cup \bigcup_{u_i} \T_{u_i})$
and the configuration on $E(\{v\} \cup \bigcup_{\hat{u}_i} \T_{\hat{u}_i})$.

Given a vector $a$, let $W_1(v,a,1)$ denote 
the total weight under the wired boundary condition of the random-cluster configurations on $E(\{v\} \cup \bigcup_{u_i} \T_{u_i})$
that contain a $v$ to $\partial \cT_v$ connection
and a $u_i$ to $\partial \cT_{u_i}$ path in $\cT_{u_i}$ for every $i \in \{1,\dots,k\}$. Similarly,
let $W_1(v,a,0)$ denote the total weight of the configurations on $E(v \cup \bigcup_{\hat{u}_i} \T_{\hat{u}_i})$ in which there is no path between $\hat u_i$ and $\partial \cT_{\hat{u}_i}$ in $\cT_{\hat{u}_i}$ for $i \in \{1,\dots,l\}$. 
Since conditioning on a disconnected configuration on $E(\{v\} \cup \bigcup_{\hat{u}_i}\cT_{\hat u_i})$ has no effect on the weight of the configuration on $E(\{v\} \cup \bigcup_{u_i} \T_{u_i})$, we have the identity 
\begin{equation}
\label{eq:z}
Z_1(v) = \frac 1q \sum_{a \in \{0,1\}^{N_v}:{\|a\|}_1 \ge 1} W_1(v,a,1) W_1(v,a,0)\,.
\end{equation}
Here, the $1/q$ factor comes from merging the two wired boundary components when ${\|a\|}_1 < |N_v|$; if  ${\|a\|}_1 = |N_v|$, we set $W_1(v,a,0)=q$.

We compute $W_1(v,a,1)$ first. We use $\Omega_1(\T_x)$ (resp., $\Omega_0(\T_x)$) for the set of all random-cluster configurations on the subtree $\cT_x$ in which there is (resp., there is not) an open path between $x$ and $\partial \cT_h$ in $\cT_x$.
For a configuration $\eta_i \in \Omega_0(\T_{u_i}) \cup \Omega_1(\T_{u_i})$, we use $\w(\eta_i) = p^{|\eta_i|}(1-p)^{|E(\T_{u_i})|-|\eta_i|} q^{c_1(\eta_i)}$
for the weight of the random-cluster configuration on $\T_{u_i}$ under the wired boundary condition; i.e., $c_1(\eta_i)$ corresponds to the number of connected components on $\eta_i$ taking into consideration the wired boundary condition.
Then, accounting also for the configuration in the edges between $v$ and the $u_i$'s, we have
\begin{align}
W_1(v,a,1) &= \sum_{\eta_1 \in \Omega_1(\T_{u_1})} \cdots \sum_{\eta_k \in \Omega_1(\T_{u_k})} \left(\prod_{i=1}^{k} \w(\eta_i) \right) \frac{1}{q^{k-1}}\left(\sum_{i=1}^k \binom{k}{i} p^{i} (1-p)^{k-i}\right) \label{eq:long:1} \\
&= \frac{1-(1-p)^k}{q^{k-1}} \prod_{i=1}^k Z_1(u_i)\,. \label{eq:long:1.1} 
\end{align}
The re-scaling in~\eqref{eq:long:1} by $\frac{1}{q^{k-1}}$ comes from the fact that the $k$ boundary components in each subtree are all merged into a single component.
By similar reasoning, when $\|a\|_1 < |N_v|$
\begin{align}
W_1(v,a,0) &= \sum_{\eta_1 \in \Omega_0(\T_{\hat{u}_1})} \cdots \sum_{\eta_l \in \Omega_0(\T_{\hat{u}_l})} \left(\prod_{i=1}^{l} \w(\eta_i) \right) \frac{1}{q^{l-1}}\left(\sum_{i=0}^l\binom{l}{i} \left(\frac{p}{q}\right)^{i} (1-p)^{l-i}\right) \label{eq:long:2}  \\
&= \frac{(1-p+p/q)^l}{q^{l-1}} \prod_{i=1}^l Z_0(u_i)\,. \label{eq:long:2.1}
\end{align}
Note that in~\eqref{eq:long:2}, in addition to the re-scaling by $\frac{1}{q^{l-1}}$ from merging the boundary components, any edge between $v$ and one of its children decreases the number of components by $1$; hence the $q^{-i}$ in the term $(\frac{p}{q})^i$. 
 
Recall that $t = 1-p+p/q$. Plugging \eqref{eq:long:1.1}  and \eqref{eq:long:2.1} into~\eqref{eq:z} we obtain
\begin{align*}
Z_1(v) 
&= q \sum_{a \in \{0,1\}^{N_v}:{\|a\|}_1 \ge 1}(1-(1-p)^{{\|a\|}_1}) \prod_{w \in N_v:a(w)=1} \frac{Z_1(w)}{q} \prod_{w \in N_v:a(w)=0} \frac{t Z_0(w)}{q}. 
\end{align*}
Observe next that
\begin{align*}
\sum_{a \in \{0,1\}^{N_v}:{\|a\|}_1 \ge 1}\prod_{w \in N_v:a(w)=1} \frac{Z_1(w)}{q} &\prod_{w \in N_v:a(w)=0}  \frac{t \cdot Z_0(w)}{q} \\
&= \prod_{w \in N_v} \left(\frac{Z_1(w)}{q}+\frac{t \cdot Z_0(w)}{q}\right) - \prod_{w \in N_v} \frac{t \cdot Z_0(w)}{q},
\end{align*}
and
\begin{align*}
\sum_{a \in \{0,1\}^{N_v}:{\|a\|}_1 \ge 1}(1-p)^{{\|a\|}_1}  & \prod_{w \in N_v:a(w)=1} \frac{Z_1(w)}{q} \prod_{w \in N_v:a(w)=0}  \frac{t \cdot Z_0(w)}{q} \\
&= \prod_{w \in N_v} \left(\frac{(1-p)Z_1(w)}{q}+\frac{t \cdot Z_0(w)}{q}\right) - \prod_{w \in N_v} \frac{t \cdot Z_0(w)}{q}.
\end{align*}
Hence, 
\begin{align*}
Z_1(v) &= q \prod_{w \in N_v} \left(\frac{Z_1(w)}{q}+\frac{t \cdot Z_0(w)}{q}\right)  - q \prod_{w \in N_v} \left(\frac{(1-p)Z_1(w)}{q}+\frac{t \cdot Z_0(w)}{q}\right),
\end{align*}
as claimed. The expression for $Z_0(v)$ can be derived from an analogous and slightly simpler argument and is thus omitted.
\end{proof}

\begin{proof}[\textbf{\emph{Proof of Fact~\ref{fact:g:bound}}}]
	We first consider the interval $x \in [1,1+\eta]$ for some $\eta > 0$ small. It can be checked that 
	\begin{align*}
	 g'(x) &= \frac{p (p + q - p q)}{(-1 + q + x - p (-2 + q + x))^2}\,, \\  
	g''(x) &= \frac{-2 p(1 - p) (q+(1-p)p)}{((1-p)x+p+(1-p)(q-1))^3}\,.
	\end{align*}
	Hence, $g'(1) = \ps$ and $|g''|$ is decreasing for $x \ge 1$. Then, from the Taylor expansion of $g$ at $1$, we get
	\begin{align}
	\label{eq:g:ineq2} 
	g(x) &\le 1 + \ps (x-1) + c\eta^2 ,
	\end{align}
	where $c = c(p,q) > 0$ is suitable constant. Similarly, using the Taylor expansion of $x^{1/\gamma-\xi}$ at $1$, we obtain
	$$
	x^{1/\gamma-\xi} \ge 1 + ({1}/{\gamma}-\xi) (x-1) - c' \eta^2
	$$
	for a suitable constant $c' = c'(\gamma,\xi) > 0$.
	Since $\ps < 1/\gamma$ when $p < p_u(q,\gamma)$, then for sufficiently small $\xi$ and $\eta$ (depending on $p,q,\gamma$) we have 
 	$
 	g(x)\le x^{1/\gamma -\xi}
 	$
 	as desired. 
 	
 	We next observe that since $g(x) \le \frac{1}{1-p}$,
 	we have $g(x) \le x^{1/\gamma - \xi}$ for all $x\ge K$ for $K$ sufficiently large (depending on $p,q,\gamma$), importantly independent of $\xi$ as long as $\xi<1/(2\gamma)$, say.
 	
 	It remains to consider the case when $x \in (1+\eta,K)$. For this, let us give an auxiliary form of $p_u(q,\gamma)$: 
 	 	\begin{align}\label{eq:p-u-alternate-form}
 	    p_u(q,\gamma)= \sup\Big\{p: \sup_{x>1} \{g_p(x) - x^{1/\gamma}\} \le 0\Big\}\,.
 	\end{align}
 	 (where we have added the $p$ subscript to $g$ to emphasize the $p$ dependence there). 
    Let us first conclude the proof assuming the equality of~\eqref{eq:p-u-alternate-form}. By direct computation, it can be checked that $\frac{\partial g(x)}{\partial p} > 0$
 	whenever $x > 1$. Hence, fixing $p'\in (p, p_u(q,\gamma))$ for every $x > 1$
 	we have $g_p(x) < g_{p'}(x)$, and by continuity $g_p(x) \le g_{p'}(x) -\delta$ for a sufficiently small $\delta > 0$. By continuity, in fact there exists a uniform choice of $\delta>0$ such that 
 	\begin{align*}
 	    g_p(x)<g_{p'}(x)- \delta \qquad \mbox{for all $x\in [1+\eta,K]$}\,.
 	\end{align*}
 	At the same time, for $\xi$ sufficiently small, depending on $\delta,\gamma,K$, we have 
 	$$
 	|x^{1/\gamma}-x^{1/\gamma-\xi}| \le \delta \qquad \mbox{for all $x\in [1+\eta,K]$}\,.
 	$$
    Combining these two, and using~\eqref{eq:p-u-alternate-form}, we see that for all $x\in [1+\eta,K]$, 
 	$$
 	g_p(x) \le g_{p'}(x)-x^{1/\gamma}+x^{1/\gamma-\xi} \le x^{1/\gamma -\xi} \qquad\mbox{for all $x\in [1+\eta,K]$}\,.
 	$$

 	It remains to establish the equality~\eqref{eq:p-u-alternate-form}. We first rewrite the definition of $p_u(q,\gamma)$ from~\eqref{eq:p_u} as 
 	\begin{align*}
 	    p_u(q,\gamma) = \sup \Big\{ p: \sup_{y>1}\{p-1 + \frac{1}{1+h(y)}\} \le 0\Big\}\,.
 	\end{align*}
 	It therefore suffices to establish that 
 	\begin{align*}
 	    \sup_{y>1}\{ p-1 +\frac{1}{1+h(y)} \} \le 0 \iff \sup_{x>1} \{g(x) - x^{1/\gamma}\}\le 0\,.
 	\end{align*}
 	By substituting $y = x^{1/\gamma}$, and calculating, this reduces to showing that for every $y>1$, 
 	\begin{align*}
 	    \frac{ - (1-p)y^{\gamma+1} + y^{\gamma} - [p+(q-1)(1-p)]y + (q-1)(1-p)}{y^\gamma - y + (y-1)(y^{\gamma} + q-1)} \le 0  
 	\end{align*}
 	if and only if 
 	\begin{align*}
 	    \frac{ - (1-p)y^{\gamma+1} + y^{\gamma} - [p+(q-1)(1-p)]y + (q-1)(1-p)}{(1-p)y^\gamma + p +(q-1)(1-p)} \le 0\,.
 	\end{align*}
 	This equivalence follows because the numerators are the same, and the denominators are both positive whenever $\gamma>1$, $q\ge 1$ and $y>1$.  
\end{proof}

\subsubsection{Uniqueness in general trees}

As a consequence of the decay of the root-to-leaf connectivity we have established, it follows that there is a unique infinite wired random-cluster measure
 whenever $p<p_u(q,\gamma)$ on infinite trees with average branching $\gamma$. 
 The random-cluster measure on the infinite wired tree is defined using the Dobrushin-Lanford-Ruelle (DLR) formalism (see, e.g.,~\cite{Haggstrom,Grimmett}); in particular, the wired boundary condition corresponds to counting all infinite connected components as one. 
 
 Let $\T$ be an infinite tree, let $D_h \subset V(\T)$ denote the set of vertices at distance $h$ from the root of $\T$ and define the branching rate $Br(\cT)$ per~\cite{Lyons} as: 
\begin{align*}
    Br(\T) = \inf\big\{\lambda>0: \inf_h {|D_h|}{\lambda^{-h}} = 0\big\}.
\end{align*}
Observe that if $Br(\T) < \gamma$, then $|D_h| < \gamma^h$ for all sufficiently large $h$.
We prove the following.
\begin{corollary}\label{cor:rc:unique}
    Fix $q \ge 1$, $\gamma > 1$ and  $p < p_u(q,\gamma)$.
    Suppose $\T$ is an infinite tree with ${Br}(\T) < \gamma$.
    Then, there is a unique infinite-volume random-cluster measure on $\T$ under the wired boundary condition.
\end{corollary}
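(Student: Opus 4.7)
The plan is to reduce the corollary to Lemma~\ref{lemma:tree:decay} applied to finite truncations $\T_h$ of $\T$, and then invoke the standard criterion that uniqueness of the infinite-volume random-cluster measure on a tree follows from agreement of the wired and free weak limits, which in turn follows from the decay of the root-to-boundary connection probability under the wired boundary condition.

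First, I would observe that $p_u(q,\gamma)$, as defined in~\eqref{eq:p_u}, is continuous and non-increasing in $\gamma$ (for fixed $y>1$ and $q\ge 1$, the function $h(y)$ in~\eqref{eq:p_u} is non-decreasing in $\gamma$, so $\inf_{y>1} h(y)$ is non-decreasing and hence $p_u$ non-increasing). Since $p<p_u(q,\gamma)$ and ${Br}(\T)<\gamma$, I can pick $\gamma'\in ({Br}(\T),\gamma)$ with $p<p_u(q,\gamma')$. By definition of the branching number, there exists $h_0$ such that $|D_h|\le (\gamma')^h$ for every $h\ge h_0$. Any subtree $\cT_v\subset \cT$ with $v$ at depth $k$ then satisfies $|\partial \cT_v\cap D_h|\le |D_h|\le (\gamma')^{h-k}\cdot (\gamma')^{k}$, and in particular for $v=\rho$ and $h\ge h_0$ the hypothesis of Lemma~\ref{lemma:tree:decay} holds (with $\gamma'$ in place of $\gamma$). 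Hence there exist $\theta=\theta(p,q,\gamma')\in (0,1)$ and $C>0$ such that
\[
\pi^{1}_{\T_h}\!\bigl(\rho \text{ is connected to } \partial \T_h\bigr)\; \le\; C\,\theta^{h}\qquad\text{for all } h\ge h_0.
\]
By re-rooting, the analogous bound (with possibly enlarged $C$) holds for any fixed vertex $v\in V(\T)$.

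Second, I would appeal to the standard DLR/FKG framework for the random-cluster model on a locally finite graph with $q\ge 1$ (see~\cite{Grimmett}): the measures $\pi^{1}_{\T_h}$ decrease stochastically in $h$ to a weak limit $\pi^1_{\T}$ (the wired infinite-volume measure), the measures $\pi^{0}_{\T_h}$ increase to a weak limit $\pi^0_{\T}$, and every infinite-volume random-cluster measure on $\T$ is stochastically sandwiched between $\pi^0_{\T}$ and $\pi^1_{\T}$. Thus uniqueness is equivalent to the identity $\pi^0_{\T}=\pi^1_{\T}$.

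Finally, to establish this identity I would show that on any finite edge set $F\subset E(\T)$ the marginals of $\pi^0_{\T_h}$ and $\pi^1_{\T_h}$ agree in the limit $h\to\infty$. For $h$ large enough that $F\subseteq E(\T_h)$, a standard monotone coupling of boundary conditions (together with the comparison in Lemma~\ref{lemma:simple-rc-bound}, used just as in the proof of Lemma~\ref{lemma:exp:decay:wired:treelike}) shows the marginals on $F$ can differ only on the event that some endpoint of $F$ is connected in $\omega$ to $\partial \T_h$. Union-bounding over the $2|F|$ endpoints and applying the exponential decay from the first step gives
\[
\bigl\|\pi^1_{\T_h}|_F - \pi^0_{\T_h}|_F\bigr\|_{\tv}\; \le\; 2|F|\,C\,\theta^{h}\;\xrightarrow[h\to\infty]{}\;0,
\]
so $\pi^1_{\T}$ and $\pi^0_{\T}$ coincide on every cylinder event, which yields the desired uniqueness. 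The essential content of the proof is Lemma~\ref{lemma:tree:decay}; the main delicate point is the opening step, namely the use of the continuity/monotonicity of $p_u(q,\gamma)$ in $\gamma$ to absorb a strictly sub-$\gamma$ volume growth bound coming from ${Br}(\T)<\gamma$ into the uniqueness hypothesis $p<p_u(q,\gamma)$.
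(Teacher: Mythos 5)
Your proposal is correct in outline and relies on the same key decay input (Lemma~\ref{lemma:tree:decay}), but it takes a genuinely different route from the paper's proof. The paper invokes~\cite[Lemma~3.1]{Haggstrom} for the one-sided comparison $\mu\preceq\pi_\T^1$ for any infinite-volume measure $\mu$, then uses Lemma~\ref{lemma:tree:decay} to show $\pi_\T^1(\rho\leftrightarrow\infty)=0$, and concludes that a.s.\ every edge is a cut-edge, so $\pi_\T^1$ is the i.i.d.\ $\ber(\ps)$ product measure, with the same argument then applying to any $\mu\preceq\pi_\T^1$. You instead use the two-sided FKG sandwich $\pi_\T^0\preceq\mu\preceq\pi_\T^1$, reduce to $\pi_\T^0=\pi_\T^1$, and argue via convergence of finite cylinder marginals through a monotone coupling in which disagreement must reach $\partial\T_h$. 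Your route is arguably more textbook, while the paper's has the extra payoff of explicitly identifying $\pi_\T^1$ as a product measure. Two small points worth flagging. There is a sign slip in your opening parenthetical: for fixed $y>1$ and $q\ge 1$, the function $h(y)$ in~\eqref{eq:p_u} is \emph{decreasing} in $\gamma$ (differentiating in $z=y^\gamma$ gives $\partial h/\partial z=-(y-1)(y+q-1)/(z-y)^2<0$), so $\inf_{y>1}h(y)$ is non-increasing and hence $p_u$ is non-increasing in $\gamma$; your two sign errors cancel and the conclusion you use is still correct. Also, the claim that the coupled marginals can differ on $F$ only when an endpoint of $F$ is connected to $\partial\T_h$ deserves a more careful justification than ``a standard monotone coupling'' suggests; the cleanest version on a tree is to note that $\pi^0_{\T_h}$ is \emph{exactly} the $\ber(\ps)$ product measure (every edge is a cut-edge under free boundary), and then bound $\|\pi^1_{\T_h}|_F-\ber(\ps)^{\otimes F}\|_\tv$ by the probability that some edge of $F$ fails to be a cut-edge, which requires both its endpoints to reach $\partial\T_h$; this is also where the slack $\gamma'<\gamma$ you introduced is genuinely needed, so that $|\partial\cT_v|\le(\gamma')^h\le\gamma^{h-k}$ for $v$ at fixed depth $k$ once $h$ is large enough.
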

\begin{proof}
Let $\T_h$ denote the subtree of $\T$ that includes all vertices at distance at most $h$ from the root $\rho$ of $\T$.
Let $\pi_\T^1 = \lim_{h \rightarrow \infty} \pi_{\T_h}^1$.
It was established in~\cite[Lemma 3.1]{Haggstrom} that the limiting measure $\pi_\T^1$ 
is a random-cluster measure with parameters $p$ and $q$ and, moreover, that 
any other random-cluster measure on $\T$ with the same parameters is stochastically dominated by $\pi_\T^1$. (We note that Lemma 3.1 from~\cite{Haggstrom} is stated for the case when $\T$ is a  homogeneous tree, but the proof there does not use this assumption, and the result clearly extends to general trees.) Now, since ${Br}(\T) < \gamma$ we have that $|D_h| < \gamma^h$ for sufficiently large $h$, and so Lemma~\ref{lemma:tree:decay} implies that $\pi_{\T}^1(\rho \leftrightarrow \infty) =  \lim_{h \rightarrow \infty} \pi_{\T_h}^1(\rho \leftrightarrow \partial \cT_h) = 0$.
This implies that the conditional probability that any edge $e$ is present, given the configuration outside of $e$, is $\ps$ with probability $1$ (see, e.g., the proof of Theorem 1.8 in ~\cite{Haggstrom}). Hence, $\pi_{\T}^1$ corresponds to the i.i.d.\ distribution on $\{0,1\}^{E(\T)}$ with edge probability $\ps$.
By the same argument, the same is true for any other random-cluster $\mu$ since $\mu \preceq \pi_{\T}^1$, and the result follows.
\end{proof}

\begin{corollary}
    Fix $q\ge 2$ integer, $\gamma>1$ and $p<p_u(q,\gamma)$. Suppose $\cT$ is an infinite tree with $Br(\gamma)<\gamma$. Then there is a unique infinite-volume Potts measure on $\cT$. 
\end{corollary}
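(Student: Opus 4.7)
The plan is to reduce Potts uniqueness to random-cluster uniqueness via the Edwards--Sokal coupling, using Corollary~\ref{cor:rc:unique} as the main input. Before applying the coupling, I would first upgrade Corollary~\ref{cor:rc:unique} to cover every infinite-volume random-cluster measure on $\cT$, not just the wired one. Its proof shows that $\pi_{\cT}^1$ equals the i.i.d.\ Bernoulli$(\ps)$ product measure on $E(\cT)$, so in particular $\pi_{\cT}^1$ has no infinite cluster almost surely. Since existence of an infinite cluster is an increasing event, and any infinite-volume random-cluster measure $\mu$ on $\cT$ satisfies $\mu \preccurlyeq \pi_{\cT}^1$ by~\cite[Lemma 3.1]{Haggstrom}, $\mu$ also has no infinite cluster a.s. On a tree this forces every edge to be a.s.\ a cut-edge under $\mu$, so the DLR equations make each edge conditionally open with probability $\ps$, giving $\mu = \pi_\cT^1$.

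Next, I would invoke the Edwards--Sokal coupling: given any infinite-volume $q$-state Potts Gibbs measure $\mu^{\textrm{P}}$ on $\cT$ at inverse temperature $\beta = -\ln(1-p)$, opening each monochromatic edge independently with probability $p$ produces a joint spin--edge measure whose edge marginal is an infinite-volume random-cluster measure on $\cT$ with parameters $p,q$. By the uniqueness just established, this edge marginal must be the i.i.d.\ Bernoulli$(\ps)$ product measure; in particular, all of its clusters are a.s.\ finite.

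Finally, under the Edwards--Sokal joint law the conditional distribution of spins given the edge configuration is the uniform measure over colorings that are constant on each cluster and independent between clusters. Because all clusters are finite a.s., this pins $\mu^{\textrm{P}}$ down uniquely as the law obtained from an i.i.d.\ Bernoulli$(\ps)$ edge configuration by assigning an independent uniform color in $[q]$ to each finite cluster. The main obstacle I anticipate is the careful formalization of the Edwards--Sokal correspondence in infinite volume on a tree and the verification that it really determines $\mu^{\textrm{P}}$; however, in the absence of infinite clusters this is standard, since the only freedom the coupling normally has is in the joint coloring of infinite components, and that freedom is eliminated here.
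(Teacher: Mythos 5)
The paper states this corollary without any proof, implicitly treating it as an immediate consequence of Corollary~\ref{cor:rc:unique} via the well-known equivalence (cf.\ Theorem~1.8 of~\cite{Haggstrom}, which the paper cites in the proof of Corollary~\ref{cor:rc:unique}) between uniqueness of the Potts Gibbs measure and the absence of an infinite cluster in the wired random-cluster measure. Your proposal is essentially a self-contained re-derivation of that implication, and it is correct. The structure is right: (i) Corollary~\ref{cor:rc:unique}'s proof already shows (not merely that the wired limit is unique, but) that \emph{every} random-cluster DLR measure $\mu$ on $\cT$ is stochastically below $\pi_\cT^1$ and hence equals the i.i.d.\ $\ber(\ps)$ product measure with no infinite cluster; (ii) the Edwards--Sokal edge-opening kernel maps any infinite-volume Potts Gibbs measure to an infinite-volume random-cluster measure, which must therefore be this same product measure; and (iii) in the absence of infinite clusters the coloring kernel is determined, pinning down the Potts measure uniquely. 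The only place where care is needed is the one you flag, namely the infinite-volume Edwards--Sokal correspondence, and your assessment is right that this is standard: on a locally finite graph, if $\mu^{\mathrm{P}}$ is a Potts Gibbs measure then the edge marginal under the monochromatic-edge-opening kernel is indeed a random-cluster DLR measure (see Grimmett, \emph{The Random-Cluster Model}, Chapter~4, or H\"aggstr\"om's~\cite{Haggstrom} Theorem~1.8 and its proof), and the only indeterminacy when pulling back from edges to spins lives on infinite clusters, which are a.s.\ absent here. So your argument and the paper's intended argument coincide; yours simply makes explicit what the paper leaves to the reader, at the modest cost of invoking Edwards--Sokal rather than citing H\"aggstr\"om's equivalence theorem directly.
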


\subsection{Galton--Watson trees: volume and uniqueness}
As corollaries of our results on general trees, we can obtain exponential decay and uniqueness results for the random-cluster model on a Galton--Watson random tree. Let $\nu$ denote the progeny distribution for a Galton--Watson tree. 
For $\ell \ge 0$ let $Z_\ell$ be the number of vertices in $\ell$-th generation so that $Z_0 = 1$ and $Z_1 \sim \nu$.
Our first result provides a tail bound for $Z_\ell$ (under mild moment assumptions on $\nu$). 
This bound will 
allow us to argue that the Galton--Watson tree satisfies the volume assumptions of Lemma~\ref{lemma:exp:decay:wired:treelike}, and deduce uniqueness of the random-cluster measure on super-critical Galton--Watson trees when $p<p_u(q,\gamma)$.

\begin{lemma}
	\label{lemma:gw:leaves}
	Let $N \sim \nu$, $\kappa \ge 1$ and
	suppose $\E[N] \ge 1$ and $\E[N^\kappa] < \ew_\kappa$ for some constant $\ew_\kappa$. 
	If $\kappa$ is a power of $2$, there exists $C = C(\kappa,\E[N],\ew_\kappa)$ such that for every $\gamma > 0$ and every $1\le \ell \le h$,
	$$
	\Pr(Z_\ell \ge \gamma^h) \le C h^{2\kappa} \left(\frac{\E[N]}{\gamma}\right)^{\kappa h}.
	$$
\end{lemma}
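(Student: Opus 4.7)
The plan is to reduce the tail bound to a moment bound via Markov's inequality, and then prove the moment bound by induction on $\kappa$ through the powers of $2$. Concretely, I aim to establish
\begin{equation*}
\E[Z_\ell^\kappa] \le C_\kappa\, h^{2\kappa}\, m^{\kappa \ell} \qquad (\star)
\end{equation*}
for all $1 \le \ell \le h$, where $m := \E[N]$. Once $(\star)$ is in hand, Markov's inequality applied to $Z_\ell^\kappa$, together with the assumptions $m \ge 1$ and $\ell \le h$, immediately yields
\[
\Pr(Z_\ell \ge \gamma^h) \le \E[Z_\ell^\kappa]/\gamma^{\kappa h} \le C_\kappa\, h^{2\kappa}\, (m/\gamma)^{\kappa h},
\]
which is exactly the claim.

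The base case $\kappa = 1$ of $(\star)$ follows from $\E[Z_\ell] = m^\ell$. For the inductive step from $\kappa$ to $2\kappa$ (both powers of $2$), I condition on $Z_{\ell-1}$ and write $Z_\ell = \sum_{j=1}^{Z_{\ell-1}} N_j$ with $(N_j)$ iid copies of $N$ independent of $Z_{\ell-1}$. Decomposing $Z_\ell = m Z_{\ell-1} + \sum_j(N_j - m)$ and applying Rosenthal's inequality to the centered sum of length $Z_{\ell-1}$ with index $p = 2\kappa$ gives
\[
\E\!\left[(Z_\ell - m Z_{\ell-1})^{2\kappa} \,\big|\, Z_{\ell-1}\right] \le C\, \bigl(Z_{\ell-1} + Z_{\ell-1}^{\kappa}\bigr),
\]
where $C$ depends on $\ew_{2\kappa}$. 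Combining this with $(a+b)^{2\kappa} \le 2^{2\kappa}(a^{2\kappa}+b^{2\kappa})$, taking unconditional expectations, and absorbing $\E[Z_{\ell-1}] \le \E[Z_{\ell-1}^\kappa]$ (by Jensen, since $m \ge 1$ and $\kappa \ge 1$) yields the linear recursion
\[
\E[Z_\ell^{2\kappa}] \le C'\, \E[Z_{\ell-1}^\kappa] + m^{2\kappa}\, \E[Z_{\ell-1}^{2\kappa}].
\]

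Unrolling this recursion from $\E[Z_0^{2\kappa}] = 1$ and inserting the inductive hypothesis $\E[Z_{\ell-1-i}^\kappa] \le C_\kappa\, h^{2\kappa}\, m^{\kappa(\ell-1-i)}$ gives
\[
\E[Z_\ell^{2\kappa}] \le C''\, h^{2\kappa}\, m^{\kappa(\ell-1)} \sum_{i=0}^{\ell-1} m^{\kappa i} + m^{2\kappa \ell}.
\]
When $m > 1$ the geometric sum is at most a constant (depending on $m,\kappa$) times $m^{\kappa \ell}$, giving $\E[Z_\ell^{2\kappa}] \le C_{2\kappa}\, h^{2\kappa}\, m^{2\kappa \ell}$. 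When $m = 1$ the sum equals $\ell \le h$, giving $\E[Z_\ell^{2\kappa}] \le C_{2\kappa}\, h^{2\kappa+1} \le C_{2\kappa}\, h^{4\kappa}$ (the last step uses $\kappa \ge 1$). In both cases $(\star)$ holds with $\kappa$ replaced by $2\kappa$ (noting that $2(2\kappa) = 4\kappa$), closing the induction.

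The main technical subtlety is precisely the critical regime $m = 1$, where the geometric sum degenerates and produces an extra polynomial-in-$\ell$ factor. The generous exponent $2\kappa$ in $(\star)$ is chosen so that the polynomial budget has enough slack to absorb this factor at every doubling step of the induction; a sharper exponent of order $\log_2 \kappa$ is achievable, but is not needed for the lemma's conclusion.
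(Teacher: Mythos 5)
Your overall strategy is the same one the paper uses---a doubling induction in the moment index, proving a moment bound on the generation size and converting it to a tail bound by Markov---but the execution has a genuine gap at the step where you extract the linear recursion. After applying Rosenthal and the elementary bound $(a+b)^{2\kappa}\le 2^{2\kappa-1}(a^{2\kappa}+b^{2\kappa})$, what you actually obtain is
\[
\E[Z_\ell^{2\kappa}] \le 2^{2\kappa-1}\, m^{2\kappa}\, \E[Z_{\ell-1}^{2\kappa}] + C'\, \E[Z_{\ell-1}^\kappa]\,,
\]
but you silently dropped the factor $2^{2\kappa-1}$ in front of $m^{2\kappa}$. This matters: when you unroll the recursion over $\ell\le h$ steps, the homogeneous part contributes $(2^{2\kappa-1}m^{2\kappa})^\ell$, which carries an extra factor $2^{(2\kappa-1)\ell}$ that is exponential in $\ell$ (hence in $h$) and cannot be absorbed into a polynomial-in-$h$ constant. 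In the regime $m=1$ that you flag as the delicate one, this destroys the bound entirely: the recursion then yields $\E[Z_\ell^{2\kappa}]\gtrsim 2^{(2\kappa-1)\ell}$, far worse than the target $C_{2\kappa}h^{4\kappa}$.

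The paper avoids this by working with the normalized martingale $W_\ell = Z_\ell/m^\ell$ and telescoping it as $W_\ell = \sum_{i\le \ell}(W_i - W_{i-1})$: a single application of Jensen's inequality then costs only one factor $\ell^{\kappa-1}$, rather than accumulating a multiplicative constant at every unrolling step, and the Marcinkiewicz--Zygmund bound on each increment $W_i-W_{i-1}$ supplies the moment recursion in $\kappa$. If you want to keep your decomposition of $Z_\ell$, you can repair the argument either by switching to $W_\ell$ and telescoping, or by replacing the crude $(a+b)^{2\kappa}$ bound with the exact binomial expansion: conditioning on $Z_{\ell-1}$, the cross terms of odd order one vanish, so
\[
\E[Z_\ell^{2\kappa}\mid Z_{\ell-1}] = m^{2\kappa}Z_{\ell-1}^{2\kappa} + \sum_{j\ge 2}\binom{2\kappa}{j}(mZ_{\ell-1})^{2\kappa-j}\,\E[S^j\mid Z_{\ell-1}]\,,
\]
whose leading coefficient is exactly $m^{2\kappa}$ with no spurious constant, and the remaining terms are of strictly lower degree in $Z_{\ell-1}$. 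Alternatively, a weighted inequality $(a+b)^{p}\le(1+\delta)^{p-1}a^p + (1+1/\delta)^{p-1}b^p$ with $\delta \asymp 1/h$ would also work, at the cost of some additional $h$-polynomial bookkeeping. As written, though, the proof does not close.
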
	
\begin{proof}
	Let $\E[N] = \ew$ and $W_\ell = Z_\ell/\ew^\ell$.
	From the definition of the Galton-Watson tree we have that $Z_{\ell+1} = \sum_{i=1}^{Z_\ell} N_{\ell,i}$ for all $\ell \ge 1$, where the $N_{\ell,i}$ are independent copies of $N \sim \nu$. Then,
	\begin{equation}
	\label{eq:gw:id}
	W_{\ell+1} - W_\ell = \frac{1}{\ew^\ell} \sum_{i=1}^{Z_\ell} (W_{1,i} - 1)\,,
	\end{equation}
	where the $W_{1,i}$'s are i.i.d.'s instances of $W_1 = N/\ew$. From~\eqref{eq:gw:id} we deduce that  
	$$
	\E[|W_{\ell+1} - W_\ell|^\kappa\mid Z_\ell = L] = \frac{1}{\ew^{\kappa\ell}} \E\left[\left|\sum_{i=1}^{L} (W_{1,i} - 1)\right|^\kappa\right].
	$$		
	Since the $W_{1,i}$'s are i.i.d.'s with $\E[W_{1,i} - 1] = 0$, and  
	$
	\E[|W_{1,i} -1|^\kappa] \le 2^\kappa(\frac{1}{\ew^\kappa} \E[N^\kappa] + 1)
	$
	is finite, 
	it follows from the Marcinkiewicz--Zygmund inequality that
	$$
	\E\left[ {\left| \sum_{i=1}^{L} (W_{1,i} - 1) \right|}^{\kappa} \right] \le B_\kappa \cdot \E\left[ {\left( \sum_{i=1}^{L} (W_{1,i} - 1)^2 \right)}^{\kappa/2}  \right],
	$$
	where $B_\kappa=(2\lceil \kappa/2 \rceil)^{1/2})^\kappa$ (see Section~10.3 in~\cite{chow2003probability}).
	For $\kappa > 2$, Jensen's inequality then provides the bound
	\begin{equation}
	\label{eq:holder:ineq}
	\E\left[ {\left( \sum_{i=1}^{L} (W_{1,i} - 1)^2 \right)}^{\kappa/2} \right]  \le  L^{\kappa/2-1} \sum_{i=1}^{L} \E[|W_{1,i} - 1|^\kappa]\,.
	\end{equation}
	Combining these inequalities and taking expectations we obtain
	$$
	\E[|W_{\ell+1} - W_\ell|^\kappa]  \le \frac{B_\kappa }{\ew^{\kappa\ell}} \E[Z_\ell^{\kappa/2}] \E[|W_{1} - 1|^\kappa] = \frac{B_\kappa }{\ew^{\kappa\ell/2}} \E[W_\ell^{\kappa/2}] \E[|W_{1} - 1|^\kappa]\,,
	$$
	and since $\E[|W_{1} - 1|^\kappa] \le 2^\kappa(\frac{1}{\ew^\kappa} \E[N^\kappa] + 1)$, for a suitable constant $C = C(\ew,\kappa,\ew_\kappa)$ we have
	\begin{equation}
	\label{eq:gw:mb}
	\E[|W_{\ell+1} - W_\ell|^\kappa]  \le \frac{C }{\ew^{\kappa\ell/2}} \E[W_\ell^{\kappa/2}]\,. 
	\end{equation}
	
	Now, let $Y_1 = W_1$ and for $i \ge 2$ let $Y_i = W_i - W_{i-1}$, so that $W_\ell = \sum_{i=1}^\ell Y_i$. Using the triangle and Jensen's inequalities (as in~\eqref{eq:holder:ineq}) we deduce that 
	$$
	\E[|W_\ell|^\kappa] \le \E\left[\left(\sum_{i=1}^\ell |Y_i| \right)^\kappa\right] \le \ell^{\kappa-1} \sum_{i=1}^\ell \E[|Y_i|^\kappa]\,,
	$$
	and from the bound in~\eqref{eq:gw:mb} we get
	$$
	\E[W_\ell^\kappa] \le C \ell^{\kappa-1} \sum_{i=1}^\ell \frac{\E[W_i^{\kappa/2}]}{\ew^{\kappa i/2}}.
	$$
	From this, letting $\rho(h,\kappa) = \max_{i \le h} \E[W_i^\kappa]$, we obtain the recurrence
	$$
	\rho(h,\kappa) \le C h^{\kappa-1} \rho(h,\kappa/2) \sum_{i=1}^h \frac{1}{\ew^{\kappa i/2}} \le C \cdot h^{\kappa} \cdot \rho(h,\kappa/2),
	$$
	since $\ew \ge 1$ by assumption.
	Since $\kappa$ is a power of two,
	iteratively, for a suitable constant $C_1= C_1(\kappa,\ew,\ew_\kappa)$,
	\begin{align}\label{eq:population-moment-bound}
	\rho(h,\kappa) \le  C_1 \cdot \rho(h,1) \cdot \prod_{i=0}^{\log_2 \kappa} h^{\kappa/2^i} \le C_1 \cdot h^{2\kappa}. 
    \end{align}
	Finally, we note that by Markov's inequality, for any $\ell \le h$
	$$
	\Pr(Z_\ell \ge \gamma^h) = \Pr\Big(W_\ell^\kappa \ge \Big(\frac{\gamma}{\ew}\Big)^{h\kappa}\Big)\le  C_1 h^{2\kappa}\Big(\frac{\ew}{\gamma}\Big)^{\kappa h},
	$$
	as claimed.
\end{proof}

We show next that Galton-Watson trees satisfy (with high probability) a certain growth condition that would allow us to apply the sharp decay of connectivities in random-cluster configurations from Lemma~\ref{lemma:exp:decay:wired:treelike}. We define the following volume growth condition for the random tree, which is stronger than the assumption of Lemma~\ref{lemma:exp:decay:wired:treelike}, and will also reappear later in the paper. 

\begin{definition}\label{def:gamma-eps-tree-growth}
	We say a tree $\cT_h = (V(\cT_h),E(\cT_h))$ satisfies the $(\gamma,\varepsilon)$-tree-growth condition if for every for every $v \in V(\T_h)$ with $h(v) > \varepsilon h$, and 
	every $k$ such that $\varepsilon h < k \le h(v)$, we have 
	$|V(\T_v(k))| \le \gamma^{k}$,
	where $\T_v(k)$ denotes the subtree of $\cT_h$ of height $k$ rooted at~$v$.
\end{definition}

\begin{corollary}\label{cor:random-tree-has-tree-growth}
	Let $N \sim \nu$ and $\kappa \ge 1$.
	Suppose $1 \le \E[N] < \gamma$ and that there exists a constant $\ew_\kappa$ such that $\E[N^\kappa] < \ew_\kappa$. 
	Then, if $\kappa$ is a  sufficiently large power of $2$, there exists a constant $\theta = \theta(\gamma,\kappa,\E[N],\ew_\kappa) \in (0,1)$ such that the Galton-Watson tree truncated at height $h$ with progeny distribution $\nu$ has $(\gamma,\epsilon)$-tree-growth with probability at least 
	$1 -  \theta^{\varepsilon\kappa h}$ for $h$ sufficiently large.
\end{corollary}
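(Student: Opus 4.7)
The plan is to combine the moment tail bound of Lemma~\ref{lemma:gw:leaves} with a branching-process union bound over depths $j$ and heights $k$. Fix any $\gamma' \in (\E[N], \gamma)$, which exists because $\E[N] < \gamma$. For any vertex $v$ at depth $j$ of the Galton--Watson tree $\T_h$, the subtree $\T_v$ is, conditional on $\{v \in V(\T_h)\}$, itself a Galton--Watson tree with offspring distribution $\nu$ and height $h-j$. Applying Lemma~\ref{lemma:gw:leaves} to $\T_v$ with its statement's parameter $h$ replaced by $k$ and its $\gamma$ replaced by $\gamma'$ yields, for every $1 \le \ell \le k \le h-j$,
\[
\Pr\bigl(Z_\ell^{(v)} \ge (\gamma')^k\bigr) \le C k^{2\kappa} \bigl(\E[N]/\gamma'\bigr)^{\kappa k}.
\]
Since $|V(\T_v(k))| = \sum_{\ell=0}^k Z_\ell^{(v)} \le (k+1) \max_\ell Z_\ell^{(v)}$ and $(k+1)(\gamma')^k \le \gamma^k$ for all $k \ge k_0(\gamma,\gamma')$, a union bound over $\ell$ gives
$\Pr(|V(\T_v(k))| > \gamma^k) \le C k^{2\kappa+1} (\E[N]/\gamma')^{\kappa k}$.

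The next step is to aggregate over all offending vertices. Let $D_j$ denote the vertices of $\T_h$ at depth $j$. Then $\E[|D_j|] = (\E[N])^j$, and conditional on $|D_j|$ the subtrees $\{\T_v\}_{v \in D_j}$ are i.i.d.\ Galton--Watson trees, so a Wald-type identity and Markov's inequality give
\[
\Pr\bigl(\exists v \in D_j,\ \varepsilon h < k \le h(v) : |V(\T_v(k))| > \gamma^k\bigr) \le (\E[N])^j \sum_{k > \varepsilon h} C k^{2\kappa+1} (\E[N]/\gamma')^{\kappa k},
\]
and the inner sum is geometric, dominated by its first term $\le C h^{2\kappa+1} (\E[N]/\gamma')^{\kappa \varepsilon h}$. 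A vertex has $h(v) > \varepsilon h$ iff its depth $j$ is less than $(1-\varepsilon)h$, so summing over $0 \le j \le (1-\varepsilon)h$ (and bounding $\sum_j (\E[N])^j \le C(\E[N])^{(1-\varepsilon)h}$, or by $h$ if $\E[N]=1$) yields
\[
\Pr(\T_h \text{ fails $(\gamma,\varepsilon)$-tree-growth}) \le C h^{2\kappa+2} \left( \frac{(\E[N])^{1+(1-\varepsilon)/(\kappa\varepsilon)}}{\gamma'} \right)^{\kappa \varepsilon h}.
\]
Because $\E[N] < \gamma'$ strictly, taking $\kappa$ a sufficiently large power of $2$ (depending on $\gamma, \E[N], \varepsilon$) drives the exponent $1+(1-\varepsilon)/(\kappa\varepsilon)$ close enough to $1$ that the base in parentheses is strictly less than~$1$. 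Picking any $\theta$ in the open interval between this base and~$1$ absorbs the polynomial prefactor $h^{2\kappa+2}$ for $h$ large, which produces the claimed $\theta^{\varepsilon \kappa h}$ bound on the failure probability.

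The main obstacle is the parameter tuning in the final display: two competing exponential factors have to be balanced, namely the branching factor $(\E[N])^j$ enumerating depth-$j$ vertices and the decay factor $(\E[N]/\gamma')^{\kappa k}$ from the single-subtree tail. The hypothesis $\E[N] < \gamma$ provides the slack, and the requirement that $\kappa$ be a sufficiently large power of $2$ is exactly what ensures the decay outruns the branching. A small technical point is that one should condition on the existence of each subtree $\T_v$ before invoking Lemma~\ref{lemma:gw:leaves}, but the subsequent Wald computation (which weighs every subtree by the probability its root exists) packages this cleanly through $\E[|D_j|] = (\E[N])^j$.
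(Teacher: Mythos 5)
Your argument is correct and reaches the claimed bound via the same key input (Lemma~\ref{lemma:gw:leaves}) and the same two-index union bound over depths $j$ and heights $k$. The one place you diverge from the paper is in how the random population at depth $j$ is controlled: the paper splits on the rare event $\{Z_l \ge (2\gamma)^h\}$, bounds its probability separately with Lemma~\ref{lemma:gw:leaves}, and carries the deterministic prefactor $(2\gamma)^h$ through the remaining union bound, whereas you directly invoke the first-moment (Wald) identity $\E[|D_j|] = (\E[N])^j$ and never need the auxiliary truncation event. Your route is slightly cleaner and yields the tighter population prefactor $\sum_j (\E[N])^j = O\bigl((\E[N])^{(1-\varepsilon)h}\bigr)$ in place of $(2\gamma)^h$; in both cases the requirement that $\kappa$ be a sufficiently large power of two is exactly what lets the decay $(\E[N]/\gamma')^{\kappa\varepsilon h}$ outrun that prefactor, so the two proofs are functionally equivalent. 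One cosmetic inaccuracy to flag: ``a vertex has $h(v) > \varepsilon h$ iff its depth $j$ is less than $(1-\varepsilon)h$'' is not an equivalence, since a shallow vertex can head a short subtree; however you only use the forward implication (to justify restricting the outer sum to $j < (1-\varepsilon)h$), and that implication is correct, so the union bound stands.
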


\begin{proof}
	Let $\{X_k^{(j)}\}_{j \ge 1}$ be i.i.d. random variables corresponding to the total number of vertices in a Galton--Watson tree of height $k$. By a union bound,
	the probability that the Galton--Watson tree does not satisfy the $(\gamma,\varepsilon)$-tree-growth condition is at most:
	\begin{align}
	&\sum_{l=0}^{(1-\varepsilon)h} \Pr\left( \bigcup_{j=1}^{Z_l} \bigcup_{\varepsilon h 
		\le k \le h - l} \{X_k^{(j)} \ge \gamma^k \} \right) \notag \\
	& \qquad \qquad\le 	(2\gamma)^h   \sum_{l=0}^{(1-\varepsilon)h} \sum_{\varepsilon h 
		\le k \le h - l}  \Pr\left(X_k^{(1)} \ge \gamma^k  \right) + 	\sum_{l=0}^{(1-\varepsilon)h} \Pr\left( Z_l \ge (2\gamma)^h\right)	\label{eq:vol:0}.
	\end{align}
	
	From Lemma~\ref{lemma:gw:leaves}, we know that there exists a constant $C = C(\kappa,\E[N],\ew_\kappa)$ such that 
	\begin{equation}
	\label{eq:vol:1}
	\sum_{l=0}^{(1-\varepsilon)h} \Pr\left( Z_l \ge (2\gamma)^h\right) \le \frac{C h^{2\kappa + 1}}{2^{\kappa h}}.
	\end{equation}
	Now, observe that $X_k^{(1)}$ has the same distribution as $\sum_{j=0}^k {Z}_j$. Hence, Lemma~\ref{lemma:gw:leaves} and a union bound imply that
	there exists $\hat{\gamma} \in (\E[N],\gamma)$ such that
	$$
	\Pr\left(X_k^{(1)} \ge \gamma^k  \right) = \Pr\left(\sum_{j=0}^k {Z}_j \ge \gamma^k\right) \le \sum_{j=0}^k \Pr(Z_j \ge \hat{\gamma}^k) \le C_1 k^{2r+1}\Big(\frac{\E[N]}{\hat\gamma}\Big)^{\kappa k}
	$$
	for a suitable constant $C_1 = C_1(\kappa,\E[N],\ew_\kappa) > 0$ and $k$ large enough. Then,
	\begin{align*}
	\sum_{l=0}^{(1-\varepsilon)h} \sum_{\varepsilon h 
		\le k \le h - l}  \Pr\left(X_k^{(1)} \ge \gamma^k  \right)  
	&\le C_1 \sum_{l=0}^{(1-\varepsilon)h} \sum_{\varepsilon h 
		\le k \le h - l}  	k^{2r+1}\Big(\frac{\E[N]}{\hat\gamma}\Big)^{rk} \\ 
	& \le C_2 h^{2\kappa+2} \Big(\frac{\E[N]}{\hat\gamma}\Big)^{\varepsilon \kappa h},
	\end{align*}	
	for a suitable constant $C_2 > 0$. Plugging this bound and~\eqref{eq:vol:1} into~\eqref{eq:vol:0}, we obtain
	that the probability that the Galton--Watson tree does not satisfy the $(\gamma,\varepsilon)$-tree-growth condition is at most
	$1-\theta^{\varepsilon \kappa h}$ for a suitable $\theta = \theta(\gamma,\kappa,\E[N],\ew_\kappa) \in (0,1)$
	as claimed.
\end{proof}

\subsubsection{Uniqueness in Galton-Watson trees}

Let $\T$ be a Galton-Watson tree with progeny distribution $\nu$ and
let $N \sim \nu$.
By Lemma~\ref{lemma:gw:leaves} and the Borel--Cantelli lemma, with probability one over $\cT$, for any $\gamma>\E[N]$, we have $Z_h\le \gamma^h$ for all sufficiently large $h$. In particular, with probability one, $Br(\cT)<\gamma$
for any $\gamma > \E[N]$. As such, Corollary~\ref{cor:rc:unique} implies that there is a unique random-cluster measure on $\T$ under the wired boundary condition when $p < p_u(q,\gamma)$. 

\begin{corollary}
    Fix $q \ge 1$, $\gamma > 1$ and $p < p_u(q,\gamma)$
    	Let $N \sim \nu$, $\kappa \ge 1$ and
	suppose $1 \le \E[N] < \gamma$ and that there exist a constant $\ew_\kappa$ such that $\E[N^\kappa] < \ew_\kappa$. With probability one over $\cT$, there is a unique random-cluster distribution on $\T$
	under the wired boundary condition. Similarly, at integer $q$, with probability one over $\cT$, there is a unique Potts distribution on $\cT$. 
\end{corollary}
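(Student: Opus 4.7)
My plan is to combine Lemma~\ref{lemma:gw:leaves} with a Borel--Cantelli argument to show that the branching number $Br(\cT)$ is strictly less than $\gamma$ almost surely, and then invoke Corollary~\ref{cor:rc:unique} for the random-cluster statement and the Potts uniqueness corollary stated just above for the Potts statement.

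Since $\E[N] < \gamma$, pick any $\gamma' \in (\E[N], \gamma)$. Applying Lemma~\ref{lemma:gw:leaves} with $\ell = h$ and with $\gamma'$ in the role of $\gamma$ in that lemma, taking $\kappa = 1$ (a power of $2$; the hypothesized moment bound ensures $\E[N] < \infty$), yields
\[
\Pr\bigl(Z_h \ge (\gamma')^h\bigr) \le C\, h^{2}\Bigl(\frac{\E[N]}{\gamma'}\Bigr)^{h}.
\]
Because $\E[N]/\gamma' < 1$, the right-hand side is summable in $h$, and the Borel--Cantelli lemma implies that with probability one over the Galton--Watson tree $\cT$, we have $Z_h \le (\gamma')^h$ for all but finitely many $h$.

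On this almost-sure event, for every $\lambda \in (\gamma', \gamma]$ we have $Z_h/\lambda^h \le (\gamma'/\lambda)^h \to 0$, so in particular $\inf_h Z_h \lambda^{-h} = 0$; by the definition of $Br(\cT)$, this forces $Br(\cT) \le \gamma' < \gamma$. Having established that $Br(\cT) < \gamma$ almost surely, the uniqueness of the infinite-volume wired random-cluster measure on $\cT$ follows immediately from Corollary~\ref{cor:rc:unique} (applied pathwise to almost every realization of $\cT$), and the Potts statement at integer $q \ge 2$ follows immediately from the corresponding Potts uniqueness corollary stated above, which applies to any infinite tree of branching number strictly less than $\gamma$.

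There is no substantive obstacle here: the only quantitative input is the tail bound of Lemma~\ref{lemma:gw:leaves}, and the remainder is a routine Borel--Cantelli argument followed by a direct invocation of the tree-uniqueness results already in hand. The only small point to watch is the strictness of the inequality $Br(\cT) < \gamma$ (as opposed to $Br(\cT) \le \gamma$), which is why we introduce the intermediate value $\gamma' \in (\E[N], \gamma)$ rather than applying the tail bound at $\gamma$ itself.
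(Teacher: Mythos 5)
Your proposal is correct and matches the paper's own argument: both use Lemma~\ref{lemma:gw:leaves} together with Borel--Cantelli to conclude $Br(\cT) < \gamma$ almost surely, then apply Corollary~\ref{cor:rc:unique} (and the analogous Potts corollary) pathwise. The only nonessential difference is that you explicitly instantiate the lemma at $\kappa = 1$ and spell out the passage from eventual domination of $Z_h$ to the bound $Br(\cT) \le \gamma'$, whereas the paper states the same chain of implications more tersely.
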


\section{Random-graph estimates}\label{sec:random-graph-estimates}
In this section, we describe the standard revealing scheme for the configuration model with degree sequence $\vdn$. We also formalize the mechanism to translate probability $1-o(1)$ events for $\Pcm$ to $1-o(1)$ events for $\Prg$ and for the Erd\H{o}s--R\'enyi random graph model; we use this to provide a proof of Theorem~\ref{thm:intro:fk-er} given Theorem~\ref{thm:intro:general}. We then use the revealing scheme for the configuration model to prove the random graph estimates of Lemmas~\ref{lem:random-graph-treelike} and~\ref{lem:random-graph-volume-growth}.

\subsection{Configuration model with general degree sequence}
We begin by describing a revealing procedure for the configuration model with degree sequence $\vdn$. To do so, we begin with an important definition providing the \emph{state space} for our revealing procedures of the configuration model. Recall that a \emph{matching} on a graph is an edge-subset such that no vertex belongs to more than one edge. A \emph{perfect matching} is an edge-subset in which every vertex belongs to exactly one edge.

\begin{definition}\label{def:matchings-perfect-matchings}
Given a degree sequence $\vdn = (d_v)_{1\le v \le n}$, to each vertex $v\in \{1,...,n\}$, assign $d_v$ \emph{half-edges}. Consider an auxiliary complete graph $K_{{\|\vdn\|}_1}$ whose ${\|\vdn\|}_1$ vertices are identified with these half-edges. Let $\fM_\vdn$ be the set of all matchings (not necessarily perfect) on $K_{{\|\vdn\|}_1}$, and let $\fPM_n(\vdn)$ be the set of all perfect matchings on $K_{{\|\vdn\|}_1}$. 
\end{definition}

We are now in position to formally define the configuration model of random graphs. 

\begin{definition}\label{def:configuration-model}
Given a degree sequence $\vdn$, the configuration model $\Pcm$ is the uniform distribution over $\fPM_n(\vdn)$, i.e., it is a uniform perfect matching of the ${\|\vdn\|}_1$ half-edges assigned to the vertices $\{1,...,n\}$. This is naturally identified with a multigraph on $\{1,...,n\}$ by identifying all half-edges with the vertex they come from, so that the edges in the matching become edges of the graph between the corresponding vertices. In this manner, with a slight abuse of notation, elements $E\in \fPM_n(\vdn)$ are simply the edge-sets of the multigraph $\cG = (V,E)$. 
\end{definition}

 \begin{remark}\label{def:extend-random-cluster}
  The definitions of the random-cluster model~\eqref{eq:rcmeasure}, and the FK-dynamics extend naturally to multigraphs, where $G = (V,E)$ is such that $V$ is identified with $\{1,...,n\}$ and $E \in \fPM_n$ is a multiset. The random-cluster model and FK-dynamics then live over subsets of $E$, identified with~$\omega: E\to \{0,1\}$, and connected components of a configuration $\omega$ are understood naturally.
  \end{remark}

\subsection{Revealing procedure for the configuration model}
We now describe a simple revealing procedure for generating a sample from the configuration model distribution given fixed degree sequence $\vdn$.

\begin{process}\label{proc:configuration-model-revealing-graph}
Fix a degree sequence $\vdn$ with $\sum_{v} d_v$ even. Suppose $f$ is a (possibly random) function from matchings $A\in \fM_n$, to a half-edge not matched in $A$. 
\begin{enumerate}
\item Initialize the set $A_0= \emptyset$
\item For every $t\ge 0$, if $A_t \notin \fPM_n$ (i.e., there exist un-matched half-edges), construct $A_{t+1}$ as follows: 
\begin{enumerate}
    \item Let $\hat e_{t+1}$ be the half-edge $f(A_t)$
    \item Pick another un-matched half-edge in $A_t$ uniformly at random, and match it with $\hat e_{t+1}$ in $A_{t+1}$. 
\end{enumerate}
\end{enumerate}
\end{process}

For natural choices of the function $f$, we can reveal, for example, a ball in the random graph without revealing any information about the remainder of the random graph. The next definitions give an example of such an $f$ that we will use repeatedly.

\begin{definition}\label{def:BFS-revealing-exposed}
    Given a matching $A \in \fM_n$, the set of \emph{exposed} half-edges of $A$ is the set of un-matched half-edges that belong to the same vertex (among $V = \{1,...,n\}$) as some half-edge that is matched in $A$. Denote this set by $\widehat E(A)$. 
    \end{definition}
    
    \begin{process}\label{proc:BFS-revealing-process}
   
    The breadth-first exploration of a ball $B_r(v) \subset E(\cG)$ is constructed using Process~\ref{proc:configuration-model-revealing-graph} with the following choice of $f$.
    For each $A$, $f(A)$ is an arbitrarily chosen exposed half-edge among $\widehat E(A)$ whose distance in $(V,A)$ to $v$ is at most $r$.
    \end{process}

\subsection{Contiguity with simple random graphs}
The configuration model described above gives a uniform at random \emph{multigraph} with prescribed degree sequence $\vdn$. In the sparse regime of bounded average degree, this happens to be a very useful model for studying random \emph{simple} graphs (i.e., has no self-loops or multi-edges), most notably $\Delta$-regular random graphs, but also a uniformly chosen random simple graph with degree sequence $\vdn$ (as long as the sequence is graphical).

\subsubsection{General degree sequence}
It is well established that in the sparse regime of bounded average degree, the configuration model will have probability uniformly bounded away from zero of being simple, and on that event it is exactly a uniform simple graph with degree sequence $\vdn$. This contiguity can be summarized as follows (see e.g.,~\cite{janson_2009}). 
\begin{lemma}\label{lem:cm-rg-contiguity}
Fix any $\gamma$ and $\kappa$. Suppose $(\vdn)_n\in \cD_{\gamma,\kappa}$ and ${\|\vdn\|}_1 = \Omega(n)$. Then for any sequence of sets $A_n$ of simple graphs on $n$ vertices, we have 
\begin{align*}
    \Pcm(\cG \in A) = o(1) \qquad \mbox{if and only if}\qquad \Prg(\cG \in A) = o(1)\,.
\end{align*}
\end{lemma}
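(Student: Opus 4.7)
The plan is to reduce the contiguity claim to the classical fact that, under a bounded second moment condition on the degree sequence, the configuration model produces a simple graph with probability bounded away from zero. This result (going back to Bollob\'as for regular graphs and stated in the general form we need by Janson~\cite{janson_2009}) is precisely what is cited in the excerpt, so the main work is to check that the assumption $(\vdn)_n \in \cD_{\gamma,\kappa}$ together with ${\|\vdn\|}_1 = \Omega(n)$ implies the needed moment bound.

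First I would record the standard observation that, conditional on the event $\mathsf{Simp}$ that the multigraph $\cG\sim \Pcm$ is simple (i.e.\ has no loops or parallel edges), the law of $\cG$ is exactly $\Prg$. Indeed, each simple graph with degree sequence $\vdn$ is realized by the same number $\prod_v d_v!$ of perfect matchings of half-edges, so $\Pcm(\cdot \mid \mathsf{Simp}) = \Prg(\cdot)$. Consequently, for any set $A_n$ of simple graphs,
\begin{equation*}
\Pcm(\cG \in A_n) = \Pcm(\mathsf{Simp})\cdot \Prg(\cG \in A_n).
\end{equation*}
Thus $\Prg(A_n) = o(1) \Rightarrow \Pcm(A_n) = o(1)$ trivially, and conversely $\Pcm(A_n) = o(1) \Rightarrow \Prg(A_n) = o(1)$ provided $\Pcm(\mathsf{Simp}) = \Omega(1)$.

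Next I would verify the moment assumption. Unpacking the definition of $\Pvd$ in~\eqref{eq:Pvd} via the substitution $x = d_v-1$ gives
\begin{equation*}
\Evd[D] \;=\; \frac{\sum_v d_v(d_v-1)}{\sum_v d_v},
\end{equation*}
so the uniform bound $\limsup_n \Evd[D] < \gamma$, combined with ${\|\vdn\|}_1 = \Omega(n)$, yields $\sum_v d_v^2 = O(n)$; an analogous computation shows that the $\kappa$-th moment bound in Definition~\ref{def:D-gamma-kappa} gives $\sum_v d_v^{\kappa+1} = O(n)$. In particular, the empirical distribution $n^{-1}\sum_v \delta_{d_v}$ has uniformly bounded second moment.

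Finally, I would invoke the classical result that, whenever $\sum_v d_v^2 = O(\sum_v d_v)$, the number of self-loops and the number of multiple edges in $\Pcm$ are asymptotically independent Poisson random variables with bounded means, so
\begin{equation*}
\liminf_{n\to\infty} \Pcm(\mathsf{Simp}) \;>\; 0;
\end{equation*}
see~\cite{janson_2009}. Combining this with the conditional identity above completes both directions of the contiguity statement. The only potential obstacle is checking the moment hypothesis, but this is purely bookkeeping from Definition~\ref{def:D-gamma-kappa} and the assumption ${\|\vdn\|}_1 = \Omega(n)$; the probabilistic content is entirely contained in the cited Poissonization result.
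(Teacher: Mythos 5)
Your proposal is correct, and it is exactly the standard argument that the paper silently invokes by citing~\cite{janson_2009} without proof: condition on simplicity to identify $\Pcm(\cdot\mid\mathsf{Simp})$ with $\Prg$, then note that $\limsup_n\Evd[D]<\gamma$ is precisely the statement $\sum_v d_v(d_v-1) = O\bigl(\sum_v d_v\bigr)$ needed for Janson's theorem to give $\liminf_n\Pcm(\mathsf{Simp})>0$. You have simply filled in the bookkeeping the paper leaves to the reader; there is no difference in approach.
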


\subsubsection{Erd\H{o}s--R\'enyi random graph}
In the case of the Erd\H{o}s--R\'enyi random graph $G(n,d/n)$, the degree of a vertex $v$ is not fixed, but rather is distributed as $\mbox{Bin}(n-1, d/n)$. Nonetheless, there is a way to first randomly sample $\vdn$ then draw a configuration model on $\vdn$, such that the resulting random graph is contiguous to the Erd\H{o}s--R\'enyi distribution. Let $\mathbb P_{Poi(d)}$ be the distribution over $\vdn = (d_1,...,d_n)$ where $d_i$ are i.i.d.\ $\mbox{Poisson}(d)$ random variables. The following was established in~\cite{KimPoissonCloning}. 

\begin{lemma}\label{lem:poisson-cloning-contiguous}
For any $d = \Theta(1)$, for every sequence of sets $A_n$ of simple graphs on $n$ vertices, we have 
\begin{align*}
    \mathbb E_{Poi(d)} [\Pcm(\cG \in A)] = o(1) \qquad \mbox{if and only if}\qquad \mathbb{P}_{G(n,d/n)}(\cG\in A) = o(1)\,.
\end{align*}
\end{lemma}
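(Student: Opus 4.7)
The plan is to construct both laws on a common state space of labeled (multi)graphs on $\{1,\ldots,n\}$ and directly compare Radon--Nikodym derivatives on the event that the Poisson cloning multigraph is simple.

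First, let $\mu_n$ denote the law of $\cG$ under $\mathbb E_{Poi(d)}[\Pcm(\,\cdot\,)]$: sample $\vdn = (d_1,\ldots,d_n)$ i.i.d.\ Poisson($d$) (on the event that $\sum_v d_v$ is even, which has probability $1/2 + o(1)$; one can add a dummy half-edge otherwise and show this has no effect on $o(1)$ events), and then pick a uniform perfect matching on the $\sum_v d_v$ half-edges to form the multigraph $\cG$. Let $\nu_n$ denote the law of $G(n,d/n)$. I would like to show that on the sub-$\sigma$-algebra of events supported on simple graphs, $\mu_n$ and $\nu_n$ are mutually contiguous.

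The key computation is the Radon--Nikodym derivative $d\mu_n/d\nu_n$ restricted to a specific simple graph $G$ with $m = |E(G)|$ edges and degree sequence $(d_v)$. Under $\mu_n$, the probability of generating $G$ decomposes as the probability of drawing the prescribed degrees, $\prod_v e^{-d}d^{d_v}/d_v!$, times the probability under the configuration model of selecting a matching that realizes $G$, which is $\prod_v d_v!/(2m-1)!!$. The degree factorials cancel and one obtains
\[
\mu_n(\{G\}) = \frac{e^{-nd}\, d^{2m}}{(2m-1)!!}\,,
\]
which depends on $G$ only through $m$. Comparing with $\nu_n(\{G\}) = (d/n)^m(1-d/n)^{\binom{n}{2}-m}$ gives
\[
\frac{\mu_n(\{G\})}{\nu_n(\{G\})} \;=\; \frac{e^{-nd}\,d^m\,n^m}{(2m-1)!!\,(1-d/n)^{\binom{n}{2}-m}}\,,
\]
again a function only of $m$. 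Using Stirling's formula to approximate $(2m-1)!!$ and $(1-d/n)^{\binom{n}{2}}\sim e^{-nd/2}$, this ratio is a bounded, smooth function of $m$ on the range $m = nd/2 + O(\sqrt{n}\log n)$ around the typical value, and tends to a positive constant at $m = nd/2$.

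The remaining steps are then routine. I would show that under $\mu_n$ the multigraph is simple with probability tending to $\exp(-d/2 - d^2/4) > 0$ (by the classical moment computation for self-loops and multi-edges in the configuration model with bounded-mean Poisson degrees), and that both $\mu_n$ and $\nu_n$ concentrate $|E(\cG)|$ on the window $[nd/2 - C\sqrt n\log n, nd/2 + C\sqrt n\log n]$ up to probability $o(1)$ (Chernoff bounds for Binomial and Poisson sums). Combining the two-sided bound on $d\mu_n/d\nu_n$ on this window with the positive limit of $\mu_n(\mathrm{simple})$ gives, for any sequence of sets $A_n$ of simple graphs,
\[
\mu_n(A_n) \;\le\; C\,\nu_n(A_n) + o(1) \qquad \text{and} \qquad \nu_n(A_n)\;\le\; C\,\mu_n(A_n) + o(1)\,,
\]
which is the desired contiguity.

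The main technical obstacle is the careful Stirling expansion of the ratio $d^m n^m / [(2m-1)!!\,(1-d/n)^{\binom{n}{2}-m}]$ uniformly over the typical window of $m$; this is what ultimately yields the two-sided bound on the Radon--Nikodym derivative. Everything else---Poisson concentration of the degree sequence, the classical positive limit of $\mu_n(\mathrm{simple})$ for bounded average degree, and the fact that discarding the measure-zero event of odd $\sum d_v$ is harmless---is standard and does not affect $o(1)$ probabilities.
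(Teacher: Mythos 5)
The paper does not prove this lemma; it is a black-box citation to \cite{KimPoissonCloning}, so there is no internal proof for your argument to match against, and providing one is in principle welcome. Your setup is right: writing $\mu_n$ for the law $\mathbb E_{Poi(d)}[\Pcm(\cdot)]$ and $\nu_n$ for $G(n,d/n)$, the Radon--Nikodym derivative on a simple graph $G$ indeed depends only on $m=|E(G)|$, and your formula for $L_n(m):=\mu_n(\{G\})/\nu_n(\{G\})$ is correct. The gap is in the claim that $L_n$ is two-sided bounded on the window $m=nd/2+O(\sqrt n\log n)$. Carrying the Stirling and $\log(1-d/n)$ expansions to second order gives, with $k:=m-nd/2$,
\[
\log L_n(m) \;=\; -\frac{k^2}{nd} \;-\; \frac{dk}{n} \;-\; \tfrac12\log 2 \;-\; \tfrac d2 \;-\; \tfrac{d^2}{4} \;+\; o(1)\,,
\]
uniformly for $m=\Theta(n)$. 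So $L_n$ \emph{is} uniformly bounded above (the exponent is $\le$ const), but for $|k|\asymp\sqrt n\log n$ the term $-k^2/(nd)\asymp-\log^2 n$ drives $L_n(m)\to 0$. Hence on exactly the window that carries $1-o(1)$ mass, the lower bound on $L_n$ fails, and the inequality $\nu_n(A_n)\le C\,\mu_n(A_n)+o(1)$ with a single constant $C$ does not follow from your argument.

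The fix is the standard two-parameter version of the contiguity argument. For each $C$, take the fixed-width window $W_C=\{|m-nd/2|\le C\sqrt n\}$: on $W_C$ one has $c_1(C,d)\le L_n(m)\le c_2(d)$ for large $n$, while $\mu_n(W_C^c),\,\nu_n(W_C^c)=O(e^{-cC^2})$ --- a constant for fixed $C$ (not $o(1)$), but $\to 0$ as $C\to\infty$. This yields $\nu_n(A_n)\le c_1(C,d)^{-1}\mu_n(A_n)+O(e^{-cC^2})$, and sending $n\to\infty$ then $C\to\infty$ gives $\mu_n(A_n)=o(1)\Rightarrow\nu_n(A_n)=o(1)$; the other direction follows immediately from the global upper bound on $L_n$ with no windowing at all. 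Two secondary slips worth noting: (i) the probability that the cloning multigraph is simple tends to $\tfrac12 e^{-d/2-d^2/4}$, not $e^{-d/2-d^2/4}$ --- the extra factor $\tfrac12$ is the parity constraint $\|\vdn\|_1$ even, which the paper handles by declaring $\Pcm(\cdot)=0$ on the odd event rather than by your dummy half-edge (which would typically produce a self-loop and so cannot be discarded harmlessly when restricting to simple graphs); and (ii) one should restrict the Stirling analysis to $m=\Theta(n)$, outside of which both $\mu_n$ and $\nu_n$ have exponentially small mass. Neither of these changes the conclusion; only the window issue above is a genuine gap in the proof as written.
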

In the above lemma, on the event that $\vdn$ does not have ${\|\vdn\|}_1$ even, as a matter of convention, we take the probability in the expectation to be zero. Overloading notation slightly, let $\mathbb P_{Poi(d)}$ be the product distribution over $\vdn\sim \mathbb P_{Poi(d)}$ for each $n$. 

\begin{lemma}\label{lem:Poisson-D-gamma-kappa}
For every $0<d<\gamma$ and every $\kappa\ge 1$,
\begin{align*}
    \mathbb P_{Poi(d)} \big((\vdn)_n\in \cD_{\gamma,\kappa}\big) =1\,.
\end{align*}
\end{lemma}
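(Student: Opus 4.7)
The plan is a routine application of the Strong Law of Large Numbers (SLLN) to the i.i.d.\ Poisson$(d)$ sequence $d_1, d_2, \ldots$. The decisive fact is that the Poisson$(d)$ distribution has all moments finite, and in particular that its second factorial moment satisfies $\E[d_1(d_1-1)] = d^2$.

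First I would rewrite the two quantities appearing in Definition~\ref{def:D-gamma-kappa} using~\eqref{eq:Pvd}:
\[
\Evd[D] = \frac{\sum_{v=1}^n d_v(d_v-1)}{\sum_{v=1}^n d_v}, \qquad \Evd[D^\kappa] = \frac{\sum_{v=1}^n d_v(d_v-1)^\kappa}{\sum_{v=1}^n d_v}.
\]
Dividing numerators and denominators by $n$ and applying the SLLN separately to each (this is valid since $\E[d_1(d_1-1)^\kappa] < \infty$ for any $\kappa$), one obtains almost surely as $n\to\infty$,
\[
\Evd[D] \longrightarrow \frac{d^2}{d} = d, \qquad \Evd[D^\kappa] \longrightarrow \frac{\E[d_1(d_1-1)^\kappa]}{d} < \infty.
\]
Because the hypothesis $d<\gamma$ is strict, this yields $\limsup_n \Evd[D] = d < \gamma$ almost surely, and the finiteness of the second limit gives $\limsup_n \Evd[D^\kappa] < \infty$ almost surely. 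The auxiliary size condition $\sum_{v=1}^n d_v = \Omega(n)$ is immediate from $n^{-1}\sum_v d_v \to d > 0$ a.s.

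The only issue deserving a comment is graphicality of $\vdn$: for sparse Poisson sequences the Erd\H{o}s--Gallai conditions are satisfied with probability $1-o(1)$ for each $n$, and any parity obstruction (i.e., $\sum_v d_v$ being odd) can be handled by perturbing a single entry by $\pm 1$, which does not affect any of the empirical averages used above nor their SLLN limits. Combining these observations gives $(\vdn)_n\in \cD_{\gamma,\kappa}$ almost surely under $\mathbb P_{Poi(d)}$. There is no substantive obstacle in the argument: the strict hypothesis $d<\gamma$ supplies the gap needed for the first condition, and the finiteness of all moments of a Poisson random variable supplies the second.
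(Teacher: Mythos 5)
Your algebra is correct: from~\eqref{eq:Pvd}, $\Evd[D^\kappa]=\frac{\sum_v d_v(d_v-1)^\kappa}{\sum_v d_v}$ (and in particular $\Evd[D]=\frac{\sum_v d_v(d_v-1)}{\sum_v d_v}$), and your factorial-moment computation $\E[d_1(d_1-1)]=d^2$ for $\mathrm{Poisson}(d)$ is right. However, the invocation of the Strong Law of Large Numbers glosses over a real subtlety. The paper defines $\mathbb P_{Poi(d)}$ as the \emph{product} distribution over $(\vdn)_n$, so the degree sequences for different values of $n$ are drawn \emph{independently}, not nested (i.e., $\vdn$ is not the restriction of $\vec d_{n+1}$). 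The SLLN is a statement about partial sums of a single infinite i.i.d.\ sequence and does not apply as stated. To conclude $\limsup_n \Evd[D]<\gamma$ almost surely under the product measure one needs, by Borel--Cantelli (and in fact, because the events are now independent, the second Borel--Cantelli lemma forces this), that $\sum_n \mathbb P_{Poi(d)}\bigl(\Evd[D]\ge \gamma-\epsilon\bigr)<\infty$; SLLN alone does not furnish such a rate. The paper's proof handles exactly this: it bounds $\mathbb P\bigl(\big|\sum_v d_v^k - \E\sum_v d_v^k\big|>\lambda\bigr)$ using Markov's inequality with a large $\ell$-th moment (available because Poisson has all moments finite), obtaining $O(n^{-5})$ decay, and then applies Borel--Cantelli. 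Your observation about finiteness of Poisson moments is precisely the ingredient that makes this rate argument work, so the gap is patchable — but as written, invoking SLLN is not a proof. The graphicality/parity comment is not needed for this lemma; the paper handles non-even $\|\vdn\|_1$ by convention in Lemma~\ref{lem:poisson-cloning-contiguous}, so it need not be perturbed here.
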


\begin{proof}
Recall by definition of $\Pvd,\Evd$, that 
\begin{align*}
    \Evd[D] = \frac{1}{{\|\vdn\|}_1} \sum_{v}d_v(d_v -1) =  \frac{\sum_v d_v^2}{{\|\vdn\|}_1} - 1\,.
\end{align*}
Let $0<\epsilon<\gamma - d$. Then for every $n$ large, we have 
\begin{align}\label{eq:Poisson-expectation}
    \mathbb P_{Poi(d)}(\Evd[D]<\gamma -\epsilon) & \le \mathbb P_{Poi(d)} \Big(\frac{1}{{\|\vdn\|}_1}\sum_v d_v^2 \ge \gamma - \epsilon + 1 \Big) \\
    & \le \mathbb P\big(\frac{1}{n}\sum_{v} d_v^2 > d(d+1) +  n^{- \frac 12 + \delta}\big) + \mathbb P\big(\frac{1}{n}\sum_v d_v < d -  n^{- \frac 12 +\delta}\big)\,. \nonumber
\end{align}
To bound either of these terms, notice by Markov's inequality, that \begin{align*}
    \mathbb P\Big(|\sum_{v} d_v^k - \mathbb E[\sum d_v^k]| > \lambda) \le \frac{\mathbb E[|\sum_{v} (d_v^k - \mathbb E[ d_v^k])|^l]}{\lambda^l}\,.
\end{align*}
The numerator on the right-hand side is a sum of i.i.d.\ mean-zero random variables, each of which have all finite moments. As such, for any fixed $l$, the right-hand side above is at most 
\begin{align*}
    Cn^{l/2}\lambda^{-l} \le C n^{-l\delta}\,.
\end{align*}
Taking $l>5\delta^{-1}$, the right-hand side above is $O(n^{-5})$. Therefore, the sum over $n$ of the probabilities of the left-hand side of~\eqref{eq:Poisson-expectation} is finite, and by Borel--Cantelli, with probability one, eventually almost surely, $\Evd[D]<\gamma - \epsilon$, so that $\limsup \Evd[D]<\gamma$. A similar argument yields the uniform boundedness of the $\kappa$'th moments $\Evd[D^\kappa]$ for any $\kappa$, yielding the desired and concluding the proof.
\end{proof}

Given Lemmas~\ref{lem:poisson-cloning-contiguous}--\ref{lem:Poisson-D-gamma-kappa}, our Theorem~\ref{thm:intro:fk-er} becomes a corollary of Theorem~\ref{thm:intro:general}.

\begin{proof}[\textbf{\emph{Proof of Theorem~\ref{thm:intro:fk-er} given Theorem~\ref{thm:intro:general}}}]
Fix $q\ge 1$, $\gamma >0$ and $p<p_u(q,\gamma)$. Suppose $\cG \sim G(n,\gamma/n)$. Fix a large constant $K$ and let $A$ be the set of simple graphs $\cG$ such that the mixing time of FK-dynamics on $\cG$ at parameters $p,q$ satisfies $K^{-1} \log n \le \tmix \le K \log n$. By Lemma~\ref{lem:poisson-cloning-contiguous}, it suffices to show that  
\begin{align*}
    \mathbb E_{Poi(\gamma)}[\Pcm(\cG\notin A)]  = o(1)\,.
\end{align*}
Considering this quantity, for any $\gamma'$,
\begin{align*}
    \limsup_n \mathbb E_{Poi(\gamma)}[\Pcm& (\cG\notin A)]  \\
     & \le \mathbb P_{Poi(\gamma)} \big((\vdn)_n \notin \cD_{\gamma',\kappa}\big) + \sup_{(\vdn)\in \cD_{\gamma',\kappa}} \limsup_n \Pcm(\cG\notin A)\,.
\end{align*}
The first term on the right-hand side is zero for all $\gamma'>\gamma$ and all $\kappa$ by Lemma~\ref{lem:Poisson-D-gamma-kappa}. By Theorem~\ref{thm:intro:general} and Lemma~\ref{lem:cm-rg-contiguity}, the second term is zero if $\gamma'>\gamma$ is such that $p<p_u(q,\gamma')$, and if $\kappa$ and $K$ are sufficiently large (depending on $p,q,\gamma'$). By continuity of $p_u(q,\gamma)$, if $p<p_u(q,\gamma)$, there also exists $\gamma'>\gamma$ such that $p<p_u(q,\gamma')$, concluding the proof. 
\end{proof}

\subsection{Local domination of the configuration model by random trees}
We now dominate balls of volume $o(n^{1/2})$ of the random graph $\cG \sim \Pcm$ by branching processes whose progeny are approximately given by $\Pvd$. To be more precise, we define the following. 

\begin{definition}\label{def:truncated-empirical-distribution}
     Define the \emph{truncated empirical distribution} by letting $\underline\vdn = \vdn \setminus A_{\vdn}$, where $A_\vdn$ are the smallest $2\sqrt n$ elements of $\vdn$, and the set subtraction is done in the multi-set sense. Then let $\mathbb P_{\underline \vdn}$ be the corresponding effective offspring distribution of $\underline \vdn$, i.e., for $x\in \{d_v -1: d_v \in \underline \vdn\}$, 
     \begin{align*}
         \mathbb P_{\underline \vdn}(x) = \frac{\sum_{v:  d_v \in \underline \vdn}(x+1) \mathbf{1}_{\{d_v= x+1 \}}}{{\|\underline \vdn\|}_1}\,.
     \end{align*}
     Let $\underline D\sim \mathbb P_{\underline\vdn}$, and let $\mathbb E_{\underline \vdn}$ be the corresponding expectation.
\end{definition}

\begin{lemma}\label{lem:moments-sequence-to-truncated-sequence}
    If $(\vdn)_n \in \cD_{\gamma,\kappa}$, then $(\underline \vdn)_n\in \cD_{\gamma,\kappa}$.
\end{lemma}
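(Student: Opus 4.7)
The plan is to show that truncation by removing the $2\sqrt n$ smallest degrees perturbs both the normalizing factor $\|\vdn\|_1$ and the weighted moments $\sum_v d_v (d_v-1)^k$ by only a $(1+o(1))$ multiplicative factor, so that the required strict bound on the mean and the uniform boundedness of the $\kappa$-th moment are inherited from $\vdn$ to $\underline{\vdn}$.

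First I would control the mass of removed half-edges. Since $\limsup_n \Evd[D] < \gamma$ and $\Evd[D]+1 = \sum_v d_v^2 / \|\vdn\|_1$, Cauchy--Schwarz gives $\|\vdn\|_1/n \le (\sum_v d_v^2)/\|\vdn\|_1 = \Evd[D]+1$, so the average degree $\bar d := \|\vdn\|_1/n$ is $O(1)$ (bounded by $\gamma+1$ for large $n$). Because the removed set $A_\vdn$ consists of the $2\sqrt n$ smallest entries, the largest degree $d^\star$ in $A_\vdn$ satisfies $(n-2\sqrt n)\,d^\star \le \|\vdn\|_1$, hence $d^\star \le (1+o(1))\bar d = O(1)$, and therefore
\[
\sum_{v \in A_\vdn} d_v \;\le\; 2\sqrt n \cdot d^\star \;=\; O(\sqrt n).
\]
Consequently $\|\underline\vdn\|_1 = \|\vdn\|_1 - O(\sqrt n) = (1-o(1))\|\vdn\|_1$, using $\|\vdn\|_1 = \Omega(n)$. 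In particular $\|\underline\vdn\|_1 = \Omega(n)$, which is the final condition in Definition~\ref{def:D-gamma-kappa}.

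Next, for any integer $k \ge 1$, the numerator of $\mathbb E_{\underline\vdn}[\underline D^k]$ is obtained from that of $\Evd[D^k]$ by dropping a set of nonnegative terms, so
\[
\mathbb E_{\underline\vdn}[\underline D^k] \;=\; \frac{\sum_{v:\, d_v \in \underline\vdn} d_v (d_v-1)^k}{\|\underline\vdn\|_1} \;\le\; \frac{\sum_v d_v (d_v-1)^k}{\|\underline\vdn\|_1} \;=\; \frac{\|\vdn\|_1}{\|\underline\vdn\|_1}\,\Evd[D^k] \;=\; (1+o(1))\,\Evd[D^k].
\]
Taking $k = \kappa$ and using $\limsup_n \Evd[D^\kappa] < \infty$ yields $\limsup_n \mathbb E_{\underline\vdn}[\underline D^\kappa] < \infty$. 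Taking $k=1$ and using $\limsup_n \Evd[D] < \gamma$, together with the $(1+o(1))$ factor, gives $\limsup_n \mathbb E_{\underline\vdn}[\underline D] < \gamma$ as well, completing the verification of both moment conditions.

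There is essentially no difficult step; the only mild subtlety is establishing $\sum_{v \in A_\vdn} d_v = O(\sqrt n)$ without a priori knowing that $d^\star$ is bounded, which is handled by the averaging argument above. (If one insists on $\underline\vdn$ being graphical as a set of degrees on a graph, this is not needed in the sequel: $\underline\vdn$ enters only through the distribution $\mathbb P_{\underline\vdn}$ used to dominate local neighborhoods by branching processes, so only the moment conditions are required.)
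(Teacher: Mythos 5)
Your proof is correct and follows essentially the same route as the paper: show that removing the $2\sqrt n$ smallest degrees changes $\|\vdn\|_1$ by a $(1+o(1))$ factor, then transfer the two moment bounds term-by-term. One small remark: the Cauchy--Schwarz step establishing $\bar d = O(1)$ is actually superfluous. You only need the ratio $\sum_{v\in A_\vdn} d_v \,/\, \|\vdn\|_1$, and your own inequality $(n-2\sqrt n)\,d^\star \le \|\vdn\|_1$ already gives $\sum_{v\in A_\vdn} d_v \le 2\sqrt n\, d^\star \le \tfrac{2\sqrt n}{n-2\sqrt n}\|\vdn\|_1 = O(n^{-1/2})\|\vdn\|_1$ directly, without any control on $\|\vdn\|_1/n$ from above; this is precisely how the paper phrases it, bounding the ratio $\max_{A_\vdn}/\min_{A_\vdn^c}\le 1$. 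Your closing observation that $\underline\vdn$ need not be graphical (since it is only ever used as the offspring distribution $\mathbb P_{\underline\vdn}$ of a Galton--Watson tree) is correct and is a point the paper passes over silently; strictly speaking the conclusion ``$(\underline\vdn)_n\in\cD_{\gamma,\kappa}$'' should be read as asserting only the moment and $\Omega(n)$ conditions, which is all that Proposition~\ref{prop:ball-domination-by-GW-tree} and the subsequent arguments use.
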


\begin{proof}
Let $A_\vdn$ be the set of $2n^{1/2}$ smallest degrees of $\vdn$. We first of all claim that ${\|\vdn\|}_1 \le (1+o(1)){\|\underline \vdn\|}_1$. 
Indeed this follows from the calculation 
\begin{align*}
    \frac{{\|\vdn\|}_1 - {\|\underline \vdn\|}_1}{{\|\underline \vdn\|}_1} \le \frac{\sum_{v: d_v\in A_\vdn}d_v}{\sum_{v: d_v \notin A_{\vdn}}d_v}\le \frac{2\sqrt{n} \max \{d_v: d_v\in A_{\vdn}\}}{(n-2\sqrt{n})\min\{d_v: d_v\notin A_{\vdn}\}}\le O(n^{-1/2})\,. 
\end{align*}
We then can observe that 
\begin{align*}
   \mathbb E_{\underline \vdn} [\underline D] =  \frac{{\|\vdn\|}_1}{{\|\underline \vdn\|}_1}\frac 1{{\|\vdn\|}_1}\sum_{v: d_v \notin A_{\vdn}} d_v(d_v -1) \le (1+o(1))\mathbb E_{\vdn}[D]\,.
\end{align*}
We now wish to prove the desired moment conditions. Those follow by analogous reasoning: 
\begin{align*}
    \mathbb E_{\underline \vdn} [D^k] & \le \frac{{\|\vdn\|}_1}{{\|\underline \vdn\|}_1}\frac 1{{\|\vdn\|}_1} \sum_{v: d_v \notin A_{\vdn}} d_v(d_v -1)^k \le (1+o(1)) \Evd[D^k]\,. %\\ 
\end{align*}
Altogether, these give the desired implications of the lemma. 
\end{proof}

We now wish to show that the balls of the random graph $\cG \sim \Pcm$ are stochastically dominated by random trees with offspring distribution $\mathbb P_{\underline \vdn}$, even conditionally on an already revealed portion $H\in \mathfrak M_n$ of the random graph. However, this will only hold if $|H|\le n^{1/2}$ and the ball does not intersect $H$. We now formalize this notion. 

\begin{process}\label{proc:B-r-out}
     For a subgraph $H = (V(H),E(H))$, let $\widehat E(H)$ be the set of half-edges incident to $H$ but not matched in $H$. (Notice that this definition aligns with the use of $\widehat E(A)$ for the exposed half-edges of $A\in \mathfrak{M}_n$ when taking $H=(V(A),A)$.) For a half-edge $\hat e$ in $\widehat E(H)$, define $B_r(\hat e; H^c)$ as the ball of radius $r$ ``out of $H$". More formally, $B_r(\hat e; H^c)$ is obtained by 
     \begin{enumerate}
         \item Matching $\hat e$ to a vertex $w$. 
         \item Running the breadth-first revealing of $B_{r-1}(w)$ from Process~\ref{proc:BFS-revealing-process} but where $f(A)$ cannot be in $\widehat E(H)$ (i.e., it will be an arbitrarily chosen half-edge of $\widehat E(A)\setminus \widehat E(H)$ at distance at most $r$ from $w$ in $A$). 
     \end{enumerate}
\end{process}

Due to the extra edge from matching $\hat e$, let us say a \emph{single-source} Galton--Watson tree is a Galton--Watson tree whose first generation deterministically has exactly one child. 

\begin{proposition}\label{prop:ball-domination-by-GW-tree}
Consider any degree sequence $\vdn$. Let $\underline \vdn$ be as per Definition~\ref{def:truncated-empirical-distribution}. Let $\widehat {\cT}_h(\underline \vdn)$ be a single-source Galton--Watson tree of at depth $h$ (meaning it is truncated at depth $h$) with offspring distribution $\mathbb{P}_{\underline \vdn}$. Fix an arbitrary $H = (V(H),E(H))\in \mathfrak{M}_n$, and consider $\hat e\in \widehat E(H)$. Then, conditionally on $\{E(H) \subset E(\cG)\}$, we have the stochastic domination 
\begin{align*}
    |B_r(\hat e; H^c)|\mathbf 1_{\{|E(H) \cup E(B_r(\hat e; H^c))| \le n^{1/2}\}} \preceq |\widehat \cT_r(\underline \vdn)|\,,
\end{align*}
On the event that $B_r(\hat e;H^c)$ is a tree, there is an isometry between the graphs such that $B_r(\hat e; H^c)$ is a subset of $\widehat \cT_{r}(\underline \vdn)$. 
\end{proposition}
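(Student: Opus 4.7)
The plan is to couple the BFS exploration $B_r(\hat{e}; H^c)$ from Process~\ref{proc:B-r-out} with a realization of the single-source Galton--Watson tree $\widehat{\cT}_r(\underline{\vdn})$, step by step, so that the number of new vertices exposed at each BFS step is dominated by the offspring count at the corresponding node of the GW tree. The single-source structure of $\widehat{\cT}_r(\underline{\vdn})$ mirrors the initial deterministic matching of $\hat{e}$ to a partner $w$ in Process~\ref{proc:B-r-out}; subsequently, at BFS step $t$ the newly revealed vertex $w_t$ contributes at most $d_{w_t} - 1$ new exposed half-edges (exactly $d_{w_t} - 1$ when $w_t$ is freshly discovered, which is precisely the tree event).

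The crux is a step-wise stochastic comparison: conditional on any BFS history with at most $\sqrt{n}$ edges matched, I claim $d_{w_t} - 1 \preceq \underline{D}_t$ where $\underline{D}_t \sim \mathbb{P}_{\underline{\vdn}}$. To see this, compare tail probabilities: for any $k$ strictly exceeding the largest element of $A_{\vdn}$,
\[
\mathbb{P}\bigl(d_{w_t} \ge k \mid \text{history}\bigr) \le \frac{\sum_{d \ge k} d\, n_d}{|U_t|},
\qquad
\mathbb{P}\bigl(\underline{D}_t + 1 \ge k\bigr) = \frac{\sum_{d \ge k} d\, n_d}{\|\underline{\vdn}\|_1},
\]
using that $\underline{n}_d = n_d$ for such $d$. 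The inequality reduces to $|U_t| \ge \|\underline{\vdn}\|_1$, equivalently $\sum_{v \in A_{\vdn}} d_v \ge 2t$. The truncation at the $2\sqrt{n}$ smallest elements of $\vdn$ is calibrated precisely for this to hold on the indicator event, absorbing the up-to-$2\sqrt{n}$ half-edges consumed by the BFS matching. For $k$ below the truncation threshold, both tail probabilities coincide or the GW side dominates trivially, since degree-$0$ vertices never participate in the BFS and contribute nothing to either side of the comparison.

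Given the step-wise domination, an inductive coupling across the $r$ BFS generations yields a joint realization in which $|B_r(\hat{e}; H^c)| \le |\widehat{\cT}_r(\underline{\vdn})|$ on the indicator event, as required. For the isomorphism claim, on the tree event every BFS step visits a previously unseen vertex and contributes exactly $d_{w_t} - 1$ new vertices at depth one greater than that of the partner. The step-wise coupling then identifies each such newly discovered vertex of $B_r(\hat{e}; H^c)$ with a distinct child in the GW realization, preserving parent--child relations and distances to the root, and thus embeds $B_r(\hat{e}; H^c)$ as a subtree of $\widehat{\cT}_r(\underline{\vdn})$.

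The main obstacle is the step-wise comparison, which is sensitive to the multiset of previously matched half-edges. The choice of truncation at the $2\sqrt{n}$ smallest elements of $\vdn$ is the key quantitative ingredient: it provides just enough slack in the denominator $\|\underline{\vdn}\|_1$ of the GW distribution to dominate the BFS denominator $|U_t|$, which is itself decremented by the previously matched half-edges (at most $2\sqrt{n}$ by the indicator). Lemma~\ref{lem:moments-sequence-to-truncated-sequence} ensures that this truncation is mild enough to preserve the $(\gamma,\kappa)$-moment conditions of $\vdn$, which will be important when the proposition is invoked downstream.
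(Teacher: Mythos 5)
Your proof follows essentially the same coupling-based strategy as the paper's: run the BFS revealing of $B_r(\hat e;H^c)$ step by step against a realization of the single-source Galton--Watson tree, verifying at each matching that the degree of the newly discovered vertex is stochastically dominated by $\mathbb P_{\underline\vdn}$. The tail-probability calculation you spell out---reducing the domination to $|U_t|\ge\|\underline\vdn\|_1$, i.e.\ $2t\le\sum_{v\in A_{\vdn}}d_v$, which the indicator event enforces provided the truncated degrees are each at least one (a point both you and the paper leave implicit; if $A_{\vdn}$ contains many degree-zero vertices this bound can fail, but such vertices carry no half-edges and can be stripped from $\vdn$ without loss of generality)---is exactly what the paper compresses into ``easily seen,'' and your isometry argument on the tree event mirrors the paper's identification of each matched half-edge with the parent-edge of the corresponding Galton--Watson node.
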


\begin{proof}
We appeal to the revealing procedure of Process~\ref{proc:configuration-model-revealing-graph} with the choice of breadth-first revealing described in Processes~\ref{proc:BFS-revealing-process} and~\ref{proc:B-r-out}. Begin the single-source Galton--Watson tree with a root vertex and a single child, corresponding to $\hat e$. Iteratively, when a half-edge $\hat f$, corresponding to a vertex $x$ in the single-source Galton--Watson tree, gets matched in the revealing procedure to a vertex $w$, 
\begin{enumerate}
    \item If $w$ had not been exposed yet, identify the other $d_w-1$ half-edges of $w$ with the children of $x$ in the single-source Galton--Watson tree. 
    \item If $w$ is an exposed vertex, do nothing. 
\end{enumerate}
(We say a vertex is exposed if one of its half-edges has already been matched, whether in $H$ or in the revealing.)
Uniformly over any subset of at most $2n^{1/2}$ matched half-edges (forming $n^{1/2}$ edges), the distribution $d_w-1$  is easily seen to be stochastically below $\mathbb P_{\underline \vdn}$ (in which the \emph{smallest} $2n^{1/2}$ half-edges have been removed). Notice then that on the indicator
\begin{align*}
    \mathbf 1_{\{|E(H) \cup E(B_r(\hat e; H^c))| \le n^{1/2}\}}\,,
\end{align*}
throughout the breadth-first revealing process, the number of matched half-edges will always be at most $2n^{1/2}$. Thus, we see that this process maintains the desired stochastic domination relation as compared to the single-source Galton--Watson tree until the number of matched half-edges exceeds $n^{1/2}$. 

When $B_r(\hat e; H^c)$ is a tree, item (2) above never happens, and the isometry goes by identifying the edge containing $\hat f$ in $E(\cG)$ with the edge connecting the corresponding vertex in $\widehat \cT_r(\underline \vdn)$ to its parent.  
\end{proof}

With Proposition~\ref{prop:ball-domination-by-GW-tree}, we can translate the volume growth bounds of Lemma~\ref{lemma:gw:leaves} into the desired volume growth estimate for the random graph $\cG \sim \Pcm$. In this proof, and other proofs relying on the random graph revealing procedure, it will be useful to have an $\ell^\infty$ bound on the degrees. For this, note that $(\vdn)_n\in \cD_{\gamma,\kappa}$ implicitly places a constraint on ${\|\vdn\|}_\infty$, since $\Pvd$ chooses ${\|\vdn\|}_\infty -1$ with probability $\Omega({\|\vdn\|}_\infty/n)$. More precisely, we have the following.

\begin{fact}\label{fact:L-infty-degree-sequence}
If $(\vdn)_n \in \cD_{\gamma,\kappa}$, then
    ${\|\vdn\|}_\infty \le n^{\epsilon_*}$ for $\epsilon_*(\kappa) = 2/(\kappa+1)$.
\end{fact}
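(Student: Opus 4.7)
The plan is to combine the size-biased form of $\Pvd$ with the uniform moment bounds guaranteed by $\cD_{\gamma,\kappa}$. Setting $M := \|\vdn\|_\infty$ and picking any vertex $v^*$ with $d_{v^*} = M$, the definition~\eqref{eq:Pvd} immediately yields
\[
\Pvd(M-1) \ge \frac{M}{\|\vdn\|_1},
\]
since the right-hand side is just the contribution of $v^*$ alone to $\Pvd(M-1)$. Consequently, for any uniform upper bound $\mathbf{m}_\kappa$ on $\Evd[D^\kappa]$,
\[
\mathbf{m}_\kappa \,\ge\, \Evd[D^\kappa] \,\ge\, (M-1)^\kappa \cdot \frac{M}{\|\vdn\|_1},
\]
so that $M(M-1)^\kappa = O(\|\vdn\|_1)$.

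The one step that requires a moment of thought is upgrading the hypothesized lower bound $\|\vdn\|_1 = \Omega(n)$ to a matching upper bound $\|\vdn\|_1 = O(n)$. For this I would use the first-moment condition: unpacking~\eqref{eq:Pvd} gives the identity $\sum_v d_v^2 = (\Evd[D]+1)\|\vdn\|_1$, which together with $\limsup_n \Evd[D] < \gamma$ yields $\sum_v d_v^2 \le (\gamma+1)\|\vdn\|_1$ for all large $n$. Cauchy--Schwarz applied to the pairing $\sum_v d_v \cdot 1$ then gives
\[
\|\vdn\|_1^2 \,\le\, n \sum_v d_v^2 \,\le\, (\gamma+1)\,n\,\|\vdn\|_1,
\]
so that $\|\vdn\|_1 \le (\gamma+1)n$.

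Plugging this back in yields $M^{\kappa+1} \lesssim M(M-1)^\kappa \lesssim n$, hence $M \le C n^{1/(\kappa+1)}$ for a constant $C = C(\gamma,\mathbf{m}_\kappa)$. Since $1/(\kappa+1) < 2/(\kappa+1)$, the prefactor $C$ is absorbed and $M \le n^{2/(\kappa+1)}$ holds for all sufficiently large $n$, which is exactly the claim. The argument is essentially a one-line size-biasing computation; the only non-routine ingredient is the upper bound on $\|\vdn\|_1$, and even there the tools are standard.
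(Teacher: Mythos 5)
Your proof is correct and matches the approach the paper sketches. The paper does not actually supply a proof of this Fact; the only indication given is the one-sentence remark just before it, that $\Pvd$ assigns to $\|\vdn\|_\infty - 1$ probability $\Omega(\|\vdn\|_\infty/n)$ --- which is exactly the size-biased lower bound $\Pvd(M-1)\ge M/\|\vdn\|_1$ that drives your argument. The one detail the paper glosses over is why $\|\vdn\|_1 = O(n)$ (the definition of $\cD_{\gamma,\kappa}$ only posits $\|\vdn\|_1 = \Omega(n)$), and your Cauchy--Schwarz step via the identity $\sum_v d_v^2 = (\Evd[D]+1)\|\vdn\|_1$ correctly supplies it. The final absorption of the constant into the slack between the exponents $1/(\kappa+1)$ and $2/(\kappa+1)$ is also handled properly, keeping in mind the paper's convention that statements hold for all sufficiently large $n$.
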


\begin{proof}[\textbf{\emph{Proof of Lemma~\ref{lem:random-graph-volume-growth}}}]
We will take a union bound over the probabilities that for a fixed vertex $v\in \{1,...,n\}$, and a fixed $r \ge \epsilon \log_\gamma n$, the graph $\cG$ has $|B_r(v)|\le C\gamma^r$. Fix any such $v,r$ and take $H = (\{v\},\emptyset)$, so that $\hat E(H)$ are exactly the half-edges of $v$. Evidently, for $\hat e\in \hat E(H)$
\begin{align*}
    \mathbb P(|B_{r}(v)| \ge \gamma^{r}) \le  d_v \mathbb P(|B_{r-1}(\hat e)|\ge d_v^{-1} \gamma^r)\,.
\end{align*}
Consider the probability on the right. For each $\hat e\in \hat E(H)$, by Proposition~\ref{prop:ball-domination-by-GW-tree},
\begin{align*}
    |B_{r-1}(\hat e)|\mathbf 1_{\{|B_{r-1}(\hat e)|\le n^{1/2}\}}\preceq Z_r\,,
\end{align*}
where $Z_{r} \sim |\widehat \cT_{r-1}(\underline \vdn)|$, where we recall this is the single-source Galton--Watson tree of depth $r-1$ whose offspring distribution is $\mathbb P_{\underline \vdn}$. 
Now using a union bound,  
\begin{align*}
    \Pcm\big(\cG \mbox{ does not have $(\gamma,\epsilon)$-volume growth}\big) \le \sum_{v} d_v \sum_{r= \epsilon \log_\gamma n}^{\frac 12 \log_\gamma n}  \mathbb P\big(|Z_r|\ge d_v^{-1} \gamma^r\big)\,.
\end{align*}
Since $(\vdn)_n\in \cD_{\gamma,\kappa}$, there exists $\eta>0$ such that it also is in $\cD_{\gamma - \eta,\kappa}$. Fix such an $\eta$. 

Let $\kappa(\gamma,\eta,\epsilon)$ be large, to be chosen later, and let $\epsilon_*(\kappa)$ be such that ${\|\vdn\|}_\infty \le n^{\epsilon_*}$ per Fact~\ref{fact:L-infty-degree-sequence}. Then the right-hand side is at most 
\begin{align*}
    n^{1+\epsilon_*} \sum_{r = \epsilon \log n}^{\frac 12 \log_\gamma n} \mathbb P\big(|Z_{r}| \ge  n^{-\epsilon_*}\gamma^{r}\big) =  n^{1+\epsilon_*}\sum_{r = \epsilon \log n}^{\frac 12 \log_\gamma n} \mathbb P\big(|Z_{r}| \ge   (\gamma^{1-\epsilon_*\log \gamma/\epsilon})^{r}\big)\,.
\end{align*}
Let $\tilde \gamma = \gamma^{1-\epsilon_* \log \gamma/\epsilon}$ and take $\kappa$ to be sufficiently large (so that $\epsilon_*$ is sufficiently small) that $\tilde \gamma >\gamma - \eta/2$. 
By Lemma~\ref{lemma:gw:leaves}, then, the right-hand side above  is at most 
\begin{align*}
    Cn^{1+\epsilon_*}\sum_{r= \epsilon \log n}^{\frac 12 \log_\gamma n} r^{\kappa}\Big(\frac{\gamma-\eta}{\gamma- \eta/2}\Big)^{r \kappa}\,.
\end{align*}
One then sees that if $\kappa$ is large enough, the right-hand side will be $o(n^{-10})$ as desired. 
\end{proof}

\subsection{Treelike nature of the configuration model}
We can also use the breadth-first revealing procedures together with the volume growth estimates, to establish that the random graph given by the configuration model is typically $(L,R)$-$\treelike$ for $L = O(1)$ and $R  \le \frac{1}{2}\log_\gamma n$. 

\begin{proof}[\textbf{\emph{Proof of Lemma~\ref{lem:random-graph-treelike}}}]
By Lemma~\ref{lem:random-graph-volume-growth}, with probability $1-o(n^{-10})$ the random graph $\cG$ has $(\gamma,\epsilon)$ volume growth, say for $\epsilon = 1/4$, as long as $\kappa$ is sufficiently large. Let us work on that event, so that $|B_{R}(v)|\le n^{1/2 - \delta}$ for all $v\in \{1,...,n\}$.  

Now fix any $v$ and perform the breadth-first revealing of $B_R(v)$ per Process~\ref{proc:BFS-revealing-process}. In order for $B_R(v)$ to not be $L$-$\treelike$, it must be the case that for more than $L$ different steps $m$ in the first $n^{1/2 - \delta}$ steps, the half-edge $f(A_{m-1})$ is being matched to a half-edge of $\widehat E(A_{m-1})$. Call such a step \emph{bad}. (If there were at most $L$ bad steps, then the removal of the at-most $L$ edges formed by those at-most $L$ matchings in the revealing scheme, evidently leaves a tree.)

Uniformly over $A_{m-1}$, the probability of the $m$'th step being bad is at most $$(m{\|\vdn\|}_\infty)/({\|\vdn\|}_1 - m)\,.$$
We thus find that for every $\ell\ge 1$, 
\begin{align}\label{eq:B-r-v-not-treelike}
    \Pcm ( B_R(v) \mbox{ is not $\ell$-$\treelike$}) \le \bbP\Big(\bin\Big(n^{1/2 - \delta}, \frac{n^{1/2- \delta}{\|\vdn\|}_\infty}{{\|\vdn\|}_1 - n^{1/2-\delta}}\Big) >\ell \Big)\,.
\end{align}
Recall that the standard Chernoff bound applied to a binomial distribution with mean $\mu = N p$ says that for every $s \ge \mu$, 
\begin{align}\label{eq:Chernoff-Poisson-binomial}
    \mathbb P\big(\bin(N,p) \ge s\big) \le e^{ s - \mu} \Big(\frac{s}{\mu}\Big)^{-s}\,.
\end{align}
Using the assumption that ${\|\vdn\|}_1= \Omega(n)$ and recalling from Fact~\ref{fact:L-infty-degree-sequence} that ${\|\vdn\|}_\infty\le n^{\epsilon_*(\kappa)}$,~\eqref{eq:Chernoff-Poisson-binomial} implies that the right-hand side of~\eqref{eq:B-r-v-not-treelike} is at most $(C n^{-2\delta + \epsilon_*})^\ell$. As a consequence, taking $\kappa$ large enough that $\epsilon_*<\delta$, and choosing $L > 11\delta^{-1}$, we would find that the probability of $B_R(v)$ not being $L$-$\treelike$ is $o(n^{-11})$ for all $v$, and a union bound over $v\in \{1,...,n\}$ implies the desired result. 
\end{proof}

\section{The FK-dynamics shatters quickly on random graphs}\label{sec:shattering}
Our first goal in this section
is to prove the following theorem establishing the existence of $T = O(1)$ (in continuous-time) such that for $t\ge T$, the FK-dynamics chain on the random graph $\cG$ initialized from the all-wired configuration (i.e., all edges are open), denoted $X_{\cG,t}^1$, is shattered, i.e., all the connected components of the FK-dynamics configuration are small; recall Definition~\ref{def:(k,r)-sparse} for a precise formulation.

\begin{theorem}\label{thm:pi-exponential-decay}
	Fix $q \ge 1$, $\gamma>1$, and $p<p_u(q,\gamma)$.
	For every $\epsilon>0$, there exists $\kappa$ such that if $(\vdn)_n\in \cD_{\gamma,\kappa}$, the following holds. There exists $T = O(1)$ such that for every $t\ge T$ and every $v$, with probability $1-o(n^{-10})$, $\cG\sim \Pcm$ is such that 
	\begin{align*}
	\bbP\big(|\cC_v(X_{\cG,t}^1)|\ge n^{\epsilon} \big) \le o(n^{-10})\,. 
	\end{align*}
\end{theorem}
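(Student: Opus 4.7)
The plan is to simultaneously reveal the random graph $\cG\sim\Pcm$ and a configuration that stochastically dominates the restriction of $X_{\cG,t}^1$ to the connected component $\cC_v(X_{\cG,t}^1)$. Starting with $v$ as the only ``exposed" vertex, iterate the following for each newly exposed vertex $u$, in the spirit of the scheme depicted in Figure~\ref{fig:overview}: (a) reveal the ball $B_r(u)$ in the random graph via the breadth-first Process~\ref{proc:BFS-revealing-process}, restricted to the half-edges not already matched by an earlier step (using the ``out-of-$H$" variant of Process~\ref{proc:B-r-out}); (b) on this revealed ball, run a continuous-time FK-dynamics for time $t$ with wired boundary on $\partial B_r(u)$ and all-open initialization, producing a configuration $\tilde\omega(B_r(u))$ which, by monotonicity of the FK-dynamics in the boundary condition and the initial configuration, stochastically dominates the restriction of $X_{\cG,t}^1$ to $E(B_r(u))$; (c) add to the exposed set every vertex of $\partial B_r(u)$ that lies in the $u$-component of $\tilde\omega(B_r(u))$. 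By construction, after termination the exposed set contains $\cC_v(X_{\cG,t}^1)$.

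The central quantitative input is that for a large but $O(1)$ choice of $r$, the number of newly exposed vertices contributed at each step has expectation strictly less than $1$. Indeed, as long as the cumulative number of revealed half-edges stays below $n^{1/2}$, Proposition~\ref{prop:ball-domination-by-GW-tree} stochastically dominates $B_r(u)$ by a single-source Galton--Watson tree $\widehat\cT_r(\underline\vdn)$ with offspring distribution $\mathbb P_{\underline\vdn}$, which by Lemma~\ref{lem:moments-sequence-to-truncated-sequence} still lies in $\cD_{\gamma,\kappa}$. Corollary~\ref{cor:random-tree-has-tree-growth} guarantees the $(\gamma,\varepsilon)$-tree-growth condition of Definition~\ref{def:gamma-eps-tree-growth} for $\widehat\cT_r(\underline\vdn)$ with very high probability; on this event, Lemma~\ref{lemma:exp:decay:wired:treelike} (applied subtree by subtree, using stationarity of the local dynamics on $O(1)$-sized balls up to an $o(1)$ error when $t\geq T$ for $T=O(1)$) bounds the expected number of leaves of $\widehat\cT_r$ connected to the root under $\tilde\omega$ by $C|\partial\widehat\cT_r|\,\ps^{(1-\varepsilon)r}$. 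Taking expectations with respect to the random tree and using $\mathbb E[|\partial\widehat\cT_r|]\leq \gamma^r$ together with $\ps<1/\gamma$ (valid throughout $p<p_u(q,\gamma)$), this is bounded by $C(\gamma\ps^{1-\varepsilon})^r<1$ for $r$ large enough, placing the exposure process in a strictly sub-critical regime.

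The main obstacle, and the place where the argument genuinely departs from the regular case of \cite{BlGh21}, is converting this subcriticality into the required $o(n^{-10})$ tail bound on the total number of exposed vertices. In the degree-regular case each step adds at most $\Delta^r=O(1)$ new vertices, so the number exposed is dominated by a bounded-increment martingale and standard Azuma concentration applies. Here, however, $\|\vdn\|_\infty$ can be as large as $n^{\epsilon_*(\kappa)}$, so a single step may contribute $|\partial\widehat\cT_r|$ vertices with only polynomial-tail control coming from the $\kappa$-th moment of $\mathbb P_{\underline\vdn}$. The plan is a two-scale truncation: let $\cE$ be the event that across all $\leq n^\epsilon$ steps, every revealed ball satisfies $|B_r(u)|\leq n^{c\epsilon}$ for a small $c=c(\kappa)$. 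On $\cE$ the martingale increments are polynomially bounded, and a Freedman/Bernstein-type inequality, combined with the subcritical drift above, gives the desired $o(n^{-10})$ deviation bound. Off $\cE$, Lemma~\ref{lemma:gw:leaves} and the moment bound \eqref{eq:population-moment-bound}, applied to $\widehat\cT_r(\underline\vdn)$, control the probability that some ball is too large by $n^{\epsilon-c\epsilon\kappa/2}$ times $n^\epsilon$ from the union bound, which is $o(n^{-10})$ once $\kappa$ is chosen large enough relative to $1/\epsilon$.

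Finally, the choice of $T=O(1)$ is justified by noting that the local continuous-time FK-dynamics on a ball $B_r(u)$ with $r=O(1)$ and $|B_r(u)|\leq n^{c\epsilon}$ has mixing time polynomial in the ball size, hence $t\geq T$ for a fixed large constant $T$ suffices to make $\tilde\omega(B_r(u))$ distributionally close (within $o(n^{-20})$) to the wired random-cluster measure on $B_r(u)$ to which Lemma~\ref{lemma:exp:decay:wired:treelike} applies; the additional error is absorbed in the constants. Combining these ingredients yields $|\cC_v(X_{\cG,t}^1)|\leq n^\epsilon$ with probability $1-o(n^{-10})$ for every $t\geq T$ and every vertex $v$, which is Theorem~\ref{thm:pi-exponential-decay}.
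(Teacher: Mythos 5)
Your overall architecture matches the paper's: the same simultaneous breadth-first revealing of the graph and a dominating localized FK-dynamics, the same GW-tree domination via Proposition~\ref{prop:ball-domination-by-GW-tree}, and the same subcriticality input from Lemma~\ref{lemma:exp:decay:wired:treelike}. However, there are two genuine gaps.

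First, and most seriously, your argument only applies Lemma~\ref{lemma:exp:decay:wired:treelike} to the dominating Galton--Watson tree and never addresses what happens when the revealed ball $B_r(\hat e_m;\cA_{m-1}^c)$ is \emph{not} a tree, or when during its breadth-first exploration a half-edge gets matched to a previously revealed vertex of $\cA_{m-1}$. In the first case the exponential decay lemma simply does not apply (the random-cluster measure on a ball with cycles behaves very differently from on a tree), and in the second case the component of $v$ in $X_{\cG,t}^1$ can abruptly merge with an arbitrarily large previously revealed component, so the exposed set is no longer controlled by the subcritical tree-connectivity estimate at all. These two events are precisely what item~(1) of Definition~\ref{def:branching-process} handles in the paper, by assigning a state-dependent a priori offspring of size $\|\vdn\|_\infty^r\cdot(\text{population so far})$ when the event $\Gamma_{\mathsf{good}}$ fails; and the whole machinery of the auxiliary branching process of Definition~\ref{def:auxiliary-branching-process} together with the $M$-epoch decomposition of Claim~\ref{lem:branching-process-auxiliary-comparison} exists to turn ``a bounded number of such large state-dependent jumps'' into a usable tail bound. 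Your truncation event $\cE$ controls only the \emph{size} of the revealed balls, which does not rule out either of these two bad events, and your ``Freedman/Bernstein on $\cE$'' step tacitly assumes the subcritical drift holds on every step, which it does not on the bad steps. This is exactly where, as Remark~\ref{rem:revealing-process-modifications} notes, the heterogeneous-degree setting departs from the regular case of~\cite{BlGh21}.

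Second, your justification that $T=O(1)$ suffices is inconsistent and partly wrong. You first invoke ``stationarity of the local dynamics on $O(1)$-sized balls'', but then allow balls of size up to $n^{c\epsilon}$; the mixing time of the FK-dynamics on a ball of size $n^{c\epsilon}$ is polynomial in $n^{c\epsilon}$, which is not absorbed by a fixed constant $T$. The paper avoids this by only running the localized dynamics on trees that satisfy the $(\gamma,\epsilon)$-tree-growth condition (hence $|\mathbb T_r|\le\gamma^r=O(1)$, so $\Tburn=O(1)$ suffices via~\eqref{eq:t-burn}), and taking the crude a priori bound (item (2a) of Definition~\ref{def:branching-process}, i.e.\ the full boundary) when tree-growth fails. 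Your proof needs this case split as well.
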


We will then use this to conclude Theorem~\ref{thm:k-R-sparse-whp}, demonstrating that if $t\ge T$, the boundary condition $X_{\cG,t}^1$ induces on any ball of volume $o(\sqrt n)$ is $O(1)$-sparse.

By monotonicity of the FK-dynamics, for every $\cG$, we have that $X_{\cG,t}^1\succeq \pi_{\cG}$, from which it follows that Theorem~\ref{thm:pi-exponential-decay} holds under $\pi_{\cG}$. 

\begin{corollary}\label{cor:pi-exponential-decay}
Fix $q \ge 1$, $\gamma>1$, and $p<p_u(q,\gamma)$. For every $\epsilon>0$, there exists $\kappa$ such that if $(\vdn)_n\in \cD_{\gamma,\kappa}$, then for every $v$, with probability $1-o(n^{-10})$, $\cG \sim \Pcm$ is such that
	\begin{align*}
	\pi_{\cG}\big(|\cC_v(X_{\cG,t}^1)|\ge n^{\epsilon} \big) \le o(n^{-10})\,. 
	\end{align*}
\end{corollary}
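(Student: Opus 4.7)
The plan is to derive this corollary from Theorem~\ref{thm:pi-exponential-decay} by a one-line stochastic domination argument, since the hard analytic work has already been done for the initialized-from-all-wired dynamics. Concretely, I will exploit the three ingredients the excerpt already highlights: (i) the chain $X_{\cG,t}^1$ has $\pi_{\cG}$ as its reversible stationary distribution; (ii) for $q \ge 1$ the continuous-time FK-dynamics admits a grand coupling that is monotone in the initial configuration, so that $X_{\cG,t}^{1} \succeq X_{\cG,t}^{x_0}$ for every starting state $x_0$; and (iii) the event $\{\omega : |\cC_v(\omega)| \ge n^{\epsilon}\}$ is increasing in $\omega$, because adding open edges can only merge clusters and never split them.

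With these ingredients in place, the execution is immediate. I would first fix a graph $\cG \sim \Pcm$ belonging to the probability $1 - o(n^{-10})$ event on which the conclusion of Theorem~\ref{thm:pi-exponential-decay} applies at time $T$; on this event the same $\cG$ will serve here, which is why the corollary's probabilistic quantifier over $\cG$ matches the theorem's. Next, starting the FK-dynamics from a sample $\omega_0 \sim \pi_{\cG}$ produces a stationary copy, so $X_{\cG,t}^{\omega_0}$ is distributed as $\pi_{\cG}$ for every $t$, and in particular at $t = T$. Coupling this stationary chain monotonically with the chain from the all-wired initialization gives $X_{\cG,T}^{1} \succeq X_{\cG,T}^{\omega_0} \stackrel{d}{=} \pi_{\cG}$. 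Applying this to the increasing indicator $\mathbf{1}\{|\cC_v| \ge n^{\epsilon}\}$ yields
\begin{equation*}
\pi_{\cG}\bigl(|\cC_v(\omega)| \ge n^{\epsilon}\bigr) \;=\; \bbP\bigl(|\cC_v(X_{\cG,T}^{\omega_0})| \ge n^{\epsilon}\bigr) \;\le\; \bbP\bigl(|\cC_v(X_{\cG,T}^{1})| \ge n^{\epsilon}\bigr),
\end{equation*}
and the right-hand side is $o(n^{-10})$ by Theorem~\ref{thm:pi-exponential-decay}.

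There is essentially no technical obstacle here: monotonicity of the FK-dynamics under $q \ge 1$ via the standard grand coupling of edge updates is classical (cf.~\cite{Grimmett}), and the cluster-size event is transparently increasing. The only point that deserves a brief sentence in the writeup is to confirm that the probability-$1 - o(n^{-10})$ event over $\cG$ is the \emph{same} good event as in Theorem~\ref{thm:pi-exponential-decay} applied at $t = T$, so that the quantifiers line up and no additional union bound over $v$ or $t$ is needed beyond what that theorem already provides.
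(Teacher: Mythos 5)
Your proof is correct and is precisely the argument the paper intends: the paper derives the corollary in one line by noting that $X_{\cG,t}^1 \succeq \pi_\cG$ for every $\cG$ (monotonicity from the all-wired start), and since $\{|\cC_v| \ge n^\epsilon\}$ is an increasing event, the bound of Theorem~\ref{thm:pi-exponential-decay} transfers directly to $\pi_\cG$ on the same high-probability set of graphs. Your write-up just spells out the stationary coupling a bit more explicitly; the substance is identical.
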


While we do not use this corollary here, it may find applications elsewhere.

\subsection{Couplings and revealing schemes for the FK-dynamics on random graphs}
In this section, we define our central revealing procedure for exposing the random graph together with a family of coupled FK-dynamics on subsets of the random graph $\cG$, which together stochastically dominate $X_{\cG,t}^1$. This revealing procedure is essential to the proof of shattering for $X_{\cG,t}^1$ in the uniqueness region after $O(1)$ continuous-time. 

A similar revealing scheme of random graphs with an FK-dynamics chain on top of it was introduced in~\cite{BlGh21}. The revealing scheme we use here builds on that, but makes some key modifications to deal with the non-uniformity of the degrees and the lack of  deterministic control on the volume of small balls of $\cG$. These changes are explicitly laid out in Remark~\ref{rem:revealing-process-modifications}. 

\subsubsection{Grand coupling of localized FK-dynamics}
In this section, we define a grand coupling of FK-dynamics on all possible edge subsets of the random graph $\cG$ in such a way that 
all monotonicities of the model are maintained. 

Recall from Definition~\ref{def:matchings-perfect-matchings} that we use $\fM_n$ as the set of all (not necessarily perfect)  matchings of the complete graph on the ${\|\vdn\|}_1$ many half-edges. The matching $A$ is naturally identified with a set of edges on the original vertex set $\{1,...,n\}$, each pairing of two half-edges becoming an edge between the vertices they belong to. Abusing notation, we will understand $A$ both as a matching element of $\fM_n$ and as a multiset of edges on $\{1,...,n\}$. 

\begin{definition}\label{def:censored-Glauber-chains}
    For an element $A\in \fM_n$, let $\partial A$ be the set of vertices in $\{1,...,n\}$ having half-edges that are not matched in $A$. Let $\pi_{A}^1$ be the random-cluster measure on the edge set $A$ with wired boundary conditions wiring all vertices of $\partial A$. Let $(X_{A,t}^1)_{t\ge 0}$ be the continuous-time FK-dynamics initialized from the all wired configuration on $A$ (as well as outside $A$), and making updates in $A$ according to $\pi_{A}^1$. 
\end{definition}

We next place all the chains $(\mathcal X_{t}^1)_{t\ge 0}= \big\{(X_{A,t}^1)_{t \ge 0}\big\}_{A\in \fM_n}$ on all possible matchings $A\in \fM_n$, in the same probability space, and construct an explicit coupling of them. 

\begin{definition}\label{def:sources-of-randomness}
The probability space we consider will consist of the following sources of randomness:
\begin{enumerate}
\item Independently assign each possible edge $e$ (i.e., each possible pairing of two half-edges), a sequence of times $\mathfrak T_e = (T_1^{e}, T_2^{e},...)$ given by the rings of a rate-1 Poisson clock; and
\item Independently assign each possible $e$ a sequence of $\mbox{Unif}[0,1]$ random-variables $\mathfrak U_e = (U_{1}^{e},U_2^e,...)$. 
\end{enumerate}
We denote by $\cF_t$ the $\sigma$-algebra generated by the processes $(\mathfrak T_e)_e$ up to time $t$, as well as the corresponding set of random variables in $(\mathfrak U_e)_e$. 
\end{definition}

\begin{definition}\label{def:coupling-Glauber-chains}
From $(\mathfrak T, \mathfrak U)$ construct the processes  $(X_{A,t}^{1})_{t\ge 1}$ for all $A\in \fM_n$ as follows: 
\begin{enumerate}
    \item Let $0<t_1< t_2<...$ be the (almost surely distinct) times in $\bigcup_{i}\bigcup_{e}\{T_i^e\}$ in increasing order. 
    \item Initialize $X_{A,0}^1(e) =1$ for all $e$; i.e., the all wired configuration. 
    \item For each $i\ge 1$, let $$X_{A,t}^{1} = X_{A,t_{i-1}}^1\qquad \mbox{for all}\qquad t\in [t_{i-1},t_i)\,.$$
    Then, let $(e_i,k_i)$ be the unique pair for which $t_i = T_{k_i}^{e_i}$ 
and define $X_{A,t_i}^1$ by setting 
$$X_{A,t_i}^1(e) = X_{A,t_{i-1}}^1(e) \qquad \mbox{for all} \qquad e\in A \setminus \{e_i\}$$
and
\[X_{A,t_i}^{1}(e_{i})= 
\begin{cases}
1 & \mbox{if }e_{i}\in A~\mbox{and } U_{e_i,k_i} \le \varrho; \\
0 & \mbox{if }e_{i}\in A~\mbox{and }U_{e_t,k_i} > \varrho;
\end{cases}
\]
for $$\varrho = \pi_{A}^{1}\big(\omega(e_{i})=1\mid\omega(A\setminus \{e_i\})= X_{A,t_{i-1}}^1(A\setminus \{e_i\})\big)\,;$$ 
i.e., if $e_{i}\in A$, we resample $e_{i}$ given the
remainder of the configuration on $A$, together with the wired
boundary condition on $\partial A$, using the same $U_{e_i,k_i}$ for every $X_{A,t_i}^{1}$ such that $e_i\in A$.
\end{enumerate}
\end{definition}

The following two observations are elementary to observe, but of central importance to our analysis. 

\begin{observation}\label{obs:monotonicity-of-coupling}
The coupling defined in Definition~\ref{def:coupling-Glauber-chains} is a monotone coupling. In particular, we have $X_{A',t}^1\le X_{A,t}^1$ for any two matchings $A,A'\in \fM_n$ with $A\subset A'$. As such, we have for every $\cG$ that $$X_{\cG,t}^1(e) \le \min_{A\in \fM_n: A\subset E(\cG)} X_{A,t}^1(e)\,, \qquad \mbox{for all $e\in E(\mathcal G)$ and all $t\ge 0$}\,.$$
\end{observation}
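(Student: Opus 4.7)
The plan is a grand-coupling induction along the update times. For $A \subseteq A'$ in $\fM_n$ and every $e \in A$, I will show $X^{1}_{A',t}(e) \le X^{1}_{A,t}(e)$ for all $t \ge 0$. The base case $t = 0$ is immediate as both chains start from the all-open configuration on their edge sets. Between updates nothing changes, so the inductive step amounts to checking preservation at each time $t_i$. If $e_i \notin A$ then $X^{1}_{A,\cdot}$ is unaffected and there is nothing to verify. If $e_i \in A \subseteq A'$ then both chains resample $e_i$ using the same uniform variable $U = U_{e_i,k_i}$, yielding new values $\mathbf{1}\{U \le \rho_A\}$ and $\mathbf{1}\{U \le \rho_{A'}\}$, where $\rho_A$ (resp. $\rho_{A'}$) is the conditional open-probability of $e_i$ given the configuration off $e_i$ under the wired boundary $\partial A$ (resp. $\partial A'$). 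Each such marginal equals $p$ if $e_i$ is not a cut-edge in the corresponding graph-with-boundary and $\ps < p$ otherwise, so it suffices to prove $\rho_{A'} \le \rho_A$, equivalently that connectivity between the endpoints of $e_i$ in the $A'$-setting forces connectivity in the $A$-setting.

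For this path-transfer step, fix any path $\gamma$ that connects the endpoints $u,v$ of $e_i$ in the graph $(V, X^{1}_{A',t_{i-1}} \cap (A' \setminus \{e_i\}))$ augmented by the $\partial A'$-wiring; I will transform $\gamma$ into a corresponding $u$-$v$ path in $(V, X^{1}_{A,t_{i-1}} \cap (A \setminus \{e_i\}))$ augmented by the $\partial A$-wiring, via local substitutions. Any edge of $\gamma$ lying in $A$ is open in $X^{1}_{A,t_{i-1}}$ by the inductive hypothesis and may be kept. Any edge of $\gamma$ lying in $A' \setminus A$ has, by the definition of $A \subseteq A'$, both of its half-edges unmatched in $A$, hence both endpoints in $\partial A$, so this edge may be replaced by a step through the $\partial A$-wiring. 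Any use of the $\partial A'$-wiring along $\gamma$ can also be replaced by a use of the $\partial A$-wiring because growing the matching can only remove vertices from the unmatched boundary, giving $\partial A' \subseteq \partial A$. Concatenating these substitutions yields the required path.

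The crux is the compatibility identity that every edge in $A' \setminus A$ is incident to two vertices of $\partial A$; this is precisely how extra internal edges trade cleanly against the wired-boundary connections that are lost as the matching grows, and without it the marginal comparison would fail. The remainder is standard grand-coupling bookkeeping. The second displayed inequality of the observation then follows by specialization: taking $A' = E(\cG)$ identifies $X^{1}_{A',t}$ with $X^{1}_{\cG,t}$, and the just-proved monotonicity gives $X^{1}_{\cG,t}(e) \le X^{1}_{A,t}(e)$ for every $A \in \fM_n$ with $A \subseteq E(\cG)$ and every $e \in A$, which is exactly the claim. \qed
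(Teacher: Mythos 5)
Your proof is correct. The paper states this as an ``Observation'' without an explicit argument, and yours is exactly the one a careful reader is expected to supply: induction on the shared update times, reducing at each time to the marginal comparison $\rho_{A'} \le \rho_A$, which you reduce to a path-transfer. The key structural input that makes the path transfer work is exactly the one you single out: for nested matchings $A \subseteq A'$, any half-edge matched in $A$ is matched to the \emph{same} partner in $A'$, so both endpoints of every edge in $A' \setminus A$ have unmatched half-edges in $A$ and therefore lie in $\partial A$, and moreover $\partial A' \subseteq \partial A$. This is what lets you trade internal edges and $\partial A'$-wiring steps for $\partial A$-wiring steps, and the resulting path stays in $A \setminus \{e_i\}$ since $\gamma$ avoids $e_i$. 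The specialization $A' = E(\cG)$ (for which $\partial A' = \emptyset$, so $X^1_{E(\cG),t}$ coincides with the free-boundary chain $X^1_{\cG,t}$) gives the second display. One trivial slip: you write $\ps < p$, but at $q = 1$ one has $\ps = p$; this changes nothing since the argument only needs $\ps \le p$, which is exactly where $q \ge 1$ enters.
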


\begin{observation}\label{obs:measurability-of-coupling}
For every $A$, the configuration $X_{A,t}^{1}$ depends only on $(\mathfrak T_e, \mathfrak U_e)_{e\in A}$, and in fact only on their restriction to $\cF_t$ (the $\sigma$-algebra generated by elements of $\mathfrak T,\mathfrak U$ before time $t$). 
\end{observation}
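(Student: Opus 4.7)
The plan is to prove this observation by strong induction on the ordered sequence of Poisson clock ring times $0 < t_1 < t_2 < \cdots$ used to construct $X_{A,t}^1$ in Definition~\ref{def:coupling-Glauber-chains}. The base case is immediate, since $X_{A,0}^1$ is the deterministic all-wired configuration on $A$ and depends on no randomness at all. For the inductive step, I will assume that $X_{A,t_{i-1}}^1$ is measurable with respect to the $\sigma$-algebra generated by the restrictions of $(\mathfrak T_e, \mathfrak U_e)_{e\in A}$ to times strictly less than $t_i$, and then separately address the two cases of the update rule at time $t_i$.

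If $e_i \notin A$, the construction stipulates $X_{A,t_i}^1 = X_{A,t_{i-1}}^1$ and the inductive hypothesis transfers verbatim. If instead $e_i \in A$, then $X_{A,t_i}^1(e_i)$ is a deterministic function of $U_{e_i,k_i}$ (an entry of $\mathfrak U_{e_i}$ with $e_i \in A$) and of $\varrho$, which in turn is a function of $X_{A,t_{i-1}}^1(A\setminus\{e_i\})$ and of the fixed conditional probability under $\pi_A^1$; by the inductive hypothesis it remains measurable with respect to $(\mathfrak T_e, \mathfrak U_e)_{e\in A}$ restricted up to time $t_i$. Propagating the induction through all $t_i \le t$ then gives the claim, with the $\sigma$-algebra in question being precisely $\cF_t \cap \sigma\big((\mathfrak T_e, \mathfrak U_e)_{e\in A}\big)$.

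There is no serious obstacle here: the observation is essentially a direct reading of the construction. The only mild subtlety is that identifying the update times $t_1 < t_2 < \cdots$ as written in Definition~\ref{def:coupling-Glauber-chains} formally uses the clock-ring information from \emph{all} edges of the complete graph on half-edges, not only those in $A$. However, because $X_{A,\cdot}^1$ is stationary across every step where $e_i \notin A$, the same process can equivalently be constructed using only the thinned clock sequence $(\mathfrak T_e)_{e\in A}$ together with the uniforms $(\mathfrak U_e)_{e\in A}$; rephrasing the construction in this thinned form makes the measurability claim transparent and completes the argument.
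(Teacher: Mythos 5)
Your argument is correct; the paper itself presents Observation~\ref{obs:measurability-of-coupling} without proof (flagging it as ``elementary to observe''), and your induction along the clock-ring times, with the remark that the global ordering of rings can be replaced by the thinned sequence $(\mathfrak T_e)_{e\in A}$ because $X_{A,\cdot}^1$ is constant across rings with $e_i\notin A$, is exactly the intended reading of Definition~\ref{def:coupling-Glauber-chains}. The thinning remark is the key point worth making explicit, and you make it; nothing is missing.
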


We now use the coupling of Definition~\ref{def:coupling-Glauber-chains} to design a coupling of FK-dynamics chains on random graphs.  

\begin{definition}\label{def:coupling-Glauber-chains-average-graph}
Let $\mathbb{P}_{t}^{1}$ be the distribution over pairs $(\mathcal{G},\omega_t)$
where $\omega_{t}$ is a random-cluster configuration on $\mathcal{G}$ that results by first
drawing $\mathcal{G}\sim \Pcm$, then drawing $\omega_{t}\sim \mathbb P(X_{\cG,t}^{1}\in\cdot)$.
    Likewise, for every set $A\in \fM_n$, let $\mathbb{P}_{A,t}^{1}$
be the distribution over pairs $(\mathcal{G},\omega_{A\cap E({\mathcal{G}}),t})$
where $\omega_{A,t}\sim \mathbb P(X_{A,t}^{1}\in\cdot)$.
Couple, under the distribution $\bbP$, the family of distributions $(\mathbb{P}_{A,t}^{1})_{A\in \fM_n,t\ge 0}$
by selecting the same random graph $\mathcal{G}\sim \Pcm$
for all of them, then using the coupling of Definition~\ref{def:coupling-Glauber-chains} for the family of FK-dynamics $(X_{A,t}^{1})_{A\subset E(\cG),t\ge 0}$ on $\cG$. 
\end{definition}

In this manner, we have constructed a monotone coupling of 
$(\mathcal{G},(X_{A,t}^{1})_{t\ge1})_{A\subset E(\cG)}$. Note that we use this coupling for $A$ which we know have $A \subset E(\cG)$, so that the randomness of the graph is only over the edges of $E(\cG)\setminus A$, which we note $X_{A,t}^1$ is independent of; thus the role of this coupling is only to put the random graphs with their random-cluster configurations on the same probability space. We emphasize that by Observation~\ref{obs:measurability-of-coupling}, if $A \cap B = \emptyset$, then $X_{A,t}^1$ and $X_{B,t}^1$ are independent.

\subsubsection{The joint revealing procedure} We now construct a revealing procedure for $\cG$ and a configuration $\tilde \omega_t$ on $\cG$ that stochastically dominates $X_{\cG,t}^{1}$. This will be inspired by the simultaneous revealing procedure first introduced in~\cite{BlGh21}, with significant modifications that streamline that argument, and deal with the heterogeneity of degrees and volumes of balls in $\cG \sim \Pcm$. 

\begin{definition}
Given a degree sequence $\vdn$, a vertex set $\cV \subset \{1,\dots,n\}$, and a matching $\cA\in \fM_n$, let $\widehat E(\cV,\cA)$ be the set of half-edges incident to $\cV$, and not matched in $\cA$.   
\end{definition}
We note that $\widehat E(V(\cA),\cA) = \widehat E(\cA)$ from Definition~\ref{def:BFS-revealing-exposed}.

For a matching on half-edges, $\cA_0 \in \fM_n$, so that $\cA_0 \subset E(\cG)$, and a subset of vertices $\cV_0\subset V(\cG)$, we construct a procedure to expose (a set containing) the connected components $\cC_{\cV_0}(X_{\cG,t}^1(E(\cG)\setminus \cA_0))$, i.e., the union of all the connected components of the vertices in $\cV_0$ in the configuration $X_{\cG,t}^{1}(E(\cG)\setminus \cA_0)$. The two examples to have in mind are 
\begin{enumerate}
    \item $\cA_0 = \emptyset$ and $\cV_0 = \{v\}$, used to prove Theorem~\ref{thm:pi-exponential-decay};
    \item $\cA_0= E(B_R(v))$, and $\cV_0 = \partial B_R(v)$, used to prove Theorem~\ref{thm:k-R-sparse-whp}.
\end{enumerate}  

In this revealing procedure, the index $m$ will count the number of ``steps", and $k$ will track the number of ``generations". We will keep track of the following variables through our revealing process:
\begin{itemize}
    \item $\mathcal{A}_{m}$: the element of $\fM_n$
          that has been shown to be a subset of $E(\cG)$ through step~$m$;
     \item $\tilde \omega_{m}$: the random-cluster configuration revealed through step $m$;
    \item $\widehat \cE_k$: the set of half-edges we want to explore out of in the $k$-th generation.
\end{itemize}
For $\cA\in \fM_n$, recall from Process~\ref{proc:B-r-out} that $B_r(\hat e; \cA^c)$ is revealed in a breadth-first manner, with the breadth-first exploration rejecting branches through vertices in~$V(\cA)$. 

The revealing process, with parameters $(p,q,\gamma,r,t)$, and input $(\cV_0,\cA_0)$ is defined as follows: see Figure~\ref{fig:revealing-fig1}--\ref{fig:revealing-fig2} for a depiction to accompany the below.

\begin{center}
	\fbox{
		\parbox{0.98\textwidth}{
			\begin{process}\label{proc:revealing-FK-on-graph} 
            
            \textbf{ }
            
			\textbf{Inputs:} $(p,q,\gamma)$; \quad $t\ge 0$; \quad $r\ge 1$; \quad $\cV_0\subset \{1,...,n\}$; \quad $\cA_0\in \fM_n$; 
			
			\textbf{Initialize:} $k = 0$;
            \quad $m = 1$;
            \quad $\widehat \cE_0 = \widehat E (\cV_0,\cA_0)$; 
            \quad $\tilde \omega_0 = \emptyset$;

            \medskip
            \textbf{for each}~$k \ge 0$~\textbf{while}~$\widehat{\mathcal{E}}_k \neq \emptyset$
            
           \quad \textbf{for each}~$\hat e \in \widehat \cE_k$,
\begin{enumerate}[\quad\quad\quad~1.]\setlength{\itemsep}{3mm}

\item Reveal the ball of radius $r$ out from $\hat e$ in the random graph $\cG$:
\smallskip
\begin{enumerate}
    \item Set $\hat e_m = \hat e$. Conditionally on $\cA_{m-1}$, reveal $B_{r}(\hat e_m; \cA_{m-1}^c)$ per Process~\ref{proc:B-r-out}. 
    \item Set $\cA_m = \cA_{m-1}\cup B_r(\hat e_m ; \cA_{m-1}^c)$. 
    \item Let $A_m := \mathcal A_m \setminus \mathcal A_{m-1}$ be the set of new edges revealed to belong to $E(\cG)$. 
\end{enumerate}

\smallskip
\item Simulate the FK-dynamics up to time $t$ on the newly revealed edge set $A_m$:
\smallskip
\begin{enumerate}
\item Reveal $\mathfrak F_{A_m,t} := \{(\mathfrak T^e)_{e\in A_m},(\mathfrak U^e)_{e\in A_m}\}\cap \cF_t$ (as defined in Definition~\ref{def:sources-of-randomness}).
\item Generate $X_{A_m,t}^{1}$
from $\mathfrak F_{A_m,t}$ per Definition~\ref{def:coupling-Glauber-chains}.
\end{enumerate}

\smallskip
\item Update the configuration $\tilde \omega_m$, the boundary half-edges $\widehat \cE_k$, and the step count~$m$: 
\smallskip
\begin{enumerate}
\item Concatenate $X_{A_m,t}^{1}$ with $\tilde \omega_{m-1}$ to obtain a new configuration $\tilde \omega_{m}$ on~$\cA_m$. 
\item Add to $\widehat \cE_{k+1}$ all un-matched half-edges of vertices in $\partial \cA_m$ that are in the component of $\cV_0$ in $\tilde \omega_m(\cA_m)$ and are not in $\widehat \cE_{j}$ for any $j\le k$. 
\item Increase $m$ by $1$.
\end{enumerate}
\end{enumerate}
\end{process} 
	}}
\end{center}

\begin{remark}\label{rem:revealing-process-modifications}
Before proceeding, let us describe the specific differences between the current revealing scheme and that of~\cite{BlGh21}, as well as why these changes are needed to overcome difficulties arising from heterogeneity of the underlying degree sequence. The main changes are as follows:  
\begin{enumerate}
    \item The revealing process is based on half-edges rather than vertices: this ensures that the revealing of the ball $B_r(\hat e_m; \cA_{m-1}^c)$ does not reveal the degrees of the vertices from which the exploration proceeds (which could potentially have high-degree and introduce correlations between generations). 
    \item The revealing of the ball $B_r(\hat e_m;\cA_{m-1}^c)$ does not continue exploring if it encounters any vertex of $V(\cA_{m-1}^c)$. This is important because if $B_r(\hat e_m)$ intersects a dense region of $\cA_{m-1}^c$ that has already been revealed, then the volume of $B_r(\hat e_m)$ would not be independent of $\cA_{m-1}^c$.  
    \item The FK-dynamics is simulated in continuous time, rather than discrete time. This introduces additional independence so that the number of updates taken by each of the localized FK-dynamics chains $X_{A_m,t}^1$ are truly independent of one another. 
\end{enumerate}
\end{remark}

For ease of notation, let $k_{\emptyset}$
be the first $k$ such that $\widehat {\mathcal{E}}_{k}=\emptyset$, i.e., the total number of generations of the revealing procedure. Let 
\begin{align*}
    \fm_k = \sum_{0\le j\le k}|\widehat \cE_j|\,,
\end{align*}
be the total number of half-edges for which $B_r(\hat e_m; \mathcal {A}_{m-1}^c)$ was revealed in step 1.a) of Process~\ref{proc:revealing-FK-on-graph}, so that $\mathfrak m_{k_{\emptyset}}$ counts the total number of half-edges out of which a ball is ever revealed. Let $$\tilde \omega = \tilde \omega_{\fm_{k_\emptyset}}(\cA_{\mathfrak m_{\kappa_\emptyset}} \setminus\cA_0)$$ be the random-cluster configuration revealed when the process terminates. 
The following key observation is a direct consequence of Observation~\ref{obs:monotonicity-of-coupling} and the construction of Process~\ref{proc:revealing-FK-on-graph}. 

\begin{observation}\label{obs:key-observation}
Under the procedure of Process~\ref{proc:revealing-FK-on-graph}, we have $$\tilde \omega(\cA_{\fm_{k_\emptyset}}\setminus \cA_0) \ge X_{\cG,t}^1(\cA_{\fm_{k_\emptyset}}\setminus \cA_0)\,.$$ In particular, the connected component of each vertex in $\cV_0$ in $X_{\cG,t}^1(E(\cG)\setminus\cA_0)$ is a subset of a connected component of a vertex in $\cV_0$ in $\tilde \omega$. 

Thus, if $N_\omega(A)$ denotes the number of vertices in non-trivial (i.e., non-singleton) components of the boundary condition induced by $\omega(E(\cG)\setminus A)$ on $A$, then 
\begin{align*}
	N_{X_{\cG,t}^1}(\cA_0)\le N_{\tilde \omega}(\cA_0)\,.
\end{align*}
%
%the number of vertices in non-trivial (i.e., non-singleton) components of 
% the boundary condition $X_{\cG,t}^1(E(\cG)\setminus \cA_0)$ induces on $\cA_0$ is less than the number of vertices in non-trivial components of the boundary condition $\tilde \omega$ induces on $\cA_0$. 
\end{observation}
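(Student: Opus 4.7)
The first claim follows directly from Observation~\ref{obs:monotonicity-of-coupling}. At every step $m$ of Process~\ref{proc:revealing-FK-on-graph}, the newly revealed edges $A_m:=\cA_m\setminus \cA_{m-1}$ form a matching with $A_m\subseteq E(\cG)$, so by monotonicity of the grand coupling, $X_{A_m,t}^1\ge X_{\cG,t}^1$ on $A_m$. Step 3(a) sets $\tilde\omega$ on $A_m$ equal to $X_{A_m,t}^1$, and taking the union over $m\le \fm_{k_\emptyset}$ gives the claim.

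For the second claim, the plan is to prove by induction on the $X_{\cG,t}^1(E(\cG)\setminus \cA_0)$-distance $l$ of a vertex $w$ from $\cV_0$ the slightly stronger statement that $w$ lies in the $\tilde\omega$-component of some $v_0\in \cV_0$. Fix a geodesic $v_0,v_1,\dots,v_l=w$. By the inductive hypothesis $v_{l-1}$ lies in the $\tilde\omega$-component of some $v_0\in \cV_0$, so it suffices to show that the edge $e_l:=(v_{l-1},v_l)$ belongs to $\cA_{\fm_{k_\emptyset}}\setminus \cA_0$: the first claim then forces $e_l$ open in $\tilde\omega$, extending the $\tilde\omega$-component to $v_l$. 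Since $e_l\notin \cA_0$ by the choice of geodesic, one has to argue that the half-edge $\hat f$ of $v_{l-1}$ incident to $v_l$ is matched during the revealing. If $v_{l-1}\in \cV_0$ then $\hat f\in \widehat\cE_0$ by construction. Otherwise, at the step $m^*$ when $v_{l-1}$ first enters the $\tilde\omega$-component of $\cV_0$, step 3(b) adds every un-matched half-edge of $v_{l-1}$ at that moment (including $\hat f$, unless it had already been matched, in which case $e_l\in \cA_{m^*}$ and we are done) to the next frontier $\widehat\cE_{k+1}$. Once $\hat f$ is processed, the initial-match step of Process~\ref{proc:B-r-out} pairs $\hat f$ with the half-edge of $v_l$ specified by the underlying draw of $\cG$, and hence $e_l\in \cA_{\fm_{k_\emptyset}}$.

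The third claim is a direct consequence of the second: the boundary condition any configuration on $E(\cG)\setminus \cA_0$ induces on $\cA_0$ partitions $V(\cA_0)$ by connectivity through that configuration. Provided $\cV_0$ contains every vertex of $V(\cA_0)$ that could lie in a non-singleton class of the $X_{\cG,t}^1$-induced partition (for instance $\cV_0=\partial B_R(v)$ when $\cA_0=E(B_R(v))$, since interior vertices of $B_R(v)$ have all their incident edges inside $\cA_0$ and so are singletons in both partitions), the second claim implies the $\tilde\omega$-boundary-partition is a coarsening of the $X_{\cG,t}^1$-one, so every vertex in a non-singleton $X_{\cG,t}^1$-class also lies in a non-singleton $\tilde\omega$-class, which gives the cardinality bound.

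The main obstacle I expect is justifying carefully that $\hat f$ is matched to $v_l$'s half-edge (rather than to some other un-matched half-edge) when $\hat f$ is processed. This uses that the revealing jointly exposes and generates $\cG$: conditioning on $\cG$, the uniform random match produced by the initial step of Process~\ref{proc:B-r-out} coincides with the $\cG$-prescribed pairing, even when $v_l\in V(\cA_{m'-1})$, in which case the initial match still takes place and only the subsequent BFS is suppressed.
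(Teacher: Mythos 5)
The paper offers no proof for this observation, describing it as a direct consequence of Observation~\ref{obs:monotonicity-of-coupling} and the construction of Process~\ref{proc:revealing-FK-on-graph}; your proposal supplies the missing details and is correct. The first claim is indeed immediate from monotonicity of the grand coupling. Your induction on the $X_{\cG,t}^1(E(\cG)\setminus\cA_0)$-distance correctly captures why the revealing cannot terminate before exposing the full $X_{\cG,t}^1$-cluster of $\cV_0$: once $v_{l-1}$ joins the $\tilde\omega$-component of $\cV_0$, step 3(b) puts all its un-matched half-edges (in particular $\hat f$) into the next frontier $\widehat\cE_{k+1}$, and every frontier half-edge eventually gets matched, either by the time it is processed or during the BFS of an earlier half-edge, so $e_l\in\cA_{\fm_{k_\emptyset}}\setminus\cA_0$ and the first claim propagates the openness. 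You are also right to flag that the third claim is not a formal consequence of the second for an arbitrary pair $(\cV_0,\cA_0)$, but that it holds in both regimes the paper uses: $\cA_0=\emptyset$ (vacuous) and $\cA_0=E(B_R(v))$, $\cV_0=\partial B_R(v)$ (where interior vertices of $B_R(v)$ are isolated in $E(\cG)\setminus\cA_0$ and so are singletons under both configurations); the paper's applications, via $\mathfrak V_{(\cV_0,\cA_0)}$ in Definition~\ref{def:mathfrak-V}, in fact only ever count vertices of $\cV_0$, so this hypothesis is built in. The one imprecision is in your closing remark: the revealing does not ``condition on $\cG$'' after the fact; rather, Process~\ref{proc:configuration-model-revealing-graph} generates $\cG$ and $\tilde\omega$ jointly on a single probability space, so the match made for $\hat f$ is, by construction, the pairing realized in $\cG$. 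The conclusion you draw from it is nonetheless correct.
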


\begin{figure}
    \centering
    \begin{tikzpicture}
    \node at (-3.5,0) {
    \includegraphics[width = .4\textwidth] {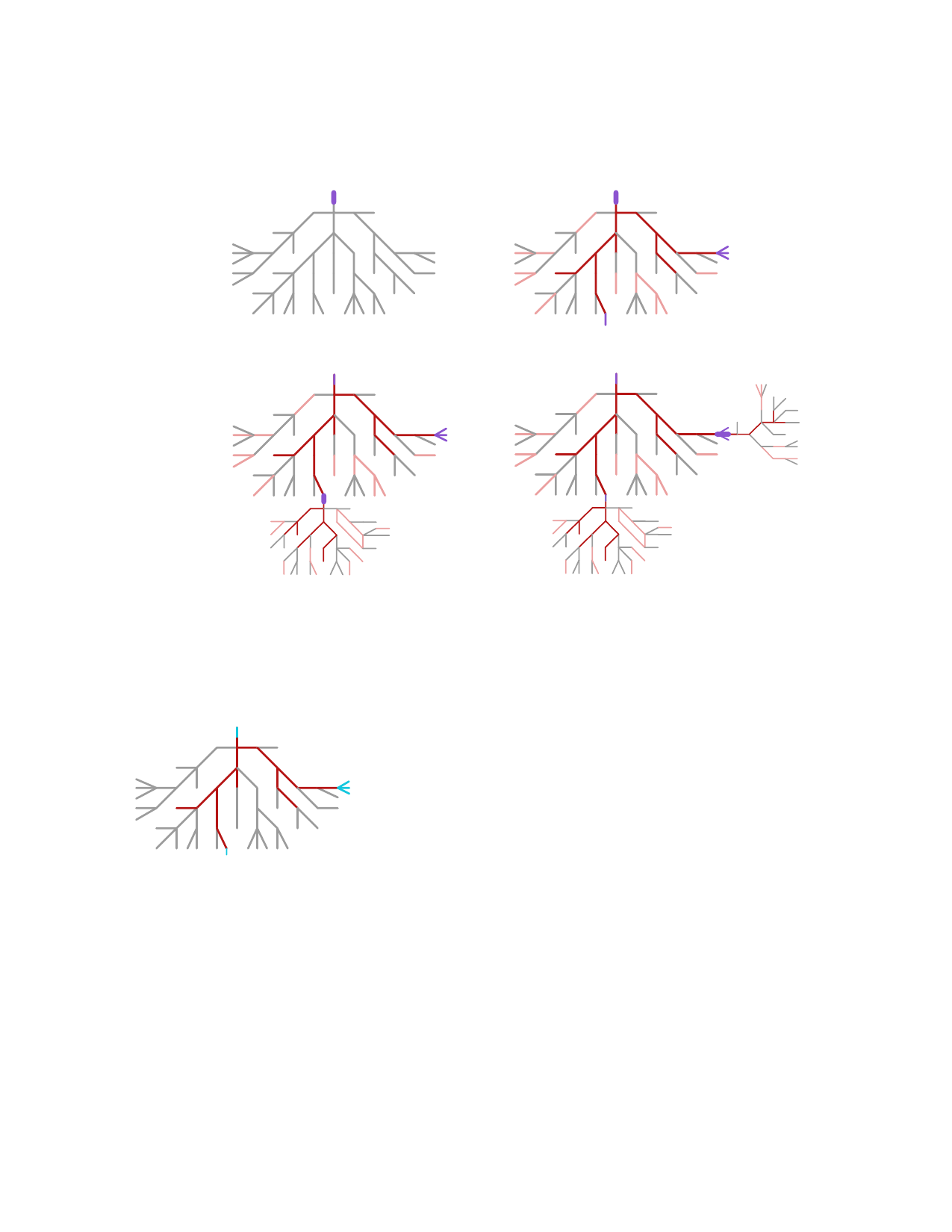}};
    \node at (-3.25,1.6) {$\hat e$};

    \node at (3.5,0) {
    \includegraphics[width = .4\textwidth] {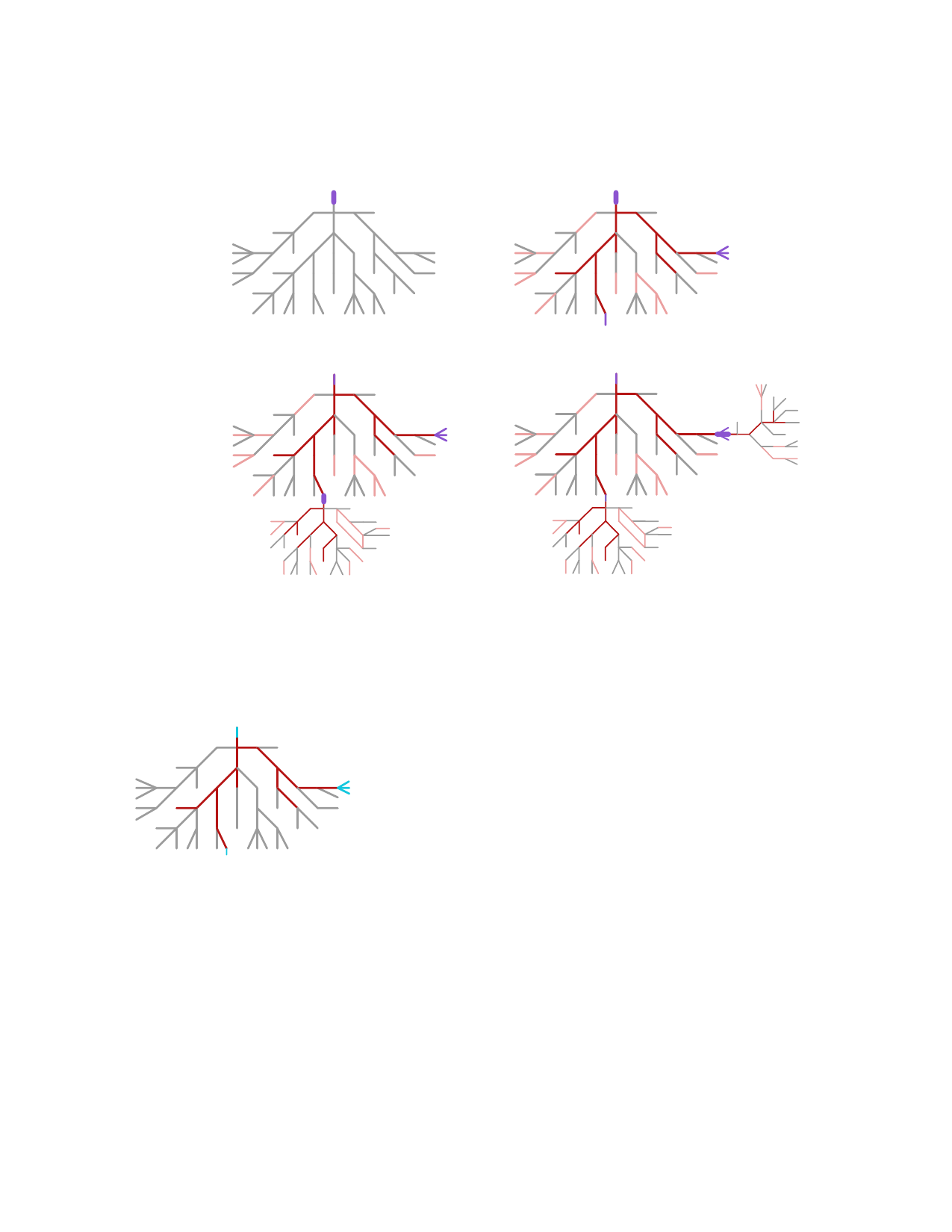}};
        \node at (3.7,1.6) {$\hat e$};
    \end{tikzpicture}
    \caption{Left: We initialize the revealing process with $r=6$ from $\cV_0 = \{v\}$, $\cA_0 = \emptyset$ and the half-edge $\hat e = \hat e_1=  \widehat \cE_0$ (purple). The process begins by revealing $A_1 = B_r(\hat e; \cA_0^c)$, depicted in gray. Right: The process then reveals the configuration $X_{A_1,t}^1$ (open edges  shown in red/pink). Half-edges belonging to vertices in $\partial A_1$ that are in the $X_{A_1,t}^1$-connected component of $v$ (red) are added to form $\widehat \cE_1$ (purple).}
    \label{fig:revealing-fig1}
    \centering
    \begin{tikzpicture}
    \node at (-4,0) {
    \includegraphics[width = .4\textwidth] {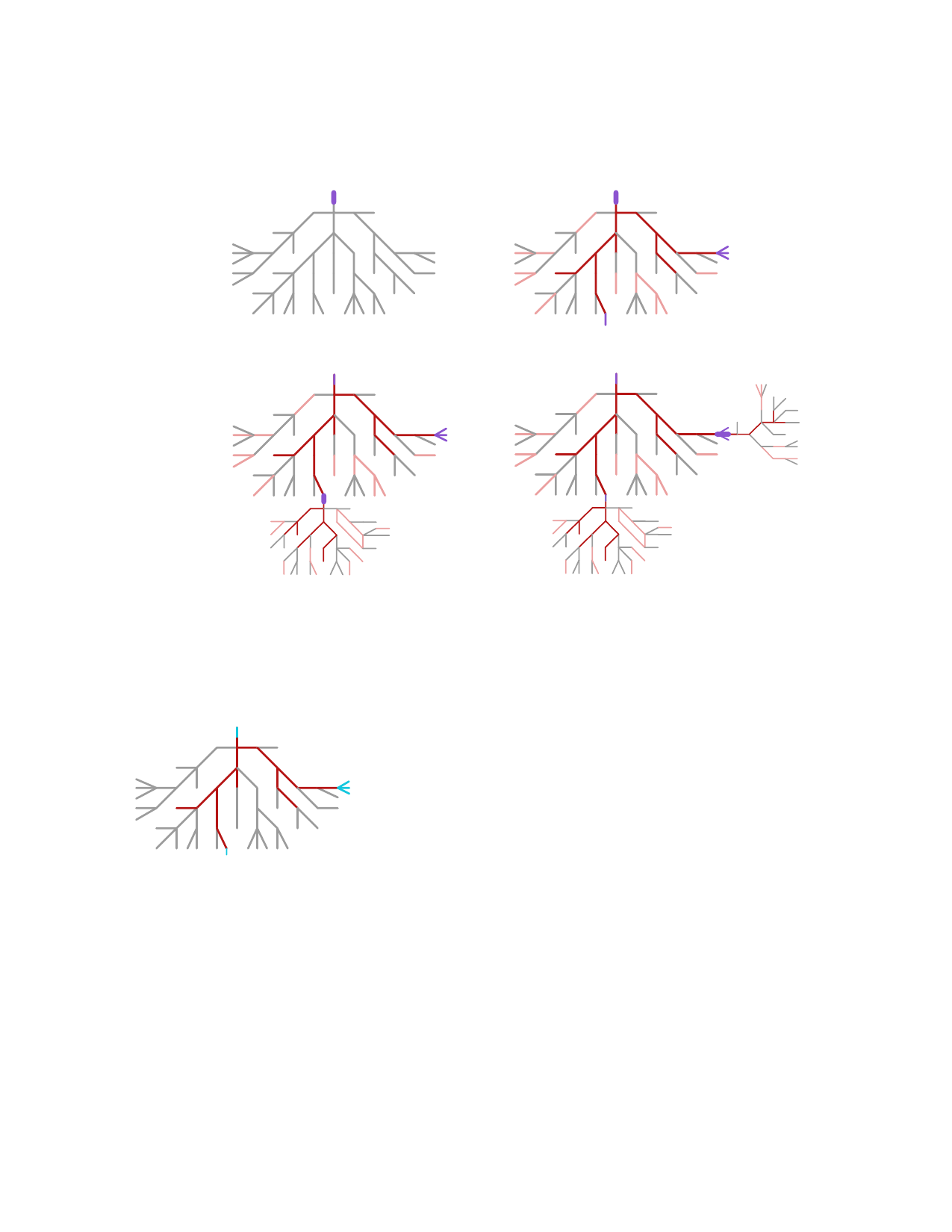}};
        \node[font = \tiny] at (-4.75,-.63) {$\hat e_2$};
    \node at (3.5,0) {
    \includegraphics[width = .48\textwidth] {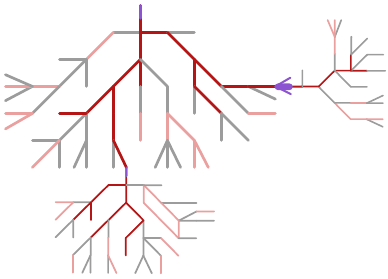}};
            \node[font = \tiny] at (5.3,.75) {$\hat e_3$};
    \end{tikzpicture}
    \vspace{-.25cm}
    \caption{Left: Proceeding from above, in the next generation, starting from  $\hat e_2\in \widehat \cE_1$, the process reveals the edges of $A_1 = B_r(\hat e_2, \cA_1^c)$ in $\cG$; in this case, this is not a tree as it contains a single cycle. The FK-dynamics configuration $X_{A_2,t}^1$ is then generated and concatenated with $\tilde \omega_1$ to form $\tilde \omega_2$. Right: The process continues with $\hat e_3$, revealing $B_r(\hat e_3; \cA_2^c)$ with the FK-dynamics configuration $X_{A_3,t}^1$ on top of it.}
    \label{fig:revealing-fig2}
\end{figure}

With Observation~\ref{obs:key-observation} in hand, we focus on obtaining the stretched exponential tail bound of Theorem~\ref{thm:pi-exponential-decay} for the size of $\cC_v(\tilde \omega)$ (the component of $v$ in $\tilde \omega$) and likewise, the sparsity bound of Theorem~\ref{thm:k-R-sparse-whp}.

\subsubsection{Constructing a dominating branching process}
Towards proving Theorem~\ref{thm:pi-exponential-decay} and~\ref{thm:k-R-sparse-whp}, we construct a branching process (ours will be a size-dependent one but we use the terminology nonetheless) which we will show stochastically dominates the sequence $(\widehat \cE_k)_{k\ge 0}$ of our joint revealing process.
This process $(Z_k)_{k\ge 0}$ will then be shown to be sub-critical, with good tail bounds.

\begin{definition}\label{def:branching-process}
Initialize $Z_{0} =|\widehat \cE_{0}|$. 
    Let $(Z_{k})_{k\ge 0}$ be the branching process, which for each $k\ge 0$, has progeny $(\chi_{i,k})_{i\le Z_k}$, i.e., 
\begin{align*}
    Z_{k+1} = \sum_{i\le Z_k} \chi_{i,k}\,.
\end{align*}
The progeny $\chi_{i,k}$ are distributed as follows. First, let $(\widehat \cT_r^{i,k})_{i,k}$ be i.i.d.\ single-source Galton--Watson trees of depth $r$, with offspring distribution $\mathbb P_{\underline \vdn}$ (from Definition~\ref{def:truncated-empirical-distribution}); recall the single-source here refers to the fact that this is a tree of depth $r$ whose first generation deterministically has one offspring; beyond that first edge, it is simply a Galton--Watson tree of depth $r-1$ with offspring distribution $\mathbb P_{\underline\vdn}$. 
Then the offspring distribution (parametrized by $p,q,\gamma,\epsilon$ and $r,t$), is as follows: 
\begin{enumerate}    
\item With probability $n^{-1/2}$, let $\chi_{i,k} = {\|\vdn\|}_\infty^r \big(\sum_{j<k} Z_j+ \sum_{j<i} \chi_{j,k}\big)$;
\item Otherwise, 
\begin{enumerate}
    \item If $\widehat{\cT}_{r}^{i,k}$ does not satisfy the $(\gamma,\epsilon)$-tree-growth condition  (Definition~\ref{def:gamma-eps-tree-growth}), let $N_t= |E(\widehat{\cT}_{r}^{i,k})|$ and let $\chi_{i,k}$ be a sum of $N_t$ independent random variables drawn from $\mathbb P_{\underline \vdn}$.
    \item If $\widehat \cT_{r}^{i,k}$ does satisfy the $(\gamma,\epsilon)$-tree-growth condition, first generate a configuration on $\widehat \cT_{r}^{i,k}$ by running FK-dynamics with $(1,\circlearrowleft)$ boundary conditions, initialized from $\omega_0 \equiv 1$ for time $t$. Let $N_t$ be the number of vertices of $\partial \widehat \cT_{r}^{i,k}$ that are connected to the root, and let $\chi_{i,k}$ be a sum of $N_t$ independent random variables drawn from $\mathbb P_{\underline \vdn}$.   
\end{enumerate}
\end{enumerate}
\end{definition}

Let us motivate the above construction. Item (1) in the definition of $\chi_{i,k}$ corresponds to cases when either
\begin{itemize}
    \item The ball $B_r(\hat e_m; \cA_{m-1}^c)$ is not a tree, or 
    \item The ball $B_r(\hat e_m; \cA_{m-1}^c)\setminus \{\hat e_m\}$ intersects some already exposed vertex in $\cA_{m-1}$.
\end{itemize}
The $n^{-1/2}$ probability of item (1) is because we will need to take $\cA_0$ possibly as large as $n^{- \frac 12 - o(1)}$. On the latter of these two events, the connected component of $\cV_0$ may, in one step, incorporate many edges of $\cA_{m-1}$, by virtue of an already revealed large connected component of $\cA_{m-1}$. In this case, the best a priori bound we can place on the progeny is the total number of edges revealed up to that point.

In case (2), the newly revealed ball is indeed a tree and does not intersect any already exposed vertex of $\cA_{m-1}$. On the indicator of this event, by Proposition~\ref{prop:ball-domination-by-GW-tree}, the ball is stochastically below $\widehat \cT_{r}^{i,k}$; cases (2a)--(2b) then distinguish whether or not the dominating tree satisfies the $(\gamma,\epsilon)$-tree-growth condition. This is important because if the tree does not satisfy the condition, $p<p_u(q,\gamma)$ will \emph{not} be sub-critical for the $(1,\circlearrowleft)$ random-cluster model on $\widehat \cT_{r}^{i,k}$, and we can only take the full boundary of the tree as our bound on the size of the component of the tree's root.

\subsubsection{Dominating the revealing process by the branching process} 
We are now in position to state the main two lemmas of this section, comparing the revealing procedure to the branching process of Definition~\ref{def:branching-process}, and then establishing its sub-criticality.

Recall that $\mathfrak m_{0} = |\widehat \cE_0|$ and for each $k\ge 1$, $\mathfrak m_{k+1}=\mathfrak m_{k}+|\widehat \cE_{k+1}|$, i.e., in each generation~$k$, $\fm_k$ is the number of half-edges we explore out from. This will be the quantity which we compare to  the population of the  branching process $(Z_k)_k$ of Definition~\ref{def:branching-process}. For notational simplicity, write $\cA_{\infty} = \cA_{\fm_{k_\emptyset}}$.

\begin{lemma}\label{lem:branching-process-domination}
For every $\cA_0, \cV_0$ such that $|\cA_0|,|\cV_0|\le n^{\frac 12 - \delta}$ for $\delta>0$, and every $\ell\ge 1$, 
\begin{align*}
\big(|\widehat \cE_{j}| \mathbf 1_{{\{\mathfrak m_{j} \le n^{1/2 - \delta/ 2} \}}}\big)_{j\le \ell} & \preceq  (Z_{j})_{j\le \ell}\,.
\end{align*}
Furthermore, we have
\begin{align*}
    |\cA_{\infty}\setminus \cA_0| \mathbf 1_{\{\fm_\infty \le n^{1/2 - \delta/2}\}} \preceq \gamma^r \sum_{j=0}^{\infty} Z_j\,.
\end{align*}
\end{lemma}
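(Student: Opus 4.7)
The plan is to couple the revealing process $(\widehat \cE_k)_{k \ge 0}$ with the branching process $(Z_k)_{k \ge 0}$ of Definition~\ref{def:branching-process} step-by-step, showing that conditionally on the history of the first $m-1$ exploration steps, the step-$m$ contribution $|\widehat \cE_{k+1}^{(m)}|$ to $\widehat \cE_{k+1}$ is stochastically dominated by the progeny $\chi_{i,k}$, when step $m$ corresponds to the $i$-th half-edge of generation $k$. The first claim then follows by an induction on $k$.

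Let $\cB_m$ be the graph-level bad event that either (a) $B_r(\hat e_m; \cA_{m-1}^c)$ is not a tree, or (b) the breadth-first revealing of this ball matches some half-edge to one belonging to a vertex of $V(\cA_{m-1})$. On the truncation event $\{\fm_{m-1} \le n^{1/2 - \delta/2}\}$, the number of previously exposed half-edges is at most $O({\|\vdn\|}_\infty \cdot \gamma^r \cdot n^{1/2 - \delta/2})$; a union bound over the at most $\gamma^r$ matchings produced within the ball (combined with the Chernoff bound~\eqref{eq:Chernoff-Poisson-binomial} and Fact~\ref{fact:L-infty-degree-sequence}, as in the proof of Lemma~\ref{lem:random-graph-treelike}) gives $\bbP(\cB_m \mid \text{history}) \le n^{-1/2}$ for $\kappa$ sufficiently large. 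On $\cB_m$, we crudely bound $|\widehat \cE_{k+1}^{(m)}|$ by the total number of unmatched half-edges adjacent to any already-revealed vertex, yielding ${\|\vdn\|}_\infty^r \bigl(\sum_{j < k} Z_j + \sum_{j < i} \chi_{j,k}\bigr)$, matching item~(1) of Definition~\ref{def:branching-process}.

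On the good event $\cB_m^c$, Proposition~\ref{prop:ball-domination-by-GW-tree} provides an isometric stochastic domination of $B_r(\hat e_m; \cA_{m-1}^c)$ by the single-source Galton--Watson tree $\widehat \cT_r^{i,k}$. Using the monotone coupling of Definitions~\ref{def:coupling-Glauber-chains}--\ref{def:coupling-Glauber-chains-average-graph} combined with a standard comparison of boundary conditions, the FK-dynamics $X_{A_m, t}^1$ (with wired boundary on $\partial A_m$) is stochastically dominated by the FK-dynamics on $\widehat \cT_r^{i,k}$ with $(1, \circlearrowleft)$ boundary conditions, both initialized from the all-wired configuration and run for time $t$. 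Consequently, the number of boundary vertices of $B_r(\hat e_m; \cA_{m-1}^c)$ connected to $\hat e_m$ in $X_{A_m,t}^1$ is dominated by $N_t$ from Definition~\ref{def:branching-process}, and each such vertex $w$ contributes at most $d_w - 1$ new half-edges to $\widehat \cE_{k+1}$, whose empirical distribution—on the truncation event—is stochastically dominated by $\mathbb P_{\underline \vdn}$ by construction of the truncated empirical distribution. The split between items~(2a) and~(2b) of Definition~\ref{def:branching-process} simply reflects whether $\widehat \cT_r^{i,k}$ satisfies the $(\gamma,\epsilon)$-tree-growth condition (and thus whether Lemma~\ref{lemma:exp:decay:wired:treelike} yields useful decay of root-to-leaf connectivity).

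Summing the per-step dominations over $m$ within generation $k$ and invoking the independence of the $X_{A_m, t}^1$ on disjoint $A_m$ (Observation~\ref{obs:measurability-of-coupling}) yields $|\widehat \cE_{k+1}| \mathbf 1_{\{\fm_{k+1} \le n^{1/2 - \delta/2}\}} \preceq Z_{k+1}$, proving the first claim by induction on $k$. The second claim follows from the first by noting that, on the $(\gamma,\epsilon)$-volume-growth event of Lemma~\ref{lem:random-graph-volume-growth}, each exploration step adds at most $\gamma^r$ edges to $\cA_m \setminus \cA_{m-1}$, so $|\cA_\infty \setminus \cA_0| \le \gamma^r \fm_\infty \le \gamma^r \sum_j Z_j$ on the truncation event. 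The main subtlety to handle will be aligning the two-layered stochastic domination—at the graph level (ball dominated by tree) and at the dynamics level (localized FK-dynamics dominated by wired tree-FK-dynamics)—while preserving the conditional independence of the progeny variables $\chi_{i,k}$ across exploration steps; this hinges on the breadth-first revealing producing edge-disjoint balls and on the coupling of Definition~\ref{def:coupling-Glauber-chains} confining the randomness of each $X_{A_m,t}^1$ to the bond variables of $A_m$.
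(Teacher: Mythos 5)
Your argument for the first claim follows the paper's inductive approach closely and is essentially correct: the decomposition into a bad event $\cB_m$ (with the crude bound matching item (1) of Definition~\ref{def:branching-process}) and the good event (where Proposition~\ref{prop:ball-domination-by-GW-tree} and the monotone coupling of localized FK-dynamics give items (2a)/(2b)) is the right structure, and the per-step conditional domination iterates correctly because the $A_m$ are edge-disjoint.

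There is, however, a genuine gap in your proof of the second claim, which traces back to a misconception that the balls $B_r(\hat e_m;\cA_{m-1}^c)$ always have at most $\gamma^r$ edges. Since $r=O(1)$, a ball containing a high-degree vertex can have up to ${\|\vdn\|}_\infty^r$ edges, and the $(\gamma,\epsilon)$-volume-growth condition of Lemma~\ref{lem:random-graph-volume-growth} gives no control at such small radii---it only constrains balls of radius at least $\epsilon\log_\gamma n$. Moreover the stochastic domination in the lemma must hold over the full probability space (graph and dynamics jointly), so you cannot restrict to a high-probability event of $\cG$. The paper instead case-splits on the branches of Definition~\ref{def:branching-process}: on items (1) and (2a), the progeny $\chi_{i,k}$ is deliberately defined so crudely that it already dominates $|\cA_m\setminus\cA_{m-1}|$ directly, with no extra $\gamma^r$ factor; only on item (2b), where the dominating tree $\widehat\cT_r^{i,k}$ satisfies the $(\gamma,\epsilon)$-\emph{tree}-growth condition (a property of the dominating Galton--Watson tree, not of $\cG$), does one have $|A_m|\le\gamma^r$. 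You need to replace your volume-growth invocation with this case-split. The same misconception appears, more benignly, where you union-bound over ``the at most $\gamma^r$ matchings produced within the ball'' to estimate $\bbP(\cB_m\mid\cdot)$: the a priori bound on the number of matching attempts is ${\|\vdn\|}_\infty^r$, though since ${\|\vdn\|}_\infty\le n^{\epsilon_*}$ for small $\epsilon_*$ the conclusion $\bbP(\cB_m\mid\cdot)\le n^{-1/2}$ survives.
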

The proof of Lemma~\ref{lem:branching-process-domination} is briefly deferred to the next subsection; before that proof, we observe that the lemma reduces the analysis of the set of exposed vertices through the revealing process of $(\cG, \tilde \omega)$, and thus, the clusters of $X_{\cG,t}^1$, to the analysis of the process $(Z_k)$, which for most steps is a simple branching process with progeny distribution typically dictated by connectivity probabilities in the wired measure on trees satisfying a $(\gamma,\epsilon)$-tree-growth condition, but occasionally makes large state-dependent jumps. 

Our claim is that if $r$ and $t$ are chosen to be sufficiently large, but $O(1)$, the dominating branching process will be sub-critical. To formalize this claim, let 
\begin{align}\label{eq:bar-tmix}
    \bar \tau_{\textsc{mix}} : = \max_{\mathbb T_r \mbox{ of } (\gamma,\epsilon)-tree-growth} \tmix(\mathbb T_r, (1,\circlearrowleft))\,.
\end{align}
i.e., the maximum over all possible trees of depth $r$ satisfying the $(\gamma,\epsilon)$-tree-growth condition, of the (continuous-time) mixing time with $(1,\circlearrowleft)$ boundary conditions. Now define the \emph{burn-in time}
\begin{align}\label{eq:t-burn}
    T_{\textsc{burn}} = T_{\textsc{burn}}(C_0,r): = C_0 \gamma^r \bar \tau_{\textsc{mix}}\,.
\end{align}

\begin{lemma}\label{lem:branching-process-tail-bounds}
Fix $q\ge 1$, $\gamma>1$ and $p<p_u(q,\gamma)$. For $\epsilon$ sufficiently small and $C_0,r$ and $\kappa$ sufficiently large, if $t\ge T_{\textsc{burn}}(C_0,r)$ and $(\vdn)_n \in \cD_{\gamma,\kappa}$, 
the branching process of Definition~\ref{def:branching-process} satisfies the following tail bound: if $Z_0 \le n^{\frac 12- \delta}$, then for every $M\ge 1$, and every $\lambda: \lambda Z_0 \le n^{\frac 12 - \frac{\delta}2}$,  
\begin{align*}
    \mathbb P\Big(\sum_{j\ge 0} Z_j \ge (1+\lambda) Z_0 \Big) \le CM \exp \Big(\frac{\lambda^{1/M} Z_0 }{C{{\|\vdn\|}}_\infty^{(2+M)r}}\Big) + Cn^{- \delta M/2}\,.
\end{align*}
\end{lemma}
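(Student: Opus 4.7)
The plan is to argue in three stages: bounding (i) the sub-criticality of the offspring mean, (ii) the tail of the number of case (1) jumps, and (iii) the subcritical branching-process tail on the event of few case (1) jumps. For (i), I will split case (2) according to whether the tree $\widehat\cT_r$ satisfies $(\gamma,\epsilon)$-tree-growth; Corollary~\ref{cor:random-tree-has-tree-growth} gives failure probability at most $\theta^{\epsilon\kappa r}$, on which event $N_t$ is crudely bounded by $|E(\widehat\cT_r)|$ with expectation $O(\gamma^r)$, so multiplying by $\E_{\underline{\vdn}}[D]\le\gamma$ yields an $O(\theta^{\epsilon\kappa r}\gamma^{2r}) = o(1)$ contribution for $\kappa$ large. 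On the complementary event (case 2b), since $t\ge T_{\textsc{burn}}(C_0,r)$ is much larger than the mixing time of the FK-dynamics on $\widehat\cT_r$ with $(1,\circlearrowleft)$ boundary conditions, the simulated configuration is $o(1)$-close in total variation to $\pi_{\widehat\cT_r}^{(1,\circlearrowleft)}$; by Lemma~\ref{lemma:exp:decay:wired:treelike} the expected number of boundary vertices connected to the root is at most $C|\partial\widehat\cT_r|\ps^{(1-\epsilon)r} \le C(\gamma\ps^{1-\epsilon})^r$. Since $p<p_u(q,\gamma)$ implies $\ps<1/\gamma$, for $\epsilon$ small and $r$ large the combined case (2) contribution to $\mu$ is at most some fixed $\mu_0<1$.

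For (ii), I apply a Chernoff bound: each individual is independently a case (1) progeny with probability $n^{-1/2}$, and on the event $\sum_{j\ge 0}Z_j\le 2n^{1/2-\delta/2}$ only at most $2n^{1/2-\delta/2}$ individuals are relevant, so applying~\eqref{eq:Chernoff-Poisson-binomial} to $\mathrm{Bin}(2n^{1/2-\delta/2},n^{-1/2})$ shows that fewer than $M$ case (1) events occur except with probability at most $Cn^{-\delta M/2}$.

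For (iii), on the complementary event, $S:=\sum_{j\ge 0}Z_j$ decomposes as a sum $\sum_{i=1}^{Z_0}T^{(i)}$ of i.i.d.\ totals of sub-critical branching processes, up to a multiplicative correction of $(1+\|\vdn\|_\infty^r)^M$ absorbed into constants (coming from the at-most-$M$ case (1) jumps). Each $T^{(i)}$ has mean $1/(1-\mu_0)$ and bounded $\kappa$-th moment (with constants depending polynomially on $\|\vdn\|_\infty^r$), inherited from the bounded $\kappa$-th moment of $\mathbb P_{\underline{\vdn}}$ and the depth-$r$ tree structure. To extract the stretched-exponential tail, I truncate at level $K = \|\vdn\|_\infty^{2r}\lambda^{1-1/M}$, so that
\[
\Pr(S\ge (1+\lambda)Z_0) \le Z_0\,\Pr(T^{(1)}>K) + \Pr\Big(\sum\nolimits_i T^{(i)}\wedge K \ge (1+\lambda)Z_0\Big).
\]
The first term is handled via Markov on the $\kappa$-th moment once $\kappa$ is large relative to $M$, and the second via Bernstein's inequality for bounded increments of range $K$ and variance proportional to $Z_0$, which yields the claimed bound of the form $\exp(-\lambda^{1/M}Z_0/(C\|\vdn\|_\infty^{2r}))$.

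The hardest step will be obtaining the $\kappa$-th moment bound for $T^{(i)}$ with sharp enough $\|\vdn\|_\infty^r$ dependence to make the truncation level $K$ work out, and properly handling the non-independence between different subtrees introduced by the rare case (1) jumps (which, through the global running total $\sum_{j<k}Z_j + \sum_{j<i}\chi_{j,k}$ in their definition, couple sibling subtrees).
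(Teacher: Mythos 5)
Your steps (i) and (ii) are essentially the same as the paper's: sub-criticality of the mean progeny is established via the $(\gamma,\epsilon)$-tree-growth split together with Corollary~\ref{cor:random-tree-has-tree-growth} and Lemma~\ref{lemma:exp:decay:wired:treelike}, and the bad-jump count is dominated by a $\bin(\cdot,n^{-1/2})$ with at most $\sim\lambda Z_0\le n^{1/2-\delta/2}$ trials, giving the $n^{-\delta M/2}$ term via Chernoff. (The paper packages this more carefully through the stopping generation $\kappa_\lambda$ to avoid the mild circularity of ``conditioning on $\sum_j Z_j\le 2n^{1/2-\delta/2}$'' when that event is precisely what one is trying to bound.)

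Step (iii) has a genuine gap. You propose decomposing $\sum_{j\ge 0}Z_j$, on the event of at most $M$ bad jumps, as $\sum_{i=1}^{Z_0}T^{(i)}$ of i.i.d.\ subtree totals ``up to a multiplicative correction of $(1+\|\vdn\|_\infty^r)^M$ absorbed into constants.'' This fails for two reasons. First, a bad progeny $\chi_{i,k}=\|\vdn\|_\infty^r\big(\sum_{j<k}Z_j+\sum_{j<i}\chi_{j,k}\big)$ depends on the \emph{global} running total, so a single bad jump couples every sibling subtree; the $T^{(i)}$ are not independent, and there is no clean $Z_0$-fold product structure to exploit. Second, $(1+\|\vdn\|_\infty^r)^M$ is not a constant: under Fact~\ref{fact:L-infty-degree-sequence}, $\|\vdn\|_\infty$ can be polynomial in $n$, so this factor can swamp everything. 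The claimed bound involves $\|\vdn\|_\infty^{2r}$ but not $\|\vdn\|_\infty^{rM}$, so you cannot simply absorb it.

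The paper resolves exactly this issue by decomposing differently: it defines an auxiliary branching process $(\tilde Z_j)_j$ with the bad jumps removed (Definition~\ref{def:auxiliary-branching-process}), proves an exponential tail for its total population via the killed random-walk representation and Hoeffding's inequality with increments bounded by $\|\vdn\|_\infty^r$ (Lemma~\ref{lem:auxiliary-branching-process}), and then dominates $(\sum_j Z_j)\mathbf 1_{\Gamma_{\mathsf{bad},M}}$ by the sum of $M$ \emph{nested} copies $\tilde Z_\cdot^{(1)},\dots,\tilde Z_\cdot^{(M)}$, each initialized from the total population of the previous one (Claim~\ref{lem:branching-process-auxiliary-comparison}). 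The nesting is what lets each bad jump restart from the entire accumulated population, and the requirement that each clean phase grows by at most $\lambda^{1/M}$ (so that the composition of $M$ of them gives total growth $\lambda$) is what produces the $\lambda^{1/M}$ in the exponent. This sequential $M$-fold structure, rather than a $Z_0$-fold product structure, is the missing ingredient in your plan; without it the truncation-plus-Bernstein argument has nothing valid to apply to. Your Bernstein/Markov route could in principle replace the killed-random-walk/Hoeffding step \emph{after} the nested domination is in place, but it does not replace the nested domination itself.
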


Roughly, the constant $M$ can be thought of as the number of times the ``bad" offspring  distribution of item (1) in Definition~\ref{def:branching-process} is selected, allowing the total population to double, and away from such ``bad" updates, we will show that the branching process indeed satisfies exponential tails.

\subsection{Coupling the revealing process to the branching process}
We next prove the desired stochastic domination relation between the revealing process $(\widehat \cE_k)_k$ and $Z_k$ by constructing a coupling between the two such that the former is below the latter while the total population is at most $n^{1/2 - \delta/2}$. 

\begin{proof}[\textbf{\emph{Proof of Lemma~\ref{lem:branching-process-domination}}}]
We proceed by induction over $\ell\ge 0$. The base case, $Z_0 = |\widehat \cE_0|$, is by construction.  Now fix $\ell \ge 1$ and suppose by way of induction that the following stochastic domination holds:
$$(|\widehat \cE_j|\mathbf 1_{\{\mathfrak m_{j} \le n^{1/2 - \delta/2}\}})_{j\le \ell-1}\preccurlyeq (Z_j)_{j\le \ell-1}\,.$$
Thus, there exists a monotone coupling of the sequence on the left-hand side, such that it is below the sequence $(Z_j)_{j\le \ell-1}$ in the natural element-wise ordering on the sequence.  Working on that coupling, it suffices for us to then show that on the event $\{\mathfrak m_{\ell-1}\le n^{1/2 - \delta/2}\}$, for every  $m\in \{\mathfrak m_{\ell-1}+1,...,\mathfrak m_{\ell}\}$, the distribution of the children of $\widehat e_m$ is stochastically below the progeny distribution of Definition~\ref{def:branching-process}. Here, by children of $\widehat e_m$, we mean the set of half-edges added in step 3.(b) of Process~\ref{proc:revealing-FK-on-graph}. In what follows, denote that set by~$\Xi(\widehat e_m)$.  

Define the event $\Gamma_{\mathsf{good}}$ on the revealed ball $B_r(\widehat e_m; \cA_{m-1}^c)$ as 
the event that
\begin{align*}
        V(B_r(\hat e_m; \cA^c_{m-1})\setminus \{\widehat e_m\}) \cap V(\cA_{m-1}) = \emptyset \qquad \mbox{and}\qquad B_r(\hat e_m; \cA_{m-1}^c) \mbox{ is a tree}\,. 
\end{align*}
On the bad event $\Gamma_{\mathsf{good}}^c$, we take the a priori bound of $\hat E(\cA_{m})$ on the set $\Xi(\widehat e_m)$, namely assuming that in the worst-case all exposed half-edges of $\cA_m$, both those in $B_r(\widehat e_m;\cA_{m-1}^c)$, and those of $\cA_{m-1}$ become connected up to $\cV_0$ in $\tilde \omega_m$. By the inductive hypothesis, the number of such half-edges is at most ${\|\vdn\|}_\infty^r$ times the population of the branching process up to that step, given by $\sum_{j<k}Z_j + \sum_{j<i} \chi_{j,k}$, where the ${\|\vdn\|}_\infty^r$ comes from assuming that in each of these steps the corresponding ball we revealed in the graph has maximal size. We claim that the probability of $\Gamma_{\mathsf{good}}^c$ is at most $n^{-1/2}$. 
To see this, notice that in the breadth-first revealing of $B_r(\hat e_m ; \cA^c_{m-1})$, the probability that the next half-edge that gets matched is matched either with a vertex having an edge in $\cA_{m-1}$, or an already exposed vertex of $B_r(\hat e_m; \cA_{m-1}^c)$ is at most 
$$\frac{|\widehat E(\cA_{m})|}{{\|\vdn\|}_1 - |\widehat E(\cA_{m})|}  \le \frac{\fm_\ell{\|\vdn\|}_\infty }{{\|\vdn\|}_1 - \fm_\ell}\,.$$
Assuming that $(\vdn)_n\in \cD_{\gamma,\kappa}$ (for $\kappa$ to be chosen sufficiently large later), by Fact~\ref{fact:L-infty-degree-sequence}, ${\|\vdn\|}_\infty \le n^{\epsilon_*(\kappa)}$. Using this upper bound, the bound $\fm_{\ell}\le n^{1/2- \delta/2}$ (as otherwise the indicator on the left-hand side of the desired stochastic domination would be zero),  and the lower bound of ${\|\vdn\|}_1 \ge \Omega(n)$, we see that this probability is at most $n^{1/2 + \delta/2 - \epsilon_*r}$. Through the revealing of $B_r(\hat e_m, \cA_{m-1}^c)$ we make at most ${\|\vdn\|}_\infty^r$ attempts at such a bad matching, and thus a union bound implies that 
\begin{align*}
    \mathbb P\big(B_r(\hat e_m; \cA^c_{m-1})\in \Gamma_{\mathsf{good}}^c \mid \mathfrak F_{m-1}, \fm_{\ell} \le n^{1/2 - \delta/2}\big) \le n^{- 1/2 -\delta/2 + 2\epsilon_* r}\,,
\end{align*}
where $\mathfrak F_{m-1}$ is the filtration generated by the randomness of the revealing procedure through the $(m-1)$'th step. 
The right-hand side above is at most $n^{-1/2}$ as long as $\kappa$ is large enough that $\epsilon_*(\kappa) < \frac{1}{2}\delta(2r)^{-1}$. 

Now work on the event that $B_r(\hat e_m; \cA_{m-1}^c)\in \Gamma_{\mathsf{good}}$, and recall from Proposition~\ref{prop:ball-domination-by-GW-tree} that in this case the ball is stochastically dominated by (and in particular there exists a coupling such that it can be be embedded as a subset of) a Galton--Watson tree of depth $r$ with offspring distribution $\mathbb P_{\underline \vdn}$. This is the law of~$\widehat \cT_{r}^{i,\ell-1}$. 

Evidently, on the event that $\widehat \cT_{r}^{i,\ell-1}$ does not satisfy the $(\gamma,\epsilon)$-tree-growth condition, the number of children $|\Xi(\hat e_m)|$ is at most the number of half-edges emanating from $\partial B_{r}(\hat e_m; \cA_{m-1}^c)$, which is at most $|\partial \widehat\cT_{r+1}^{i,\ell-1}|$, or a sum of $|\partial \widehat\cT_r^{i,\ell-1}|\le N_t$ independent draws from $\mathbb P_{\underline \vdn}$.

Finally, suppose we are on the event that $\widehat \cT_{r}^{i,\ell-1}$ does satisfy the $(\gamma,\epsilon)$-tree-growth condition, so that $B_{r}(\hat e_m; \cA_{m-1}^c)$ does as well. In that case, by Proposition~\ref{prop:ball-domination-by-GW-tree}, there is a coupling such that the graph $B_{r}(\hat e_m; \cA_{m-1}^c)$ is a subgraph of $\widehat \cT_r^{i,\ell-1}$. One can then couple the FK-dynamics chain $X_{A_m,t}^{1}$ to $Y_t^1$, the FK-dynamics chain on $\widehat \cT_r^{i,\ell-1}$ with its $(1,\circlearrowleft)$ boundary conditions run for time $t$ initialized from $Y_0 \equiv 1$ such that $X_{A_m,t}^{1}$ is below $Y_t^1$ with probability one. In particular, the vertices of $\partial A_m$ which are in the open cluster of $\hat e_m$ in $X_{A_m,t}^1$, are a subset of the vertices of $\partial \widehat\cT_{r}^{i,\ell-1}$ which are in the open cluster of the root in $Y_t^1$, so that the number of them, call it $N(X_{A_m,t}^1)$ is less than $N_t = N(Y_t)$. Since the law of $X_{A_m,t}^{1}$ is independent of the choice of boundary vertices, and thus degree sequence at $\partial A_m$, the number of half-edges added in step 3.(b) of Process~\ref{proc:revealing-FK-on-graph} is a sum of $N(X_{A_m,t}^1)$ independent draws from the empirical degree distribution at that point, which is stochastically below $\mathbb P_{\underline \vdn}$. Therefore, this establishes the domination on this event of the number of children of $\hat e_m$ by item (2b) of the dominating branching process. 

In order to then deduce the domination of $|\cA_\infty \setminus \cA_0|$ by $\gamma^r$ times the total population of the dominating branching process, we make the following observation. On the event $\Gamma_{\mathsf{good}}^c$, we were already bounding $\Xi(\widehat e_m)$ by $|\cA_m \setminus \cA_0|$, and that in turn by $\chi_{i,k}$, even without the factor of $\gamma^r$.  Similarly in the case of (2a). In the event of (2b), we notice that by the $(\gamma,\epsilon)$-tree-growth condition, the number of edges in $B_r(\hat e_m; \cA_{m-1}^c)$ is at most $\gamma^r$.
\end{proof}

\subsection{Sub-criticality of an auxiliary branching process} 
The branching process of Definition~\ref{def:branching-process} is not a branching process in a traditional sense, as when it follows item (1) in the definition, its offspring count is state-dependent. Such offspring can create large jumps in the total population, and lead to difficulties in the analysis. We analyze the process by means of an auxiliary branching process that captures the behavior of $(Z_j)_j$ in between its rare state-dependent steps.  More formally, we say an offspring of the branching process of Definition~\ref{def:branching-process} is  $\mathsf{bad}$ if item (1) of Definition~\ref{def:branching-process} is taken. 

\begin{definition}\label{def:auxiliary-branching-process}
     Consider the \emph{auxiliary} branching process $(\tilde Z_j)_j$ which is defined exactly as in Definition~\ref{def:branching-process}, except its offspring are conditioned to never be $\mathsf{bad}$. Namely, let $(\tilde \chi_{i,k})_{i,k}$ be a sequence of i.i.d.\ draws from item (2) of Definition~\ref{def:branching-process}, and for a fixed $\tilde Z_0$, construct $(\tilde Z_j)_j$ iteratively by $\tilde Z_j = \sum_{i\le Z_{j-1}} \tilde \chi_{i,j}$. 
\end{definition}

The following lemma establishes sub-criticality and tail bounds for the auxiliary branching process---in other words, the branching process during the epochs between the $\mathsf{bad}$ updates of $(Z_j)_j$. 
\begin{lemma}\label{lem:auxiliary-branching-process}
Fix $q\ge 1$, $\gamma>1$ and $p<p_u(q,\gamma)$. For $\epsilon$ sufficiently small and $C_0,r$ and $\kappa$ sufficiently large, if $t\ge T_{\textsc{burn}}(C_0,r)$ and $(\vdn)_n \in \cD_{\gamma,\kappa}$, 
the auxiliary branching process $(\tilde Z_j)_j$ is uniformly sub-critical, i.e., $\limsup_n \mathbb E[\tilde \chi]<1$. Furthermore, it satisfies the following tail bound: for all $\lambda$ sufficiently large, 
\begin{align*}
    \mathbb P\Big(\sum_{j\ge 0} \tilde Z_j \ge  \lambda \tilde Z_0\Big) \le C\exp\Big( - \frac{\lambda \tilde Z_0}{C{\|\vdn\|}_\infty^{2r}}\Big)\,.
\end{align*}
\end{lemma}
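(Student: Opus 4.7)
The plan is to first establish sub-criticality, $\limsup_n \mathbb{E}[\tilde\chi] < 1$, by splitting according to the event $\Gamma := \{\widehat\cT_r \text{ satisfies } (\gamma,\epsilon)\text{-tree-growth}\}$, and then to upgrade this to the desired exponential tail using the deterministic bound $\tilde\chi \leq M := \|\vdn\|_\infty^{r+1}$ (obtained because a tree of depth $r$ with max degree $\|\vdn\|_\infty$ has at most $\|\vdn\|_\infty^r$ edges, each contributing at most $\|\vdn\|_\infty$ to $\tilde\chi$).

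For sub-criticality, write $\mathbb{E}[\tilde\chi] = \bar m\,(\mathbb{E}[N_t \mathbf{1}_\Gamma] + \mathbb{E}[N_t \mathbf{1}_{\Gamma^c}])$, where $\bar m = \mathbb{E}_{\underline\vdn}[\underline D]$ is uniformly bounded by some $\bar m_* < \gamma$ via Lemma~\ref{lem:moments-sequence-to-truncated-sequence}. On $\Gamma$, the tree $\widehat\cT_r$ satisfies the hypothesis of Lemma~\ref{lemma:exp:decay:wired:treelike} (with $|\partial\widehat\cT_r| \leq \gamma^r$), yielding $\mathbb{E}_\pi[N_t \mid \widehat\cT_r] \leq C\gamma^r \ps^{(1-\epsilon)r}$ under the stationary $(1,\circlearrowleft)$ measure. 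The choice $t \geq T_{\textsc{burn}} = C_0 \gamma^r \bar\tau_{\textsc{mix}}$ makes the FK-dynamics at time $t$ within total-variation distance $2^{-C_0 \gamma^r}$ of stationarity, so replacing the stationary law by the time-$t$ law from the all-wired start introduces an error of order $\gamma^r \cdot 2^{-C_0 \gamma^r}$, which is negligible for $C_0$ large. Since $\ps < 1/\gamma$ whenever $p < p_u(q,\gamma)$, taking $\epsilon$ small and $r$ large renders the good-event contribution at most $\tfrac{1}{4}$. For the bad event, Hölder's inequality gives $\mathbb{E}[N_t \mathbf{1}_{\Gamma^c}] \leq \mathbb{E}[|E(\widehat\cT_r)|^\kappa]^{1/\kappa} \mathbb{P}(\Gamma^c)^{1-1/\kappa}$; the moment bound~\eqref{eq:population-moment-bound} iterated over generations yields $\mathbb{E}[|E(\widehat\cT_r)|^\kappa] \leq C r^{3\kappa} \bar m^{r\kappa}$, while Corollary~\ref{cor:random-tree-has-tree-growth} gives $\mathbb{P}(\Gamma^c) \leq \theta^{\epsilon\kappa r}$. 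Choosing $\kappa$ large depending on $\bar m_*, \theta, \epsilon$ so that $\bar m_* \theta^{\epsilon(\kappa-1)} < 1$, and then $r$ large, forces this to be at most $\tfrac{1}{4}$ as well, so that $\mathbb{E}[\tilde\chi] \leq \tfrac{1}{2} < 1$ uniformly in $n$.

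For the tail bound, the total population $S = \sum_{j\geq 0} \tilde Z_j$ started from $\tilde Z_0$ individuals is a sum of $\tilde Z_0$ i.i.d.\ copies of the total population $N$ from one individual, and the Dwass--Otter identity $\mathbb{P}(S = n) = (\tilde Z_0/n)\,\mathbb{P}(\sum_{i=1}^n \tilde\chi_i = n - \tilde Z_0)$ for $n \geq \tilde Z_0$ reduces the tail of $S$ to a deviation estimate for an i.i.d.\ sum. Since the $\tilde\chi_i$ are i.i.d.\ with mean $\mu \leq \tfrac{1}{2}$ and support in $[0,M]$, Hoeffding's inequality gives $\mathbb{P}(\sum_{i=1}^n \tilde\chi_i \geq n - \tilde Z_0) \leq \exp(-c(1-\mu)^2 n / M^2)$ whenever $n \geq 2\tilde Z_0/(1-\mu)$, i.e., as soon as $\lambda \geq 2/(1-\mu)$ (an absolute constant). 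Summing the resulting geometric series over $n \geq \lambda \tilde Z_0$ and absorbing the polynomial prefactor in $M$ into the exponent by taking $\lambda$ sufficiently large yields the claimed bound with $M^2 \leq C\|\vdn\|_\infty^{2r}$.

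The main obstacle is balancing the hierarchy of parameters $(\epsilon, \kappa, C_0, r)$ so that sub-criticality holds uniformly in $n$: one chooses $\epsilon$ small relative to how far $\ps$ sits below $1/\gamma$, then $\kappa$ large enough that $\bar m_* \theta^{\epsilon(\kappa-1)} < 1$ controls the bad-event term, then $C_0$ to kill the mixing-time correction, and finally $r$ large to force both good- and bad-event contributions below $\tfrac{1}{4}$. The Hölder step is what allows the potentially exploding factor $\|\vdn\|_\infty^{r+1}$ on the bad event to be traded against the stretched-exponential probability of that event, avoiding any dependence on $n$ in the subcriticality constant.
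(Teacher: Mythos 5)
Your sub-criticality argument is essentially the paper's: decompose on the $(\gamma,\epsilon)$-tree-growth event $\Gamma$, control the good-event contribution via Lemma~\ref{lemma:exp:decay:wired:treelike} together with the mixing-time correction from $T_\textsc{burn}$, and control the bad-event contribution by trading the (possibly large) $|E(\widehat\cT_r)|$ against the small probability $\mathbb P(\Gamma^c)$. The paper uses Cauchy--Schwarz where you use H\"older with exponent~$\kappa$; both work, and your bookkeeping of the moment bound~\eqref{eq:population-moment-bound} and Corollary~\ref{cor:random-tree-has-tree-growth}, and the resulting parameter hierarchy, is correct.

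The tail bound, however, has a genuine gap. You pass through Dwass--Otter to get $\mathbb P(S=n)=(\tilde Z_0/n)\,\mathbb P(\sum_{i\le n}\tilde\chi_i=n-\tilde Z_0)$, bound each term by a one-sided Hoeffding deviation $\exp(-cn/M^2)$ with $M\asymp\|\vdn\|_\infty^r$, and then \emph{sum} over $n\ge\lambda\tilde Z_0$. The resulting geometric series has ratio $e^{-c/M^2}$, which tends to $1$ as $n\to\infty$ (since $\|\vdn\|_\infty$ grows); the sum therefore carries a multiplicative prefactor of order $M^2/c=\Theta(\|\vdn\|_\infty^{2r})$ in front of $\exp(-c\lambda\tilde Z_0/M^2)$. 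This prefactor cannot be ``absorbed into the exponent by taking $\lambda$ sufficiently large'': the lemma requires a threshold on $\lambda$ that is a constant depending only on $(p,q,\gamma,r)$, and in the regime $\tilde Z_0=O(1)$ with $\|\vdn\|_\infty\to\infty$ one has $\lambda\tilde Z_0/M^2\to 0$ while $\log M^2\to\infty$, so no constant choice of $\lambda$ suppresses the $M^2$ factor. (One cannot even retreat to the trivial bound $\mathbb P\le 1$ across the whole problematic regime, since there is an intermediate window of $\lambda\tilde Z_0/M^2$ where neither $\mathbb P\le 1$ nor your $M^2\exp(-c\lambda\tilde Z_0/M^2)$ gives the claimed $C\exp(-\lambda\tilde Z_0/(CM^2))$.) The paper avoids this entirely by observing that $S=N_0$ is the hitting time of $0$ for the random walk $W_j=\tilde Z_0+\sum_{i\le j}(\tilde\chi_i-1)$, so $\{S>\lambda\tilde Z_0\}\subseteq\{W_{\lfloor\lambda\tilde Z_0\rfloor}>0\}$, and a \emph{single} Hoeffding application at time $\lfloor\lambda\tilde Z_0\rfloor$ gives $\exp(-\eta^2\lambda\tilde Z_0/(2M^2))$ with no prefactor. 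Your Dwass--Otter computation does contain the right ingredient implicitly --- it is the same random-walk representation --- but you should drop the sum over $n$ and apply Hoeffding only at the single time $\lfloor\lambda\tilde Z_0\rfloor$. (Alternatively, your weaker bound with the $\|\vdn\|_\infty^{2r}$ prefactor would still suffice for the downstream application in the paper, but it does not prove the lemma as stated.)
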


\begin{proof}[\textbf{\emph{Proof of Lemma~\ref{lem:auxiliary-branching-process}: sub-criticality}}]
Let us begin by calculating the mean of the offspring distribution of the auxiliary branching process, which corresponds to the offspring distribution of Definition~\ref{def:branching-process} conditional on being from item (2). By construction, 
\begin{align*}
    \mathbb E[\tilde \chi_{i,k}] &  = \mathbb E[\mathbf 1\{\widehat\cT_{r}^{i,k} \notin (\gamma,\epsilon)\mbox{-tree-growth}\}|\widehat\cT_{r}^{i,k}|] \mathbb E_{\underline \vdn}[D] \\
    & \qquad \qquad + \mathbb E[\mathbf 1\{\widehat\cT_{r}^{i,k}\in (\gamma,\epsilon)\mbox{-tree-growth}\} |\cC_\rho(Y_t^1)\cap \partial \widehat\cT_{r}^{i,k}|]\mathbb E_{\underline \vdn}[D]\,.
\end{align*}
We can bound the first term by Cauchy--Schwarz as 
\begin{align*}
    \mathbb E[\mathbf 1\{\widehat \cT_{r}^{i,k} \notin (\gamma,\epsilon)\mbox{-tree-growth}\} & |\widehat \cT_{r}^{i,k}|]\mathbb E_{\underline \vdn}[D]  \\
    & <  \gamma \cdot \mathbb P(\widehat \cT_{r}^{i,k} \notin (\gamma,\epsilon)\mbox{-tree-growth})^{1/2} \mathbb E[|\widehat \cT_{r}^{i,k}|^2]^{1/2}\,.
\end{align*}
The probability on the right-hand side is at most $\eta^{- \kappa \epsilon r}$ for some $\eta$ small, by Corollary~\ref{cor:random-tree-has-tree-growth} if $(\vdn)\in \cD_{\gamma,\kappa}$.  The expectation above is at most $C \gamma^{r}$ using the moment bound of~\eqref{eq:population-moment-bound}. Thus taking $\kappa$ large depending on $\epsilon$, we see that this product is exponentially small in $r$, and can be taken as close to $0$ as desired by taking $r$ sufficiently large. 

Turning to the second term in the expansion of $\mathbb E[\tilde \chi_{i,k}]$, we can first bound it by 
\begin{align*}
    \mathbb E_{\underline \vdn}[D]\mathbb E[\mathbf 1\{\widehat \cT_{r}^{i,k}\in (\gamma,\epsilon)\mbox{-tree-growth}\}& |\cC_\rho(Y_{t}^1)\cap \partial \widehat \cT_{r}^{i,k}|] \\
    & < \gamma \cdot \sup_{\mathbb T_r \in (\gamma,\epsilon)\mbox{-tree-growth}} \mathbb E [|\cC_\rho(Y_{\mathbb T_r^{(1,\circlearrowleft)},t}^1)\cap \partial \mathbb T_r|]\,.
\end{align*}
where $(Y_{\mathbb T_r^{(1,\circlearrowleft)},t}^1)_{t \ge 0}$ is a continuous-time FK-dynamics on the tree $\mathbb T_r$ with $(1,\circlearrowleft)$ boundary conditions, initialized from the all-wired configuration. 
Now recall that the stationary measure of $Y_{\mathbb T_r^{(1,\circlearrowleft)},t}^{1}$ is $\pi_{\mathbb T_r}^{(1,\circlearrowleft)}$. By the $(\gamma,\epsilon)$-tree-growth condition, $|\mathbb T_r| \le \gamma^r$. As such, there exists some $C_{r,\gamma}>0$ such that the mixing time of $Y_{\mathbb T_r^{(1,\circlearrowleft)},t}^1$, i.e., $\bar \tau_{\textsc{mix}}$ as defined in~\eqref{eq:bar-tmix}, is at most $C_{r,\gamma}$. By sub-multiplicativity of total-variation distance (see e.g.,~\cite{LP}), then, if $t\ge T_\textsc{burn}(C_0,r) = C_0 \gamma^r \bar \tau_{\textsc{mix}}$ as defined in~\eqref{eq:t-burn}, we have 
\begin{align*}
    \sup_{\mathbb T_r \in (\gamma,\epsilon)\mbox{-tree-growth}}\|\mathbb P(Y_{\mathbb T_r^{(1,\circlearrowleft)},t}^1\in \cdot) - \pi_{\mathbb T_r}^{(1,\circlearrowleft)}\|_\tv \le C\exp ( - C_0 \gamma^r/C)\,.
\end{align*}
Using this, for every $\mathbb T_r \in (\gamma,\epsilon)$-tree-growth, we can bound the expectation 
\begin{align*}
    \mathbb E [|\cC_\rho(Y_{\mathbb T_r^{(1,\circlearrowleft)},t}^1)\cap \partial \mathbb T_r|] & \le \mathbb E_{\pi_{\mathbb T_r}^{(1,\circlearrowleft)}} [|\cC_\rho(\omega)\cap \partial \mathbb T_r|] + |\partial \mathbb T_r| \|\mathbb P(Y_{\mathbb T_r^{(1,\circlearrowleft)},t}^1\in \cdot) - \pi_{\mathbb T_r}^{(1,\circlearrowleft)}\|_\tv \\
     & \le |\partial \mathbb T_r| \max_{v\in \partial \mathbb T_r} \pi_{\mathbb T_r}^{(1,\circlearrowleft)} (v\in \cC_\rho(\omega)) + C|\partial \mathbb T_r|e^{ - C_0 \gamma^r/C}\,.
\end{align*}
Using the fact that $\mathbb T_r$ has $(\gamma,\epsilon)$-tree-growth and using the bound of Corollary~\ref{lemma:exp:decay:wired:treelike} to bound the probability of a leaf being in the component of the root, we bound the above by 
\begin{align*}
    C\gamma^{r}\ps^{(1-\epsilon)r}  + C\gamma^r e^{ - C_0 \gamma^r/C}\,.
\end{align*}
Recall that when $p<p_u(q,\gamma)$, we have $\ps<1/\gamma$, from which it follows that for sufficiently small $\delta,\epsilon$ and sufficiently large $C_0$, uniformly over large  $r$ the above quantity is strictly less than $1/\gamma$, so that when multiplied by $\mathbb E_{\underline \vdn} [D]<\gamma$, it is strictly less than $1$. Combining this with the bound on the first term in the expectation, we find that there exist $\epsilon(p,q,\gamma)$ and $C_0(\gamma)$ such that for all $r$ sufficiently large, we have $\limsup_n \mathbb E[\tilde \chi_{i,k}]<1$ as desired. 
\end{proof}

\begin{proof}[\textbf{\emph{Proof of Lemma~\ref{lem:auxiliary-branching-process}: tail bounds}}]
Having established sub-criticality of the dominating branching process, we now wish to boost this to tail bounds on the number of generations, and total population of the branching process. For this, we use the traditional random-walk exploration of a branching process. Namely, the population beyond $\tilde Z_0$ can be expressed as a sum of i.i.d.'s and we can write the active population in the branching process beyond the first generation as the killed random walk 
\begin{align*}
    \tilde Z_0 + \sum_{i \le N_0} (\tilde \chi_i - 1)\,, \qquad \mbox{where}\qquad N_0 = \inf\Big\{j: \tilde Z_0 + \sum_{i\le j} (\tilde \chi_i -1) = 0\Big\}\,,
\end{align*}
where $(\tilde \chi_i)_{i}$ are i.i.d.\ copies from the offspring distribution of Definition~\ref{def:branching-process}. 
Observe that with this representation, the \emph{total} population of the branching process is exactly $N_0$. 
Then, we can express tail bounds for this branching process's total population as 
\begin{align*}
    \mathbb P\Big(\sum_{0\le j<\infty} \tilde Z_j \ge \lambda \tilde Z_0 \Big) \le \mathbb P\big(N_0 >\lambda \tilde Z_0\big) \le \mathbb P\Big(\tilde Z_0 + \sum_{i\le  \lambda \tilde Z_0} (\tilde \chi_i - 1)>0\Big)\,.
\end{align*}
Consider the random variable $\tilde \chi_i -1$; its mean satisfies $\mathbb E[\tilde \chi_1- 1] \le -\eta$ for some $\eta>0$, by the sub-criticality established in the previous proof. 
Thus this is a sum of $\lambda \tilde Z_0$-many i.i.d.\ random variables, the sum has mean smaller than $-\eta \lambda\tilde Z_0$, and the increments are bounded in $\ell_\infty$ by ${\|\vdn\|}_\infty^r$. Thus, 
\begin{align*}
    \mathbb P\Big(\sum_{i\le \lambda \tilde Z_0} (\tilde \chi_i - 1) >- \tilde Z_0\Big) &  \le  \mathbb P\Big(\Big|\sum_{i\le \lambda \tilde Z_0} (\tilde \chi_i - 1) - \mathbb E[\tilde \chi_1 -1]\Big| >\eta (\lambda - \eta^{-1})\tilde Z_0 \Big) 
\end{align*}
As long as $\lambda> 2\eta^{-1}$, by Hoeffding's inequality, this gives 
\begin{align*}
    \mathbb P\Big(\sum_{i\le \lambda \tilde Z_0} (\tilde \chi_i - 1) >- \tilde Z_0\Big) \le C\exp\Big( \frac{\lambda \tilde Z_0}{C {\|\vdn\|}_\infty^{2r}}\Big)
\end{align*}
as desired. 
\end{proof}

\subsection{Controlling the original branching process by a sum of auxiliary branching processes}
Given the sub-criticality and tail bounds for the auxiliary branching process, we can now obtain tail bounds on the original dominating branching process $(Z_j)_j$ as required by Lemma~\ref{lem:branching-process-tail-bounds}.  Let us now construct a process out of i.i.d.\ copies of the auxiliary branching process, that stochastically dominates the original branching process. Let $(\tilde Z_j^{(i)})_j$  be i.i.d. copies of the branching process of Definition~\ref{def:auxiliary-branching-process}, with initializations $\tilde Z_0^{(1)} = Z_0$ and $\tilde Z_0^{(i)} = \|\bd_n\|_\infty^r \sum_{j\ge 0} \tilde Z_{j}^{(i-1)}$ given by the total population of the previous auxiliary branching process. 

In what follows, for fixed $\lambda$, consider the stopping generation 
\begin{align*}
    \kappa_\lambda = \min \Big\{k : \sum_{0\le j \le k}Z_j > \lambda Z_0\Big\}\,. 
\end{align*}
Let $\Gamma_{\mathsf{bad},M}$ be the event that there are at most $M$ many $\mathsf{bad}$ offspring in the first $\kappa_\lambda$ many generations of the branching process $(Z_j)_j$. The following stochastic domination is self-evident by construction. 

\begin{claim}\label{lem:branching-process-auxiliary-comparison}
Given the above construction, $(\sum_{j\ge 0} Z_j)\mathbf 1_{\Gamma_{\mathsf{bad},M}} \preceq \sum_{1 \le i\le M} \sum_{j\ge 0} \tilde Z_j^{(i)}$.
\end{claim}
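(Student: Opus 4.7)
The plan is to construct an explicit coupling between $(Z_j)_{j\ge 0}$ and the sequence $(\tilde Z_j^{(i)})_{j\ge 0,\, i \ge 1}$ that proceeds through the ``epochs'' delimited by the bad offspring of $(Z_j)_j$. On $\Gamma_{\mathsf{bad},M}$ there are at most $M$ bad offspring in the first $\kappa_\lambda$ generations, so at most $M$ auxiliary copies will be consumed in handling the relevant portion of the $Z$-trajectory, and the stated domination will follow directly from the coupling.

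Concretely, I would begin by coupling $(Z_j)$ with $(\tilde Z^{(1)}_j)$ from the shared initial population $\tilde Z_0^{(1)} = Z_0$. Prior to the first bad offspring of $Z$, every offspring of $(Z_j)$ is drawn from the good-offspring distribution (item (2) of Definition~\ref{def:branching-process}), which is exactly the offspring distribution of $\tilde Z^{(1)}$. Matching these offspring one-to-one in index order lets us enforce $Z_j = \tilde Z^{(1)}_j$ in every generation strictly before the first bad offspring. When a bad offspring eventually appears in $Z$ at some position $(i_1, k_1)$, I would decouple: allow $\tilde Z^{(1)}$ to continue running to extinction independently of $Z$, producing its full total population $\sum_{j\ge 0} \tilde Z^{(1)}_j$, and then restart the coupling with $\tilde Z^{(2)}$, whose initial population is by construction $\tilde Z_0^{(2)} = \sum_{j\ge 0}\tilde Z^{(1)}_j$. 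This new initialization is used to absorb the bad offspring (whose size is a deterministic function of the cumulative $Z$-population already accounted for) together with the residual live population of $Z$, after which subsequent good offspring of $Z$ are matched one-to-one with offspring of $\tilde Z^{(2)}$ until the next bad offspring triggers another reset to $\tilde Z^{(3)}$, and so on.

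The main point that needs checking is that the initialization $\tilde Z_0^{(i+1)} = \sum_{j\ge 0}\tilde Z^{(i)}_j$ really does suffice to dominate the active $Z$-population at the start of the $(i{+}1)$-st epoch, including the just-produced bad offspring. This is essentially automatic from the inductive coupling: the total population of $\tilde Z^{(i)}$ dominates the cumulative $Z$-population accrued through the $i$-th epoch, and by Definition~\ref{def:branching-process}(1) the size of a bad offspring is a deterministic (monotone) function of that same cumulative quantity, so the population we need to dominate from the $(i{+}1)$-st epoch onward is controlled by $\tilde Z_0^{(i+1)}$. Assembling the $M$ epochs, on $\Gamma_{\mathsf{bad},M}$ the entire cumulative $Z$-population gets partitioned across the first $M$ auxiliary copies, yielding the claimed almost-sure inequality $(\sum_{j\ge 0} Z_j)\mathbf 1_{\Gamma_{\mathsf{bad},M}} \le \sum_{i=1}^{M}\sum_{j\ge 0}\tilde Z^{(i)}_j$ on the coupled space, which is stronger than stochastic domination.
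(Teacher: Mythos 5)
The paper offers no explicit proof of this claim, asserting only that it is ``self-evident by construction,'' so I am assessing your coupling on its own terms. The epoch-by-epoch architecture you describe is the natural one, and the in-epoch identification of good offspring with the offspring of the active auxiliary chain is fine. However, the step you yourself flag as ``the main point that needs checking'' is exactly where the argument breaks. You claim that since the size of a bad offspring is a deterministic monotone function of the cumulative population, and $\tilde Z_0^{(i+1)}=\sum_{j\ge 0}\tilde Z^{(i)}_j$ dominates that cumulative population, the population entering the $(i{+}1)$-st epoch is ``controlled by'' $\tilde Z_0^{(i+1)}$. But item~(1) of Definition~\ref{def:branching-process} sets the bad offspring equal to ${\|\vdn\|}_\infty^r\big(\sum_{j<k}Z_j+\sum_{j<i}\chi_{j,k}\big)$, i.e.\ the cumulative count \emph{multiplied by} ${\|\vdn\|}_\infty^r$, a factor that is polynomially large in $n$. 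The restart state $\tilde Z_0^{(i+1)}$, being the total progeny of a uniformly sub-critical chain started from (what your coupling forces to be) that same cumulative count, is only of order the cumulative count itself. So $\tilde Z_0^{(i+1)}$ does not dominate the bad offspring, let alone the bad offspring plus the residual live $Z$-population; ``monotone function of'' is not the same as ``bounded above by,'' and the discrepancy is precisely the ${\|\vdn\|}_\infty^r$ factor. As written, the pathwise inequality in your final sentence does not follow from the coupling you constructed.

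To repair this you would either have to show $\sum_{j\ge 0}\tilde Z^{(i)}_j\ge {\|\vdn\|}_\infty^r\cdot(\text{coupled cumulative $Z$-count})$ at the moment of the bad step, which cannot hold with high probability for a sub-critical chain, or modify the chaining so the restart carries the extra ${\|\vdn\|}_\infty^r$ factor (e.g.\ re-initializing with $\tilde Z_0^{(i+1)}={\|\vdn\|}_\infty^r\sum_{j\ge 0}\tilde Z^{(i)}_j$) and then propagate the resulting change into the estimates of Lemma~\ref{lem:branching-process-tail-bounds}. At a minimum, the proof should surface this factor and explain how it is absorbed rather than declaring the domination ``essentially automatic.''
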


Given this stochastic domination, we can now establish Lemma~\ref{lem:branching-process-tail-bounds}. 

\begin{proof}[\textbf{\emph{Proof of Lemma~\ref{lem:branching-process-tail-bounds}}}]
By a union bound, we have 
\begin{align*}
    \mathbb P\Big( \sum_{j\ge 0} Z_j \ge \lambda Z_0\Big) \le \mathbb P ( \Gamma_{\mathsf{bad},M}^c)  + \mathbb P\Big(\sum_{1\le i \le M} \sum_{j\ge 0} \tilde Z_j^{(i)}\ge \lambda Z_0\Big) \,.
\end{align*}
The first probability is bounded as follows: for every $\lambda: \lambda Z_0\le n^{\frac 12 - \frac {\delta}{2}}$, we have 
\begin{align*}
    \mathbb P(\Gamma_{\mathsf{bad},M}^c) \le \mathbb P\Big(\mbox{Bin}\big(\lambda Z_0, n^{-1/2}\big) \ge M\Big) \le C n^{-\delta M/2}\,.
\end{align*}
The second probability above can be bounded as 
\begin{align*}
    \mathbb P\Big(\sum_{1\le i \le M} \sum_{j\ge 0} \tilde Z_j^{(i)}\ge \lambda Z_0\Big) \le \sum_{1\le i\le M} \mathbb P\Big( \sum_{j\ge 0} \tilde Z_j^{(i)} \ge \frac{1}{M^{1/M}\|\bd_n\|_\infty^{ri}} \lambda^{1/M} \tilde Z_0^{(i)}  \Big)
\end{align*}
Indeed, if for every $i$, $\sum_{j\ge 0} \tilde Z_j^{(i)}\le M^{-1/M}\lambda^{1/M} \|\bd_n\|_\infty^{-ri} \tilde Z_0^{(i)}$, then $\sum_{1\le i\le M} \sum_{j\ge 0} \tilde Z_j^{(i)} \le \lambda Z_0$. In order to now bound the right-hand side, we use the tail bounds of Lemma~\ref{lem:auxiliary-branching-process} to deduce that 
\begin{align*}
    \mathbb P\Big(\sum_{1\le i \le M} \sum_{j\ge 0} \tilde Z_j^{(i)}\ge \lambda Z_0\Big) \le C M \exp\Big(- \frac{\lambda^{1/M} Z_0}{C M^{1/M} {\|\vdn\|}_\infty^{(2+M)r}}\Big)\,.
\end{align*}
Combined with the bound on $\mathbb P(\Gamma_{\mathsf{bad},M}^c)$, we obtain the desired result. 
\end{proof}

\subsection{Tail bounds on cluster sizes, and shattering of the dynamics}\label{subsec:main-revealing-proof}
We are now in a position to conclude the proof of the tail bounds on clusters of $X_{\cG,t}^1$, and use that to deduce that $X_{\cG,t}^1$ is $(K,R)$-$\sparse$, except with probability $o(n^{-5})$.  We begin by using Lemmas~\ref{lem:branching-process-domination}--\ref{lem:branching-process-tail-bounds} to prove the following tail bound on $|\cA_\infty|$, which we recall counts the number of edges exposed through the revealing process of Process~\ref{proc:revealing-FK-on-graph}. 

\begin{lemma}\label{lem:revealing-procedure-tail-bounds}
Fix $\delta>0$ and consider the revealing procedure for any initial pair $(\cV_0, \cA_0)$ having $|\cA_0|,|\cV_0|$ and $|\widehat \cE_0|$ all at most $n^{\frac 12 - \delta}$. There exist $C_0 (p,q,\gamma), r (p,q,\gamma)$ in the definition of~$\Tburn$ in~\eqref{eq:t-burn} and $\kappa(p,q,\gamma)$ such that for all $t\ge \Tburn$ the following holds. 
For all $(\vdn)_n \in \cD_{\gamma,\kappa}$, $M\ge 1$ and $\lambda: \lambda |\widehat \cE_0| \le n^{\frac{1}{2}- \delta}$,  
\begin{align*}
\mathbb P\Big( |\cA_\infty| \ge |\cA_0| + \gamma^r(\lambda |\widehat \cE_0|) \Big) \le CM \exp \Big(\frac{\lambda^{1/M} Z_0 }{C{\|\vdn\|}_\infty^{(M+2)r}}\Big) + Cn^{- \delta M/2}\,.
\end{align*}
\end{lemma}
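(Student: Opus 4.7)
The plan is to read off the tail bound for $|\cA_\infty|$ directly from the corresponding estimate for the dominating branching process. I initialize the branching process of Definition~\ref{def:branching-process} with $Z_0 := |\widehat\cE_0|$. Since $|\cA_0|, |\cV_0|, |\widehat\cE_0| \le n^{1/2-\delta}$, Lemma~\ref{lem:branching-process-domination} applies and furnishes the stochastic dominations
\begin{align*}
\big(|\widehat\cE_j|\mathbf{1}_{\{\fm_j \le n^{1/2-\delta/2}\}}\big)_{j\ge 0} \preceq (Z_j)_{j\ge 0} \qquad \text{and} \qquad |\cA_\infty\setminus\cA_0|\mathbf{1}_{\{\fm_\infty \le n^{1/2-\delta/2}\}} \preceq \gamma^r\sum_{j\ge 0} Z_j.
\end{align*}

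Next, I split the event of interest as
\begin{align*}
\big\{|\cA_\infty| \ge |\cA_0| + \gamma^r \lambda |\widehat\cE_0|\big\} \subseteq \Big\{\sum_{j\ge 0} Z_j \ge \lambda Z_0\Big\} \cup \big\{\fm_\infty > n^{1/2-\delta/2}\big\}.
\end{align*}
The first event is controlled by Lemma~\ref{lem:branching-process-tail-bounds} (whose hypotheses are met since $\lambda Z_0 \le n^{1/2-\delta}$, and with $\kappa$, $r$, $C_0$ and $t\ge \Tburn$ chosen as there); this already produces the promised bound $C\exp(-\lambda^{1/M}Z_0/(C\|\vdn\|_\infty^{2r})) + Cn^{-\delta M/2}$.

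For the second event, I would use the per-generation coupling underlying Lemma~\ref{lem:branching-process-domination}: in the construction from its proof, the distribution of the children of $\widehat e_m$ at generation $\ell$ is dominated by the progeny of Definition~\ref{def:branching-process} whenever $\fm_{\ell-1} \le n^{1/2-\delta/2}$. Defining $\tau := \min\{j : \fm_j > n^{1/2-\delta/2}\}$, on the event $\{\tau <\infty\}$ one has $\fm_{\tau-1} \le n^{1/2-\delta/2}$, so the coupling yields $|\widehat\cE_j| \le Z_j$ for every $j\le \tau$ and hence $n^{1/2-\delta/2} < \fm_\tau \le \sum_{j\ge 0} Z_j$. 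Consequently $\mathbb P(\fm_\infty > n^{1/2-\delta/2}) \le \mathbb P(\sum_j Z_j > n^{1/2-\delta/2})$, and a second application of Lemma~\ref{lem:branching-process-tail-bounds} with $\lambda' := n^{1/2-\delta/2}/Z_0 \ge \lambda$ (still admissible since $\lambda' Z_0 = n^{1/2-\delta/2}$) yields a bound no worse than the one for the first event. A union bound then completes the proof.

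The only real subtlety is the bookkeeping surrounding the per-generation coupling: Lemma~\ref{lem:branching-process-domination} is phrased with the indicator $\mathbf{1}_{\{\fm_j \le n^{1/2-\delta/2}\}}$, while the argument above requires the slightly earlier-indexed version $\mathbf{1}_{\{\fm_{j-1} \le n^{1/2-\delta/2}\}}$. This strengthening is implicit in the proof of that lemma, since the stochastic domination of the children counts at generation $\ell$ is proved conditionally on the state at the end of generation $\ell-1$. Apart from this careful indexing, the proof is a routine two-case union bound.
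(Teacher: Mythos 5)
Your proof is correct and uses essentially the same ingredients as the paper: the dominations from Lemma~\ref{lem:branching-process-domination} together with the tail bound of Lemma~\ref{lem:branching-process-tail-bounds}, combined through the per-generation coupling (and you correctly flag that the proof of Lemma~\ref{lem:branching-process-domination} actually establishes the stronger domination conditional on $\mathfrak m_{j-1}\le n^{1/2-\delta/2}$, which is what both arguments need). The paper streamlines your two-case union bound into one: it observes that on $\{\sum_j Z_j \le \lambda Z_0\}$, the same generation-by-generation induction you run in your second case already forces $\mathfrak m_\infty \le \sum_j Z_j \le \lambda Z_0 \le n^{1/2-\delta} < n^{1/2-\delta/2}$, so the event $\{\mathfrak m_\infty > n^{1/2-\delta/2}\}$ is automatically contained in $\{\sum_j Z_j > \lambda Z_0\}$ and the second application of Lemma~\ref{lem:branching-process-tail-bounds} is unnecessary.
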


\begin{proof}
Define the following stopping generation  
\[
\varsigma = \inf\Big\{\ell: \mathfrak m_{\ell - 1} > \lambda|\widehat \cE_0|  \Big\}\,.
\]
 Similarly define $\varsigma_Z$ as the first $\ell: \sum_{j\le \ell-1} Z_j > \lambda Z_0$. Under the monotone coupling of Lemma~\ref{lem:branching-process-domination}, if $\varsigma_Z = \infty$, then $\varsigma= \infty$, the indicators in the lemma are both $1$, and both 
 $$(|\widehat \cE_j|)_{j} \le (Z_j)_{j}\,, \qquad \mbox{and}\qquad |\cA_\infty \setminus \cA_0| \le \gamma^r \sum_{j=0}^{\infty} Z_j\,,$$
hold. Therefore, 
we obtain 
\begin{align*}
    \mathbb P \Big( |\cA_\infty \setminus \cA_0 | \ge  \gamma^{r} (\lambda |\widehat \cE_0|) \Big) \le \mathbb P \Big( \sum_{k\ge 0} Z_k \ge \lambda Z_0  \Big)\,.
\end{align*}
Lemma~\ref{lem:branching-process-tail-bounds} then implies the desired result.
\end{proof}

We next use Lemma~\ref{lem:revealing-procedure-tail-bounds} and Observation~\ref{obs:key-observation} to deduce tail estimates on the volume and radius of the cluster in $X_{\cG,t}^1$ containing $v$, when $t\ge T_{\textsc{burn}}$. 

\begin{proof}[\textbf{\emph{Proof of Theorem~\ref{thm:pi-exponential-decay}}}]
    Fix some $v\in \{1,...,n\}$, let $\cA_0 = \emptyset$ and let $\cV_0=\{v\}$ in Process~\ref{proc:revealing-FK-on-graph}. In this case $\widehat \cE_0$ is the set of half-edges out from $v$, and thus $|\widehat \cE_0| = d_v$. By Observation~\ref{obs:key-observation}, for each $\cG\sim  \Pcm$, the cluster of $v$ in the configuration $X_{\cG,t}^1$, denoted  $\cC_{v}(X_{\cG,t}^1)$ is a subset of $\cC_v(\tilde \omega)$, which in turn is a subset of $V(\cA_{\mathfrak m_{\emptyset}}\setminus \cA_0)$. Let $C_0,r$ be sufficiently large constants and take $t\ge T=  T_{\textsc{burn}}(C_0,r)$. 
    Then, we have  
    \begin{align*}
    |\cC_{v}(X_{\cG,t}^1)|\le |\cC_v(\tilde \omega)| \le |V(\cA_{\fm_\emptyset}\setminus \cA_{0})|\le 2|\cA_{\fm_\emptyset} \setminus \cA_0|\,.
    \end{align*}
    By Lemma~\ref{lem:revealing-procedure-tail-bounds} and the above, if $(\vdn)_n\in \cD_{\gamma,\kappa}$, there exists $C(p,q,\gamma)$ such that
    \begin{align*}
        \mathbb P \big((\cG,X_{\cG,t}^1): |\cC_v(X_{\cG,t}^1)|\ge \gamma^r \big(\lambda d_v)\big) \le C M \exp \Big(\frac{\lambda^{1/M} d_v }{C{\|\vdn\|}_\infty^{(M+2)r}}\Big) + Cn^{- \delta M/2}\,.
    \end{align*}
    Let $\delta = 1/4$ and let $M = 200$, for instance. For any fixed small $\epsilon>0$, by taking $\kappa$ sufficiently large, by Fact~\ref{fact:L-infty-degree-sequence}, ${\|\vdn\|}_{\infty}^{(M+2)r}< n^{\epsilon/4rM}$; then taking $\lambda = n^{(M-1)\epsilon/4 M}$, 
    we satisfy that $\lambda d_v \le n^{\frac 12 - \delta}$. Then we see that 
    \begin{align*}
        \gamma^r( \lambda d_v)< n^{\epsilon}\,, \qquad \mbox{and}\qquad \frac{\lambda^{1/M} d_v }{C{\|\vdn\|}_\infty^{(M+2)r}} > n^{\epsilon/4M}/C\,.
    \end{align*}
    In turn, the probability above is at most $o(n^{-24})$. 
    Observing that $\mathbb P ((\cG,X_{\cG,t}^{1}): X_{\cG,t}^1\in \cdot ) = \Ecm [\mathbb P(X_{\cG,t}^1\in \cdot)]$, we can use Markov's inequality to write 
    \begin{align*}
        \Pcm\bigg( \cG: \mathbb P\Big(X_{\cG,t}^1: |\cC_v(X_{\cG,t}^1)| \ge n^{\epsilon}\Big)\ge  n^{-12}\bigg)  \le n^{-12}\,,
    \end{align*}
    implying the desired result. 
\end{proof}

We next establish that the $(K,R)$-$\sparse$ property for the random-cluster configuration on $\cG \sim \Pcm$ holds with high probability for all $t\ge T_{\textsc{burn}}$. Towards this, we introduce the following notation. 

\begin{definition}\label{def:mathfrak-V}
    Given a graph $\cG$, a vertex subset $\cV_0$, an edge subset $\cA_0$, and a configuration
$\omega$ on $E(\cG)$, define $\mathfrak{V}_{(\cV_0,\cA_0)}(\omega)$ as the subset of vertices in $\cV_0$ in non-singleton components in the boundary condition induced 
by $\omega(E(\cG)\setminus \cA_0)$.
\end{definition}

\begin{lemma}\label{lem:nontrivial-bdy-component-bound}
Fix $q\ge 1$, $\gamma>1$,  
$p<p_{u}(q,\gamma)$, and $\delta>0$. Let $R \le (\frac{1}{2}-\delta)\log_{\gamma}n$. 
There exist $\kappa$, $K$ as well as $C_0$ and $r$, such that for every $v\in \{1,...,n\}$ for all $t\ge \Tburn(C_0,r)$ and all $(\vdn)_n \in \cD_{\gamma,\kappa}$ the following holds for $\cA_0 = B_R(v)$ and $\cV_0 = \partial B_R(v)$: 
\begin{align*}
    \mathbb P\big((\cG,X_{\cG,t}^{1}): |\mathfrak V_{(\cV_0,\cA_0)}(X_{\cG,t}^1)|>K\big) \le o(n^{-10})\,.
\end{align*}
\end{lemma}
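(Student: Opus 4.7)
\medskip
\noindent\emph{Proof proposal.} The plan is to apply Process~\ref{proc:revealing-FK-on-graph} with inputs $\cA_0 = E(B_R(v))$ and $\cV_0 = \partial B_R(v)$ and to bound $|\mathfrak V_{(\cV_0,\cA_0)}(X_{\cG,t}^1)|$ by counting ``collision'' events produced by the revealing. By Observation~\ref{obs:key-observation} the boundary partition induced by $X_{\cG,t}^1$ refines that of $\tilde \omega$, so it suffices to bound $|\mathfrak V_{(\cV_0,\cA_0)}(\tilde \omega)|$. The central combinatorial observation is that since every interior vertex of $B_R(v)$ has all its incident edges inside $\cA_0$, the non-trivial boundary components of $\tilde \omega$ consist entirely of $\cV_0$-vertices, and moreover such a non-trivial component can only be created at a step $m$ where the freshly revealed ball $B_r(\widehat e_m;\cA_{m-1}^c)$ intersects $V(\cA_{m-1})$ outside $\{\widehat e_m\}$. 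This is exactly the event $\Gamma_{\mathsf{good}}^c$ from the proof of Lemma~\ref{lem:branching-process-domination}, which triggers case (1) in the dominating branching process of Definition~\ref{def:branching-process}.

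Next I would verify the size hypotheses needed to invoke the branching-process machinery. Restricting to the event (of probability $1-o(n^{-10})$ by Lemma~\ref{lem:random-graph-volume-growth}, for $\kappa$ large) that $\cG$ has $(\gamma,\epsilon)$-volume growth, and using Fact~\ref{fact:L-infty-degree-sequence} to bound ${\|\vdn\|}_\infty \le n^{\epsilon_*(\kappa)}$, we get $|\widehat \cE_0|\le {\|\vdn\|}_\infty\cdot |B_{R+1}(v)|\le n^{\epsilon_*(\kappa)}\gamma^{R+1}\le n^{1/2-\delta/2}$ for $n$ large (since $R\le(\tfrac{1}{2}-\delta)\log_\gamma n$ and $\kappa$ is large). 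Choosing $C_0,r,\kappa$ as in Lemma~\ref{lem:branching-process-tail-bounds} with $t\ge \Tburn(C_0,r)$ makes the auxiliary branching process subcritical, and Lemmas~\ref{lem:branching-process-domination}--\ref{lem:branching-process-tail-bounds} then imply that the total number of revealing steps satisfies $\fm_{k_\emptyset}=O(|\widehat \cE_0|)\le n^{1/2-\delta/2}$ with probability $1-o(n^{-10})$, so the stopping indicator in Lemma~\ref{lem:branching-process-domination} does not vanish.

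Next I would count the bad events and bound the per-event contribution to $\mathfrak V$. Inspecting the proof of Lemma~\ref{lem:branching-process-tail-bounds}, the event $\Gamma_{\mathsf{bad},M}^c$ that more than $M$ case-(1) selections occur during the first $n^{1/2-\delta/2}$ exploration steps has probability at most $Cn^{-\delta M/2}$; setting $M:=\lceil 24/\delta\rceil$ drives this below $o(n^{-10})$. Separately, by Proposition~\ref{prop:ball-domination-by-GW-tree} each revealed ball is dominated by a single-source Galton--Watson tree of depth $r$, and Lemma~\ref{lemma:gw:leaves} (with $\kappa$ chosen sufficiently large as a function of $r$, $\gamma$, and $\Evd[D]$) makes the probability that any one such tree exceeds $\gamma^r$ in size superpolynomially small in $n$; a union bound over the $O(n^{1/2})$ revealing steps controls this by $o(n^{-10})$ as well. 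On the intersection of these three high-probability events every bad step $m$ contributes at most $|B_r(\widehat e_m;\cA_{m-1}^c)|\le \gamma^r$ many $\cV_0$-vertices to $\mathfrak V$, so setting $K:=M\gamma^r+1$ gives $|\mathfrak V_{(\cV_0,\cA_0)}(\tilde \omega)|\le K$ and the lemma follows by a final union bound together with Observation~\ref{obs:key-observation}.

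The main technical obstacle is this last per-event estimate: the raw branching-process bound in item (1) of Definition~\ref{def:branching-process} allows offspring of order ${\|\vdn\|}_\infty^r\cdot(\text{running population})$, which grows polynomially in $n$ and would prevent $K$ from being $n$-independent. Since $r=O(1)$ lies well below the scale $\epsilon\log_\gamma n$ at which Lemma~\ref{lem:random-graph-volume-growth} provides graph-wide volume growth, we cannot use that lemma directly for the small balls revealed in the process; instead one has to promote the per-ball Galton--Watson size tail of Lemma~\ref{lemma:gw:leaves} to a uniform bound across all $O(n^{1/2})$ balls explored during the revealing, which succeeds provided $\kappa$ is large enough that $(\Evd[D]/\gamma)^{\kappa r}$ beats $n^{1/2}$ by a polynomial factor.
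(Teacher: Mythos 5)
Your high-level set-up matches the paper: you run Process~\ref{proc:revealing-FK-on-graph} from $(\cV_0,\cA_0)=(\partial B_R(v),E(B_R(v)))$, use Observation~\ref{obs:key-observation} to pass from $X_{\cG,t}^1$ to the revealed graph $\cA_\infty\setminus\cA_0$, use Lemma~\ref{lem:revealing-procedure-tail-bounds} (via Lemmas~\ref{lem:branching-process-domination}--\ref{lem:branching-process-tail-bounds}) to keep $\fm_{k_\emptyset}\le n^{1/2-\delta/2}$ with probability $1-o(n^{-10})$, and attribute non-trivial wirings of $\cV_0$ to collision events. That much is sound.

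The gap is in the per-event accounting, and you have in fact put your finger on exactly the step that breaks, but then dismissed it incorrectly. You want to bound $|\mathfrak V|$ by $M\cdot\gamma^r$, where $M$ is the number of $\Gamma_{\mathsf{good}}^c$ \emph{ball-revealings}, and you argue each such ball contributes at most $|B_r(\hat e_m;\cA_{m-1}^c)|\le\gamma^r$ vertices to $\mathfrak V$. To make $\gamma^r$ a valid uniform bound you invoke Lemma~\ref{lemma:gw:leaves} with $h=r$ and claim the failure probability is ``superpolynomially small in $n$,'' to be killed by a union bound over $O(n^{1/2})$ balls, with the fallback that choosing $\kappa$ large makes $(\Evd[D]/\gamma)^{\kappa r}$ ``beat $n^{1/2}$.'' This is false: $r=O(1)$ and $\kappa=O(1)$ are both fixed constants chosen independently of $n$, so $\Pr(|\widehat\cT_r|\ge\gamma^r)\le Cr^{2\kappa}(\Evd[D]/\gamma)^{\kappa r}$ is a \emph{constant} bounded away from $0$. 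It does not decay with $n$ at all, and no choice of $\kappa$ changes that. Summed over $\Theta(n^{1/2-\delta/2})$ balls, the union bound is $\Theta(n^{1/2})$, not $o(n^{-10})$. (This is precisely why Definition~\ref{def:branching-process} has a separate case (2a) for trees that fail the $(\gamma,\epsilon)$-tree-growth condition: that event has constant probability, and the paper never attempts a uniform volume bound on the depth-$r$ balls.) Without the uniform $\gamma^r$ bound, a single bad ball can contribute up to $\Theta({\|\vdn\|}_\infty^r)$ vertices, which is $\mathrm{poly}(n)$, so $K$ cannot be taken $n$-independent in your scheme.

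The paper's proof avoids this by counting at the level of individual half-edge \emph{matchings} rather than ball-revealings. Each breadth-first matching step, uniformly over history, has probability at most $|\cA_{\fm_{k_\emptyset}}|{\|\vdn\|}_\infty/({\|\vdn\|}_1-|\cV_{\fm_{k_\emptyset}}|{\|\vdn\|}_\infty)$ of closing onto an already-discovered vertex; on the event $|\cA_{\fm_{k_\emptyset}}|\le n^{1/2-\delta/2}$ (itself controlled by Lemma~\ref{lem:revealing-procedure-tail-bounds}, just as in your argument) the total number of such collisions is stochastically dominated by $\bin(n^{1/2-\delta/2}{\|\vdn\|}_\infty,\,2n^{-1/2-\delta/2}{\|\vdn\|}_\infty)$, whose mean is $2n^{-\delta}{\|\vdn\|}_\infty^2=o(n^{-\delta/2})$ once $\kappa$ is large enough that $\epsilon_*(\kappa)<\delta/4$ via Fact~\ref{fact:L-infty-degree-sequence}. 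The Chernoff bound~\eqref{eq:Chernoff-Poisson-binomial} then gives $\mathbb P(\text{collisions}>L)\le o(n^{-\delta L/2\wedge 10})$, and $|\mathfrak V_{(\cV_0,\cA_0)}|$ is bounded by the number of collisions. This sidesteps the need for any a.s.\ control on individual ball volumes, which is the thing your version cannot deliver.
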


We use Theorem~\ref{thm:pi-exponential-decay} to bound the number of chances the revealing process of Process~\ref{proc:revealing-FK-on-graph} has to reconnect to the vertices of $\cV_0 = \partial B_R(v)$. Intuitively, since the components of $\tilde \omega$ have (stretched) exponential tail bounds, the number of chances at reconnecting is of the same order as $|\cV_0|$; since $R$ is such that $|\cV_0|\le n^{1/2 - \delta}$, the number of such connections (each possibly inducing a non-trivial boundary component) will be dominated by an $\mbox{Bin}(n^{1/2+\delta},n^{-1/2-\delta})$ random variable, yielding the desired tail bound on the probability of this exceeding some large $K$.

\begin{proof}[\textbf{\emph{Proof of Lemma~\ref{lem:nontrivial-bdy-component-bound}}}]
Fix $v\in \{1,...,n\}$ and $\delta>0$, and any $R \le (\frac{1}{2} - \delta)\log_\gamma n$. First of all, we recall from Lemma~\ref{lem:random-graph-volume-growth}, that if we let $\Gamma$ be the event that $\cG$ has the $(\gamma,\epsilon)$-volume-growth property, then  
\begin{align*}
    \Pcm(\Gamma^c) \le o(n^{ - 10})\,.
\end{align*}
We will henceforth work on the event $\Gamma$. Reveal the sub-graph $B_R(v)$ on the event $\Gamma$ (such that its volume is at most $\gamma^R$) and initialize  $\cV_0 = \partial B_R(v)$ and $\cA_0= E(B_R(v))$. We apply the revealing procedure of Process~\ref{proc:revealing-FK-on-graph} with this initialization. Recall from Observation~\ref{obs:key-observation} that the FK-clusters of $\cV_0$ induced by  $\tilde \omega(E(\cG)\setminus \cA_0)$ are a subset of $\cA_{\mathfrak m_{k_\emptyset}}\setminus \cA_0$, and the configuration $\tilde \omega$ satisfies $\tilde \omega(\cA_{\mathfrak m_{k_\emptyset}}\setminus \cA_0) \ge X_{\cG,t}^1(\cA_{\mathfrak m_{k_\emptyset}}\setminus \cA_0)$. Thus, the sets $\mathfrak V_{(\cV_0,\cA_0)}(\tilde \omega)$ and $\mathfrak V_{(\cV_0,\cA_0)}(X_{\cG,t}^1)$, are subsets of $\mathfrak V_{(\cV_0,\cA_0)}(\cA_{\mathfrak m_{k_\emptyset}}\setminus \cA_0)$.

Through the revealing process of Process~\ref{proc:revealing-FK-on-graph}, for each $m$, the edges of $B_r(\hat e_m; \cA_{m-1}^c)$ are revealed one at a time via the breadth-first revealing per Processes~\ref{proc:configuration-model-revealing-graph} and~\ref{proc:B-r-out}. Therefore, $|\mathfrak  V_{(\cV_0,\cA_0)}(\mathcal A_{\mathfrak m_{k_\emptyset}}\setminus \cA_0)|$ is at most the number of times during the revealing of $\mathcal A_{\mathfrak m_{k_\emptyset}}$, that a half-edge is matched up to a half-edge belonging to a vertex that had already been discovered. For a fixed $m$, consider the revealing of $B_r(\hat e_m; \cA_{m-1}^c)$. Conditionally on a discovered edge set $\cA$
the law of the next half-edge to be matched is uniform amongst all un-matched half-edges. Thus, uniformly over the history of the revealing process up to that point, the probability that the next half-edge to be matched is matched up to a vertex of $V(\cA)$ is at most 
\begin{align*}
    \frac{|\cA_{\fm_{k_\emptyset}}|{\|\vdn\|}_\infty}{{\|\vdn\|}_1 - |\cV_{\fm_{k_\emptyset}}| {\|\vdn\|}_\infty}\,.
\end{align*}

We thus obtain for a sufficiently large constant $\Lambda$ (depending on $p,q,\gamma,r$), for all $L\ge 1$, 
\begin{align*}
    \mathbb P \Big((\mathcal G, \tilde \omega): & \cG \in \Gamma, |\mathfrak V_{(\cV_0,\cA_0)}(X_{\cG,t}^1)| > L\Big) \\
    & \le \mathbb P \Big(\Gamma,  |\cA_{\mathfrak m_{k_\emptyset}}|> n^{\frac 12 - \frac{\delta}{2}}\Big)   + \mathbb P \Big(\bin\Big( n^{\frac 12 - \frac{\delta}{2}}{\|\vdn\|}_\infty , 2 n^{-\frac{\delta}{2} -\frac{1}{2}}{\|\vdn\|}_\infty\Big)>L\Big)\,.
\end{align*}
By the bound $|\widehat \cE_0| \le {\|\vdn\|}_\infty \gamma^R \le n^{\frac 12 -\frac{2\delta}{3}}$ as long as ${\|\vdn\|}_\infty \le n^{\epsilon_*}$ for a sufficiently small $\epsilon_*$ (which holds as long as $\kappa$ is sufficiently large in $\delta,M$ by Fact~\ref{fact:L-infty-degree-sequence}), we can apply Lemma~\ref{lem:revealing-procedure-tail-bounds} with a sufficiently large choice of $M$ to deduce that the first term is at most 
\begin{align*}
    \mathbb P \Big(\Gamma, |\cA_{\infty}|> n^{\frac 12 - \frac{\delta}{2}}\Big) & \le \mathbb P\Big(|\cA_{\infty}| \ge |\cA_0| +  \gamma^r {\|\vdn\|}_\infty n^{\frac{\delta}{10}} \Big) \le o(n^{-10})\,.
\end{align*}
For the second term, notice that the mean of the binomial is $2n^{ - \delta} {\|\vdn\|}^2_\infty$. As long as $\kappa$ is sufficiently large so that ${\|\vdn\|}_\infty \le n^{\epsilon_*}$ for sufficiently small $\epsilon_* <\delta/4$, this is $o(n^{-\delta/2})$. Thus, by the Chernoff bound for the binomial~\eqref{eq:Chernoff-Poisson-binomial}, for every fixed $L\ge 1$, 
\begin{align}\label{eq:non-trivial-bc-tail-X-t}
 \mathbb P \Big((\mathcal G, X_{\cG,t}^1): |\mathfrak V_{(\cV_0,\cA_0)}(X_{\cG,t}^1)| > L\Big) \le o(n^{ - \frac{\delta L}{2} \wedge 10})\,.
\end{align}
Choosing $L$ sufficiently large (depending on $\delta$), we can make the right-hand side here $o(n^{-10})$ as well. 
\end{proof}

\begin{proof}[\textbf{\emph{Proof of Theorem~\ref{thm:k-R-sparse-whp}}}]
Given Lemma~\ref{lem:nontrivial-bdy-component-bound}, it is  straightforward to deduce Theorem~\ref{thm:k-R-sparse-whp}. Specifically, take $K$ sufficiently large so that the right-hand side of Lemma~\ref{lem:nontrivial-bdy-component-bound} is $o(n^{-10})$. By a union bound,  
\begin{align}\label{eq:k-r-sparse-union-bound}
    \mathbb P\big((\cG,X_{\cG,t}^{1}): X_{\cG,t}^1 \mbox{ is not }(K,R)\mbox{-}\sparse\big) \le \sum_{v}
    \mathbb P\big((\cG,X_{\cG,t}^{1}): |\mathfrak V_{(\cV_0,\cA_0)}(X_{\cG,t}^1)|>K\big) \le o(n^{ - 9})\,.
\end{align}
By Markov's inequality, 
   \begin{align*}\Pcm \Big(\cG: \mathbb P(X_{\cG,t}^1 & \mbox{ is not }(K,R)\mbox{-}\sparse)> n^{ - 6}\Big) \\
   & \le n^6 \Ecm [\mathbb P(X_{\cG,t}^1\mbox{ is not }(K,R)\mbox{-}\sparse)]\,,
   \end{align*}
and the conclusion follows from the fact that the expectation on the right-hand side is exactly the probability on the left-hand side of~\eqref{eq:k-r-sparse-union-bound}. 
\end{proof}

Let us conclude with a better bound in the special case of $R=0$ from Theorem~\ref{thm:k-R-sparse-whp}; this will be applied to establish our mixing time lower bounds for the Ising/Potts Glauber dynamics.

\begin{lemma}\label{lem:sparsity-for-Potts-lower-bound}
    Fix $q,\gamma$ and suppose $p<p_u(q,\gamma)$. There exists $\kappa$ such that for all $(\vdn)_n\in \cD_{\gamma,\kappa}$,  with probability $1-o(1)$ over $\cG\sim \Pcm$, for every $v\in \{1,...,n\}$ and every $\eta>0$, 
    \begin{align*}
        \pi_{\cG}\big(\omega(E_{v}^c) \mbox{ is not } \eta d_{v}\mbox{-}\sparse\big) \le C\exp( - \eta d_{v}/C)\,.
    \end{align*}
    (Here $\omega(E_{v}^c)$ is viewed as a boundary condition induced by $\omega$ on $E_{v} = \{e: e\ni v\}$.)
\end{lemma}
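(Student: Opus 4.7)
The plan is to reduce to an FK-dynamics configuration via monotonicity and then apply the joint revealing procedure of Process~\ref{proc:revealing-FK-on-graph} together with a Chernoff estimate to get exponential-in-$d_v$ tail control on the number of non-trivial boundary components. Since the FK-dynamics started from the all-wired configuration satisfies $X_{\cG,t}^1 \succeq \pi_\cG$ stochastically for every $t \ge 0$, and the event ``the boundary condition induced on $E_v$ is not $K$-sparse'' is monotone increasing in $\omega$ (adding edges only merges components and so weakly enlarges the set of vertices in non-singleton boundary classes), it suffices to bound this probability at time $t = \Tburn$ under $X_{\cG,t}^1$. One then applies Process~\ref{proc:revealing-FK-on-graph} with $\cV_0 = N(v)$ and $\cA_0 = E_v$, so that $|\widehat\cE_0| = \sum_{u\in N(v)}(d_u-1) \le {\|\vdn\|}_\infty d_v$. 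By Observation~\ref{obs:key-observation}, the count $\mathfrak N$ of vertices of $N(v)$ in non-trivial boundary components of $X_{\cG,t}^1(E_v^c)$ is at most the corresponding count in $\tilde\omega$, which, as in the proof of Lemma~\ref{lem:nontrivial-bdy-component-bound}, is at most the number of \emph{collisions}---matches of a revealed half-edge to a half-edge of an already-exposed vertex---during the breadth-first revealing of $\cA_\infty \setminus \cA_0$.

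Combining Lemmas~\ref{lem:branching-process-domination} and~\ref{lem:branching-process-tail-bounds} (with $\lambda=1$ and a large constant $M$) together with Fact~\ref{fact:L-infty-degree-sequence} (so that ${\|\vdn\|}_\infty \le n^{\epsilon_*}$ with $\epsilon_*$ arbitrarily small once $\kappa$ is large enough) yields $\bbP(|\cA_\infty \setminus \cA_0| > 2\gamma^r {\|\vdn\|}_\infty d_v) = o(n^{-10})$. On this high-probability event, each of the at most $4\gamma^r{\|\vdn\|}_\infty d_v$ matching steps in the revealing has conditional collision probability at most $2|\cA_\infty|{\|\vdn\|}_\infty/{\|\vdn\|}_1 \le Cd_v n^{2\epsilon_*-1}$, so $\mathfrak N$ is stochastically dominated by a Poisson binomial with mean $\mu \le C d_v^2 n^{3\epsilon_*-1}$. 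Since $d_v \le n^{\epsilon_*}$, this mean is $o(1)$, and the Chernoff bound~\eqref{eq:Chernoff-Poisson-binomial} then gives $\bbP(\mathfrak N \ge \eta d_v) \le (e\mu/(\eta d_v))^{\eta d_v} \le n^{-c\eta d_v}$ (for a constant $c > 0$ depending on $\epsilon_*$) whenever $\eta d_v \ge 1$, and this is at most $C\exp(-\eta d_v/C)$ for $n$ large. The complementary regime $\eta d_v < 1$ is trivial, as $C\exp(-\eta d_v/C) \ge Ce^{-1/C}$ can be taken to be at least $1$.

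The main technical subtlety is that the exponential rate in the tail of Lemma~\ref{lem:branching-process-tail-bounds} scales as $1/{\|\vdn\|}_\infty^{2r}$ rather than the constant rate $1/C$ demanded of the conclusion. This is handled by choosing $M$ large enough in Lemma~\ref{lem:branching-process-tail-bounds} (depending only on $\delta$ and $\epsilon_*$) that the secondary polynomial term $Cn^{-\delta M/2}$ dominates the stretched exponential $C\exp(-|\widehat\cE_0|/(C{\|\vdn\|}_\infty^{2r}))$ uniformly in $d_v \le {\|\vdn\|}_\infty \le n^{\epsilon_*}$, while still $Cn^{-\delta M/2} \le C\exp(-\eta d_v/C)$ for all $\eta d_v \le n^{\epsilon_*}$ and $n$ large. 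Finally, a Markov-type averaging over $\cG \sim \Pcm$, analogous to the passage from Lemma~\ref{lem:nontrivial-bdy-component-bound} to Theorem~\ref{thm:k-R-sparse-whp}, transfers the joint $(\cG,\omega)$-probability bound into the stated ``with probability $1 - o(1)$ over $\cG$'' statement, uniformly in $v$ via a union bound.
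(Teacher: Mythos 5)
Your high-level reduction (using $\pi_\cG\preceq X_{\cG,t}^1$ and the joint revealing of Process~\ref{proc:revealing-FK-on-graph}) is in the right spirit, but there is a genuine gap in how you control the first error term, and your proposed fix via the choice of $M$ does not work. Running Process~\ref{proc:revealing-FK-on-graph} once from $\cV_0 = N(v)$, $\cA_0 = E_v$ and appealing to Lemma~\ref{lem:branching-process-tail-bounds} is the wrong tool here, for two reasons. First, the claim that $\bbP\big(|\cA_\infty\setminus\cA_0|>2\gamma^r{\|\vdn\|}_\infty d_v\big)=o(n^{-10})$ does not follow: with $\lambda=1$ the first term of the tail bound is $C\exp\big(-|\widehat\cE_0|/(C{\|\vdn\|}_\infty^{2r})\big)$, and $|\widehat\cE_0|=\sum_{u\in N(v)}(d_u-1)$ can be arbitrarily small (even $0$ or $1$) regardless of $d_v$, so this term need not decay at all. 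Second, and more fundamentally, the second term $Cn^{-\delta M/2}$ of Lemma~\ref{lem:branching-process-tail-bounds} is only polynomially small in $n$ for \emph{any fixed} $M$, while the target bound $C\exp(-\eta d_v/C)$ is super-polynomially small when $d_v\sim{\|\vdn\|}_\infty\sim n^{\epsilon_*}$, which is precisely the regime that makes the lemma non-trivial. Your ``choose $M$ large so that $Cn^{-\delta M/2}$ dominates the stretched exponential term while still $Cn^{-\delta M/2}\le C\exp(-\eta d_v/C)$'' imposes two incompatible inequalities: the first direction would need $\delta M\log n/2\le |\widehat\cE_0|/(C{\|\vdn\|}_\infty^{2r})$, which fails when $|\widehat\cE_0|$ is small, and the second would need $\delta M\log n/2\ge \eta d_v/C$, which fails as soon as $\eta d_v\gg\log n$. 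No choice of $M$ resolves both.

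The paper's proof sidesteps all of this by \emph{not} invoking Lemma~\ref{lem:branching-process-tail-bounds} for the final tail. It instead decomposes the exploration into $d_v$ separate, sequentially-run copies of Process~\ref{proc:revealing-FK-on-graph}, one per half-edge of $v$, each \emph{deterministically stopped} as soon as $|\cA_m^{(i)}|\ge n^\epsilon$ or as soon as a bad matching occurs. The stopping rule caps the per-copy bad-step probability deterministically at $n^\epsilon\cdot n^{1/4}{\|\vdn\|}_\infty/({\|\vdn\|}_1-n^{1/4}{\|\vdn\|}_\infty)=o(n^{-1/2})$, with no residual polynomial error term, because the process is stopped before it can accumulate a large population. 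The key observation is then that $\omega(E_v^c)$ failing to be $\eta d_v$-sparse forces more than $\eta d_v/2$ of the copies to have been stopped, so the quantity of interest is dominated by $\bin(d_v,o(n^{-1/2}))$, whose tail at $\eta d_v/2$ is bounded by $(C/\sqrt n)^{\eta d_v/2}\le C\exp(-\eta d_v/C)$ with the correct scaling in $d_v$. In short, the paper trades the global branching-process tail bound (which comes pre-capped at $n^{-\delta M/2}$) for a direct per-half-edge accounting that preserves exponential decay in $d_v$; this is the idea missing from your argument.
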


\begin{proof}
Fix a small $\epsilon>0$ and consider the following modification of the revealing process of Process~\ref{proc:revealing-FK-on-graph}. 
\begin{enumerate}
    \item Label the half-edges of the vertex $v$ $\hat e_v^{(1)},...,\hat e_v^{(d_v)}$
    \item Perform the process of Process~\ref{proc:revealing-FK-on-graph} with $\cV_0 = v$ $\cA_0 = \cA_0^{(1)} := \emptyset$, and $\widehat \cE_0 = \hat e_v^{(1)}$, stopped if either $|\cA_m^{(1)}|\ge n^{\epsilon}$ or in step 1.(a) a $\mathsf{bad}$ step is taken, i.e., some previously exposed vertex gets matched with. 
    \item For $i=1,...,d_v$, if $\hat e_{v}^{(i)}$ is hitherto un-matched, set $\cA_0^{(i)}$ to be the set of all matched edges to that point, and run the process of Process~\ref{proc:revealing-FK-on-graph} with $\cV_0 = v$, $\cA_0 = \cA_0^{(i)}$, and $\widehat \cE_0 = e_v^{(i)}$, stopped if $|\cA_m^{(i)}| \ge n^{\epsilon}$ or a $\mathsf{bad}$ step is taken. 
\end{enumerate}
Observe that in order for $\omega(E_v^c)$ to not be $\eta d_v$-$\sparse$, there must have been more than $\eta d_v/2$ many $i$'s for which the revealing process gets stopped (each such $i$ adds at most two vertices to the set $\mathfrak V_{(v,\emptyset)}(\omega)$ for $\omega \sim \pi_\cG$).  Throughout the entire procedure described above, at most $d_v n^{\epsilon}$ many edges are revealed, which for $\epsilon$ small and $\kappa$ large is at most $n^{1/4}$. By Lemma~\ref{lem:revealing-procedure-tail-bounds} with $M$ taken sufficiently large, for any fixed $i$, the probability of reaching $|\cA_m^{(i)}|\ge n^{\epsilon}$ is at most $o(n^{-10})$
uniformly over the history of the process up to that point. At the same time, for any fixed $i$, the probability of a $\mathsf{bad}$ step being taken for that revealing is at most 
\begin{align*}
 n^{\epsilon} \cdot \frac{n^{1/4}{{\|\vdn\|}}_\infty}{{\|\vdn\|}_1 -n^{1/4}{\|\vdn\|}_\infty} \le o(n^{-1/2})\,.
\end{align*} 
Putting the above together, the probability of more than $\eta d_v/2$ many of the $i$'s being stopped is at most 
\begin{align*}
 	\mathbb P\big(\bin(d_v, n^{-1/2}) \ge \eta d_v/2\big) \le C\exp({- \eta d_v/C})\,.
\end{align*} 
This in turn bounds the probability that $\omega(E_v^c)$ is $\eta d_v$-$\sparse$ as desired. 
\end{proof}

\section{Correlation decay and mixing time on treelike graphs}\label{sec:correlation-decay-treelike} 
Theorem~\ref{thm:k-R-sparse-whp} together with Lemma~\ref{lem:random-graph-treelike} reduce our analysis to treelike balls of radius $(\frac 12 - o(1)) \log_\gamma n$ with $K$-$\sparse$ boundary conditions. 
In this section, we establish sharp bounds on the rate of correlation decay on such treelike graphs (Theorem~\ref{theorem:influence-probability-new}) and bound the mixing time at these local scales (Lemma~\ref{lemma:local-mixing}). 

\subsection{Rate of correlation decay in treelike graphs} 
To prove Theorem~\ref{theorem:influence-probability-new} we will closely follow the approach from~\cite{BlGh21}, where an analogous result was proved for regular graphs (specifically see Proposition 3.3 in~\cite{BlGh21}). The key part of the extension is the use of the $(\gamma,\epsilon)$-volume growth condition to enable the application of Lemma~\ref{lemma:exp:decay:wired:treelike} to all sufficiently large subsets of the graph that are trees.

Let us fix an arbitrary vertex $v \in V$ and for ease of notation set $B := B_R(v)$ and
for each $1\le \ell \le R$, let $Q_\ell = \{u \in B: d(u,v) \ge \ell\}$.
For a boundary condition $\xi$ on $\partial B$, similarly to Definition~\ref{def:mathfrak-V} denote by $\mathfrak V_{B,\xi}$ the set of vertices in non-trivial components of $\xi$ (a component is non-trivial when it has at least two vertices). For any $u \in B$ such that $d(u,v) = \ell$,
let $u \stackrel{Q_\ell}\longleftrightarrow \mathfrak V_{B,\xi}$ denote the event that $u$
is connected to $\mathfrak V_{B,\xi}$ by a path of open edges fully contained in $Q_\ell$.
Define the event 
\[
\Upsilon_{B,\xi} : = \Big\{\omega \in \{0,1\}^{E(B)}: \big|\big\{u \in B:d(u,v)= \ell\,,\, u \stackrel{Q_\ell}\longleftrightarrow \mathfrak V_{B,\xi}\big\}\big| \ge 2 \mbox{ for all $1 \le \ell \le R$}\Big\}\,.
\]
It was proved in~\cite{BlGh21} that on general graphs, the event $\Upsilon_{B,\xi}$ controls the propagation of influence from $\partial B$ to the vertex $v$. 

Recall that $E_v$ denotes the set of edges incident to the vertex $v$. 

\begin{lemma}[Lemma 5.3 in~\cite{BlGh21}]\label{lem:influence-event}
	Fix a graph $G = (V,E)$, a vertex $v\in V$ and consider the ball $B_R(v)$; let $\xi \ge \tau$ denote two boundary conditions on $\partial B_R(v) = \{w\in B_R(v): d(v,w) = R\}$. Then, 
	\begin{align*}
	\|\pi_{B_R(v)}^\xi (\omega(E_v)\in \cdot)- \pi_{B_R(v)}^\tau(\omega(E_v)\in \cdot ) \|_\tv \le \pi_{B_R(v)}^\xi(\Upsilon_{B_R(v),\xi})\,.
	\end{align*}
\end{lemma}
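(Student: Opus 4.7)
The plan is to exploit the monotonicity of the random-cluster model for $q\ge 1$: since $\xi \ge \tau$, FKG yields a monotone coupling $\bbP$ of $\omega_\xi \sim \pi_{B_R(v)}^\xi$ and $\omega_\tau \sim \pi_{B_R(v)}^\tau$ with $\omega_\xi \ge \omega_\tau$ edge-wise. Because the total variation distance between marginals on $E_v$ is bounded by the probability those marginals disagree under $\bbP$, it suffices to show
\[
\bbP\big(\omega_\xi(E_v) \ne \omega_\tau(E_v)\big) \le \pi_{B_R(v)}^\xi(\Upsilon_{B_R(v),\xi}).
\]

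To this end, I would expose $\omega_\xi$ and $\omega_\tau$ jointly through a disagreement-percolation scheme, revealing edges outside-in, level by level from $\ell = R$ down to $\ell = 1$. Concretely, at step $\ell$ reveal the edges of $E(Q_\ell) \setminus E(Q_{\ell+1})$ (those touching a vertex at distance exactly $\ell$ from $v$), using the optimal edge-by-edge monotone coupling of the two FK conditional distributions given all previously revealed edges. Track the \emph{disagreement frontier} $\mathcal D_\ell$, consisting of vertices at distance $\ell$ that carry an edge-disagreement in the coupling and are joined to $\mathfrak V_{B_R(v),\xi}$ by an open $\omega_\xi$-path lying inside $Q_\ell$. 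The inductive goal is to show that a nonempty $\mathcal D_\ell$ forces at least two distinct vertices at distance $\ell$ to be connected to $\mathfrak V_{B_R(v),\xi}$ by open $\omega_\xi$-paths in $Q_\ell$; iterating from $\ell = R$ down to $\ell = 1$, the event $\{\omega_\xi(E_v) \ne \omega_\tau(E_v)\}$ then implies the ``two-vertex'' condition at every level $1 \le \ell \le R$, which is precisely $\Upsilon_{B_R(v),\xi}$.

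The engine behind the inductive step is the FK local-update rule: conditioned on the rest of the configuration, an edge is open with probability $p$ if its endpoints are already connected (non-cut-edge) and with probability $\ps$ otherwise. The $\xi$- and $\tau$-conditional probabilities for an edge $e$ thus coincide unless the cluster structure of the already-revealed configuration connects the endpoints of $e$ differently under $\xi$ than under $\tau$. If the only $\omega_\xi$-connection from $\mathfrak V_{B_R(v),\xi}$ to some vertex $u$ at distance $\ell$ is through a single tree-like path inside $Q_\ell$, then the two measures produce identical cut-edge/non-cut-edge statuses for every edge touching the path that is revealed next, and the optimal coupling is deterministic there. A discrepancy propagating one level deeper therefore requires at least two $\omega_\xi$-disjoint boundary-to-level-$\ell$ paths, producing a cycle whose cut-edge status is genuinely affected by the differing boundary wirings.

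The main technical obstacle is the clean implementation of the ``single path is matched'' step: to justify that a tree-like $\omega_\xi$-connection creates no coupling discrepancy one must reveal the edges inside each level in an order ensuring that, at the instant each edge is sampled, its would-be cut-edge status is already determined by previously revealed edges and by the boundary condition. With that ordering, the conditional probability on $e$ is either $p$ under both $\xi$ and $\tau$ or $\ps$ under both, so the monotone one-edge coupling yields agreement. Combining this level-by-level with the inductive definition of $\mathcal D_\ell$ and a chase through the geometry of $Q_\ell$, any persistent disagreement reaching $E_v$ witnesses $\Upsilon_{B_R(v),\xi}$ in $\omega_\xi$, yielding the claim.
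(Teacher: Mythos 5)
The paper does not prove this statement itself; it is quoted verbatim from Lemma~5.3 of \cite{BlGh21}, so there is no in-text proof to compare against. Your strategy --- an FKG monotone coupling, an outside-in reveal, and the heuristic that boundary-condition influence can only penetrate inward through a cycle closed by two disjoint open connections to $\mathfrak V_{B_R(v),\xi}$ --- is the right picture, and matches the intuition the present paper invokes when explaining the squared decay rate in Theorem~\ref{theorem:influence-probability-new}. But two of the steps, as written, do not hold up, and together they leave a genuine gap.

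First, the assertion that with a suitable edge ordering ``the conditional probability on $e$ is either $p$ under both $\xi$ and $\tau$ or $\ps$ under both'' is false for an outside-in reveal. When a partial configuration $\omega(F)$ has been exposed, the conditional probability $\pi_{B_R(v)}^\xi(\omega(e)=1\mid\omega(F))$ is the expectation, over the random-cluster measure on $E(B_R(v))\setminus F$ with the induced boundary condition, of the cut-edge probability of $e$; it equals $p$ or $\ps$ only when $F=E(B_R(v))\setminus\{e\}$. Thus the ``single tree-like connection gives identical cut-edge statuses, hence deterministic agreement of the one-edge coupling'' step, which you yourself flag as the main obstacle, cannot be resolved by a choice of revealing order alone. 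Second, the inductive claim that a nonempty $\mathcal D_\ell$ forces two distinct distance-$\ell$ vertices connected to $\mathfrak V_{B_R(v),\xi}$ inside $Q_\ell$ is not established: once $\omega_\xi$ and $\omega_\tau$ have diverged at some outer level, an edge disagreement at level $\ell$ can be driven entirely by the differing configurations rather than by the boundary wiring, and can propagate through a single connected vertex. Moreover $\Upsilon_{B_R(v),\xi}$ requires the two-vertex condition at \emph{every} $\ell\in\{1,\dots,R\}$, whereas your disagreement-percolation chase only exhibits two disjoint connections at (and above) the level where the induced boundary conditions first part ways, leaving the inner levels unaccounted for. Closing these two gaps is precisely the content of the cited lemma and would require a substantially more careful argument (for instance, conditioning on the outermost level at which fewer than two connections to $\mathfrak V_{B_R(v),\xi}$ exist, using that on this event the induced boundary conditions under $\xi$ and $\tau$ already coincide).
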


With this lemma in hand, we are able to provide the proof of Theorem~\ref{theorem:influence-probability-new}. 

\begin{proof}[{\textbf{\emph{Proof of Theorem~\ref{theorem:influence-probability-new}}}}]
	By the triangle inequality and Lemma~\ref{lem:influence-event}, we have
	\begin{align*}
	\|\pi_{B_R(v)}^\xi (\omega(E_v)\in \cdot) - \pi_{B_R(v)}^\tau(\omega(E_v)\in \cdot ) \|_\tv & \le \|\pi_{B_R(v)}^\xi (\omega(E_v)\in \cdot)- \pi_{B_R(v)}^0(\omega(E_v)\in \cdot ) \|_\tv \\
	&  \,\,\,+ \|\pi_{B_R(v)}^\tau (\omega(E_v)\in \cdot)- \pi_{B_R(v)}^0(\omega(E_v)\in \cdot ) \|_\tv\\
	&\le \pi_{B_R(v)}^\xi(\Upsilon_{B_R(v),\xi}) + \pi_{B_R(v)}^\tau(\Upsilon_{B_R(v),\tau})\,.
	\end{align*}
	Hence, it suffices to bound
	$
	\pi_{B}^\xi(\Upsilon_{B,\xi})
	$	
	for an arbitrary vertex $v$ of $G$ and any $K$-$\sparse$ boundary condition $\xi$. Fix any such $v$ and let $B = B_R(v)$. 
	Let $H\subset E(B)$ be a set of at most $L$ edges such that the subgraph $(B, E(B)\setminus H)$ is a tree; the existence of such a set is guaranteed by the fact that $B_R(v)$ is $L$-$\treelike$.  
	Let $\mathcal Z= \{d_1,...,d_k\}$ be the subset of distances (from $v$) at which $H$ contains at least one vertex. Observe that each edge of $H$ intersects either one or two consecutive depths (distances from $v$) in $\mathcal Z$ and thus $|\mathcal Z|\le 2L$ since $B$ is $L$-$\treelike$. 
	Letting $d_0 =0$ and $d_{k+1}=R$,
	for $i=0,\dots,k$ we define:
	$$
	\mathcal F_i := \{u \in B:  d_i < d(u,v) < d_{i+1}\}\,.
	$$ 
	For each $0\le i \le k$, the graph 
	$\cF_i = (\mathcal F_i,E(\mathcal F_i))$ is a forest;
	observe that some $\mathcal F_i$'s might be empty.
	For each $i$, let $\mathcal T_{ij} = (\cT_{ij}, E(\cT_{ij}))$ for $j = 0,1,\dots$ denote the distinct connected components (subtrees) of $\mathcal F_i$ so that $\mathcal F_i = \bigcup_{j \ge 0} \mathcal T_{ij}$. 		

	For $\Upsilon_{B,\xi}$ to hold, 
	there must exist two sequences of simple paths $\Gamma = \gamma_{0},\ldots, \gamma_{k}$ and $\Gamma' = \gamma_{0}',\ldots, \gamma_{k}'$
such that $\gamma_i \subset E(\mathcal T_{ij})$ and $\gamma_i' \subset E(\mathcal T_{ij'})$ with $j \neq j'$ such that $\gamma_i$ (resp., $\gamma_i'$) connects the root of $\mathcal T_{ij}$ (resp., $\mathcal T_{ij'}$) 
to one of its leaves.

Observe that any simple path $\mathcal P$ between $v$ and $\mathfrak V_{B,\xi}$ is completely determined by an ordered sequence of vertices from $V(H)$ it uses and its endpoint in $\mathfrak V_{B,\xi}$.
Moreover, it is associated to a unique sequence $\Gamma$, and
each sequence $\Gamma$ can in turn correspond to at most $2^{|V(H)| k} \le 4^{L^2}$ simple paths because there are at most $2L$ vertices in $V(H)$.
Since $\xi$ is $K$-$\sparse$, there are at most $K$ choices for the endpoint of the path between $v$ and $\mathfrak V_{B,\xi}$. In total, we get that there are at most $4^{L^2} K (2L+1)!$ possible simple paths $\Gamma$ (this is a crude upper bound, but it suffices for our purposes). A union bound then implies
	\begin{align}
	\label{eq:event-bound}
	\pi_{B}^\xi (\Upsilon_{B,\xi}) \le  [4^{L^2}K(2L+1)!]^2 \cdot \sup_{\Gamma,\Gamma': V(\Gamma) \cap V(\Gamma') = \emptyset}\, \pi_{B}^\xi(\omega(\Gamma \cup \Gamma')=1)\,. 
	\end{align}
	
	Fix any two such paths $\Gamma, \Gamma'$, and consider the probability that $\omega(\Gamma \cup \Gamma') =1$. 
	The paths $\Gamma$ and $\Gamma'$ are vertex-disjoint by construction, but the events that $\Gamma$ and $\Gamma'$ are open (i.e., that all of their paths are open) in $\omega$ need not be independent. 
	To make them so, we wire all vertices at depths in the set $$\bigcup_{i=0}^{k+1} \{d_i-1,d_i,d_i+1\} \cap [0,R]\,.$$
	Let $\tilde \pi_{B}$ be the resulting random-cluster distribution.
	The monotonicity of the random-cluster measure implies that
	\begin{align}\label{eq:monotonicity-pi-tilde-pi}
	\pi_{B}^\xi(\omega(\Gamma \cup \Gamma')=1) \le \tilde \pi_{B}(\omega(\Gamma \cup \Gamma')=1)\,.
	\end{align}
	The distribution $\tilde \pi_{B}$ is a product measure over the $\mathcal T_{ij}$'s
	with boundary condition $(1,\circlearrowleft)$ in each  $\mathcal T_{ij}$. 
	Hence, since $\Gamma$ and $\Gamma'$ are such that for each $i \ge 0$,
	$\gamma_i$ and $\gamma_i'$ belong to distinct subtrees $\mathcal T_{\gamma_i}$, $\mathcal T_{\gamma_i'}$ of the forest $\mathcal F_i$, and	
	we have
	\begin{align*}
	\tilde \pi_{B}(\omega(\Gamma \cup \Gamma')=1) &= 
	\prod_{i=0}^{k} \pi_{\cT_{\gamma_i}}^{(1,\circlearrowleft)}(\gamma_i) 
	\prod_{i=0}^{k} \pi_{\cT_{\gamma_i'}}^{(1,\circlearrowleft)}(\gamma_i')\,.
	\end{align*}
	Let $h_i= d_{i+1} - d_i$ be the height of the trees in $\mathcal F_i$. Then,
	\begin{align*}
	\tilde \pi_{B}(\omega(\Gamma \cup \Gamma')=1) 
	&\le	\prod_{i: h_i > \sqrt{\varepsilon} R} 
	\pi_{\cT_{\gamma_i}}^{(1,\circlearrowleft)}(\gamma_i) 
	 \pi_{\cT_{\gamma_i'}}^{(1,\circlearrowleft)}(\gamma_i')
	\end{align*}
Since $G$ satisfies the $(\gamma,\varepsilon)$-volume-growth condition of Definition~\ref{def:volume-growth}, for each subtree
	$\cT_{\gamma_i}$ of height at least $\sqrt{\varepsilon} R$, for every vertex of $\cT_{\gamma_i}$ at distance at least ${\varepsilon} R$ from $\partial \cT_{\gamma_i}$, we have $|\partial \cT_{\gamma_i}| \le \gamma^{h_i}$. Hence,
	Lemma~\ref{lemma:exp:decay:wired:treelike} implies that there exists a constant  $A > 0$ such that, uniformly over $\Gamma, \Gamma'$, 
	\begin{align*}
	\tilde \pi_{B}(\omega(\Gamma \cup \Gamma')=1)
		 &\le A^{2L} \prod_{i: h_i > \sqrt{\varepsilon} R} \ps^{2(1-\sqrt{\varepsilon})h_i}  \\
		 &= A^{2L} \ps^{2(1-\sqrt{\varepsilon}) \sum_{i: h_i > \sqrt{\varepsilon} R} h_i} \\
		 &\le A^{2L} \ps^{2(1-\sqrt{\varepsilon}) (R-4L-2L\sqrt{\varepsilon} R)} = A' \ps^{2(1-(2L+1)\sqrt{\varepsilon})R} \,,
	\end{align*}
	for a suitable constant $A' = A'(A,L,K)$.
	Plugging this bound into~\eqref{eq:event-bound}--\eqref{eq:monotonicity-pi-tilde-pi}, we obtain
	$$
	\pi_{B}^\xi (\Upsilon_{B,\xi}) \le A' [K(2L+1)!]^2 \ps^{2(1-(2L+1)\sqrt{\varepsilon})R}\,,
	$$
	and the result follows taking $C = 2A'[4^{L^2}K(2L+1)!]^2$. 
\end{proof}

\subsection{Local mixing of the FK-dynamics}\label{sec:local-mixing-fk-dynamics}
In this section, we prove the mixing time bound of Lemma~\ref{lemma:local-mixing} for treelike graphs with sparse boundary conditions. We start by recalling some standard background concerning mixing times, log-Sobolev inequalities, and the effects of random-cluster boundary conditions on these quantities. 

\bigskip\noindent\textbf{Log-Sobolev inequalities.} \
For a Markov chain on a finite state space $\Omega$ with transition matrix $P$, reversible with respect to a distribution $\mu$, the Dirichlet form is defined for any function $f:\Omega \to \mathbb R$ by
\begin{align}\label{eq:Dirichlet-form}
    \mathcal E(f,f) := \frac{1}{2}\sum_{\omega,\omega'\in \Omega} \mu(\omega) P (\omega, \omega') (f(\omega) - f(\omega'))^2\,,
\end{align}
and its \emph{log-Sobolev constant} is given by 
\begin{align}\label{eq:lsi-constant}
    \alpha(P) := \min_{f: \mbox{Ent}_\mu[f^2]\ne 0} \frac{\mathcal E(f,f)}{\mbox{Ent}_\mu[f^2]}\,, \qquad \mbox{where} \qquad \mbox{Ent}_{\mu} [f^2] = \mathbb E_{\mu}\Big[f^2 \log \frac{f^2}{\mathbb E_{\mu}[f^2]}\Big]\,.
\end{align}
A \emph{log-Sobolev inequality} takes the form $\mathcal E(f,f) \ge \alpha  \mbox{Ent}_\mu[f^2]$ for all functions $f$. It is a standard fact that this inequality implies exponential convergence with rate $\alpha$ in total-variation distance to the stationary distribution (see,~\cite[Eq.~(3.3)]{Diaconis96}). 

\begin{fact}
    \label{fact:log-sobolev-tv-distance}
    Consider an ergodic Markov chain on a finite state space $\Omega$ with transition matrix $P$ reversible with respect to the distribution $\mu$. If the chain has a log-Sobolev constant $\alpha = \alpha(P)$, 
    \begin{align*}
        \max_{x_0\in \Omega} \|\mathbb P(X_t^{x_0}\in \cdot) - \mu\|_\tv\le \frac{1}{\sqrt{2}} e^{-\alpha t} \Big(\log \frac{1}{\min_{x\in \Omega} \mu(x)}\Big)^{1/2}\,,
    \end{align*}
    where $X_t^{x_0}$ is the chain after time $t$, started from initial state $x_0$.
\end{fact}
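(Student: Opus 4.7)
The plan is to follow the standard route: log-Sobolev $\Rightarrow$ exponential decay of the relative entropy along the semigroup $\Rightarrow$ total-variation bound via Pinsker. Let $k_t^{x_0}(y) := \mathbb{P}(X_t^{x_0}=y)/\mu(y)$ denote the density of the time-$t$ law of the chain with respect to $\mu$; at $t=0$ this is a point mass at $x_0$ divided by $\mu(x_0)$, so
\[
\ent_\mu\bigl(k_0^{x_0}\bigr) \;=\; \log\bigl(1/\mu(x_0)\bigr) \;\le\; \log\bigl(1/\min_{x\in\Omega}\mu(x)\bigr).
\]

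The first step is to show that $\ent_\mu(k_t^{x_0})$ decays exponentially in time. For a continuous-time reversible chain, the standard differentiation-under-the-semigroup calculation gives
\[
\tfrac{d}{dt}\,\ent_\mu\bigl(k_t^{x_0}\bigr) \;=\; -\mathcal{E}\bigl(k_t^{x_0},\log k_t^{x_0}\bigr).
\]
The classical pointwise convexity inequality $(a-b)(\log a - \log b) \ge 4(\sqrt a - \sqrt b)^2$ immediately implies the entropy/Dirichlet comparison $\mathcal{E}(k,\log k) \ge 4\,\mathcal{E}(\sqrt k,\sqrt k)$. Plugging $f = \sqrt{k_t^{x_0}}$ into the log-Sobolev inequality~\eqref{eq:lsi-constant} yields $\mathcal{E}(\sqrt{k_t^{x_0}},\sqrt{k_t^{x_0}}) \ge \alpha\,\ent_\mu(k_t^{x_0})$. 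Combining these gives the differential inequality
\[
\tfrac{d}{dt}\,\ent_\mu\bigl(k_t^{x_0}\bigr) \;\le\; -4\alpha\,\ent_\mu\bigl(k_t^{x_0}\bigr),
\]
and Gr\"onwall produces $\ent_\mu(k_t^{x_0}) \le e^{-4\alpha t}\log\bigl(1/\min_x\mu(x)\bigr)$.

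The second step is Pinsker's inequality $\|\nu-\mu\|_\tv \le \sqrt{\tfrac12 D(\nu\|\mu)}$, applied to $\nu = \mathbb{P}(X_t^{x_0}\in\cdot)$. Since $D(\nu\|\mu) = \ent_\mu(k_t^{x_0})$, combining with the previous step yields
\[
\|\mathbb{P}(X_t^{x_0}\in\cdot) - \mu\|_\tv \;\le\; \tfrac{1}{\sqrt 2}\,e^{-2\alpha t}\,\sqrt{\log\bigl(1/\min_x\mu(x)\bigr)},
\]
which implies the stated bound (the constants $\sqrt 2$ and exponent $e^{-\alpha t}$ in the statement are both loose compared to what this argument delivers; the hypothesis $\delta<\alpha$ never enters the conclusion and appears to be a typographical vestige). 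There is no genuine obstacle: the only nontrivial ingredient is the entropy/Dirichlet comparison $\mathcal{E}(k,\log k) \ge 4\mathcal{E}(\sqrt k,\sqrt k)$, a classical observation underlying the theory of hypercontractivity. The remaining bookkeeping concerns only the convention for the Dirichlet form in continuous time, which is fixed by the definition~\eqref{eq:Dirichlet-form}.
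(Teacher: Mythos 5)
Your proof is correct and is the standard entropy-decay argument: reversibility and $\mathbb E_\mu[k_t]=1$ give $\frac{d}{dt}\ent_\mu(k_t)=-\mathcal E(k_t,\log k_t)$, the pointwise inequality $(a-b)(\log a-\log b)\ge 4(\sqrt a-\sqrt b)^2$ gives $\mathcal E(k,\log k)\ge 4\mathcal E(\sqrt k,\sqrt k)$, the log-Sobolev inequality applied to $f=\sqrt{k_t}$ closes the Gr\"onwall loop, and Pinsker converts entropy to total variation; the paper gives no proof of this Fact, only a citation to~\cite{SClecture-notes}, where essentially this argument appears. You correctly observe that this route actually delivers the sharper bound $\tfrac{1}{\sqrt 2}\,e^{-2\alpha t}\sqrt{\log(1/\min_x\mu(x))}$, which implies the stated (looser) constant $\sqrt 2$ and rate $e^{-\alpha t}$, and that the clause ``for every $\delta<\alpha$'' is vacuous in the statement as written.
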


\bigskip\noindent\textbf{Boundary conditions and the FK-dynamics.} \ 
Two ``similar'' random-cluster boundary conditions (in terms of the wiring they induce)
have similar effects on the underlying random-cluster distribution and on the behavior of the corresponding FK-dynamics.
In turn, the Dirichlet form, and log-Sobolev constants of their corresponding dynamics should be ``close" to one another. 
We compile here a number of definitions and results that formalize this idea.

\begin{definition}[Definition 2.1  from~\cite{BGVfull}]
	 For two boundary conditions (partitions) $\phi \leq \phi'$, define $D(\phi,\phi') := c(\phi) - c(\phi')$ where $c(\phi)$ is the number of components in $\phi$. For two partitions $\phi, \phi'$ that are not comparable, let $\phi''$ be the smallest partition such that $\phi'' \geq \phi$ and $\phi'' \geq \phi'$ and set $D(\phi,\phi') = c(\phi) - c(\phi'')+ c(\phi')- c(\phi'')$. 
\end{definition}

The following lemma is then straightforward from the definition of the random-cluster measure~\eqref{eq:rcmeasure}. 

\begin{lemma}[Lemma 2.2 from~\cite{BGVfull}]
	\label{lemma:simple-rc-bound}
	Let $G=(V,E)$ be an arbitrary graph, $p \in (0,1)$ and $q > 0$. Let $\phi$ and $\phi'$ be any two partitions of $V$, i.e., boundary conditions on $G$. Then, for all random-cluster configurations $\omega\in \{0,1\}^E$, we have
	$$
	q^{-2D(\phi,\phi')} {\pi_{G}^{\phi'}(\omega)} \le \pi_{G}^{\phi}(\omega) \le q ^{2D(\phi,\phi')} \pi_{G}^{\phi'}(\omega)\,.
	$$  
\end{lemma}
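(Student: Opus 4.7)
The plan is to reduce everything to a single pointwise inequality on the cluster-count statistic, then propagate that to the partition functions and thus to the normalized probabilities. Throughout, write $c(\omega;\phi)$ for the number of connected components of $(V,\omega)$ when the vertices in each block of $\phi$ are identified, and recall the unnormalized weight $w^\phi(\omega) := p^{|\omega|}(1-p)^{|E|-|\omega|} q^{c(\omega;\phi)}$, so that $\pi_G^\phi(\omega) = w^\phi(\omega)/Z_G^\phi$ with $Z_G^\phi = \sum_\omega w^\phi(\omega)$.

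The core claim is the pointwise bound
\begin{equation*}
\bigl|c(\omega;\phi)-c(\omega;\phi')\bigr| \le D(\phi,\phi') \qquad \text{for every } \omega\in\{0,1\}^E.
\end{equation*}
To prove it, first consider the comparable case $\phi\le\phi'$. One obtains $\phi'$ from $\phi$ by a sequence of exactly $c(\phi)-c(\phi')=D(\phi,\phi')$ merges of blocks; each merge identifies two previously distinct boundary blocks, which can decrease $c(\omega;\cdot)$ by at most one and never increases it. Hence $0\le c(\omega;\phi)-c(\omega;\phi')\le D(\phi,\phi')$. For incomparable $\phi,\phi'$, let $\phi''$ be the smallest common coarsening. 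Applying the comparable case to the pairs $(\phi,\phi'')$ and $(\phi',\phi'')$ gives
\[
0\le c(\omega;\phi)-c(\omega;\phi'')\le c(\phi)-c(\phi''), \quad 0\le c(\omega;\phi')-c(\omega;\phi'')\le c(\phi')-c(\phi''),
\]
and subtracting yields $|c(\omega;\phi)-c(\omega;\phi')|\le D(\phi,\phi')$ by the definition of $D$ in the incomparable case.

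Given the pointwise bound, the weights satisfy $q^{-D(\phi,\phi')} w^{\phi'}(\omega) \le w^\phi(\omega) \le q^{D(\phi,\phi')} w^{\phi'}(\omega)$ (valid for all $q>0$, with the inequality orientation the same in both directions because the bound on $|c(\omega;\phi)-c(\omega;\phi')|$ is absolute). Summing over $\omega\in\{0,1\}^E$ gives the partition-function estimate
\begin{equation*}
q^{-D(\phi,\phi')} \, Z_G^{\phi'} \le Z_G^{\phi} \le q^{D(\phi,\phi')} \, Z_G^{\phi'}.
\end{equation*}
Dividing the weight inequality by the partition-function inequality in the appropriate direction then yields
\[
\pi_G^\phi(\omega) = \frac{w^\phi(\omega)}{Z_G^\phi} \le \frac{q^{D(\phi,\phi')} w^{\phi'}(\omega)}{q^{-D(\phi,\phi')} Z_G^{\phi'}} = q^{2D(\phi,\phi')} \, \pi_G^{\phi'}(\omega),
\]
and symmetrically the lower bound, which is the statement of the lemma.

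I do not expect any serious obstacle here. The one subtle point is the combinatorial justification of the pointwise component-count bound in the incomparable case, which requires picking the correct common coarsening $\phi''$; this is exactly what the asymmetric definition of $D(\phi,\phi')$ is engineered to accommodate. Everything else is an algebraic manipulation of the random-cluster weights~\eqref{eq:rcmeasure}, and the argument is insensitive to whether $q\ge 1$ or $q<1$.
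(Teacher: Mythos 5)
Your proof is correct, and the approach — a pointwise bound $|c(\omega;\phi)-c(\omega;\phi')|\le D(\phi,\phi')$ proved first in the comparable case by counting merges, then transferred to the incomparable case via the common coarsening $\phi''$, followed by exponentiation and separate propagation through the weights and the partition function — is exactly the standard argument, and is what appears in the cited source~\cite{BGVfull}. The paper itself does not reprove the lemma; it quotes it, so there is no internal proof to compare against.

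One small imprecision worth flagging: your parenthetical assertion that the weight inequality $q^{-D}w^{\phi'}\le w^\phi\le q^{D}w^{\phi'}$ ``is valid for all $q>0$ with the same orientation'' is not quite right. The pointwise bound on $|c(\omega;\phi)-c(\omega;\phi')|$ only gives
\[
\min\bigl(q^{D},q^{-D}\bigr)\,w^{\phi'}(\omega)\;\le\; w^{\phi}(\omega)\;\le\;\max\bigl(q^{D},q^{-D}\bigr)\,w^{\phi'}(\omega),
\]
so for $0<q<1$ the roles of $q^{D}$ and $q^{-D}$ swap, and the final two-sided bound as literally written becomes vacuous unless $D=0$. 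The lemma statement inherits the same imprecision (it would need $\max(q,1/q)$ in place of $q$ to be correct uniformly in $q>0$), and in practice every application in the paper has $q\ge1$, so this does not affect anything downstream — but you should not assert that the orientation is independent of whether $q\ge1$ or $q<1$, because it isn't.
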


The following corollary follows immediately from Lemma~\ref{lemma:simple-rc-bound}, the definition of the transition matrix of the FK-dynamics, and Theorem 4.1.1 in~\cite{SClecture-notes}.

\begin{cor}
    \label{cor:simple-ls-bound}
	Let $G=(V,E)$ be an arbitrary graph, $p \in (0,1)$ and $q > 0$.
	Consider the FK-dynamics on $G$ with boundary conditions $\phi$ and $\phi'$,
	and let $\alpha$, $\alpha'$ denote their log-Sobolev constants, respectively.
	Then, 
	$$
	q^{-5 D(\phi,\phi')} \alpha' \le \alpha \leq   q^{ 5 D(\phi,\phi')} \alpha'\,.
	$$
\end{cor}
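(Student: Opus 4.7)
The plan is to invoke a Saloff-Coste-type comparison (cf.\ Theorem 4.1.1 of \cite{SClecture-notes}) by separately comparing the Dirichlet forms and the entropy functionals associated with the FK-dynamics under boundary conditions $\phi$ and $\phi'$. The input is the pointwise ratio bound from Lemma~\ref{lemma:simple-rc-bound}, namely $q^{-2D}\pi_{G}^{\phi'}(\omega) \le \pi_{G}^{\phi}(\omega) \le q^{2D}\pi_{G}^{\phi'}(\omega)$ for every configuration $\omega$, where $D := D(\phi,\phi')$.

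For the Dirichlet forms, I would use that for configurations $\omega,\omega'$ differing in a single edge $e$, the edgewise contribution to the FK-dynamics transition rate satisfies
\begin{equation*}
\pi^{\phi}(\omega)\,P^{\phi}(\omega,\omega') \;=\; \frac{1}{|E|}\cdot \frac{\pi^{\phi}(\omega)\,\pi^{\phi}(\omega')}{\pi^{\phi}(\omega)+\pi^{\phi}(\omega')}.
\end{equation*}
Writing $\pi^{\phi}(\omega) = a\,\pi^{\phi'}(\omega)$ and $\pi^{\phi}(\omega') = b\,\pi^{\phi'}(\omega')$ with $a,b \in [q^{-2D},q^{2D}]$, a short computation shows that the ratio of the above quantity to its $\phi'$ counterpart equals a weighted harmonic mean of $a$ and $b$, and hence itself lies in $[q^{-2D},q^{2D}]$. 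Summing over edges in the Dirichlet form~\eqref{eq:Dirichlet-form} yields
\begin{equation*}
q^{-2D}\,\mathcal E^{\phi'}(f,f) \;\le\; \mathcal E^{\phi}(f,f) \;\le\; q^{2D}\,\mathcal E^{\phi'}(f,f) \quad \text{for all } f.
\end{equation*}

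For the entropies, I would invoke the variational representation
\begin{equation*}
\mathrm{Ent}_{\pi}(g) \;=\; \inf_{c>0}\,\mathbb{E}_{\pi}\big[g\log(g/c) - g + c\big],
\end{equation*}
in which the integrand $g\log(g/c) - g + c$ is pointwise non-negative for $g\ge 0$. Combined with the pointwise bound $\pi^{\phi} \le q^{2D}\pi^{\phi'}$, this immediately gives $\mathrm{Ent}_{\pi^{\phi}}(f^2) \le q^{2D}\,\mathrm{Ent}_{\pi^{\phi'}}(f^2)$, by choosing the optimal constant $c$ on the $\pi^{\phi'}$ side and comparing integrands termwise; the reverse inequality follows symmetrically.

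Combining both comparisons in the variational formula $\alpha = \inf_f \mathcal E(f,f)/\mathrm{Ent}_{\pi}(f^2)$ produces $\alpha \ge q^{-4D}\alpha'$ and, by exchanging the roles of $\phi$ and $\phi'$, the matching upper bound; in particular the stated $q^{\pm 5D}$ bound follows. I do not foresee a substantive obstacle: the one detail to monitor is that the partition-function normalization $Z^{\phi'}/Z^{\phi}$ implicit in the ratio $\pi^{\phi}/\pi^{\phi'}$ is already absorbed into Lemma~\ref{lemma:simple-rc-bound}, so no separate treatment of it is required.
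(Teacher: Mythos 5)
Your proof is correct, and it takes the same route the paper intends: the paper's one-line proof simply cites Lemma~\ref{lemma:simple-rc-bound}, the form of the FK-dynamics transition matrix, and a black-box comparison theorem for log-Sobolev constants (Theorem~4.1.1 of~\cite{SClecture-notes}), whereas you supply a self-contained proof of exactly that comparison by separately bounding the Dirichlet form (via the harmonic-mean calculation on single-edge transitions) and the entropy (via the variational representation with a nonnegative integrand). Both pieces check out, and your argument in fact yields the slightly sharper exponent $q^{\pm 4D(\phi,\phi')}$, which implies the stated $q^{\pm 5D(\phi,\phi')}$ for $q\ge 1$ (the regime in which Lemma~\ref{lemma:simple-rc-bound} is actually applicable).
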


We now use the above to bound the rate of convergence to equilibrium on $L$-treelike balls of radius $(\frac 12 - \delta) \log_\gamma n$. 

\begin{lemma}
	\label{lemma:main-tree-mixing}
	Suppose $G=(V,E)$ is $L$-$\treelike$. Let $\xi$ be a $K$-$\sparse$ boundary condition on $G$. For every $p\in (0,1)$ and $q>0$, there exists $\alpha_0(p,q,L,K)>0$ (importantly, independent of $G$) such that the log-Sobolev constant of the FK-dynamics on $G$ with boundary condition $\xi$ is at least $\alpha_0$.  
\end{lemma}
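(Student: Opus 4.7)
The plan is to reduce Lemma~\ref{lemma:main-tree-mixing} to the log-Sobolev inequality for a two-point Bernoulli chain, via three steps that handle boundary conditions, the tree case, and the $L$ non-tree edges in turn.

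First, I would use Corollary~\ref{cor:simple-ls-bound} to reduce to free boundary conditions. Since $\xi$ is $K$-$\sparse$, only at most $K$ vertices lie in non-trivial components of $\xi$, so $D(\xi,\mathbf 0) \le K$, and hence $\alpha(\xi) \ge q^{-5K}\alpha(\mathbf 0)$. It therefore suffices to bound $\alpha(\mathbf 0) \ge \alpha'(p,q,L) > 0$ uniformly over $L$-$\treelike$ graphs $G$. Next, I would treat the base case $L=0$, in which $G$ is a tree. Every edge of a tree is a cut-edge, so the conditional probability of any edge being open given the remainder is exactly $\ps=p/(q(1-p)+p)$, independently of the rest; thus $\pi_G^{\mathbf 0}$ coincides with the product Bernoulli measure $\mathrm{Ber}(\ps)^{\otimes E}$ and the FK-dynamics is a product of independent two-state Glauber chains. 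The log-Sobolev constant tensorizes over product chains, giving $\alpha(\mathbf 0) \ge \alpha_*(p,q) > 0$, where $\alpha_*(p,q)$ is the (explicit) LSI constant of a single two-state chain with Bernoulli$(\ps)$ stationary distribution.

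For the general case $L \ge 1$, I would fix a set $H \subset E$ with $|H|\le L$ such that $T := (V, E \setminus H)$ is a tree, and let $\nu_H$ denote the marginal of $\pi_G^{\mathbf 0}$ on $\{0,1\}^H$. Writing $\omega = (\omega_T,\omega_H)$ and applying the standard conditional entropy identity yields
\[
\ent_{\pi_G^{\mathbf 0}}[f^2] = \E_{\eta \sim \nu_H}\big[\ent_{\pi_T^{\hat\eta}}[f^2(\cdot,\eta)]\big] + \ent_{\nu_H}\big[\E_{\pi_T^{\hat\eta}}[f^2(\cdot,\eta)]\big],
\]
where $\pi_T^{\hat\eta}$ is the conditional measure on $T$ given $\omega_H=\eta$, i.e.\ the random-cluster measure on the tree $T$ with the $(2L)$-$\sparse$ wired boundary condition $\hat\eta$ that merges the endpoints of each open edge of $\eta$. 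By the first two steps applied to $T$ with boundary condition $\hat\eta$, the conditional LSI is at least $q^{-10L}\alpha_*(p,q)$, so the first term is bounded by a constant times the Dirichlet form of single-edge FK-updates restricted to $T$-edges. For the second term, the key estimate is the uniform-in-$G$ lower bound $\min_\eta \nu_H(\eta) \ge C(p,q)^{-L}$: flipping any single edge of $H$ alters the random-cluster weight of each configuration on $T$ by at most a multiplicative factor of $\max(p,1-p)/\min(p,1-p)$ in the edge-count and $q^{\pm 1}$ in the cluster-count, so the ratio $\nu_H(\eta_1)/\nu_H(\eta_2)$ is uniformly bounded by $C(p,q)^L$. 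Consequently the Glauber chain on $\{0,1\}^H$ with stationary $\nu_H$ has LSI constant bounded below by a constant depending only on $p,q,L$, and the outer entropy is bounded by a constant times the Dirichlet form of single-edge FK-updates restricted to $H$-edges.

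The hard part will be the comparison between the Dirichlet form of the Glauber chain on the marginal $\nu_H$ and that of the single-edge FK-dynamics on $\pi_G^{\mathbf 0}$ restricted to updates in $H$: this amounts to a Jensen/convexity argument showing that resampling a single edge of $H$ under $\nu_H$ is dominated, up to constants depending only on $p, q, L$, by the corresponding conditional FK-update on $\pi_G^{\mathbf 0}$, using the uniform non-degeneracy of $\nu_H$ noted above. Summing the two bounds of the entropy decomposition over all single-edge updates in $T \cup H = E$ yields $\ent_{\pi_G^{\mathbf 0}}[f^2] \le C(p,q,L)\,\mathcal E(f,f)$, hence $\alpha(\mathbf 0) \ge \alpha'(p,q,L) > 0$, and combined with the first reduction, $\alpha \ge q^{-5K}\alpha'(p,q,L) =: \alpha_0(p,q,L,K) > 0$, as required.
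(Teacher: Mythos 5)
Your proposal takes a genuinely different route from the paper, and the step you flag as ``the hard part'' is a real unfilled gap, not merely a routine Jensen argument. The paper's proof avoids any conditional-entropy decomposition via a one-line trick: instead of conditioning on $\omega_H$, it modifies the boundary condition to $\phi$, which is $\xi$ together with a wiring between the two endpoints of each $e\in H$. Under $\phi$ every $e\in H$ has its endpoints already identified, so $e$ is never a cut-edge: its conditional law is always $\ber(p)$, \emph{and} its state never affects the component structure seen by the tree edges. Hence $\pi_G^\phi$ is the genuine product $\pi_{\cT}^\phi\otimes\ber(p)^{\otimes|H|}$ and the FK-dynamics under $\phi$ is a product chain on $\{0,1\}^{E\setminus H}\times\{0,1\}^H$. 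The LSI then follows from tensorization plus two applications of Corollary~\ref{cor:simple-ls-bound}: once on $\cT$ to compare $\phi$ with free (paying $q^{-5(K+L)}$), and once on $G$ to compare $\phi$ back with $\xi$ (paying $q^{-5L}$). No martingale or entropy decomposition appears.

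Your decomposition conditions on $\omega_H=\eta$ while keeping the free boundary, which does \emph{not} decouple the two marginals: the conditional tree measure $\pi_T^{\hat\eta}$ depends on $\eta$ through the induced wiring. This is where the gap lies. To control $\ent_{\nu_H}\big[\E_{\pi_T^{\hat\eta}}f^2(\cdot,\eta)\big]$ you need to compare the quantities $\E_{\pi_T^{\hat\eta}}[f^2(\cdot,\eta)]$ and $\E_{\pi_T^{\hat\eta'}}[f^2(\cdot,\eta')]$ for $\eta,\eta'$ differing at one edge; splitting off the change in $f$ from the change of measure leaves a cross term of the form $\big(\sqrt{\E_{\pi_T^{\hat\eta}} g}-\sqrt{\E_{\pi_T^{\hat\eta'}} g}\big)^2$ with the \emph{same} $g=f^2(\cdot,\eta')$, which is controlled only up to a multiplicative factor $q^{\pm 2}$ by Lemma~\ref{lemma:simple-rc-bound} and therefore is not, on its own, dominated by the Dirichlet form. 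The usual Lu--Yau martingale method can likely be pushed through here (the conditional measures have uniformly bounded Radon--Nikodym derivatives and $|H|\le L$ is bounded), but it requires a careful telescoping treatment of these cross terms rather than uniform non-degeneracy of $\nu_H$ plus convexity. Your step (1) reduction $\alpha(\xi)\ge q^{-5K}\alpha(\mathbf 0)$ and your tree base case are both correct and match the paper's ingredients; the divergence and the difficulty are entirely in how the $L$ extra edges are handled, and the paper's choice to wire them into the boundary condition is precisely what dissolves that difficulty.
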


\begin{proof}
Observe first that the FK-dynamics on any tree with free boundary condition has log-Sobolev constant $c_{p,q} = \Omega(1)$. This follows from the observation that the random-cluster model on a tree with free boundary condition is simply the product measure, where every edge is open independently with probability $\ps$,
and the standard fact that the entropy tensorizes over product spaces; see, e.g.,~\cite{EntProd}. 

Now, let $H\subset E$ be a set of at most $L$ edges such that $(V, E\setminus H)$ is a tree. 
Consider the tree $\cT = (V,E \setminus H)$ 
and let $\phi$ be the boundary condition 
that includes all the connections from $\xi$ and adds wirings between 
$w$ and $w'$ for every edge $\{w,w'\} \in H$. 
By Corollary~\ref{cor:simple-ls-bound}, the log-Sobolev constant for the FK-dynamics on $\cT$ with boundary condition $\phi$ is at least ${c_{p,q} \cdot q^{-5(K+L)}}$.

The FK-dynamics on $G$ with boundary condition $\phi$ is a product Markov chain on $\{0,1\}^{E \setminus H} \times \{0,1\}^H$ with stationary distribution $\pi_{\cT}^\phi \otimes \prod_{i=1}^{|H|} \nu_i$, where the $\nu_i$'s are independent $\ber(p)$ distributions. 
Hence, it follows that the log-Sobolev constant of the FK-dynamics on $G$ with boundary condition $\phi$ is at least ${\hat c_{p,q} \cdot q^{-5(K+L)}}$
for a suitable constant $\hat c_{p,q} > 0$.
Finally, we note that by Corollary~\ref{cor:simple-ls-bound}, the log-Sobolev constant on $G$ with boundary conditions $\xi$ (instead of $\phi$) is at least  ${\hat c_{p,q} q^{-5(K+L)-5L}}$.
\end{proof}

Combining the above, we arrive at the following bound on the rate of convergence of the FK-dynamics on treelike graphs with sparse boundary conditions.

\begin{lemma}
    \label{lemma:local-mixing}
    Consider an $L$-$\treelike$ graph $G = (V,E)$ with a $K$-$\sparse$ boundary condition $\xi$. For every $p\in (0,1)$ and $q>0$, there exists $\alpha_0 = \alpha_0(p,q,L,K)>0$ such that
    \begin{align*}
        \max_{x_0\in \Omega} \|\mathbb P(X_t^{x_0}\in \cdot) - \pi_G^\xi\|_\tv\le \frac{1}{\sqrt{2}} e^{-\alpha_0 t} \Big(\log \frac{1}{\min_{x\in \Omega} \pi_G^\xi(x)}\Big)^{1/2}\,.
    \end{align*}
\end{lemma}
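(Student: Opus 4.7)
The plan is to combine the log-Sobolev bound of Lemma~\ref{lemma:main-tree-mixing} with the standard conversion from log-Sobolev inequalities to total-variation mixing stated in Fact~\ref{fact:log-sobolev-tv-distance}. Concretely, by Lemma~\ref{lemma:main-tree-mixing}, since $G$ is $L$-$\treelike$ and $\xi$ is $K$-$\sparse$, the FK-dynamics on $G$ with boundary condition $\xi$ admits a log-Sobolev constant $\alpha(p,q,L,K) > 0$ that depends only on $p,q,L,K$ (crucially, not on the size of $G$). Call this value $\alpha_0$.

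Next, I would directly invoke Fact~\ref{fact:log-sobolev-tv-distance}, applied to the continuous-time FK-dynamics on $G$ with stationary measure $\mu = \pi_G^\xi$. The chain is ergodic and reversible with respect to $\pi_G^\xi$ by construction (step (2) in the definition of the FK-dynamics implements resampling from the correct conditional distribution), so the fact applies verbatim and yields
\[
\max_{x_0 \in \Omega} \|\mathbb P(X_t^{x_0}\in \cdot) - \pi_G^\xi\|_\tv \le \sqrt{2}\, e^{-\alpha_0 t}\Big(\log \frac{1}{\min_{x\in \Omega} \pi_G^\xi(x)}\Big)^{1/2}\,,
\]
which is exactly the claimed bound.

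There is no substantive obstacle: the heavy lifting has already been done in Lemma~\ref{lemma:main-tree-mixing} (reducing to a tree via the $L$ extra edges, using tensorization of entropy on the tree, and controlling the boundary-condition perturbation via Corollary~\ref{cor:simple-ls-bound}) and in Fact~\ref{fact:log-sobolev-tv-distance} (the standard log-Sobolev-to-TV conversion, see e.g.~\cite{SClecture-notes}). The proof is therefore a one-line composition of these two ingredients, and the only thing worth noting is that $\alpha_0$ inherits its dependence on $p,q,L,K$ alone from Lemma~\ref{lemma:main-tree-mixing}, so the mixing-rate estimate is uniform over all $L$-$\treelike$ graphs with $K$-$\sparse$ boundary conditions.
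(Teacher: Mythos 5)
Your proof is correct and matches the paper's argument exactly: both simply compose Lemma~\ref{lemma:main-tree-mixing} (which supplies the log-Sobolev constant $\alpha_0(p,q,L,K)$ uniform over $L$-treelike graphs with $K$-sparse boundary conditions) with Fact~\ref{fact:log-sobolev-tv-distance} (the standard log-Sobolev to total-variation conversion). Nothing further is needed.
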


\begin{proof}[\textbf{\emph{Proof of Lemma~\ref{lemma:local-mixing}}}]
This follows by combining Lemma~\ref{lemma:main-tree-mixing} and Fact~\ref{fact:log-sobolev-tv-distance}.
\end{proof}

\section{Proof of main theorem}\label{sec:proof-of-main-theorem}
Given the estimates proven in the preceding sections, we can now prove our main result, Theorem~\ref{thm:intro:general}.

\subsection{Proof of main theorem: upper bound}
We begin with the proof of the upper bound.  

\begin{proof}[\textbf{\emph{Proof of Theorem~\ref{thm:intro:general}: upper bound}}]
	\label{subsec:mainthm-proof}
	Fix $q > 1$, $\gamma > 1$ and $p<p_u(q,\gamma)$. (It suffices to consider $\gamma>1$ since $\lim_{\gamma \downarrow 1} p_u(q,\gamma) = 1$, and if $\gamma\ge \gamma'$, then $\cD_{\gamma',\kappa} \subset \cD_{\gamma,\kappa}$.)
	Let $R = (\frac 12 - \delta)\log_\gamma n$,
	where $\delta>0$ is a small constant we choose later.
	For $K$ and $L$ fixed positive constants, $\varepsilon \in (0,1/2)$ and $t \ge 0$, let %		
	$\Gamma_t = \Gamma_t(L,K,\delta,\varepsilon,\gamma)$ be the subset of (multi)graphs on $n$ vertices with degree sequence $\vdn$ given by:
	\begin{align*}                        
	\Gamma_{t} = \{\cG: \cG \mbox{ is }(L,R)\mbox{-}\treelike, & \mbox{ has }(\gamma,\varepsilon)\mbox{-volume growth} \\ 		& \mbox{and }\mathbb P(X_{\cG,t}^1 \mbox{ is }(K,R)\mbox{-}\sparse) \ge 1-n^{-5} \}\,.
	\end{align*}		
	By Lemmas~\ref{lem:random-graph-treelike} and~\ref{lem:random-graph-volume-growth}, as well as Theorem~\ref{thm:k-R-sparse-whp}, for every $\delta \in (0,1/2)$ and $\varepsilon \in (0,1/2)$,
	there exist constants $\kappa(p,q,\gamma,\delta)$, $L(\delta)$, $K(p,q,\gamma,\delta)$, and $T(p,q,\gamma)$ such that if $(\vdn)_n \in \cD_{\gamma,\kappa}$ 
	then $\Pcm (\Gamma_T^c ) = o(1)\,.$ 
	Hence, it suffices for us to prove that the mixing time of the FK-dynamics on any $\cG \in \Gamma_T$ is $O(\log n)$.
	
	Fix any $\cG \in \Gamma_T$. Let $((X_{t}^{x_0})_{t\ge 0})_{x_0}$ be the family of FK-dynamics initialized from all possible configurations $x_0$, coupled via the standard grand coupling for the FK-dynamics; i.e., using the same clock rings and the same uniform random variables to make the edge updates while running the chain from different initializations. Recall that this coupling is monotone when $q\ge 1$ so that for every $t\ge 0$, if $X_t^{x_0} \le X_t^{y_0}$, then $X_{t'}^{x_0}\le X_{t'}^{y_0}$ for all $t'\ge t$.  
	Using the standard fact that the coupling time provides a bound on the mixing time (see e.g.,~\cite{LP}),
	by a union bound over the edges, it suffices to show that under this grand coupling,
	\begin{align}\label{eq:wts-main-theorem}
	\mathbb P \big(X_{\hat T}^1 (e) \ne X_{\hat T}^0 (e)\big) \le o(1/|E(\cG)|)\qquad \mbox{for every $e\in E(\cG)$\,.}
	\end{align}

	Now fix any such $e = \{u,v\}$ and for ease of notation, set $B_v = E(B_R(v))$ and $B_v^c = E(\cG) \setminus B_v$.
	Consider two auxiliary copies of the FK-dynamics $Y_t^1$ and $Y_t^0$ that 
	censor (ignore) all updates on edges of $B_v^c$ after time $T$. The censoring inequality from~\cite{PWcensoring} applied to the FK-dynamics~\cite[Theorem 2.5]{GL2} implies that $Y_t^1 \succcurlyeq X_t^1$ and $Y_t^0 \preccurlyeq X_t^0$ for all $t\ge 0$ and thus 
	\[
	\mathbb P \big (X_{t}^1(e)\ne X_{t}^0(e) \big) 
	\le \mathbb P \big(X_{t}^1(e) = 1\big) - \mathbb P \big(X_{t}^0(e) = 1\big)
	\le \mathbb P \big(Y_{t}^1(e) = 1\big) - \mathbb P \big(Y_{t}^0(e) = 1\big)\,.
	\]
	
	Let $\cH_v$ be the set of configurations on $B_v^c$ such that the boundary conditions they induce on $B_v$ are $K$-$\sparse$. (Here and throughout the paper, the boundary condition induced by a configuration $\omega(B^c)$ on a set $B$ wires two vertices $w,w'\in V(B)$ if they are in the same connected component of $\omega(B^c)$.) By definition of $\Gamma_T$ and monotonicity of the FK-dynamics, we have for every $\cG\in \Gamma_T$, 
	\begin{align*}
	    \mathbb P(Y_T^0(B_v^c)\notin \cH_v) \le \mathbb P(Y_T^1(B_v^c)\notin \cH_v) \le n^{-5}\,.
	\end{align*}
	Therefore, $\mathbb P (Y_t^1 (e) = 1)-  \mathbb P ( Y_t^0 (e) = 1)$ is bounded by 
		\begin{align*}
		\max_{\phi^1,\phi^0\in \cH_v} \Big[\mathbb P ( Y_t^1 (e) =1 \mid Y_{T}^1(B_v^c)= \phi^1) - \mathbb P (Y_t^0(e) = 1 \mid Y_T^0 (B_v^c) = \phi^0) \Big] + 2n^{-5} 
		\end{align*}
	Now fix any $\phi^1,\phi^0\in \cH_v$. 
	From the triangle inequality, we have
	\begin{align}
	\mathbb P (Y_{T+s}^1(e) = 1 &  \mid Y_T^1 (B_v^c)  = \phi^1)  - \mathbb P (Y_{T+s}^0(e) =1 \mid Y_T^0(B_v^c) = \phi^0 ) \nonumber \\ 
	& \le \big|\mathbb P (Y_{T+s}^1(e) = 1 \mid Y_T^1 (B_v^c) = \phi^1) - \pi_{\cG}(\omega(e) =1 \mid \omega(B_v^c) =\phi^1)\big| \label{mixing:plus-bound} \\
	& \quad + \big|\pi_{\cG}(\omega(e) =1 \mid \omega(B_v^c) =\phi^1) - \pi_{\cG}(\omega(e) =1 \mid \omega(B_v^c) =\phi^0)\big| \label{mixing:smp-bound} \\ 
	& \quad + \big|\mathbb P (Y_{T+s}^0(e) = 1 \mid Y_T^0 (B_v^c) = \phi^0) - \pi_{\cG}(\omega(e) =1 \mid \omega(B_v^c) =\phi^0)\big|\,. \label{mixing:minus-bound}
	\end{align}
	Observe that the chain $(Y_{T+s}^1)_{s\ge 0}$ 
	may be viewed as an FK-dynamics on $B_v$ with the boundary condition induced by $\phi^1$, initialized from the (random) configuration $Y_{T}^1(B_v)$ 
	and with stationary distribution
	$\pi_{\cG}(\omega(B_v)\in \cdot \mid \omega(B_v^c) = \phi^1) = \pi_{B_v}^{\phi^1}\,;$
	the analogous statement is true for $(Y_{T+s}^0)_{s\ge 0}$ and $\pi_{B_v}^{\phi^0}$. 
	
	Setting $\hat T = T+\hat S_n$ where $\hat S_n =  \hat C \log n$ for a constant $\hat C(p,q,\gamma,L,K)$ sufficiently large, 
	since $B_v$ is $L$-$\treelike$ and $\phi^1$ is $K$-$\sparse$,
	we obtain from Lemma~\ref{lemma:local-mixing}
	that 
	$$
	\big|\mathbb P (Y_{\hat T}^1(e) = 1 \mid Y_T^1 (B_v^c) = \phi^1) - \pi_{\cG}(\omega(e) =1 \mid \omega(B_v^c) =\phi^1)\big| \le n^{-5};
	$$
	the same bound holds for~\eqref{mixing:minus-bound}.
	
	Finally, 
	since both $\phi^1$ and $\phi^0$ induce $K$-$\sparse$ boundary conditions on $B_v$ and $\cG$ is $(L,R)$-$\treelike$ with $(\gamma,\epsilon)$-volume growth,		
	by Theorem~\ref{theorem:influence-probability-new} there exists $C= C(p,q,L,K,\gamma) > 0$ such that~\eqref{mixing:smp-bound} is at most
	$$
	\|\pi_{B_v}^{\phi^1} (\omega(E_v)\in \cdot) - \pi_{B_v}^{\phi^0}(\omega(E_v)\in \cdot )\|_\tv \le C\ps^{2(1-C\sqrt{\epsilon})R}\le C\hat p^{(1-2\delta)(1-C\sqrt{\epsilon})\log_\gamma n}\,,
	$$ 
	where $E_v$ is the set of edges incident to $v$, and we used $R = (\frac{1}{2} - \delta)\log_\gamma n$.
	Setting $\theta = (1-2\delta)(1-C\sqrt{\epsilon})$,
		\begin{equation}\label{eq:tv:bound}
	\|\pi_{B_v}^{\phi^1} (\omega(E_v)\in \cdot) - \pi_{B_v}^{\phi^0}(\omega(E_v)\in \cdot )\|_\tv \le  C \hat p^{\theta \log_\gamma n} 
% 	= \gamma^{-\theta \log_\gamma n} (\hat p\gamma)^{(\theta \log_{\hat p\gamma} n)/\log_{\hat p\gamma}\gamma} 
	= C n^{-\theta(1-\frac{1}{\log_{\hat p\gamma}\gamma})}\,. 
	\end{equation}
    Since $\hat p<1/\gamma$, $\log_{\hat p\gamma}\gamma<0$, there is some $c_{p,\gamma}>0$ such that the right-hand side is $C n^{-\theta(1+c_{p,\gamma})}$. By taking $\epsilon,\delta$ sufficiently small, $\theta$ can be made arbitrarily close to $1$, so that~\eqref{eq:tv:bound} is $o(1/n)$. 

	Now notice that $|E(\cG)| = O(n)$. To see this, observe that
	by Jensen's inequality
	$
	(\frac 1n \sum_v d_v)^2 \le \frac 1n \sum_v d_v^2,
	$
	and since $(\vdn)\in \cD_{\gamma,\kappa}$, we also have $\sum_v d_v^2 \le (1+\gamma) \sum_v d_v$.
	Combining these two inequalities we find that $|E(\cG)| \le \frac{(1+\gamma)n}{2}$. Therefore, each of~\eqref{mixing:plus-bound}--\eqref{mixing:minus-bound} are 
	$o(1/|E(\cG)|)$, implying~\eqref{eq:wts-main-theorem} as desired.  
\end{proof}

\subsection{Lower bound on the mixing time of FK-dynamics}\label{sec:lower-bound-rc}
We now turn to proving the mixing time lower bound of Theorem~\ref{thm:intro:general}. Though the argument is a straightforward adaptation of the proof of the lower bound in~\cite{BlGh21}, given our results on $(\gamma,\epsilon)$-growth of the random graph, and the exponential decay rate on random trees from Lemma~\ref{lemma:exp:decay:wired:treelike}, we include the proof for completeness, demonstrating that our new results give the requisite inputs to adapt the proof of~\cite{BlGh21}. 

\begin{claim}\label{clm:rg-many-trees}
Fix $\epsilon$ small. Suppose $\kappa$ is sufficiently large and $(\vdn)_n \in \cD_{\gamma,\kappa}$. With $\Pcm$-probability $1-o(1)$, $\cG$ satisfies $(\gamma,\epsilon)$-volume growth, and there exist $n^{1/5}$ vertices whose balls of radius $\frac 15 \log_\gamma n$ are disjoint, and are trees.
\end{claim}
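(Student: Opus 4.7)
The plan is to combine the $(\gamma,\epsilon)$-volume-growth estimate (Lemma~\ref{lem:random-graph-volume-growth}) with a first-moment count of vertices having non-tree balls of radius $R=\tfrac{1}{5}\log_\gamma n$, and then extract $n^{1/5}$ such vertices by a greedy packing argument. Throughout, we work on the high-probability event that $\cG$ has $(\gamma,\epsilon)$-volume growth (Lemma~\ref{lem:random-graph-volume-growth}), on which $|B_R(v)|\le\gamma^R=n^{1/5}$ and $|B_{2R}(v)|\le\gamma^{2R}=n^{2/5}$ for every $v$, and we take $\kappa$ large enough that ${\|\vdn\|}_\infty\le n^{\epsilon_*}$ with $\epsilon_*(\kappa)$ arbitrarily small (Fact~\ref{fact:L-infty-degree-sequence}).

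For a fixed vertex $v$, I would bound the probability that $B_R(v)$ fails to be a tree by running the breadth-first half-edge revealing of Process~\ref{proc:BFS-revealing-process} for $v$ (exactly as in the proof of Lemma~\ref{lem:random-graph-treelike}, but with the smaller radius $R=\tfrac15 \log_\gamma n$). A step is called \emph{bad} if the freshly chosen half-edge is matched to an already-exposed half-edge, i.e.\ one belonging to a vertex in the current partial ball; any bad step creates a cycle, and absence of bad steps throughout yields a tree. Since at most $n^{1/5}$ half-edges get matched during the revealing of $B_R(v)$, and conditional on the past the probability of a bad match is at most $n^{1/5}{\|\vdn\|}_\infty/({\|\vdn\|}_1-n^{1/5}{\|\vdn\|}_\infty)\le Cn^{-4/5+\epsilon_*}$, a union bound gives
\[
\Pcm\big(B_R(v)\text{ is not a tree}\big)\ \le\ C\,n^{-3/5+2\epsilon_*}.
\]
Summing over $v$ and applying Markov's inequality, with probability $1-o(1)$ at most $n^{1/2}$ vertices fail to have a tree ball of radius $R$.

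Call \emph{good} any vertex $v$ for which $B_R(v)$ is a tree; on the intersection of the two $1-o(1)$ events above, there are at least $n-n^{1/2}$ good vertices. I now greedily construct the desired collection: initialize $\cS=\emptyset$ and $\cU$ as the set of good vertices; while $\cU\neq\emptyset$, pick any $v\in\cU$, add it to $\cS$, and remove from $\cU$ every vertex of $B_{2R}(v)$. Two vertices $u,w\in\cS$ selected this way satisfy $d(u,w)>2R$, so their balls of radius $R$ are pairwise disjoint. Each selection step removes at most $|B_{2R}(v)|\le\gamma^{2R}=n^{2/5}$ vertices from $\cU$, so the process terminates only after
\[
|\cS|\ \ge\ \frac{n-n^{1/2}}{n^{2/5}}\ =\ \Omega(n^{3/5})\ \gg\ n^{1/5}
\]
iterations, producing the required $n^{1/5}$ vertices. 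No step is genuinely delicate: the $(\gamma,\epsilon)$-growth control and the first-moment estimate on non-tree balls are both straightforward consequences of results already established earlier in the paper, and the packing argument uses only the deterministic volume bound.
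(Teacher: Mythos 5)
Your proposal is correct but takes a genuinely different route from the paper. The paper proves the claim by a \emph{single coupled revealing}: it picks $n^{1/5}$ starting vertices one at a time, reveals each ball $B_R(v_i)$ out of what has already been discovered, and argues that in the entire exploration (at most $n^{2/5}$ matching attempts, on the volume-growth event) no matching attempt ever hits an already-discovered vertex, which simultaneously guarantees that all balls are trees \emph{and} that they are pairwise disjoint. You instead do a per-vertex first-moment estimate (showing the expected number of vertices with non-tree $R$-balls is $n^{2/5+O(\epsilon_*)}$, hence $\le n^{1/2}$ whp by Markov) followed by a greedy packing using the volume bound $|B_{2R}(v)|\le n^{2/5}$; this produces $\Omega(n^{3/5})$ disjoint tree balls, comfortably more than the $n^{1/5}$ required. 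Both arguments are sound and use the same two ingredients (the bad-match probability bound from Process~\ref{proc:BFS-revealing-process}/Lemma~\ref{lem:random-graph-treelike} and $(\gamma,\epsilon)$-volume growth from Lemma~\ref{lem:random-graph-volume-growth}); yours is slightly more modular and actually proves a stronger quantitative statement, while the paper's avoids the intermediate first-moment count and greedy step by building disjointness directly into the exploration. Two small bookkeeping points in your write-up worth tightening: (i) the number of matching steps to reveal $B_R(v)$ is bounded by $|E(B_R(v))|\le |B_R(v)|\,{\|\vdn\|}_\infty$, i.e.\ $n^{1/5+\epsilon_*}$ rather than $n^{1/5}$, and correspondingly the number of exposed half-edges at any step is at most $n^{1/5+2\epsilon_*}$ — this only changes your per-vertex failure probability to $n^{-3/5+O(\epsilon_*)}$, which is still fine; and (ii) since ``$B_R(v)$ is not a tree'' alone is not independent of volume growth, the clean way to state the per-vertex bound is to bound $\Pcm(\{\cG\text{ has }(\gamma,\epsilon)\text{-growth}\}\cap\{B_R(v)\text{ not a tree}\})$, or equivalently to truncate the BFS exploration of $B_R(v)$ at $n^{1/5}{\|\vdn\|}_\infty$ steps, exactly as is done in the proof of Lemma~\ref{lem:random-graph-treelike}.
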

\begin{proof}
On the one hand, by Lemma~\ref{lem:random-graph-volume-growth}, with probability $1-o(1)$, $\cG$ satisfies $(\gamma,\epsilon)$-volume growth, as long as $\kappa$ is sufficienlty large (depending on $\epsilon$). We prove the rest of the events have probability $1-o(1)$ by repeated application of the breadth-first revealing of Process~\ref{proc:BFS-revealing-process}. Namely, consider the procedure where we repeatedly take an arbitrary vertex $v$ that has not been discovered yet, and reveal its ball of radius $R = \frac{1}{5} \log_\gamma n$ via Process~\ref{proc:BFS-revealing-process}. Let $v_i$ be the $i$'th vertex to be selected in this procedure, and let $\cA_i$ be $\bigcup_{j\le i} E(B_R(v_j))$. Then, for integer $m \le n$ the probability that one of $(B_R(v_1),...,B_R(v_m))$ is not disjoint trees, is at most
\begin{align*}
   \Pcm \big(\bigcup_{i=1}^{m} \{B_R(v_i)\cap \cA_{i-1} = \emptyset \mbox{ or } B_R(v_i) \mbox{ is not a tree}\}\,,\,  \cG\in (\gamma,\epsilon)\mbox{-volume growth} \mid \cA_{i-1}\big)\,.
\end{align*}
Using the fact that $\cG$ is of $\gamma,\epsilon$-volume growth that we are intersecting with, the event can be rewritten as in its first $\gamma^R$ many matching attempts, none match with anything in $\cA_i$ or any half-edge belonging to a newly discovered half-edge of $B_R(v_i)$. In any one edge matching, uniformly over what has already been revealed, this probability is bounded by 
\begin{align*}
    \frac{{\|\vdn\|}_\infty m n^{1/5}}{{\|\vdn\|}_1 - {\|\vdn\|}_\infty m n^{1/5}}\,,
\end{align*}
which, for $m = n^{1/5}$, is at most $n^{-1/2}$ as long as $\kappa$ is sufficiently large, so that $\epsilon_*(\kappa)<1/10$. As there are at most $n^{2/5}$ edges to match, the probability that no edge gets matched to an already discovered vertex, and thus all the revealed balls form disjoint trees, is at most $\mathbb P(\mbox{Bin}(n^{2/5}, n^{-1/2}) >0)$
which is $o(1)$ simply by a Markov inequality. 
\end{proof}

Fix $\eta \in (0,1/5)$ to be taken sufficiently small later. For every $\cG$ having $n^{1/5}$ many vertices whose balls of radius $\frac 15 \log_\gamma n$ are disjoint trees, choose arbitrarily some $n^\eta$ vertices amongst the $n^{1/5}$ of Claim~\ref{clm:rg-many-trees}, and for each vertex collect a representative edge incident to it to form the set $\cC = \cC_\eta(\cG)$. 
Our proof will rely on a coupling of the restrictions of $X_{t,\cG}$ and $\pi_{\cG}$ to $\cC$ to $\ber (\hat p)$ product chains. For this, let: 
\begin{itemize}
\item $X_t= X_{t,\cG}$ be a realization of the FK-dynamics;
\item $Y_t= Y_{t,\cG}$ be a realization of the FK-dynamics that censors all updates in $E(\cG)\setminus \cC$;
\item $\nu$ as the product measure over $|\cC|$ many $\ber(\hat p)$ random variables. 
\end{itemize}
As before, let $Y_t^0$ be the chain $Y_t$ initialized from the all-$0$ configuration. 
  
\begin{lemma}\label{lem:couplings-to-product-chain}
Let $\cG$ be any graph satisfying $(\gamma,\epsilon)$-volume growth for $\epsilon<1/6$, and having at least $n^{1/5}$ vertices whose balls of radius $\frac 15 \log_\gamma n$ are disjoint trees.
For every $q > 1$, and $p<p_u(q,\gamma)$, there exists $\eta>0$ sufficiently small such that we have the following for $\cC = \cC_{\eta}(\cG)$:
\begin{enumerate}
\item For all $T = O(\log n)$, for all $t\le T$,
\begin{align*}
	\|P(X_{t}^0(\cC) \in \cdot ) - P(Y_t^0(\cC)\in \cdot)\|_\tv \le  o(1)\,.
\end{align*}
\item $\|\pi_{\cG}(\omega(\cC) \in \cdot ) - \nu\|_\tv \le  o(1)\,.$
\end{enumerate}
\end{lemma}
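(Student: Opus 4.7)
Both parts reduce to the following uniform decay estimate. Fix $i$ and write $B:=B_R(v_i)$ with $R=\tfrac{1}{5}\log_\gamma n$. Since $B$ is a tree, the only path in $B$ from $v_i$ to its neighbor $u_i$ passes through $e_i$, so any path from $v_i$ to $u_i$ in $\omega\setminus\{e_i\}$ must exit and re-enter through $\partial B$. Combined with monotonicity in boundary conditions and Lemma~\ref{lemma:exp:decay:wired:treelike}, this gives that for any boundary condition $\xi$ on $\partial B$,
\begin{equation}\label{eq:proposal-key}
\pi_{B}^{\xi}(e_i\text{ not a cut-edge})\;\le\; \pi_{B}^{1}(v_i\leftrightarrow \partial B)\;\le\; C n^{-c},
\end{equation}
for some constant $c=c(p,q,\gamma,\varepsilon)>0$. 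The $(\gamma,\varepsilon)$-volume growth of $\cG$ supplies the subtree-volume hypothesis of Lemma~\ref{lemma:exp:decay:wired:treelike}, and summing its pointwise bound over the at most $\gamma^R$ vertices of $\partial B$ produces $C(\gamma\ps^{1-O(\varepsilon)})^R$; since $\ps\gamma<1$ whenever $p<p_u(q,\gamma)$ and $\varepsilon<1/6$, this is $C n^{-c}$ with $c$ uniformly positive in $n$. Choose $\eta<c$ in the definition of $\cC_\eta(\cG)$.

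For (1), couple $X_t^0$ and $Y_t^0$ via the grand coupling driven by common Poisson clocks and uniform variables. In $Y_s^0$ every edge of $\cC$ is permanently a cut-edge (edges outside $\cC$ are frozen at $0$, and edges inside $\cC$ lie in disjoint balls), so every $\cC$-update in $Y_s^0$ uses acceptance probability $\ps$; the matching update in $X_s^0$ uses the same $\ps$ exactly when $e_i$ is a cut-edge in $X_{s^-}^0$. Hence, on the event $\mathcal B^c$ that every $\cC$-update in $[0,T]$ finds its edge to be a cut-edge in $X^0$, the coupled chains agree on $\cC$ throughout $[0,T]$. Since $X_s^0\preceq\pi_\cG$ stochastically by monotone coupling from an initial condition below stationary, and the event in~\eqref{eq:proposal-key} is increasing in $\omega$, $\mathbb P(e_i\text{ not a cut-edge in }X_s^0)\le C n^{-c}$ uniformly in $s\ge 0$. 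Campbell's formula for the rate-$1$ Poisson clocks on $\cC$ bounds the expected number of bad $\cC$-updates in $[0,T]$ by $|\cC|\,T\cdot C n^{-c}=O(n^{\eta-c}\log n)=o(1)$, so $\mathbb P(\mathcal B)=o(1)$ by Markov's inequality.

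For (2), condition on $\omega$ restricted to $E(\cG)\setminus \bigcup_i E(B_R(v_i))$. By the spatial Markov property of the random-cluster model and the disjointness of the balls, the conditional law on $\bigcup_i E(B_R(v_i))$ factors as the product over $i$ of random-cluster measures $\pi_{B_R(v_i)}^{\xi_i}$ on the trees $B_R(v_i)$ with induced boundary conditions $\xi_i$. Decomposing on the cut-edge event,
\[
\pi_{B_R(v_i)}^{\xi_i}(\omega(e_i)=1)\;=\;\ps+(p-\ps)\,\pi_{B_R(v_i)}^{\xi_i}(e_i\text{ not a cut-edge})\;=\;\ps+O(n^{-c}),
\]
uniformly in $\xi_i$ by~\eqref{eq:proposal-key}. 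Subadditivity of total variation distance across the independent factors then gives, for each realization of the outside configuration,
\[
\Big\|\textstyle\prod_i \pi_{B_R(v_i)}^{\xi_i}(\omega(e_i)\in\cdot)-\nu\Big\|_\tv\;\le\;|\cC|\cdot C n^{-c}\;=\;O(n^{\eta-c})\;=\;o(1),
\]
and averaging over the outside configuration completes the proof of~(2).

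The main obstacle lies in establishing~\eqref{eq:proposal-key}: one must combine the subtree structure of $B_R(v_i)$ with the global $(\gamma,\varepsilon)$-volume growth of $\cG$ to verify the hypothesis of Lemma~\ref{lemma:exp:decay:wired:treelike} for every relevant subtree, and then balance the exponential gain $\ps^{R}$ against the boundary-sum factor $\gamma^R$, which is only possible because $\ps\gamma<1$ strictly below $p_u(q,\gamma)$. Once this uniform polynomial bound is in hand, both claims follow from standard coupling and product-measure manipulations; the only minor subtlety is the use of Campbell's formula in~(1) to average the monotone bound against the Poisson update process rather than attempting a union bound over a continuum of times.
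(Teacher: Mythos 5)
Your proof is correct, and it is a genuinely different (and in some ways cleaner) route than the paper's. The unifying device is your inequality~$\pi_B^{\xi}(e_i~\text{not a cut-edge})\le\pi_B^1(v_i\leftrightarrow\partial B)\le Cn^{-c}$, which you apply to both parts; the paper instead works separately in each part with stochastic domination by a product of tree measures $\bigotimes\pi_{\cT_{e,i}}^{(1,\circlearrowleft)}$.

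For part (1), the paper introduces a \emph{third} auxiliary chain $Z_t^0$ that freezes the configuration on $\cC\cup(E(\cG)\setminus\bigcup E(B_R(e)))$ to all-$1$, observes $X_t^0\le Z_t^0$, notes that $Z_t^0$'s marginal on the balls is dominated by the product of wired tree measures, splits on the event $\{s(T)\le n^{2\eta}\}$, and union-bounds the connection event over the at most $n^{2\eta}$ update times. You instead use the simpler monotone-coupling fact $X_t^0\preceq\pi_\cG$ directly, pass to the stationary cut-edge probability via the increasing-event argument and the spatial Markov decomposition of $\pi_\cG$, and then control the expected number of bad $\cC$-updates in $[0,T]$ by a predictable-compensator (Campbell/Mecke) calculation rather than a union bound over update times. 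This avoids introducing $Z_t^0$ altogether. For part (2), the paper bounds the probability that some tree $\cT_{e,i}$ has an open root-to-leaf path, on whose complement $\omega(\cC)$ is conditionally an i.i.d.\ $\ber(\ps)$ vector; you instead condition on $\omega$ outside $\bigcup_i E(B_R(v_i))$, factor via the spatial Markov property into a product of tree marginals, compare each single-edge marginal to $\ber(\ps)$, and use coordinatewise subadditivity of total variation. Both are valid; your decomposition is a bit more explicit and runs the two parts off the same bound.

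One small point you should flag (a caveat shared implicitly by the paper): the bound $C(\gamma\ps^{1-O(\varepsilon)})^R\le Cn^{-c}$ requires $\gamma\ps^{1-O(\varepsilon)}<1$, which holds only when $\varepsilon$ is small enough \emph{relative to how far $\ps$ sits below $1/\gamma$}; the bare hypothesis $\varepsilon<1/6$ is not enough on its own. The paper also silently drops the $\varepsilon$-correction in writing $(\ps\gamma)^R$; both arguments are correct once one notes that in the application to the main theorem $\varepsilon$ can be taken as small as desired (Claim~\ref{clm:rg-many-trees} gives $(\gamma,\varepsilon)$-volume growth for any fixed $\varepsilon>0$ once $\kappa$ is large enough), so the constant $c>0$ is indeed available.
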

\begin{proof}
We start with part (1).
Our aim is to show that under the grand coupling of $X_t^0$ and $Y_t^0$, for every $t\le T = O(\log n)$, we have $\mathbb P (X_t^0 \ne Y_t^0) \le o(1)$. Under the grand coupling, let $\mathscr T_T = (t_1,t_2,...,t_{s(T)})$ denote the sequence of times on which the updated edge is in $\cC$, so that $s(T)$ counts the number of updates in $\cC$ by time $T$.  We can then bound
\begin{align*}
    \mathbb P (X_t^0 \ne Y_t^0) \le   \mathbb P (s(T) > n^{2\eta}) + \mathbb P (X_t^0 \ne Y_t^0, s(T)\le n^{2\eta})\,.
\end{align*}
The first term on the right-hand side is at most the probability that $\mbox{Pois}(T|\cC|)\ge n^{2\eta}$ which is $o(1)$ by standard tail estimates for Poisson variables. It thus suffices to work on the event $s(T)\le n^{2\eta}$. 

 Let $R : = \frac{1}{6} \log_\gamma n$ and let $Z_t$ be the FK-dynamics chain (coupled to $X_t, Y_t$ through the grand coupling) that freezes the configuration on $\cC  \cup (E(\cG)\setminus \bigcup_{e\in \cC} E(B_R(e)))$ to be all-$1$. Let $Z_t^0$ be the chain $Z_t$ initialized from the configuration that is all-$0$ on  $\bigcup_{e\in \cC} E(B_R(e))\setminus \{e\}$ (but all-$1$ on the frozen edges). 
Observe, trivially, that $X_t^0 \le Z_t^0$ for all $t \ge 0$. Also, observe that the updates of $Z_t^0$ are stochastically dominated by Glauber updates on the union of $2|\cC|$ many $d$-ary trees $(\cT_{e,1},\cT_{e,2})_{e\in \cC}$ of depth $R$,  rooted at the endpoints of the edges of $\cC$, and each having $(1,\circlearrowleft)$ boundary conditions. By monotonicity of the FK-dynamics, for every $t\ge 0$, 
\begin{align}\label{eq:Z-t-stochastic-domination}
    \mathbb P \bigg(Z_t^0 \Big(\bigcup_{e\in \cC} \big\{E(B_R(e))\setminus \{e\}\big\}\Big) \in \cdot\bigg) \preceq \bigotimes_{e\in \cC} \bigotimes_{i\in \{1,2\}} \pi_{\cT_{e,i}}^{(1,\circlearrowleft)}\,.
\end{align}

For each time $t_i \in \sT_T$, when an edge  $e_{t_i}\in \cC$ is updated, $Y_{t_i}^0(e_{t_i})$ is drawn from an independent $\ber(\hat p)$. At the same time, $X_{t_i}^0(e_{t_i})$ is drawn from $\ber(\hat p)$ if the endpoints of $e_{t_i}$ are not connected in $X_{t_i}^0$, which in turn must occur if none of $(\cT_{e,1},\cT_{e,2})_{e\in \cC}$ have an open root-to-leaf path in $Z_t^0$. 
We thus consider the probability of this event. 

Since $\cG$ has $(\gamma,\epsilon)$-volume growth for $\epsilon< 1/6$, every tree among $(\cT_{e,1},\cT_{e,2})_{e\in \cC}$ has at most $\gamma^R$ many leaves. Thus, by the stochastic domination of~\eqref{eq:Z-t-stochastic-domination},  and Lemma~\ref{lemma:exp:decay:wired:treelike}, the probability that the endpoints of $e_{t_i}$ are connected in $Z_{t_i}^0$ is at most $2  C (\hat p \gamma)^{R}$, which for $\eta$ sufficiently small is $O(n^{-3\eta})$. 
 On the event that $\{s(T) \le n^{2\eta}\}$, we can union bound the above probability over the $s(T)$ times in $\sT_T$, to find that $\mathbb P(X_t^0 \ne Y_t^0, s(T)\le n^{2\eta})$ is at most $O(n^{-\eta})= o(1)$ as desired.

For part (2), consider the $2|\cC|$ many $d$-ary trees $(\cT_{e,1}, \cT_{e,2})_{e\in \cC}$ emanating from the endpoints of the edges of $\cC$. 
Notice that if none of $(\cT_{e,1}, \cT_{e,2})_{e\in \cC}$ have an open root-to-leaf path, then the values $\omega(\cC)$ are conditionally distributed as a product of $\ber(\hat p)$ random variables, i.e., $\omega(\cC)$ would conditionally be distributed as $\nu(A)$. 

As such, the total-variation distance $\|\pi_{\cG}(\omega(\cC)\in \cdot) - \nu\|_\tv$ is bounded by the $\pi_{\cG}$-probability that one of $(\cT_{e,1}, \cT_{e,2})_{e\in \cC}$ has an open root-to-leaf path. By the stochastic domination 
\begin{align*}
    \pi_{\cG}\Big(\omega\Big(\bigcup_{e\in \cC} \cT_{e,1}\cup \cT_{e,2}\Big)\in \cdot\Big) \preceq \bigotimes_{e\in \cC} \bigotimes_{i\in \{1,2\}} \pi_{\cT_{e,i}}^{(1,\circlearrowleft)}\,.
\end{align*}
By a union bound, the left-hand side above is then at most 
\begin{align*}
    \sum_{e\in \cC} \sum_{i\in \{1,2\}} \pi_{\cT_{e,i}}^{(1,\circlearrowleft)}(e\leftrightarrow \partial \cT_{e,i})\,,
\end{align*}
which the $(\gamma,\epsilon)$-volume growth condition and Lemma~\ref{lemma:exp:decay:wired:treelike} together show is at most $2n^{\eta} \cdot C(\hat p \gamma)^{R}$. For $\epsilon$ sufficiently small (depending on $p,q,\gamma$) this is $o(1)$. 
\end{proof}

\begin{proof}[\textbf{\emph{Proof of Theorem~\ref{thm:intro:general}: lower bound.}}]
Take any $n$-vertex graph $\cG$ having $(\gamma,\epsilon)$-volume growth for $\epsilon<1/6$ and with $n^{1/5}$ many vertices whose balls of radius $\frac 15 \log_\gamma n$ are disjoint trees.
Note that by Claim~\ref{clm:rg-many-trees}, such graphs have $\Pcm$-probability $1-o(1)$. Take $\eta$ sufficiently small per Lemma~\ref{lem:couplings-to-product-chain}. Consider the event $A^+ \subset \{0,1\}^{\cC}$ that at least $\hat p n^{\eta} - n^{2\eta/3}$ of the edges in $\cC$ are open. 
 Let $(\overline Y_s)$ be the (discrete-time) product Markov chain over $|\cC| = n^{\eta}$ many i.i.d.\ $\ber(\hat p)$ random variables, coupled to $Y_t(\cC)$ via $\overline Y_{s(t)} = Y_t(\cC)$ for all $t$,
 where $s(t)$ counts the number of updates in $\cC$ by time $t$. 
 By item (1) of Lemma~\ref{lem:couplings-to-product-chain}, for every $T = O(\log n)$,
 \begin{align*}
	\mathbb P(X_T^0(\cC) \in A^+) & \le \mathbb P(s(T) > cn^{\eta}\log n) + \mathbb P\big(Y_T^0 \in A^+, s(T)\le cn^{\eta}\log n\big)  + o(1) \\
	& \le \mathbb P (s(T) > c n^{\eta} \log n) + \max_{s \le c n^{\eta} \log n} \mathbb P (\overline Y_s^0 \in A^+) + o(1)\,.
\end{align*}
(In the latter equation, we used the fact that the law of $\overline Y_s^{0}$ only depends on the sequence of times $(t_1,...,t_{s(T)})$ through the number of total updates $s(T)$.) 
Taking $T := c^2 \log n$ for $c>0$ sufficiently small, the probability that $s(T)$ is more than $c n^{\eta} \log n$ is $o(1)$ by tail bounds of a Poisson random variable with rate $T|\cC| = c^2 n^{\eta} \log n$. Turning to the middle term above, by the standard coupon collector bound, for every $c>0$ sufficiently small,  $\sup_{s\le  c  n^{\eta} \log n} \mathbb P(\overline Y_s^0 \in A^+) \le o(1)$. 

Combining the above, we obtain 
\[
\mathbb P(X_T^0(\cC) \in A^+) = o(1)\,.
\]
At the same time, by a Chernoff bound, $\nu(A^+) = 1-o(1)$ and by item (2) of Lemma~\ref{lem:couplings-to-product-chain}, then, $\pi_{\cG}(A^+) = 1-o(1)$. These two together imply that the (continuous-time) mixing time is at least $T = \Omega(\log n)$ as claimed. 
\end{proof}

\section{High-degree vertices slow down mixing for Potts Glauber dynamics}\label{sec:potts-slow-down}

Our lower bound on the mixing time of the Glauber dynamics for the Potts model in a random graph is derived from a bottleneck argument. For the special case of the Erd\H{o}s--R\'enyi random graph, 
the slow down can be attributed to \emph{isolated} stars whose central vertex has degree $\Theta(\frac{\log n}{\log \log n})$. Such a star appears in the random graph with high probability, and since it disconnected from the rest of $\cG$, the mixing time on the star serves as a lower bound for the mixing time on the full graph. This straightforwardly gives a lower bound of $n^{1+\Theta(\frac{1}{\log \log n})}$ on the discrete-time mixing time of the Glauber dynamics; see~\cite[Proposition 1.8]{MS}. 

For more general degree sequences, 
especially when there exist vertices of degree $\omega(\log n)$, the neighborhoods of the high-degree vertices will \emph{not} be isolated from the remainder of the graph, and in fact will correspond to the denser parts of the random graph. We use the exponential decay of random-cluster connectivities when $p<p_u(q,\gamma)$ to still leverage this star structure to give a lower bound on the mixing time of the Potts Glauber dynamics on a random graph that are exponential in its largest degree.  

We will work with the discrete-time Potts Glauber dynamics, which at each step selects a vertex $v\in V$ uniformly at random, and resamples its spin $\sigma_v$ according to the following conditional distribution: 
\begin{align*}
    \mu_{G,\beta,q}(\sigma_v = i\mid \sigma(V\setminus \{v\})) = \frac{e^{\beta \sum_{(v,w)\in E} \mathbf 1\{\sigma_w = i\}}}{\sum_{i=1}^q e^{\beta \sum_{(v,w)\in E}\mathbf 1\{\sigma_w = i\}}}\,,\qquad \mbox{for $i=1,\ldots,q$}\,.
\end{align*}

\begin{proof}[\textbf{\emph{Proof of Theorem~\ref{thm:Ising-Potts-lower-bound}}}]
Let $v_\star$ be a vertex in $\cG$ of maximum degree, and let $m_i(\sigma)$ denote the number of vertices adjacent to $v_\star$ that are assigned spin $i$ in configuration $\sigma$.
Define the following bottleneck set: 
\begin{align*}
    \mathcal A_{\epsilon}: = \Big\{\sigma: \sigma_{v_\star} = 1, m_1(\sigma) - \max_{j \neq 1} \, m_j(\sigma) \ge \lfloor\epsilon d_{v_\star}\rfloor\Big\}\,.
\end{align*}
Our aim is to show that $\mathcal A_{\epsilon}$ is a set of small conductance. Namely, we wish to show that there exists $\epsilon>0$ such that 
\begin{align*}
    \Phi(\cA_\epsilon) = \frac{Q(\cA_\epsilon, \cA_\epsilon^c)}{\mu(\cA_\epsilon)\mu(\cA_\epsilon^c)} \le e^{- \Omega(d_{v_\star})}\,,
\end{align*}
where $Q(\cA_\epsilon,\cA_\epsilon^c) = \sum_{\sigma\in \cA_\epsilon, \sigma'\in \cA_\epsilon^c} \mu(\sigma) P(\sigma,\sigma')$ with $P$ denoting the transition matrix of the discrete-time Glauber dynamics.

For this, notice that we can expand $Q(\cA_\epsilon, \cA_\epsilon^c)$ into its contribution from transitions that exit $\cA_\epsilon$ by flipping the spin of $\sigma_{v_\star}$, and those that exit $\cA_\epsilon$ by flipping the spin of a neighbor of $v_\star$ in the configuration. Hence, let
\begin{align*}
    \widehat\cA_{\epsilon} : = \Big\{\sigma\in \cA_\epsilon: m_1(\sigma) - \max_{j \neq 1} \, m_j(\sigma) = \lfloor\epsilon d_{v_\star}\rfloor\Big\}\,.
\end{align*}
Namely, we can bound 
\begin{align}
    \Phi(\cA_\epsilon) 
    &\le  \frac{\sum_{\sigma\in \cA_\epsilon} \sum_{j =2}^q \mu(\sigma) P(\sigma,\sigma^{v_\star \to j})}{\mu(\cA_\epsilon)\mu(\cA_\epsilon^c)} + \frac{\sum_{\sigma\in \widehat\cA_\epsilon, \sigma'\in \cA_\epsilon^c} \mu(\sigma) P(\sigma,\sigma')}{\mu(\cA_\epsilon)\mu(\cA_\epsilon^c)}\notag\\
    &\le \frac{\max_{\sigma\in \cA_\epsilon, j\ne 1} P(\sigma, \sigma^{v_\star \to j})}{\mu(\cA_\epsilon^c)} + \frac{\mu(\widehat \cA_\epsilon)\max_{\sigma \in \widehat \cA_\epsilon} P(\sigma,\cA_\epsilon^c) }{\mu(\cA_\epsilon)\mu(\cA_\epsilon^c)} \label{eq:conductance}\,,
\end{align}
where $\sigma^{v_\star \to j}$ is the configuration which agrees with $\sigma$ everywhere except on $v_\star$ where it takes spin $j$. Observe first of all, that by the spin symmetry of the model Potts model, $\mu(\cA_\epsilon) \le 1/q$ and thus $\mu(\cA_\epsilon^c) \ge \frac {q-1}{q} \ge \frac 12$. Moreover, by the definition of the Glauber dynamics, the transition matrix $P$ satisfies 
\begin{align*}
    \max_{\sigma \in \cA_\epsilon} P(\sigma, \sigma^{v_\star \to j}) = \frac{1}{n} \cdot \frac{e^{\beta m_j(\sigma)}}{\sum_{j} e^{\beta m_j(\sigma)}} \le  \frac{e^{\beta (m_j(\sigma) - m_1(\sigma))} }{n}\le \frac{e^{ - \beta \epsilon d_{v_\star}}}{n}\,.
\end{align*}
Also, for every $\sigma \in \widehat \cA_\epsilon$, it satisfies 
\begin{align*}
    \max_{\sigma\in \widehat\cA_\epsilon}P(\sigma, \cA_\epsilon^c) \le \frac{d_{v_\star}}{n}\,,
\end{align*}
as one needs to select a neighbor of $v_\star$ to update in order to move from $\sigma\in \widehat \cA_\epsilon$ to $\cA_\epsilon^c$. 
As such, 
\begin{align}\label{eq:conductance-simplified}
    \Phi(\cA_\epsilon) \le \frac{2}{n} e^{- \beta \epsilon d_{v_\star}} + \frac{2d_{v_\star}}{n} \frac{\mu(\widehat \cA_\epsilon)}{\mu(\cA_\epsilon)}\,.
\end{align}
It remains to bound the ratio of the probabilities of the events $\widehat\cA_\epsilon$ to $\cA_\epsilon$. It will be convenient to work with the random-cluster representation of the Potts model. Let 
\begin{align*}
    \cA_{\epsilon}^{\textsc{rc}}: = \big\{\omega \in \{0,1\}^{E(\cG)}: |\{e \in E_{v_\star}: \omega(e) =1\}| \ge \epsilon d_{v_\star}~\textrm{and}~|\mathfrak V_{E_{v_\star}} (\omega)|\le \epsilon d_{v_\star}/2\big\}\,,
\end{align*}
where we recall that 
$E_{v_\star}$ is the set of edges incident to $v_\star$ and
$\mathfrak V_{E_{v_\star}}
(\omega)$ is the set of neighbors of $v_\star$
in non-trivial connected components in the configuration induced by $\omega(E(\cG)\setminus E_{v_\star})$.
In words this is the event that an $\epsilon$ fraction of the edges incident to $v_\star$ are open, and at most $\epsilon d_{v_\star}/2$ of the neighbors of $v_\star$ are connected to one another in the configuration outside the immediate neighborhood of $v_\star$. 

We first note that for some $\epsilon(p,q,\gamma)>0$, with high probability under the random graph, the event $\cA_{\epsilon}^{\textsc{rc}}$ has high probability under the random-cluster measure $\pi$. For this, observe that since $\pi$ stochastically dominates the independent edge percolation measure  with edge probability $\ps$, and by a Chernoff bound, for any $\cG \sim \Pcm$
\begin{align*}
    \pi_{\cG} (|\{e\in E_{v_\star}: \omega(e) = 1\}| <\epsilon d_{v_\star}) \le \mathbb P(\bin(d_{v_\star}, \ps) < \epsilon d_{v_\star}) \le e^{ - \Omega(\hat p d_{v_\star})}\,,
\end{align*}
for $\epsilon$ sufficiently small (say, less than $\ps/2$). By Lemma~\ref{lem:sparsity-for-Potts-lower-bound}, if $\kappa$ is sufficiently large and $(\vdn)_n\in \cD_{\gamma,\kappa}$, for every $\epsilon>0$, we have  with probability $1-o(1)$ over the graph $\cG \sim \Pcm$, 
\begin{align*}
    \pi_{\cG} (|\mathfrak V_{E_{v_\star}}(\omega)| >\epsilon d_{v_\star}/2) \le e^{ - \Omega( \epsilon d_{v_\star})}\,.
\end{align*}
Hence, it follows from a union bound that there exists $\epsilon(p,q,\gamma)$ small, such that with probability $1-o(1)$, $\cG\sim \Pcm$ is such that
\begin{align*}
    \pi_\cG(\cA_{\epsilon}^{\textsc{rc}}) \ge 1-e^{ - \Omega(\epsilon d_{v_\star})}\,.
\end{align*}
As such, as long as $\epsilon>0$ is sufficiently small, we can bound
\begin{align*}
    \frac{\mu(\widehat \cA_\epsilon)}{\mu(\cA_\epsilon)} \le \frac{\mathbb P_{(\mu,\pi)}(\widehat \cA_\epsilon \mid \cA_{4\epsilon}^{\textsc{rc}}) + e^{ - \Omega(\epsilon d_{v_\star})}}{\mathbb P_{(\mu,\pi)}(\cA_\epsilon \mid \cA_{4\epsilon}^{\textsc{rc}}) (1-e^{ - \Omega(\epsilon d_{v_\star})})}\,,
\end{align*}
where $\mathbb P_{(\mu,\pi)}$ denotes the joint Edwards--Sokal distribution over spin-edge configurations; see~\cite{ES,Grimmett}.

Now, consider a random-cluster configuration in $\cA_{4\epsilon}^{\textsc{rc}}$. Fixing a random-cluster configuration $\omega$ in  $\cA_{4\epsilon}^{\textsc{rc}}$, we claim that the probability of $\cA_\epsilon$ given $\omega$ is at least the probability of the following event $\Gamma_\epsilon$, that 
\begin{enumerate}
    \item the component $\cC_{v_\star}(\omega)$ is given state $1$; and 
    \item  amongst the vertices of 
$$\mathfrak V_\star^c := V(E_{v_\star}) \setminus (\cC_{v_\star}(\omega) \cup \mathfrak V_{E_{v_\star}}(\omega))\,,$$
the number of vertices in each state in $[q]$ is within $\epsilon d_{v_\star}/2$ of $|\mathfrak V_\star^c|/q$. 
\end{enumerate}
To see this, note that on $\Gamma_\epsilon$, since $\cC_{v_\star}(\omega)$ has size at least $4\epsilon d_{v_\star}$ and $|\mathfrak V_{E_{v_\star}}(\omega)| \le 2\epsilon d_{v_\star}$, no matter which state the vertices of $\mathfrak V_{E_{v_\star}}(\omega)$ take, $\sigma$ will be such that
$$m_1(\sigma) - \max_{j\ne 1} m_j(\sigma) > (4\epsilon - 2 \epsilon - \epsilon)d_{v_\star} = \epsilon d_{v_\star}\,.$$
(Here, the $4\epsilon$ comes from the sites in $\cC_{v_\star}(\omega)$, the $-2\epsilon$ comes from a worst-possible assignment of states to sites of $\mathfrak V_{E_{v_\star}}(\omega)$, and the $-\epsilon$ comes from the maximal bias on the sites in $\mathfrak V_\star^c$.)

The probability of the event $\Gamma_\epsilon$, when coloring the components of $\omega$ independently, uniformly at random, is at least $1/q$ (for the probability of coloring $\cC_{v_\star}(\omega)$ in state $1$) times 
\begin{align*}
    1 - q\mathbb P\Big(\Big|\bin(|\mathfrak V_\star^c|, 1/q) - |\mathfrak V_\star^c|/q \Big| > \epsilon d_{v_\star}/2\Big) \ge 1- e^{- \Omega(\epsilon d_{v_\star})}\,.
\end{align*}
(Here, we used a union bound over the $q$ different states, and a Chernoff bound.)
In particular, we find that for $\epsilon(p,q,\gamma)>0$ sufficiently small, 
\begin{align*}
    \mathbb P_{(\mu,\pi)}(\cA_\epsilon \mid \cA_{4\epsilon}^{\textsc{rc}}) \ge \min_{\omega \in \cA_{4\epsilon}^{\textsc{rc}}} \mathbb P(\Gamma_\epsilon \mid \omega) \ge  \frac{1}{q}\Big(1- e^{- \Omega(\epsilon d_{v_\star})}\Big)\,.
\end{align*}
On the other hand, the probability of $\widehat \cA_\epsilon$, conditionally on $\cA_{4\epsilon}^{\textsc{rc}}$ is bounded by the probability of the colorings of $\mathfrak V_\star^c$ assigning at least $2\epsilon d_{v_\star} + |\mathfrak V_\star^c|/q$ many of its vertices to some state $j\ne 1$. By a union bound over the $q$ states, and a Chernoff bound, this has probability at most
\begin{align*}
   q\mathbb P\Big(\Big|\bin(|\mathfrak V_\star^c|, 1/q) - |\mathfrak V_\star^c|/q \Big| > 2 \epsilon d_{v_\star}\Big) \le e^{ - \Omega(\epsilon d_{v_\star})}\,.
\end{align*}
At this point, we can plug the above bounds into~\eqref{eq:conductance-simplified} to deduce that for all $\epsilon(p,q,\gamma)>0$ sufficiently small,  
$$\Phi(\cA_\epsilon) \le \frac{1}{n}e^{-\Omega(\beta \epsilon d_{v_\star})}\,.$$
(Notice that $\epsilon$ sufficiently small, needed to scale as $\Theta(1/p)$, so that this is $n^{-1} e^{ - \Omega(\beta^2 d_{v_\star})}$ for small $\beta$.)
Relying on the classical Cheeger bound (see e.g.,~\cite[Theorem 7.4]{LP}), the inverse of $\Phi(\cA_{\epsilon})$ serves as a lower bound on the mixing time of the Glauber dynamics for the Potts model.
\end{proof}

\bibliographystyle{abbrv}
\bibliography{references}

\end{document}